\documentclass{amsart}

\usepackage{amsmath, amscd, amssymb}
\usepackage{graphpap, color}

\usepackage[mathscr]{eucal}
\usepackage{pstricks}
\usepackage{color}
\usepackage{cancel}

\usepackage[all]{xy}
\numberwithin{equation}{section}

%\addtolength{\oddsidemargin}{-1in}
%\addtolength{\evensidemargin}{-1in}
%\addtolength{\textwidth}{.7in}

%----------font

\def\cD{\mathcal D}
\def\cE{\mathcal E}

\def\cH{\mathcal H}

\def\cM{\mathcal M}

\def\cW{\mathcal W}
\def\cX{\mathcal X}
\def\cY{\mathcal Y}
\def\cZ{\mathcal Z}

\def\sA{\mathscr{A}}
\def\sE{\mathscr{E}}
\def\sF{\mathscr{F}}

\def\sI{\mathscr{I}}

\def\sK{{\mathscr K}}

\def\sO{{\mathscr O}}

\def\sU{\mathscr{U}}
\def\sV{\mathscr{V}}

\def\fA{\mathfrak A}
\def\fB{\mathfrak B}
\def\fC{\mathfrak C}
\def\fD{\mathfrak D}
\def\fE{\mathfrak E}

\def\fI{\mathfrak I}
\def\fP{\mathfrak P}
\def\fR{\mathfrak R}
\def\fS{\mathfrak S}
\def\fT{\mathfrak T}
\def\fW{\mathfrak W}
\def\fX{\mathfrak{X}}
\def\fY{\mathfrak{Y}}

\def\C{\mathfrak C}
\def\D{\mathfrak D}

\def\P{\mathfrak P}
\def\Q{\mathfrak Q}

\def\X{\mathfrak X}
\def\Y{\mathfrak Y}

\def\fm{\mathfrak m}

\def\fz{\mathfrak{z}}
\def\p{{\mathfrak p}}

\def\bA{\mathbb{A}}

\def\bP{\mathbb{P}}
\def\bQ{\mathbb{Q}}

\newcommand{\LL}{\mathbb{L}}

\newcommand{\QQ}{\mathbb{Q}}

\newcommand{\ZZ}{\mathbb{Z}}
\def\NN{\mathbb N}

\def\bk{{\mathbf k}}
\def\kk{\bk}

\def\Qu{{\mathfrak{Q}}}

\newtheorem{prop}{Proposition}[section]
\newtheorem{theo}[prop]{Theorem}
\newtheorem{lemm}[prop]{Lemma}
\newtheorem{coro}[prop]{Corollary}
\newtheorem{rema}[prop]{Remark}
\newtheorem{exam}[prop]{Example}
\newtheorem{defi}[prop]{Definition}

\newtheorem{defiprop}[prop]{Definition-Proposition}

\newtheorem{defi-prop}[prop]{Definition-Proposition}
\newtheorem{sect-sele}[prop]{Section-Selection}

%-------subscripts

\def\lsta{_*}

\def\llt{_{(t)}}

%------superscripts

\def\sta{^\ast}

\def\virt{^{\mathrm{vir}}}
\def\upmo{^{-1}}
\def\shar{^\dagger}
\def\dual{^{\vee}}

\def\utf{^{\mathrm{t.f.}}}

\def\dpri{^{\prime\prime}}

%---------others

\def\beq{\begin{equation}}
\def\eeq{\end{equation}}
\def\lab#1{\label{#1}[{#1}]\  }
\let\lab=\label

\def\defeq{:=}
\def\bl{\bigl(}
\def\br{\bigr)}
\def\and{\quad\text{and}\quad}
\def\lra{\longrightarrow}
\def\vsp{\vskip5pt}

\let\sub=\subset

\def\Po{{\mathbb P^1}}

\def\ti{\tilde}
\def\wti{\widetilde}

\def\mapright#1{\,\smash{\mathop{\lra}\limits^{#1}}\,}

%----------Greek

\newcommand{\Del}{\Delta}

\def\si{\sigma}

\def\Lam{\Lambda}

%---------Stacks

\def\xm{X[m]}
\def\xn{X[n]}
\def\cm{C[m]}
\def\cn{C[n]}
\def\xnpo{X[n+1]}
\def\cnpo{C[n+1]}

\def\yn{Y[n]}

\def\yn{Y[n_-,n_+]}

\def\unn{^{\!n_-+n_+}}
\def\nn{n_-,n_+}

\def\Gm{G_{\mathrm{\!m}}}
\def\gn{\Gm^{{}n}}

%--------affine spaces

\def\Ao{{{\bA}^{\!1}}}

\def\Ano{\bA^{\!n+1}}
\def\Anpo{{\bA^{\!n+1}}}
\def\Anpt{{\bA^{\!n+2}}}

\def\Amo{\bA^{\!m+1}}

%-----------------terms

\def\Aut{\mathrm{Aut}}
\def\Isom{\underline{\mathrm{Isom}}}
\def\Sets{\mathrm{Sets}}
\def\Hilb{\mathrm{Hilb}}
\def\Quot{\mathrm{Quot}}

\def\Spec{\mathrm{Spec}\,}
\def\spec{{\mathrm{Spec}\,}}
\def\coker{\mathrm{coker}}
\def\im{\mathrm{Im}}

\def\Tor{\mathrm{Tor}}

\def\ch{\mathrm{ch}}
\def\ann{\mathrm{ann}}
\def\loc{\mathrm{loc}}
\def\spl{\mathrm{spl}}

\def\st{\text{st}}
\def\RHom{R\cH om}

\def\ev{\mathrm{ev}}
\def\pnn{p}
\def\splitset{\Lambda_\beta^{\mathrm{spl}}}

\def\err{\mathrm{Err}}
\let\Err=\err
\def\exc{\mathrm {Err}}
\def\excz{\mathrm{Err}{\sF}}

\def\pfrmob{\phi\colon p^*\sV\to\sF}
\def\frmobo{\phi_1\colon p^*\sV\to\sF_1}
\def\frmobt{\phi_2\colon p^*\sV\to\sF_2}

\def\cohs{\varphi\colon\sO_{\xn_0}\to\sF}
\def\cohsrel{\varphi\colon\sO_{Y[\nn]_0}\to\sF}

%--------moduli

\def\fQuot{\mathfrak{Quot}}

\def\Qx{{\mathfrak{Quot}_{\fX/\fC}^{\sV}}}
\def\Qxh{{\mathfrak{Quot}_{\fX/\fC}^{\sV,P}}}
\def\Qxp{{\mathfrak{Quot}_{\fX/\fC}^{\sV,P}}}

\def\Qyd{{\mathfrak{Quot}^{\sV_0}_{\fD_\pm\sub\fY}}}
\def\Qydp{{\mathfrak{Quot}^{\sV_0,P}_{\fD_\pm\sub\fY}}}

\def\Ixp{{\fI^P_{\fX/\fC}}}

\def\Ixpo{\fI^P_{\fX_0/\fC_0}}

\def\Mx{\fP_{\fX/\fC}}
\def\Mxh{\fP^P_{\fX/\fC}}
\def\Myd{\fP_{\fD_\pm\sub\fY}}
\def\Mydh{\fP^P_{\fD_\pm\sub\fY}}

\def\Idel{\text{$\fI$\hskip 0.1mm\raisebox{5pt}{$\scriptstyle\delta$}\hskip -1.6mm\raisebox{-2pt}{$\scriptstyle\fX_0^{\dagger}/\fC_0^\dagger$}}}
\def\scrIdel{\text{$\fI$\hskip 0.1mm\raisebox{3pt}{$\scriptstyle\delta$}\hskip -1.6mm\raisebox{-2pt}{$\scriptstyle\fX_0^{\dagger}/\fC_0^\dagger$}}}

\def\Cdel{\fC_0^{\dagger,\delta}}

\def\cMm{\cM^\delta_-}
\def\cMp{\cM^\delta_+}

\def\scrAdel{\text{$\fA_\diamond$\hskip -1.1mm\raisebox{3pt}{$\scriptstyle{{\delta}_{+},{\delta}_0}$}}}
\def\scrAdelm{\text{$\fA_\diamond$\hskip -1.1mm\raisebox{3pt}{$\scriptstyle{{\delta}_{-},{\delta}_0}$}}}

\def\LMdel{\text{$L$\raisebox{6pt}{$\scriptstyle\vee$}\hskip -2mm \raisebox{-3pt}{$\scriptstyle\scrIdel/\fC_0^{\dagger,\delta}$}}}

\def\Lprod{\text{$L$\raisebox{6pt}{$\scriptstyle\vee$}\hskip -2mm \raisebox{-3pt}
{$\scriptstyle\cM^\delta_+/\scrAdel\times \cM^\delta_-/\scrAdelm$}}}

\def\LMdelt{\text{$L$\raisebox{6pt}{$\scriptstyle\vee$}\hskip -2mm \raisebox{-3pt}{$\scriptstyle\scrIdel/\cM^\delta_+\times\cM^\delta_-$}}}

\title[Good degeneration of Quot-schemes and coherent systems]
{Good degeneration of Quot-schemes and \\coherent systems}

\author{Jun Li and Baosen Wu}
\address{Department of Mathematics, Stanford University, Stanford, CA 94305}
\email{jli@math.stanford.edu}
\address{Department of Mathematics, Harvard University, Cambridge, MA 02138}
\email{bwu@math.harvard.edu}

\begin{document}

\begin{abstract}
We construct good degenerations of Quot-schemes and coherent systems using the stack of expanded degenerations. We show that these good degenerations are separated and proper DM stacks of finite type. Applying to the projective threefolds, we derive  degeneration formulas for DT-invariants of ideal sheaves and PT stable pair invariants.
\end{abstract}

\maketitle

\section{Introduction}

Good degenerations are a class of degenerations suitable to study the geometry of moduli spaces via degenerations.
Successful applications include the degeneration formula of Gromov-Witten invariants \cite{Li1,Li2}.
In this paper, we will construct the good degenerations of Hilbert schemes, of Grothendieck's Quot-schemes, and of the
moduli of coherent systems introduced by Le Potier \cite{Le}. As applications, we obtain the degeneration formulas of
Donaldson-Thomas invariants of ideal sheaves, and of invariants of PT stable pairs of threefolds.

The degenerations we study in this paper are simple degenerations
$\pi: X\to C$
over pointed smooth curves $0\in C$.

\begin{defi}\label{def-simp}
We say $\pi: X\to C$ is a simple degeneration if
\begin{enumerate}
\item $X$ is smooth, $\pi$ is projective, $\pi$ has smooth fiber over $c\ne 0\in C$;
\item the central fiber $X_0$ has normal crossing singularity and the singular locus $D$ of $X_0$ is smooth;
\item let ${Y}$ be the normalization of $X_0$ and $\ti D={Y}\times_{X_0}D\sub {Y}$, then
$\ti D\to D$ is isomorphic to a union of two copies of $D$.
\end{enumerate}
\end{defi}

We denote the two copies of $\ti D\to D$ by $D_-$ and $D_+$.
We call $(Y, D_\pm)$ the relative pair associated with $X_0$.

We fix a relatively ample line bundle $H$ on $X/C$, and a polynomial $P(v)$; we form the Hilbert scheme $\Hilb_{X_c}^P$
of closed subschemes $Z\sub X_c$ with Hilbert polynomial $\chi^H_{\sO_Z}(v)\defeq\chi(\sO_Z\otimes H^{\otimes v})=P(v)$.
We will use the technique developed by the first named author in \cite{Li1} to find a good degeneration of the
relative Hilbert scheme (denoting $X\sta=X-X_0$ and $C\sta =C-0$)
$$\Hilb_{X\sta/C\sta}^P=\coprod_{c\in C\sta} \Hilb_{X_c}^P.
$$

To fill in the central fiber of this family over $0\in C$, we consider closed subschemes in $X[n]_0$ that are normal to the singular loci of $X[n]_0$; where $X[n]_0$ is by inserting a chain of $n$-copies of the ruled variety (over $D$)
$$\Del=\bP_D(1\oplus N_{D_+/Y}) \
$$
to $D$ in $X_0$, ($\xn_0$ is constructed in the next section,) and normal to the singular loci means that it is flat along the normal direction to the singular loci of $\xn_0$.

The central fiber of the good degeneration has set-theoretic description
$$\left\{Z\sub X[n]_0\, \Big|\,  \begin{array}{l}
\mbox{$n\ge 0$, $Z$ is normal to the singular loci } \\ \mbox{of
$X[n]_0$,   $\Aut_\fX(Z)$ is finite, $\chi^H_{\sO_Z}(v)=P(v)$.}
\end{array}
\right\}\Big/\cong.
$$
Here the equivalence and the automorphism group are defined in the next section.
In case $D$ is irreducible, it has a simple description: two closed subschemes $Z_1,Z_2\sub \xn_0$
are equivalent if there is an isomorphism $\sigma: \xn_0\to\xn_0$ preserving the projections $X[n]_0\to X_0$ such that
$\sigma(Z_1)=Z_2$. The self-equivalences of a $Z\sub X[n]_0$ form a group,
which we denote by $\Aut_\fX(Z)$.  We call $Z$ stable if $\Aut_\fX(Z)$ is finite.
Finally, $\chi^H_{\sO_Z}(v)=\chi(\sO_Z\otimes p\sta H^{\otimes v})$, where $p: \xn_0\to X_0$ is the projection
by contracting the fibers of $\Delta$.

Constructing the stack structure of this set-theoretic description of the central fiber,
and fitting it into the family $\Hilb_{X\sta/C\sta}^P$, is achieved by working with the stack $\fX\to \fC$ of
expanded degenerations. Using $\fX\to\fC$, we prove that the set-theoretic description of good degeneration is a
Deligne-Mumford stack. The first part of this paper is devoted to prove

\begin{theo}
Let $\pi: X\to C$ be a simple degeneration, $H$ be relative ample on $X\to C$, and $P$ be a polynomial.
Then the good degeneration described is a Deligne-Mumford stack proper and separated over $C$; it is of finite type.
\end{theo}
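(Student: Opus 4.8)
The plan is to realize the good degeneration as a substack of a relative Hilbert stack over the stack $\fC$ of expanded degenerations, and then to verify in turn algebraicity, finite type, the Deligne--Mumford property, separatedness and properness, using the geometry of $\fX\to\fC$. First I would form the relative Hilbert stack $\fHilb^P_{\fX/\fC}$, which assigns to a scheme $S\to\fC$ the set of $S$-flat closed subschemes of $\fX_S\defeq\fX\times_{\fC}S$ whose fibrewise Hilbert polynomial with respect to the pullback of $H$ along the contraction $\fX\to X$ equals $P$. Since $\fX\to\fC$ is represented by projective schemes and $\fC$ is algebraic and locally of finite type over $C$, the relative Hilbert construction over an algebraic base shows that $\fHilb^P_{\fX/\fC}$ is an algebraic stack, locally of finite type over $C$. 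Next I would carve out the locus of subschemes normal to the singular loci: this is a condition that can be tested \'etale-locally on $\fX$ near the singular divisors of the expanded fibres --- it asserts the compatibility of $Z$ with the two branches along each singular divisor --- and it defines a locally closed substack of $\fHilb^P_{\fX/\fC}$. On this substack, stability (finiteness of $\Aut_{\fX}(Z)$) is an open condition, because the automorphism group is a closed subgroup scheme of the relative torus $\gn$ acting on the chain of inserted components and its dimension is upper semicontinuous. Intersecting, we obtain the good degeneration as a substack; by construction it restricts over $C^{\ast}$ to $\coprod_{c\in C^{\ast}}\Hilb^P_{X_c}$.

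For finite type I would prove boundedness. Stability forces every inserted component $\Del$ of $\xn_0$ to carry a part of $Z$ that is not invariant under the fibrewise $\Gm$-action; together with the fixed polynomial $P$ this bounds both the length $n$ of admissible chains and the Hilbert polynomial of the restriction of $Z$ to each component. Hence the good degeneration maps to a finite disjoint union of honest (relative) Hilbert schemes attached to the finitely many possible $\xn_0$, each of finite type over $C$, so the good degeneration is of finite type over $C$. To see that it is Deligne--Mumford it then suffices to check that the automorphism group scheme of every object is finite and unramified: finiteness is the stability hypothesis, and unramifiedness follows from a deformation-theoretic computation identifying the infinitesimal automorphisms of $(Z\sub\xn_0)$ with the subspace of the Lie algebra of $\gn$ fixing $Z$, which vanishes precisely when $\Aut_{\fX}(Z)$ is finite; equivalently one verifies directly that the diagonal is unramified.

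Separatedness I would deduce from the valuative criterion: given two objects over $\Spec R$ ($R$ a discrete valuation ring with fraction field $K$) agreeing over $\Spec K$, the two total spaces are proper families of expanded degenerations over $\Spec R$ agreeing generically, so by separatedness of $\fC$ and of the ordinary Hilbert functor, together with the uniqueness of the flat limit of a family of subschemes in a separated family, the two limits agree up to the equivalence defining the moduli problem, giving an isomorphism of the two objects. Properness is the substantive point, and I expect it to be the main obstacle: one must establish the existence part of the valuative criterion, namely that every object over $\Spec K$ extends, after a finite base change, to one over $\Spec R$. When the closed point of $\Spec R$ maps to $c\ne 0$ this is the properness of the usual relative Hilbert scheme. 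When it maps to $0$, one starts from the given object, spreads it out, and takes the scheme-theoretic closure inside a suitable expanded degeneration over $\Spec R$ (flat over $R$ by the valuative flatness criterion), then inspects its central fibre $Z_0$. If $Z_0$ fails to be normal to the singular loci or to be stable, one performs a semistable modification --- inserting an extra $\Del$-bubble along a singular locus and re-taking the closure, or collapsing an unstable bubble --- and repeats. The crux is to produce a numerical invariant, assembled from the Hilbert polynomials of the restrictions of the successive closures to the components and bounded by the boundedness above, that strictly decreases at each modification, so that the process terminates at a limit which is normal to the singular loci and stable; and to check that this limit is, up to the moduli equivalence, independent of the choices. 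This bubbling and stabilization argument, modelled on the one for expanded degenerations of stable maps in \cite{Li1,Li2}, is where the real work lies.
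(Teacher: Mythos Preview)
Your overall architecture matches the paper's: construct the stack via charts $[\Quot^{p^*\sV,\mathrm{st}}_{X[n]/C[n]}/\Gm^n]$, then prove separatedness and properness by valuative criteria and boundedness by direct estimates. Two points deserve correction or sharpening.

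First, a minor one: normality to the singular loci is an \emph{open} condition on flat families, not merely locally closed (this is Proposition~\ref{open-2} in the paper, proved via a local flatness criterion). So the good degeneration sits inside $\fHilb^P_{\fX/\fC}$ as an open substack, which simplifies the construction.

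Second, and more substantively, your boundedness argument has a real gap. You write that stability forces each inserted $\Delta$ to carry a $\Gm$-noninvariant piece of $Z$, and that ``together with the fixed polynomial $P$ this bounds the length $n$.'' The difficulty is that the polarization $p^*H$ is only \emph{semi}-ample on $X[n]_0$: its restriction to each ruled component $\Delta_l$ is pulled back from $D$, hence trivial along the ruling. A noninvariant piece of $Z$ on $\Delta_l$ can therefore contribute arbitrarily little to $\chi^H_{\sO_Z}$, and there is no immediate bound on $n$ from $P$ alone. The paper handles this by an induction on $\deg P$ (Proposition~\ref{bdd2}): one slices by a general $S\in|p^*H|$, passes to a stable quotient of lower-degree Hilbert polynomial on a contracted $X[n_\phi]_0$, and then combines Grothendieck-type boundedness lemmas with an identity expressing $\chi(\sF)$ as a telescoping sum over components to extract uniform bounds on $n$ and on each $\sF|_{\Delta_i}$. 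This step is genuinely delicate and is not captured by the one-line heuristic.

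For properness you have correctly located the crux. The paper's decreasing invariant is the error polynomial $\mathrm{Err}\,\sF_{\eta_0}$ of Section~3.3, compared under the ordering $\prec$ by degree-then-leading-coefficient; the descending chain condition (Lemma~\ref{chain}) gives termination. The modification step (Lemma~\ref{1.2}) is not a naive ``insert a bubble and re-close'': one builds a $\Gm$-equivariant regular surface $V$ dominating $S\times\Gm$, extends the induced morphism to the Quot scheme across $V$, and analyzes the pulled-back family along a chain of exceptional rational curves to locate a point where the error strictly drops (Lemmas~\ref{5.11}--\ref{localexc}, using the technical Lemmas~\ref{A} and~\ref{B} on $\Gm$-equivariant modules). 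Your outline anticipates this, but the mechanism is more specific than ``insert or collapse and repeat.''
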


Similar results hold for good degenerations of Grothendieck's Quot-schemes and of coherent systems of Le Potier.
\vsp

The primary goal to construct such a good degeneration is to derive a degeneration formula of Donaldson-Thomas invariants and PT stable pair invariants of threefolds. For simplicity, we only state the degeneration formula in case $Y$ is a union of two irreducible complements $Y=Y_-\cup Y_+$, and $D$ is connected. We let $D_\pm=Y_\pm\cap \ti D$.

Let $\Lambda^{\text{spl}}_P$ be the set of splittings $\delta=(\delta_\pm,\delta_0)$ of $P$,
(i.e. $\delta_++\delta_--\delta_0=P$.) For each $\delta\in\Lambda^{\text{spl}}_P$, we construct the moduli stack
$\fI_{\fY_\pm/\fA_\diamond}^{\delta_\pm,\delta_0}$ of relative ideal sheaves of $(Y_\pm,D_\pm)$.
This moduli space is constructed using the stack $\fD_\pm\sub \fY_\pm$ of expanded pairs of $D_\pm\sub Y_\pm$.
Closed points of this moduli space consists of ideal sheaves $\sI_Z$ of $Y_\pm[n_\pm]$ relative to $D_\pm$,
meaning that $Z$ is normal to the singular loci of $Y_\pm[n_\pm]$ and to $D_\pm$.
This moduli space is also a proper and separated Deligne-Mumford stack of finite type. Furthermore, we have a natural morphisms
$$ \ev_\pm:\fI_{\fY_\pm/\fA_\diamond}^{\delta_\pm,\delta_0}\lra \Hilb_D^{\delta_0},
$$
to the Hilbert scheme of ideal sheaves on $D$ of Hilbert polynomial $\delta_0$,
defined via restricting ideal sheaves on $Y_\pm[n_\pm]$ to its relative divisor $D_\pm$.

Using the evaluation morphisms, we form the fiber product
$$\Idel= \fI_{\fY_-/\fA_\diamond}^{\delta_-,\delta_0}\times_{ \Hilb_D^{\delta_0}} \fI_{\fY_+/\fA_\diamond}^{\delta_+,\delta_0}.
$$
Each $\Idel$ is a closed substack of $\Ixp$, and is indeed a ``virtual'' Cartier divisor.

\begin{theo}\label{cycle0}
Let $\pi: X\to C$ be a simple degeneration of projective threefolds such that $X_0=Y_-\cup Y_+$ is a union of two smooth irreducible components. Let $[\Ixp]\virt\in A_*\Ixp$ be the virtual class of the good degeneration, and let $\triangle$ be the diagonal morphism $ \Hilb_D^{\delta_0}\to  \Hilb_D^{\delta_0}\times \Hilb_D^{\delta_0}$. Then
$i_c^![\Ixp]\virt=[\fI_{X_c}^P]\virt$ for $c\ne 0\in C$,
and
\beq\label{formula0}i_0^! [\Ixp]\virt=\sum _{\delta \in\Lam_P^{\mathrm{spl}} }
\triangle^!\bl [\fI_{\fY_-/\fA_\diamond}^{\delta_-,\delta_0}]\virt\times [\fI_{\fY_+/\fA_\diamond}^{\delta_+,\delta_0}]\virt\br.
\eeq
\end{theo}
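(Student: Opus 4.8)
The plan is to reduce the degeneration formula to the deformation-to-the-normal-cone picture provided by the stack $\fC$ of expanded degenerations, together with the machinery of virtual classes under Cartier-divisor pullback. First I would observe that the first assertion, $i_c^![\Ixp]\virt=[\fI_{X_c}^P]\virt$ for $c\neq 0$, is immediate: over $C^*$ the good degeneration $\Ixp$ restricts to the relative Hilbert scheme $\fI^P_{X^*/C^*}$, whose fiber over $c$ is $\fI_{X_c}^P$, and by construction the perfect obstruction theory of $\Ixp$ restricts to that of $\fI^P_{X_c}$ (these moduli spaces are built from the same deformation theory of quotients/ideal sheaves on a smooth threefold). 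So the content is in the central fiber.

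Next I would handle the central-fiber formula $i_0^![\Ixp]\virt$ in two stages. Stage one: express $i_0^!$ in terms of the combinatorial stratification of $\fC_0$. The stack $\fC$ carries a universal family whose central fiber $\fC_0$ decomposes according to the number $n$ of inserted $\Delta$'s and, more importantly, the degeneration $\Ixp\to C$ is obtained by base change from $\Mxh\to\fC$; by the deformation invariance of virtual classes and the fact that $0\in C$ pulls back (up to the expansion) to the distinguished boundary divisor of $\fC$, one gets $i_0^![\Ixp]\virt = [\Mxh\times_{\fC}\fC_0^{\dagger}]\virt$ for the appropriate $\fC_0^{\dagger}$. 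Stage two: on the central fiber the universal object is a sheaf on some $X[n]_0$, which is a normal-crossings threefold $Y_-[n_-]\cup_D Y_+[n_+]$; being "normal to the singular locus'' means the restriction to $D$ of the ideal sheaves from the two sides agree, i.e. the central-fiber moduli space is set-theoretically the disjoint union over splittings $\delta$ of the fiber products $\Idel$. I would make this scheme-theoretic by showing $\coprod_\delta \Idel \to \Mxh\times_\fC\fC_0^\dagger$ is an isomorphism after suitable normalization/expansion bookkeeping, using the restriction-to-$D$ sequence $0\to \sI_Z\to \sI_{Z\cap Y_-}\oplus\sI_{Z\cap Y_+}\to \sI_{Z\cap D}\to 0$.

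The heart of the argument — and the step I expect to be the main obstacle — is the comparison of obstruction theories: showing that the virtual class of $\Ixp$ restricted to the boundary splits as the product of the relative virtual classes, matched up by the diagonal $\triangle$ on $\Hilb_D^{\delta_0}$. Concretely, one must produce a compatible triple of perfect obstruction theories on $\fI^{\delta_\pm,\delta_0}_{\fY_\pm/\fA_\diamond}$ and on $\Ixp$ fitting into a distinguished triangle whose cone is (the pullback of) the cotangent complex of $\Hilb_D^{\delta_0}$ — this is the usual "excess/normal bundle'' comparison, but here complicated by the expanded-pair stacks $\fD_\pm\subset\fY_\pm$ and the $\fA_\diamond$ automorphisms, so one has to track the relative cotangent complexes of $\fY_\pm/\fA_\diamond$ and of $\fX/\fC$ and check that the stacky contributions cancel. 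Once this triangle is in place, the formula \eqref{formula0} follows from the standard functoriality of refined Gysin pullback along the diagonal (Behrend–Fantechi / Li's degeneration argument), summing over $\delta\in\Lambda_P^{\mathrm{spl}}$ because the central fiber of $\fC_0^\dagger$ decomposes into the corresponding strata. I would expect the bookkeeping of Euler sequences for the $\bP^1$-bundles $\Delta$ and the matching of the two sides' divisor classes $N_{D_\pm/Y_\pm}$ to be the most delicate routine part, with the genuinely new input being that $\Idel$ sits in $\Ixp$ as a virtual Cartier divisor with the expected virtual normal bundle.
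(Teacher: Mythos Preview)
Your overall architecture is right --- Gysin pullback for $c\ne 0$, decomposition of the central fiber indexed by splittings $\delta$, and a distinguished-triangle comparison of obstruction theories with cone governed by $\Hilb_D^{\delta_0}$ --- and the last of these matches the paper's argument closely (the exact sequence $0\to\sI_\delta\to\tilde\sI_+\oplus\tilde\sI_-\to\tilde\sI_0\to 0$ together with smoothness of $\Hilb_D^{\delta_0}$ does all the work; no Euler-sequence bookkeeping for the ruled $\Delta$'s is needed).

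The genuine gap is in your ``Stage one.'' You write $i_0^![\Ixp]\virt=[\,\cdot\,\times_{\fC}\fC_0^\dagger]\virt$ and then say the sum over $\delta$ appears because $\fC_0^\dagger$ decomposes into strata. But $\fC_0^\dagger$ is the \emph{normalization} of $\fC_0$, not a Cartier divisor in $\fC$, and passing to a normalization does not commute with $i_0^!$ in any obvious way; a set-theoretic stratification alone does not produce an additive decomposition of a Gysin map. What the paper actually does is lift the whole picture to the \emph{weighted} base stack $\fC^P$ (\'etale over $\fC$), on which there are line bundles with sections $(L_\delta,s_\delta)$, one for each $\delta\in\Lambda_P^{\mathrm{spl}}$, satisfying $\bigotimes_\delta L_\delta\cong\sO_{\fC^P}$ and $\prod_\delta s_\delta=\pi^\ast t$. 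Thus the central fiber of $\fC^P$ is literally a sum of Cartier divisors $(s_\delta=0)$, each of which is isomorphic to $\fC_0^{\dagger,\delta}$. Pulling back along $\pi_P:\Ixp\to\fC^P$, one gets $\Idel=(\pi_P^\ast s_\delta=0)$ as an honest Cartier divisor in $\Ixp$, and then
\[
i_0^![\Ixp]\virt=\sum_{\delta}c_1^{\mathrm{loc}}(\pi_P^\ast L_\delta,\pi_P^\ast s_\delta)[\Ixp]\virt=\sum_\delta[\Idel]\virt
\]
by the multiplicativity of localized Chern classes and pullback of the relative obstruction theory. Without this weighted-base device you have no mechanism to turn the node-marked normalization into a sum of virtual Cartier divisors, and your identity $i_0^!=\sum_\delta(\cdots)$ is unjustified. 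Once you insert this step, the rest of your outline goes through as you describe.
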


\vsp
Using the Chern characters of the universal ideal sheaves, we also obtain the numerical version of the
Donaldson-Thomas invariant and its degeneration, first introduced in the work of Maulik, Nekrasov, Okounkov and Pandharipande \cite{MNOP2}. For a smooth projective threefold $X$ and a polynomial $P(v)=d\cdot v+n$,
we let $\fI_{X}^P$ ($\cong\Hilb_{X}^P$ canonically) be the moduli of ideal sheaves of curves $\sI_Z\subset \sO_{X}$
with Hilbert polynomial $P$; and let $\sI_\cZ\subset\sO_{X\times \fI_{X}^P}$ be its universal family. For any $\gamma\in H^l(X,\ZZ)$,
we define
\[ \ch_{k+2}(\gamma):H_*(\fI_{X}^P,\QQ)\lra H_{*-2k+2-l} (\fI_{X}^P,\QQ),
\]
via
\[ \ch_{k+2}(\gamma)(\xi)=\pi_{2*}(\ch_{k+2}(\sI_\cZ)\cdot \pi_1^*(\gamma)\cap\pi_2^*(\xi)),
\]
where $\pi_1$ and $\pi_2$ are the first and second projection of $X\times \fI_{X}^P$. The
Donaldson-Thomas invariants (in short DT-invariants) with descendent insertions are the degree of the following cycle class
\[\Bigl\langle\prod_{i=1}^r\tilde\tau_{k_i}(\gamma_{i})\Bigr\rangle_{X}^P=
\Big[\prod_{i=1}^r(-1)^{k_i+1}\ch_{k_i+2}(\gamma_{i})\cdot[\fI_{X}^P]\virt\Big]_0,
\]
where $\gamma_{i}$ are cohomology classes of pure degree $l_i$, and $[\cdot]_0$ is taking the dimension zero part of the term inside the bracket. The partition function is
$$Z_{d}\bigg(X;q\Big |\prod_{i=1}^r \tilde \tau_{k_i}(\gamma_{i})
\bigg)=\sum_{n\in \ZZ}\deg\Bigl\langle\prod_{i=1}^r\tilde\tau_{k_i}(\gamma_{i})\Bigr\rangle_{X}^{d\cdot v+n}q^n.
$$

The commonly used form of DT-invariants as introduced in \cite{MNOP2},
uses the moduli $I_n(X,\beta)$ of ideal sheaves of subschemes $Z\sub X$ with fixed curve class
$\beta=[Z]$. In this paper we package the DT-invariant using the moduli $\fI_{X}^P$
of ideal sheaves with fixed Hilbert polynomial.
This enables use to avoid the technical
issue of decomposing curve classes during degenerations. In explicit application, one should be able to
derive the general case after analyzing this issue in details.

Next, we let $\beta_1,\cdots,\beta_m$ be a basis of $H^*(D,\QQ)$. Let $\{C_\eta\}_{|\eta|=k}$ be a
Nakajima basis of the cohomology of $\Hilb_D^k$ (where $\eta$ is a cohomology weighted partition w.r.t. $\beta_i$).
The relative DT-invariants with descendent insertions \cite{MNOP2} are the degree of
\[\Bigl\langle\prod_{i=1}^r\tilde\tau_{k_i}(\gamma_{i}) \Big | \eta\Bigr\rangle_{\Y_\pm}^{\delta_\pm}=
\Bigl[ \prod_{i=1}^r(-1)^{k_i+1}\ch_{k_i+2}(\gamma_{i})\cap\ev_\pm^*(C_\eta)\cdot
[\fI_{\fY_\pm/\fA_\diamond}^{\delta_\pm,\delta_0}]\virt\Bigl]_0
\]
which form a partition function
\[ Z_{d_\pm,\eta}\bigg(Y_\pm,D_\pm;q\Big|\prod_{i=1}^r\tilde \tau_{k_i}
(\gamma_{i})\bigg)=\sum_{n\in \ZZ}\deg\Bigl\langle\prod_{i=1}^r\tilde\tau_{k_i}(\gamma_{i})
\Big | \eta\Bigr\rangle_{\Y_\pm}^{d_\pm\cdot v+n}q^n.
\]

Using the cycle version of the degeneration formula in Theorem \ref{cycle0}, we verify the following form of degeneration formula

\begin{theo}[{\cite{MNOP2}}]\lab{degen}
Fix a basis $\beta_1,\cdots,\beta_m$ of $H^*(D,\QQ)$. The degeneration formula of
Donaldson-Thomas invariants has the following form
\begin{align*}
\notag Z_{d}\bigg(X_c;q\Big|\prod_{i=1}^r  \tilde \tau_{k_i}(\gamma_{i})
\bigg)
= &\sum_{\substack {d_-,d_+;\,\eta\\d=d_-+d_+}} \frac{(-1)^{|\eta|-l(\eta)}\fz(\eta)}{q^{|\eta|}}\cdot\\
\cdot Z_{d_-,\eta}\bigg(Y_-,D_-;q\Big|&\prod_{i=1}^r\tilde \tau_{k_i}(\gamma_{i})
\bigg)\cdot
 Z_{d_+,\eta^\vee}
\bigg(Y_+,D_+;q\Big|\prod_{i=1}^r\tilde
\tau_{k_i}(\gamma_{i})\bigg)
\end{align*}
where $\eta$ are cohomology weighted partitions w.r.t. $\beta_i$, and $\fz(\eta)=\prod_i \eta_i|\Aut(\eta)|$.
\end{theo}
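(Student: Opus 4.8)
The plan is to deduce the numerical degeneration formula of Theorem \ref{degen} from the cycle-level formula \eqref{formula0} of Theorem \ref{cycle0} by a standard "cap with descendent classes, then push forward" argument, the only genuinely new input being the bookkeeping that converts the diagonal pullback $\triangle^!$ on $\Hilb_D^{\delta_0}$ into the Nakajima-basis sum with the factor $(-1)^{|\eta|-l(\eta)}\fz(\eta)$. First I would fix the curve degree $d$ and work with the finitely many Hilbert polynomials $P(v)=d\cdot v+n$, $n\in\ZZ$, simultaneously, so that the statement becomes an identity of generating functions in $q$. For $c\ne 0$ the equality $i_c^![\Ixp]\virt=[\fI_{X_c}^P]\virt$ is already part of Theorem \ref{cycle0}; applying $\prod_i(-1)^{k_i+1}\ch_{k_i+2}(\gamma_i)$ and taking degrees gives the left-hand side $Z_d(X_c;q\mid\prod\tilde\tau_{k_i}(\gamma_i))$. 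The heart is therefore the specialization at $c=0$.

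Next I would apply the operators $\prod_i(-1)^{k_i+1}\ch_{k_i+2}(\gamma_i)$ to both sides of \eqref{formula0}. On the right one has, for each splitting $\delta=(\delta_\pm,\delta_0)$, the class $\triangle^!([\fI_{\fY_-/\fA_\diamond}^{\delta_-,\delta_0}]\virt\times[\fI_{\fY_+/\fA_\diamond}^{\delta_+,\delta_0}]\virt)$ living on the fiber product $\Idel$ over $\Hilb_D^{\delta_0}$. Here I use that the Chern characters of the universal ideal sheaf on $\fX$ restrict, on the component $\Idel$, to the corresponding Chern characters pulled back from the two factors $\fY_\pm[n_\pm]$ — this is because the universal sheaf on the degenerate fiber is glued from the universal sheaves on the two sides along $D$ — so the descendent operators act factor-by-factor and the degree of the resulting zero-cycle factors as a product of two relative invariants, one on $(Y_-,D_-)$ and one on $(Y_+,D_+)$, coupled through the class on $\Hilb_D^{\delta_0}$ that $\triangle^!$ introduces. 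Concretely, $\triangle^!(\alpha_-\times\alpha_+) = \ev_{-*}\alpha_- \cdot \ev_{+*}\alpha_+$ integrated against the diagonal class $[\Delta]\in A^*(\Hilb_D^{\delta_0}\times\Hilb_D^{\delta_0})$, and by Poincaré duality $[\Delta]=\sum_\eta C_\eta\otimes C_\eta^\vee / \langle C_\eta,C_\eta^\vee\rangle$.

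The main obstacle — and the only place where real content enters — is the evaluation of the diagonal in the Nakajima basis $\{C_\eta\}$ of $H^*(\Hilb_D^k)$. I would invoke the known computation of the Poincaré pairing in the Nakajima basis on $\Hilb$ of a surface (equivalently, the symmetric-function Gram matrix of the power-sum-type basis): $\langle C_\eta, C_{\eta'}\rangle = (-1)^{|\eta|-l(\eta)}\fz(\eta)^{-1}\delta_{\eta,\eta'^\vee}$ up to the normalization conventions fixed just before the theorem, where $\fz(\eta)=\prod_i\eta_i\,|\Aut(\eta)|$ and $\eta^\vee$ denotes the dual weighted partition with respect to the basis $\beta_i$. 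Substituting the diagonal expansion and matching $\ev_\pm^*(C_\eta)$ against the definitions of the relative partition functions $Z_{d_\pm,\eta}(Y_\pm,D_\pm;q\mid\cdots)$ then produces exactly the sum over $d=d_-+d_+$ and over $\eta$, with coefficient $(-1)^{|\eta|-l(\eta)}\fz(\eta)/q^{|\eta|}$ — the power $q^{-|\eta|}$ arising because a splitting with $\deg\delta_0=|\eta|$ shifts the constant terms of $\delta_\pm$ relative to $P$ by $|\eta|$, i.e. $n=n_-+n_+-|\eta|$. Collecting terms over all $n$ and all splittings gives the claimed identity of generating functions. The remaining checks — that only splittings with $\delta_0\ge 0$ effective contribute, that the sum over $n$ converges coefficientwise, and that the virtual classes are compatible with the deformation-invariance $i_c^!$ used above — are routine given the properness and finite-type statements of Theorem \ref{cycle0} and the earlier construction of $\fI_{\fY_\pm/\fA_\diamond}^{\delta_\pm,\delta_0}$ and the evaluation maps $\ev_\pm$.
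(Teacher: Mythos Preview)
Your proposal is correct and follows essentially the same route as the paper's proof (Theorem \ref{degen2}): start from the cycle-level identity \eqref{formula}, cap both sides with the descendent operators $\prod_i(-1)^{k_i+1}\ch_{k_i+2}(\gamma_i)$, use deformation invariance $\deg i_c^!=\deg i_0^!$ to identify the left side with the absolute invariants of $X_c$, expand the diagonal of $\Hilb_D^{\delta_0}$ in the Nakajima basis as $[\triangle]=\sum_{|\eta|=k}(-1)^{k-l(\eta)}\fz(\eta)\,C_\eta\otimes C_{\eta^\vee}$, and read off the $q^{-|\eta|}$ shift from $\delta_-+\delta_+-\delta_0=P$. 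The paper states the K\"unneth decomposition of $[\triangle]$ directly rather than via the Poincar\'e pairing of the $C_\eta$, but this is the same content.
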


\vsp

\noindent{\bf Comments}.
In this paper, parallel results on the PT stable pairs invariants are proved. The PT stable pair invariant was introduced by Pandharipande and Thomas in \cite{PT}. Their degeneration was essentially proved in \cite{MPT},
though in a special form. For future reference, we include the statement and the necessary constructions
that lead to a proof of the degeneration of PT stable pair invariants in this paper.

The notion of relative ideal sheaves was developed through email communication
between Pandharipande and the first named author. The technical part of this paper is the proof of the properness and
boundedness of good generations of Grothendieck's Quot-schemes. The parallel results for PT stable pairs are simpler.
The part on perfect obstruction theory necessary for proving the degenerations of invariants are taken from the work \cite{MPT}.

The good degeneration of ideal sheaves for threefolds was constructed by the second named author in his
thesis \cite{Wu-th}. The properness, separatedness and the boundedness were proved there. The proofs  in
this paper for Grothendieck's Quot-schemes are new.

\vsp
\noindent{\bf Acknowledgment}: The first named author is partially supported by an NSF grant and a DARPA grant. The second named author is grateful to Professor Shing-Tung Yau for his support and encouragements.

\section{The stack of expanded degenerations}

We work with a fixed algebraically closed field $\kk$ of characteristic $0$.
We denote $\Gm=GL(1,\kk)$. Let $\pi\colon X\to C$, $0\in C$, be a
simple degeneration;
%\begin{defi}
%We call $\pi\colon X\to C$ a simple degeneration if $X$ is smooth, $\pi$ is projective,
%$\pi$ is smooth away from the central fiber $X_0$, and $X_0$ has normal crossing singularity with singular locus
%$D\sub X_0$ smooth.
%\end{defi}
let ${Y}$ be the normalization of $X_0$; let $\ti D\sub {Y}$ be the preimage of $D\sub X_0$,
and fix $\ti D=D_-\cup D_+$, as defined in Definition \ref{def-simp}.
In this paper, we call $({Y},D_\pm)$ the relative pair associated with $X_0$.

In \cite{Li1, Li2}, the first named author proved the degeneration of Gromov-Witten invariants of a simple degeneration in case
$Y$ is a union of two irreducible components $Y=Y_-\cup Y_+$ and $D$ is connected.
Often, one needs to deal with simple degeneration $X\to C$ when $Y$ is irreducible or contains more than
two connected components, or $D$ is not connected. In this paper, we will construct good degenerations of
moduli spaces for general simple degenerations.

In this section, we review the construction of the stack of expanded degenerations and its family $\fX\to\fC$:
%\beq\label{sq}
%\begin{CD}
%\fX @>{\p}>> X\\
%@VVV @VV{\pi}V\\
%\fC @>>> C
%\end{CD}
%\eeq
presented in the survey article \cite{Li-Survey}. Some formulation of the stack $\fX$ is new; however, the proofs of the results listed follow directly from the arguments in \cite{Li1}.
\beq\label{sq}
\begin{CD}
\fX @>{\p}>> X\\
@VVV @VV{\pi}V\\
\fC @>>> C
\end{CD}
\eeq

\subsection{The stack $\fC$}

We consider $\Ano$ with the group action
$$(t_1,\cdots,t_{n+1})^{\sigma}=(\sigma_1t_1,\sigma_1^{-1}\sigma_2t_2,\cdots,\sigma_{n-1}^{-1}\sigma_nt_n,\sigma_n^{-1}t_{n+1})
,\quad \sigma\in \gn.
$$
This group action generates a class of equivalence relations on $\Ano$.

We need another class of equivalences. We fix the convention on indices. We denote by $[n+1]=\{1,\ldots,n+1\}$;
for any $I\sub [n+1]$, we let $I^\circ=[n+1]-I$ be its complement. For $|I|=m+1$, we let
$$\text{ind}_I\colon[m+1]\to I\sub [n+1]\and \text{ind}_{I^\circ}\colon[n-m]\to I^\circ \sub [n+1]
$$
be the order preserving isomorphisms; let
\beq\label{UI}
\Ano_{U(I)}= \{ (t)\in\Ano \mid t_i \ne 0,\ i\in I^\circ\}\sub\Ano.
\eeq
We let
\beq\label{tauI}
\ti\tau_I: \Amo\times \Gm^{{}n-m}\mapright{\cong}  \Ano_{U(I)} %\Ano_{I^\circ}
\eeq
be defined by the rule
$$
(t'_1,\cdots,t'_{m+1};\sigma_1,\cdots,\sigma_{n-m})\mapsto (t_1,\cdots,t_{n+1}),\quad \left\{
\begin{array}{ll}
t_k=t'_l, & \text{if}\ k=\text{ind}_I(l);\\
 t_k=\sigma_{l}, & \text{if}\ k=\text{ind}_{I^\circ}(l).
 \end{array}\right.\!\!\!
$$
Restricting to $(\sigma_1,\ldots,\sigma_{n-m})=(1)$, it defines
\beq\label{tau}
\tau_I: \Amo\lra \Ano,\quad (t'_1,\ldots,t_{m+1}')\mapsto \ti\tau_I(t'_1,\ldots,t_{m+1}',1,\ldots,1).
\eeq
We call such $\tau_I$ standard embeddings. Given two $I,I'\sub [n+1]$ of same cardinalities, we define the isomorphism
\beq\label{tauII}
\ti\tau_{I,I'}=\ti\tau_{I}\circ\ti\tau_{I'}\upmo: \Ano_{U(I')}\lra \Ano_{U(I)}.
\eeq

Next, we let $\Ano\to \bA^{\! n+2}$ be the closed immersion $\tau_I$ using $I=[n+1]\sub [n+2]$. Let $\Gm^{{}n} \to \Gm^{{}n+1}$ be the homomorphism defined via $(\sigma_1,\ldots,\sigma_n)\mapsto (\sigma_1,\ldots,\sigma_n,1)$. Via this homomorphism, and viewing $\Ano$ as scheme over $\Ao$ via $(t)\mapsto t_1\ldots t_{n+1}$, the morphism
\beq\label{AA}
\tau_I: \Ano\to \bA^{\! n+2}
\eeq
is a $\Gm^{{}n}$ equivariant $\Ao$-morphism with $\Gm^{{}n}$ acting on $\Ao$ trivially.

Further, for general $I, I'\sub [n+1]$ of $|I|=|I'|$, the equivalence $\ti\tau_{I,I'}$ of $\Ano$ is the restriction of the equivalence $\ti\tau_{I,I'}$ of $\bA^{\!n+2}$, by considering $I,I'$ as subsets in $[n+2]$ via $I, I'\sub [n+1]\sub [n+2]$.

\begin{defi}
We define $\fA_{n}$ be the quotient $[\Ano/\sim]$ by the equivalences generated by the $\Gm^{{}n}$ action and by the equivalences $\ti\tau_{I,I'}$ for all pairs $I,I'\sub [n+1]$ with $|I|=|I'|$. The morphism \eqref{AA} defines
an open immersion $\fA_n\to\fA_{n+1}$. We define $\fA$ be the direct limit $\fA=\varinjlim\fA_n$.
\end{defi}

Note that the tautological $\Ano\to\fA_{n}$ is a surjective smooth chart; the collection $\{\Ano\to\fA\}_{n\ge 0}$
forms a smooth atlas of $\fA$.
\vsp

Now let $0\in C$ be the pointed smooth affine curve given. Without loss of generality, we assume there is
an \'{e}tale morphism $C\to \bA^1$ so that the inverse image of $0\in \bA^1$ is the distinguished point $0\in C$.
We define
$$\fC=C\times_{\Ao}\fA.
$$
It is clear that $\fC$ does not depend on the choice of $C\to\Ao$, and is covered by smooth charts
$$C[n]\defeq C\times_\Ao \Ano\lra \fC=C\times_\Ao\fA.
$$

Let $o_n\in \fA$ be the image of $0\in \Ano$ under the tautological $\Ano\to\fA$. By abuse of notation, we denote by the same $o_n\in \fC$ the lift of $o_n\in\fA$ and $0\in C$. By construction, $o_n$ has automorphism group $\Gm^{{}n}$; and $\overline{o_k}=\{ o_{k'}: k'\ge  k\}$.

\subsection{The stack $\fX$}

We begin with describing $\fX\times_C 0$. We keep the decomposition $\ti D=D_-\cup D_+$ specified at the beginning of this section. Let $N_{\pm}$ be the normal line bundles of $D_\pm$ in $Y$. Since $\pi$ is a simple degeneration, $N_-\otimes N_+\cong \sO_D$. (Here and later we implicitly identify $D_\pm$ with $D$ using $D_-\cup D_+=\ti D\to D$.)

We introduce the ruled variety
$$\Delta=\bP_D(N_+\oplus 1);
$$
it is a $\bP^1$-bundle over $D$ coming with two distinguished sections $D_+=\bP(1)$ and $D_-=\bP(N_+)$. For any $\sigma\in\Gm$, the $\Gm$-action on $N_+\oplus 1$ via $(a,b)^\sigma=(\sigma\cdot a, b)$ defines a $\Gm$-action
\beq\label{Auto}
\sigma: \Delta\lra\Delta,\quad [a,b]^\sigma=[\sigma a,b],
\eeq
called the tautological $\Gm$-action on $\Delta$. This action fixes $D_-$ and $D_+\sub\Delta$.

We now construct $X[n]_0$. We take $n$ copies of $\Delta$, indexed by $\Delta_1,\cdots,\Delta_n$, and form a new scheme
$\xn_0$ according to the following rule: we identify $D_-\sub Y$ with $D_+\sub \Delta_1$, ($D_-\cong D_+$ is
via the isomorphism $D_\pm\to D$;) identify $D_-\sub\Del_i$ with $D_+\sub\Del_{i+1}$, and identify
$D_-\sub \Del_n$ with $D_+\sub Y$.
We denote
\beq\label{xn0} \xn_0=Y\sqcup\Delta_1\sqcup\cdots\sqcup\Delta_n\sqcup (Y),\footnote{Here $\sqcup$ means that we identify $D_-\subset \Delta_i$ with $D_+\subset\Delta_{i+1}$, agreeing that $Y=\Del_0=\Del_{n+1}$; we put the further right $Y$ in parenthesis indicating that it is the same $Y$ appearing in the further left.}
\eeq
We denote $D_i\sub \xn_0$ be $D_-$ in $\Del_{i-1}$, which is also the $D_+\sub \Delta_i$.\footnote{Thus $D_+\sub \Del_i$ is
$\Del_{i-1}\cap \Del_{i}$ and $D_-\sub \Del_i$ is $\Del_{i}\cap \Del_{i+1}$.}
The singular locus of $\xn_0$ is the union of $D_1,\ldots,D_{n+1}$.

\begin{figure}[h!]
\begin{center}
\setlength{\unitlength}{3mm}
\begin{picture}(30,3)(0,-1)
\linethickness{0.075mm}\tiny
\multiput(0,0)(5,0){2}{\line(1,0){5}\circle*{0.2}}
\put(10,0){\line(1,0){2}}
\put(14,-0.3){$\cdots$}
\multiput(20,0)(5,0){2}{\line(1,0){5}}
\multiput(20,0)(5,0){2}{\circle*{0.2}}
\put(20,0){\line(-1,0){2}}
%-----------------------Labels on the Left Hand Side of the dots--------
\put(4.5,0.6){$D_1$}
\put(9.5,0.6){$D_2$}
\put(2,0.6){$Y$}
\put(7,0.6){$\Delta_1$}
\multiput(3.8,-1)(5,0){2}{$D_-$}
\multiput(5.6,-1)(5,0){2}{$D_+$}
%-----------------------Labels on the Right Hand Side of the dots--------
\put(19.4,0.6){$D_{n}$}
\put(23.7,0.6){$D_{n+1}$}
\put(22,0.6){$\Delta_n$}
\put(27,0.6){$Y$}
\multiput(18.8,-1)(5,0){2}{$D_{-}$}
\multiput(20.6,-1)(5,0){2}{$D_{+}$}
\end{picture}
\end{center}
%\caption{The $\xn_0$}\label{1}
%%%%%%%%%%%
%\vskip-1cm
\caption{\small The two ends are the same $Y$, in the middle a chain of $n$ $\Delta$'s are inserted;
 the $D_-$ of $Y$ is glued to $D_+$ of $\Delta_1$, which is named $D_1$.}
\end{figure}
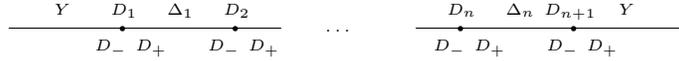

\vskip5pt

Because the inserted $\Delta_i$ intersects the remainder components along $D_i$ and $D_{i+1}\sub \Delta_i$, the tautological $\Gm$-action on $\Delta_i$ (cf. \eqref{Auto}) lifts to an automorphism of $\xn_0$ that acts trivially on all other $\Delta_{j\ne i}$. We let $\Gm^{{}n}$ acts on $\xn_0$ so that its $i$-th factor acts on $\xn_0$ via the tautological $\Gm$-action on $\Delta_i$ and trivially on $\Delta_{j\ne i}$. Let $p\colon\xn_0\lra  X_0$ be the projection contracting all inserted components $\Delta_1,\cdots,\Delta_n$; it is $\Gm^{{}n}$-equivariant with the trivial action on $X$.

\vsp
We now construct the family $\fX\to \fC$ associated with $X\to C$. Let $0\in \cn$ be the preimage of $0\in\Ano$ in $\cn$. We denote $C^*=C-0$ and let $C[m]^*=C[m]\times_C C^*$.

\begin{lemm}\label{Lem-X}
We let $\xn$ be the small resolution $\xn\to X\times_C \cn$, coupled with the projection $p\colon \xn\to X$ induced from $X\times_C\cn\to X$. It is characterized by the properties:
\begin{enumerate}
\item $\xn$ is smooth;
\item the central fiber $(\xn\times_{\cn} 0,p)$ is the $(\xn_0,p)$ constructed;
\item let $\bar\tau_I\colon\cm\to \cn$ be a morphism induced by $\tau_I: \Amo\to\Ano$ (cf. \eqref{tau}); then the induced family
$(\bar\tau_I^*\xn,\bar\tau_I\sta p)$ is isomorphic to $(\xm,p)$ as families over $\cm$, extending the identity map
$$\bar\tau_I^*\xn|_{\cm^*}=\xm|_{\cm^*}=X\times_C \cm\sta;
$$
\item let $\ell_l$ be the $l$-th coordinate line of $\Ano$; %(namely $\bA^{\! n+1}_l=(t_l=0)$);
let $L_l=\cn\times_{\Ano}\ell_l$,
and let $\iota_l\colon L_l\to\cn$ be the inclusion; then the induced family $\iota_l^*\xn$ smooths
the  $l$-th singular divisor $D_l$ of $X[n]_0$.
\end{enumerate}
\end{lemm}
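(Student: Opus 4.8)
\noindent\emph{Proof plan.} All four assertions are local over $X$, so I would first reduce to an \'etale neighbourhood of the singular locus $D\sub X_0$: wherever $\pi$ is smooth the fibre product $X\times_C\cn$ is already smooth over $\cn$, so $\xn=X\times_C\cn$ there and (1)--(4) are trivial; thus it suffices to work near a point of $D$. Since $X$ is smooth and $X_0=\pi^{-1}(0)$ has a normal crossing along the smooth $D$, near such a point the degeneration is modelled \'etale-locally by
\[ W_n\defeq\Spec\kk[x,y,t_1,\ldots,t_{n+1},u_1,\ldots,u_{\dim D}]/(xy-t_1\cdots t_{n+1})\lra\bA^{n+1},\quad (x,y,t_\bullet,u_\bullet)\mapsto(t_1,\ldots,t_{n+1}), \]
in which $\{x=0\}$ and $\{y=0\}$ are the two local sheets of $X_0$ over $D$ and the $u_\bullet$ parametrise $D$; as $C\to\bA^1$ is \'etale, $\cn$ is \'etale-locally $\bA^{n+1}$ with $C\to\bA^1$ given by $t=t_1\cdots t_{n+1}$, so that $X\times_C\cn=W_n$ locally, with singular locus $\{x=y=0,\ \text{at least two of the }t_i\text{ vanish}\}$.

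\noindent Next I would exhibit the small resolution explicitly as the closure $\wti W_n\sub W_n\times(\bP^1)^{\times n}$ of the graph of the rational map $W_n\dashrightarrow(\bP^1)^{\times n}$, $w\mapsto([x:t_1],\ldots,[x:t_1\cdots t_n])$; with $[a_i:b_i]$ the homogeneous coordinates of the $i$-th factor, $\wti W_n$ is cut out by $a_i\,t_1\cdots t_i=b_i\,x$, by $a_i\,y=b_i\,t_{i+1}\cdots t_{n+1}$, and by $a_ib_{i+1}=a_{i+1}b_it_{i+1}$, and it is equally obtained from $W_n$ by a sequence of $n$ explicit blow-ups, as in \cite{Li1}. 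The heart of the argument is then a computation on the $2^n$ standard affine opens of $\wti W_n$ (a choice of $a_i\ne0$ or $b_i\ne0$ for each $i$): solving the defining equations shows each open is an affine space over $\bA^{\dim D}$, the relation $xy=t_1\cdots t_{n+1}$ becoming a formal consequence of the chart relations; this gives smoothness (1), and one reads off that the exceptional locus has codimension $\ge 2$, so $\wti W_n\to W_n$ is small. Restricting an open to $\{t_1=\cdots=t_{n+1}=0\}$ and collating over the $2^n$ charts identifies the central fibre with the chain $Y\cup\Del_1\cup\cdots\cup\Del_n\cup Y$, where $\Del_i\cong\bP_D(N_+\oplus1)$ --- the $\bP^1$-bundle structure being forced by the normal bundles along the two adjacent nodes, whose product is trivial by $N_-\otimes N_+\cong\sO_D$ --- glued along $D_1,\ldots,D_{n+1}$ as described; this is (2). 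Since the incidence model is canonically attached to the local data (the two sheets of $X_0$ and the coordinates $t_\bullet$) and is visibly $\Gm^{{}n}$-equivariant, the local resolutions patch to a global $\xn\to X\times_C\cn$.

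\noindent The remaining properties I would read off by restricting the model. For (3): the standard embedding $\tau_I$ sets $t_i=1$ for $i\in I^\circ$; substituting into the equations of $\wti W_n$, each such $i$ makes one $\bP^1$-factor redundant (through the relation in which $t_i$ occurs), so $n-m$ of the factors collapse and $\wti W_n\times_{\bA^{n+1},\tau_I}\bA^{m+1}$ is identified with $\wti W_m$ attached to the hypersurface $xy=\prod_l t'_l$; over $\cm^{*}$ this restricts to the identity on $X\times_C\cm^{*}$, which is the extension asserted in (3). For (4): the $l$-th coordinate line $\ell_l$ has $t_j=0$ for $j\ne l$, so $L_l\to C$ is constant at $0$; the same chart computation shows that near $D_l$ the morphism $\xn\to\cn$ is \'etale-locally $\{u'v'=t_l\}\times(\text{smooth over }\bA^{n})\lra\bA^{n+1}$ (with $u',v'$ transverse to the two branches through $D_l$), so restricting over $L_l$ gives, near $D_l$, the standard one-parameter smoothing $\{u'v'=t_l\}$ of the node $D_l$, and leaves everything away from $D_l$ unchanged; this is (4). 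For the uniqueness built into ``characterized by'': any small resolution of $W_n$ is smooth, so its fibre over $\{t_\bullet=0\}$ is a chain of $\bP^1$-bundles over $D$ joining the two sheets, and the linear order in which the $t_i$ are separated along that chain records the resolution; requiring (2) fixes this order and names $D_l$ as the $l$-th node, so it pins $\xn$ down locally, hence globally.

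\noindent The step I expect to be the main obstacle is the second one: carrying out the $2^n$-chart verification of smoothness and of the shape of the central fibre without getting lost in the indices, and --- more essentially --- phrasing the construction so that it is manifestly canonical and $\Gm^{{}n}$-equivariant, so that the local resolutions glue and the resulting $\xn$ coincides with the one in \cite{Li1} underlying the stack $\fX$. Once the explicit charts are in hand, properties (2)--(4) and uniqueness are direct restrictions, and the whole argument runs parallel to \cite{Li1}.
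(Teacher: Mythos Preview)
Your proposal is correct and follows the standard approach; note that the paper itself does not prove this lemma but simply states it, remarking that ``the proofs of the results listed follow directly from the arguments in \cite{Li1}.'' Your outline---reducing \'etale-locally to the model $xy=t_1\cdots t_{n+1}$, resolving via the closure in $W_n\times(\bP^1)^n$, and reading off (1)--(4) on the $2^n$ affine charts---is exactly the construction carried out in \cite{Li1}, to which the paper defers.
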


Because of (2), we will view $\xn_0$ as the central fiber $\xn\times_{\cn} 0$.

\begin{lemm}
The $\Gm^{{}n}$ action on $\cn$ with the trivial action on $X$ lifts to a unique $\Gm^{{}n}$-action on $\xn$.
The induced $\Gm^{{}n}$ action on $\xn_0$ is the action described before Lemma \ref{Lem-X}. For $I,I'\sub [n+1]$ of identical cardinalities, the equivalence $\ti\tau_{I,I'}$ in \eqref{tauII} lifts to a $C$-isomorphism
\beq\label{tauX}
\ti\tau_{I,I',X}: \xn\times_{\cn}{\cn_{U(I')}}\cong \xn\times_{\cn}{\cn_{U(I)}},
\eeq
where $\cn_{U(I)}=\cn\times_{\Ano}\Ano_{U(I)}$.
\end{lemm}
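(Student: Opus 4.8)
The plan is to reduce the statement to an explicit local toric computation along the singular locus $D$ and then to propagate the resulting data globally using the uniqueness built into Lemma \ref{Lem-X}. Since $\pi$ is a simple degeneration, \'{e}tale-locally along $D$ the morphism $X\to C$ has the form $\{uv=s\}\times D\to C$ with $s$ a uniformizer of $C$ at $0$ (here Definition \ref{def-simp}(3), i.e. $\ti D=D_-\sqcup D_+$, is used to rule out branch monodromy). Combining this with $\cn\to\Ano$ and $\Ano\to\Ao$, $(t)\mapsto t_1\cdots t_{n+1}$, one gets that \'{e}tale-locally along $D$
$$X\times_C\cn\;\cong\;V\times D,\qquad V=\{uv=t_1\cdots t_{n+1}\},$$
an affine toric variety. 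By Lemma \ref{Lem-X}(1) and (4), over this chart $\xn$ is the standard small toric resolution $\widetilde V\to V$, the one whose exceptional locus is the chain of $n$ $\bP^1$-bundles $\Delta_1,\dots,\Delta_n$: smoothness singles out this resolution among toric ones, and part (4) pins down the ordering of the chain. Away from $D$ the map $\xn\to X\times_C\cn$ is an isomorphism.

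\textbf{Lifting, locally.} The $\gn$-action on $\cn$ with trivial action on $X$ induces one on $X\times_C\cn$; over the chart above it is the monomial action on $V$ fixing $u,v$ and carrying $(t)$ to $(\sigma_1t_1,\sigma_1^{-1}\sigma_2t_2,\dots,\sigma_n^{-1}t_{n+1})$, which preserves $t_1\cdots t_{n+1}=uv$ and is therefore a subtorus action on the toric variety $V$, lifting canonically to $\widetilde V$. Similarly, each equivalence $\ti\tau_{I,I'}$ preserves $t_1\cdots t_{n+1}$: writing $\ti\tau_{I,I'}=\ti\tau_I\circ\ti\tau_{I'}\upmo$, both $\ti\tau_I$ and $\ti\tau_{I'}$ send $(t';\sigma)$ to a point with $t_1\cdots t_{n+1}=\prod_l t'_l\cdot\prod_l\sigma_l$. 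Hence $\ti\tau_{I,I'}$ descends to an isomorphism $\cn_{U(I')}\to\cn_{U(I)}$ and, over the chart, to a monomial isomorphism between the corresponding torus-invariant open subsets of $V$ preserving $uv$; this again lifts canonically to $\widetilde V$. Over the open locus where $\xn\cong X\times_C\cn$ one simply takes the lifts pulled back from the base.

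\textbf{Globalization, uniqueness, and the central fibre.} The local lifts of the previous step and the obvious lifts away from $D$ agree on overlaps because $\xn$ is smooth, hence normal, and $\xn\to X\times_C\cn$ is birational: a morphism $\xn\to\xn$ lying over a prescribed (iso)morphism of $X\times_C\cn$ is unique whenever it exists, since two such agree on the dense open where the resolution is an isomorphism and $\xn$ is separated. Patching yields the $\gn$-action on $\xn$ lifting the one on $\cn$ and the $C$-isomorphisms $\ti\tau_{I,I',X}$ of \eqref{tauX}; the group law for the action and the cocycle identities for the $\ti\tau_{\bullet,\bullet,X}$ follow from the same uniqueness, as does uniqueness of the lifted action. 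Since the action fixes $o_n\in\cn$, it descends to $\xn_0$; on the toric model one computes that $\sigma=(\sigma_i)$ acts on $\Delta_i\cong\bP_D(N_+\oplus 1)$ by $[a:b]\mapsto[\sigma_i a:b]$ and trivially on $Y$ and on $\Delta_j$ for $j\ne i$. (For $n=1$, on $\widetilde V=\{\,[a:b]=[u:t_2]=[t_1:v],\ uv=t_1t_2\,\}\subset V\times\bP^1$ the lift of $(u,v,t_1,t_2)\mapsto(u,v,\sigma t_1,\sigma^{-1}t_2)$ is $[a:b]\mapsto[\sigma a:b]$; the general case is this computation link by link.) This is precisely the tautological $\Gm$-action \eqref{Auto} on $\Delta_i$, so the induced action on $\xn_0$ is the one described before Lemma \ref{Lem-X}.

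\textbf{Main difficulty.} The crux is the toric input: identifying $\xn$ near $D$ with the correct small resolution of $\{uv=t_1\cdots t_{n+1}\}\times D$ via Lemma \ref{Lem-X}(1),(4), and the elementary but bookkeeping-heavy verification that the lifted action restricts on each exceptional $\bP^1$-bundle to the tautological $\Gm$-action on $\Delta_i$ with the correct weight. Once the local picture is in place, globalization and the uniqueness assertions are formal, relying only on normality of $\xn$ and birationality of $\xn\to X\times_C\cn$.
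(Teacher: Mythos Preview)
The paper itself does not prove this lemma; it states the result and refers to the arguments in \cite{Li1} (see the remark preceding the square \eqref{sq}). Your approach via the \'etale-local toric model $\{uv=t_1\cdots t_{n+1}\}\times D$, together with uniqueness of lifts by normality, birationality, and separatedness of $\xn$, is exactly the mechanism behind those constructions, so your proof is correct and in the same spirit.

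One point is worth making explicit, since it is the only place a reader might stumble. You assert that the monomial isomorphism on $V$ induced by $\ti\tau_{I,I'}$ ``lifts canonically to $\widetilde V$''; this is not automatic for an arbitrary permutation of the $t_i$, because such a permutation will in general carry $\widetilde V$ to a \emph{different} small resolution of $V$. What saves you here is that $\text{ind}_I$ and $\text{ind}_{I'}$ are both order-preserving: over $U(I')$ (resp.\ $U(I)$) one has $\widetilde V\cong \widetilde V_m\times\Gm^{n-m}$, where $\widetilde V_m$ is the standard small resolution of $\{uv=s_1\cdots s_{m+1}\}$ with its natural ordering, and in these coordinates $\ti\tau_{I,I'}$ is the identity on the $\widetilde V_m$ factor. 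Equivalently, this is nothing but Lemma~\ref{Lem-X}(3) applied to both $I$ and $I'$, combined with the $\gn$-action you have already lifted to account for the $\Gm^{n-m}$ directions. Your uniqueness argument then glues these local lifts and supplies the claimed uniqueness of the $\gn$-action.
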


As an illustration, let $C=\Ao$, and $X/\Ao$ is a smoothing of $X_0=Y_1\sqcup Y_2$ with a single node $D$.
Then $C[1]=\bA^2$; the central fiber $X[1]_0=Y_1\sqcup \Delta\sqcup Y_2$, $\Delta=\Po$, has two singular divisors $D_1=Y_1\cap \Delta$ and $D_2=\Delta\cap Y_2$. Restricting $X[1]$ to the first coordinate line $\bA^{\!2}_1$, we obtain a family that smoothes $D_1\sub X[1]_0$ but not $D_2$; restricting to the second coordinate line $\bA^{\!2}_2$ the family smoothes $D_2$ but not $D_1$.

\begin{figure}[h!]
\begin{center}
\setlength{\unitlength}{3mm}
\begin{picture}(26,12)(0,-3)
\linethickness{0.075mm}\tiny
%----------------------------------Left Part
\put(0,6){\line(1,0){6}}
\put(0,6){\line(1,2){1}}
\put(0,6){\line(1,-3){1}}
\put(0,1){\line(1,2){1}}
\multiput(1,8)(1,0){5}{\line(1,0){0.5}}
\multiput(0,1)(1,0){6}{\line(1,0){0.5}}
\put(-0.8,2){$Y_1$}
\put(-1,4){$\Delta_1$}
\put(1.5,3){$D_1$}
\put(-1,7){$Y_2$}
\put(0,-2){\line(1,0){6}}\put(0,-2){\circle*{0.2}}
\put(3,-3.5){$\mathbb A^2_1$}
%----------------------------------Right Part
\put(21,3){\line(1,0){5}}
\put(20,6){\line(1,2){1}}
\put(20,6){\line(1,-3){1}}
\put(20,1){\line(1,2){1}}
\multiput(21,8)(1,0){5}{\line(1,0){0.5}}
\multiput(20,1)(1,0){6}{\line(1,0){0.5}}
\put(19.2,2){$Y_1$}
\put(19,4){$\Delta_1$}
\put(20.5,5.5){$D_2$}
\put(19,7){$Y_2$}
\put(20,-2){\line(1,0){6}}\put(20,-2){\circle*{0.2}}
\put(23,-3.5){$\mathbb A^2_2$}
\end{picture}
\end{center}
\caption{\small It shows that $D_1$ is smoothed over $\bA^{\!2}_1$; $D_2$ smoothed over $\bA^{\!2}_2$.}
\end{figure}
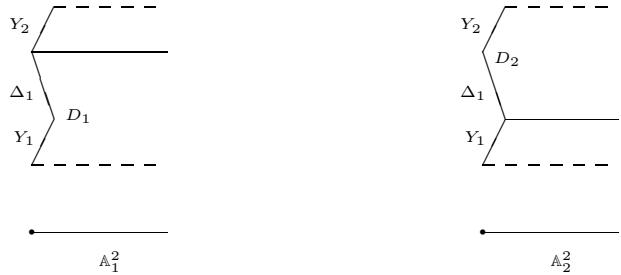

\vsp

\begin{defi}
We define $\fX_n$ be $[\xn/\!\!\sim\,]$, where $\sim$ are equivalence relations generated by the $\Gm^{{}n}$ action and the equivalences $\ti\tau_{I,I',X}$ for all $I,I'\sub [n+1]$ of $|I|=|I'|$. We let $\p_n\colon \fX_n\to X$ be the morphism induced by the tautological projection $p: \xn\to X$.
\end{defi}

The quotient is an Artin stack; it is over $C$ since the $\Gm^{{}n}$ action and the equivalence $\ti\tau_{I,I',X}$
are defined over $C$.
\vsp

Using the inclusion  $[n+1]\sub [n+2]$, the induced $\Ano\to \bA^{\! n+2}$ in \eqref{AA} and the induced $\cn\to C[n+1]$, we have tautological immersion of stacks
\beq\label{XX}
\fX_n\lra\fX_{n+1}
\eeq
that commute with the projections $\p_n$ and $\p_{n+1}$.

\begin{defi}
We define $\fX=\varinjlim \fX_n$; we define $\p\colon \fX\to X$ be the induced projection.
\end{defi}

\begin{theo}
The morphisms $\xn\to\cn$ induce a representable $C$-morphism $\fX\to\fC$. It  fits into the commutative square \eqref{sq}.
\end{theo}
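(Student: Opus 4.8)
The plan is to build the morphism $\fX\to\fC$ chart by chart and then descend it to the stack quotients, using that all the relevant equivalences are defined over $C$. First I would observe that for each $n$ the small resolution $\xn\to X\times_C\cn$ from Lemma \ref{Lem-X} comes with a structure morphism $\xn\to\cn$, obtained by composing with the second projection $X\times_C\cn\to\cn$. This is a flat projective morphism over $\cn$ whose fibers away from $0\in\cn$ are copies of $X_c$ and whose central-type fibers are the expanded degenerations $\xn_0$; in particular it is representable. I would then check that this family of morphisms $\{\xn\to\cn\}_n$ is compatible with the two classes of operations used to form $\fX_n$ and $\fA_n$ (hence $\fC$): the $\gn$-actions (Lemma after \ref{Lem-X} gives the lift of the $\gn$-action to $\xn$ compatibly with the one on $\cn$), and the equivalences $\ti\tau_{I,I',X}$ of \eqref{tauX}, which by that same lemma cover the equivalences $\ti\tau_{I,I'}$ on $\cn$. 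Therefore the morphism $\xn\to\cn$ is equivariant for all the identifications on both sides, and so descends to a morphism of quotient stacks $\fX_n=[\xn/\!\sim\,]\to[\cn/\!\sim\,]=\fC_n$, where $\fC_n\defeq C[n]\times_\Ao\cdots$ is the corresponding chart of $\fC$ (namely $C\times_\Ao\fA_n$).

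Next I would check that these chart-level morphisms $\fX_n\to\fC_n$ are compatible with the open immersions $\fX_n\to\fX_{n+1}$ of \eqref{XX} and $\fC_n\to\fC_{n+1}$ (the latter induced from $\fA_n\to\fA_{n+1}$): this is exactly the statement that the closed immersion $\tau_I\colon\cn\to C[n+1]$ with $I=[n+1]\sub[n+2]$ lifts, via part (3) of Lemma \ref{Lem-X}, to an identification of $\bar\tau_I^*X[n+1]$ with $\xn$ over $\cn$, in a way compatible with the projections to $C$. Passing to the direct limit $\fX=\varinjlim\fX_n$ and $\fC=\varinjlim\fC_n$ then yields a well-defined morphism $\fX\to\fC$. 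Representability follows because representability can be checked on a smooth atlas: pulling back along the chart $\cn\to\fC$ recovers, up to the smooth groupoid presentation, the scheme $\xn$, and $\xn\to\cn$ is a morphism of schemes; more precisely, for any scheme $T\to\fC$ the fiber product $\fX\times_\fC T$ is covered by the schemes $\xn\times_\cn(\cn\times_\fC T)$, which are algebraic spaces, so $\fX\times_\fC T$ is an algebraic space. Finally, the commutativity of \eqref{sq} is immediate from the constructions: $\p\colon\fX\to X$ is the limit of the $\p_n$ induced by $p\colon\xn\to X$, the map $\fC\to C$ is the structure map (from $\fC=C\times_\Ao\fA$), and on each chart the square commutes because $\xn\to X\times_C\cn\to X$ factors the structure maps in the evident way.

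The main obstacle I expect is purely bookkeeping: verifying that the morphism $\xn\to\cn$ is genuinely equivariant for \emph{all} the equivalences $\ti\tau_{I,I',X}$ and $\ti\tau_{I,I'}$ simultaneously — i.e. that the diagram $$\begin{CD} \xn\times_\cn\cn_{U(I')} @>{\ti\tau_{I,I',X}}>> \xn\times_\cn\cn_{U(I)}\\ @VVV @VVV\\ \cn_{U(I')} @>{\ti\tau_{I,I'}}>> \cn_{U(I)} \end{CD}$$ commutes — and that these compatibilities are themselves compatible across the transition maps $\fX_n\to\fX_{n+1}$. Each individual check is routine given Lemma \ref{Lem-X} and the lemma following it (which already assert the needed lifts), but one must confirm there are no sign/index discrepancies in how $\tau_I$ acts on the $\cn$-side versus how $\ti\tau_{I,I',X}$ acts on the total space, and that the $\gn$-equivariance of $p$ (the trivial action on $X$) is consistent with these. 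Once this compatibility is in place, descent to the quotient stacks and passage to the limit are formal, and representability and the commutative square \eqref{sq} follow without further difficulty.
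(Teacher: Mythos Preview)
Your proposal is correct and is exactly the natural argument given the preparatory lemmas; the paper itself states this theorem without proof, relying on the constructions already laid out (Lemma \ref{Lem-X}, the subsequent lemma on the $\gn$-action and the lifts $\ti\tau_{I,I',X}$, and the transition maps \eqref{XX}) and referring the reader to \cite{Li1} for details. Your chart-by-chart descent and limit argument is precisely how one fills this in.
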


We call $(\fX\to\fC,\p)$ the stack of expanded degenerations of $X\to C$. For any $C$-scheme $S$, we call $\fX\times_{\fC} S\to S$ an $S$-family of expanded degenerations.

\subsection{The stack $\fD_\pm\sub\fY$}

We now construct the stack
\beq\label{DY}
\fD_\pm\sub \fY\lra \fA_\diamond
\eeq
of expanded pairs of $(Y,D_\pm)$.

We fix the convention on indexing $\bA\unn$ and $\Gm^{{}n_-+n_+}$. In this paper, whenever we see product of $n_-+n_+$ copies, we index the individual factor by indices $-n_-,\ldots, -1,1,\ldots,n_+$. (Note that index $0$ is skipped.)
Thus the $(-n_-)$-th coordinate line of $\bA\unn$ is $(t,0,\ldots,0)$, and the $n_+$-th coordinate line is $(0,\ldots,0,t)$. The same convention applies to indexing factors of $\Gm^{{}n_-+n_+}$. We let $\Gm^{{}n_-+n_+}$ acts on $\bA\unn$ via the traditional convention
$$(t_{-n_-},\ldots, t_{-1}, t_1,\ldots, t_{n_+})^\sigma=(\sigma_{-n_-} t_{-n_-},\ldots, \sigma_{-1}t_{-1}, \sigma_1t_1,\ldots, \sigma_{n_+} t_{n_+}).
$$

We then construct
\beq\label{yn}
D[n_-]_-, \, D[n_+]_+\sub \yn\lra \bA\unn,\quad p: \yn\lra Y,
\eeq
inductively by the rule:
\begin{enumerate}
\item $(Y[0,0],D[0]_\pm)=(Y, D_\pm)$;
\item $Y[n_-,n_++1]$ is the blow-up of $Y[n_-,n_+]\times\bA^1$ along $D[n_+]_+\times 0$,
and $D[n_-]_-$ and $D[n_++1]_+$ are the proper transforms of $D[n_-]_-\times \bA^1$ and $D[n_+]_+\times \bA^1$,
respectively;
\item $Y[n_-+1,n_+]$ is the blow-up of $\Ao\times Y[n_-,n_+]$ along $0\times D[n_-]_-$,
and $D[n_-+1]_-$ and $D[n_+]_+$ are the proper transforms of $\Ao\times D[n_-]_-$ and $\Ao\times D[n_+]_+$,
respectively;
\item $p: \yn\to Y$ is the one induced by the identity $Y\to Y$.
\end{enumerate}
Following the convention, the extra copy of $\Ao$ added to the right in item (2) is the $(n_++1)$-th factor of $\bA^{n_-+(n_++1)}$; the copy $\Ao$ added to the left in item (3) is the $(-n_--1)$-th copy in $\bA^{(n_-+1)+n_+}$.

The central fiber of \eqref{yn} is easily described. We let $N_\pm$ be the normal line bundle of $D_\pm$ in $Y$; let $\Delta=\bP_D(N_+\oplus 1)$ with distinguished divisors $D_+=\bP(1)$ and $D_-=\bP(N)$. Then
$$ \yn_0=\yn\times_{\bA^{\!n_-+n_+}}0\and D[n_\pm]_{\pm,0}=D[n_\pm]_\pm\times_{\bA^{\!n_-+n_+}}0
$$
are
\beq\label{Y-comp} \yn_0=\Delta_{-n_-}\sqcup\cdots\sqcup\Delta_{-1}\sqcup Y\sqcup\Delta_1\sqcup\cdots\sqcup\Delta_{n_+}, \quad n_-, \, n_+\ge 0,
\eeq
where the square cup ``$\,\sqcup\,$'' means that we identify the divisor $D_-\sub \Delta_i$ with $D_+\sub \Delta_{i+1}$, understanding that $\Del_0=Y$, and $\Delta_i=\Del$ for $i\ne 0$; $D[n_-]_{-,0}$ is the divisor $D_+$ in $\Del_{-n_-}$, and $D[n_+]_{+,0}$ is the divisor $D_-\sub\Del_{n_+}$.

We let $p:\yn_0\to Y$ be induced by $p: \yn\to Y$ (cf. item (4)); it is by contracting all $\Del_{i\ne 0}$.
The scheme $\yn_0$ has simple normal crossing singularities when $(n_-,n_+)\ne (0,0)$.

We call
\beq\label{ynz}
(\yn_0,D[n_\pm]_{\pm, 0})\quad \text{with}\quad  p: \yn_0\to Y
\eeq
and the $\Gm^{{}n_-+n_+}$-action an expanded relative pair of $(Y,D_\pm)$.

\begin{figure}[h!]
\begin{center}
\setlength{\unitlength}{3.5mm}
\begin{picture}(30,3)(-2,-2)
\linethickness{0.075mm}\tiny
\put(0,0){\line(1,0){3}}
\multiput(0,0)(3,0){2}{\circle*{0.2}}
\put(3,0){\line(1,0){1}}
\put(6,-0.3){$\cdots$}
\put(9,0){\line(-1,0){1}}
\multiput(9,0)(3,0){4}{\circle*{0.2}}
\multiput(9,0)(3,0){3}{\line(1,0){3}}
\put(18,0){\line(1,0){1}}
\put(21,-0.3){$\cdots$}
\put(24,0){\line(-1,0){1}}
\multiput(24,0)(3,0){2}{\circle*{0.2}}
\put(24,0){\line(1,0){3}}
%--------------------------------Labels above the line--------
\put(1,0.6){$\Delta_{-n_{\!-}}$}
\put(10,0.6){$\Delta_{-\!1}$}
\put(13.5,0.6){$Y$}
\put(16.5,0.6){$\Delta_1$}
\put(25,0.6){$\Delta_{n_{\!+}}$}
%--------------------------------Labels below the line--------
\put(2,-1){$D_-$}\put(3.3,-1){$D_+$}
\multiput(11,-1)(3,0){2}{$D_-$}
\multiput(12.3,-1)(3,0){2}{$D_+$}
\put(23,-1){$D_-$}\put(24.3,-1){$D_+$}
\put(-0.3,-1){$D_+$}
\put(0,-1.8){$||$}
\put(-2,-2.5){$D[n_-]_{-,0}$}
\put(26.7,-1){$D_-$}
\put(27,-1.8){$||$}
\put(25,-2.5){$D[n_+]_{+,0}$}
\end{picture}
\end{center}
\caption{\small The $Y$, $\Delta$'s glue to form $Y[\nn]_0$; the two end divisors are the
new relative divisors of $Y[\nn]_0$.}
\end{figure}
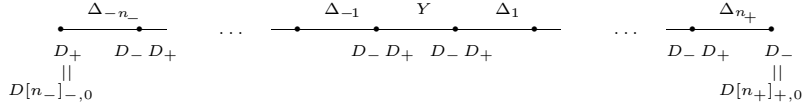
\vsp

The families $\yn\to \bA\unn$ has the following additional properties:
\begin{enumerate}
\item[(5)] let $\ell_l\to \bA\unn$ be the $l$-th coordinate line of $\bA\unn$, $-n_-\le l\le n_+$, $l\ne 0$, then the restriction $\yn\times_{\bA\unn} \ell_l$ smoothes the divisor $D_l=\Delta_{l-1}\cap\Delta_{l}$ if $l>0$, of $D_l=\Del_l\cap \Del_{l+1}$ if $l<0$.
\end{enumerate}
(Notice that $Y[\nn]_0$ has singular divisors $D_{l}$, $-n_-\le l\le n_+$ and $l\ne 0$.)

The family \eqref{yn} and the pair \eqref{ynz} are $\Gm^{{}n_-+n_+}$-equivariant. The $k$-th factor of the $\Gm$ in $\Gm^{{}n_-+n_+}$ acts trivially on all $\Del_i$ except $\Del_{k}$; on $\Del_k$ the action is the tautological $\Gm$-action of \eqref{Auto}.

\vsp

Like the stack $\fX\to\fC$, the stack \eqref{DY} we aim to construct will be the limit of the quotients of \eqref{yn} by $\Gm^{{}n_-+n_+}$ and another class of equivalences associated to subsets
\beq\label{indexI}
I\sub [-n_-, n_+]-\{0\}.
\eeq
(We define its complement $I^\circ=[-n_-,n_+]- I\cup\{0\}$.)

Given an $I$ as in \eqref{indexI}, we define $\bA\unn_{U(I)}\sub \bA\unn$ be as in \eqref{UI}. Like
\eqref{tauI}, letting $m_\pm=|I\cap \ZZ_{\pm}|$, we have an isomorphism
\beq\label{I1}
\ti\tau_{I}: \bA^{m_-+m_+}\times \Gm^{{}(n_--m_-)+(n_+-m_+)}\lra \bA\unn_{U(I)},
\eeq
and for any $I'$ as in \eqref{indexI} with
\beq\label{m}
m_\pm=|I\cap \ZZ_{\pm}|=|I'\cap \ZZ_\pm|,
\eeq
the pair $(I,I')$ defines an isomorphism
\beq\label{tauII-}
\ti\tau_{I,I'}=\ti\tau_I\circ\ti \tau_{I'}\upmo: \bA\unn_{U(I')}\lra \bA\unn_{U(I)}.
\eeq
As before, we let
\beq\label{I2}
\tau_I: \bA^{\!m_-+m_+}\lra \bA\unn
\eeq
be $\ti\tau_I$ restricting to $\bA^{m_-+m_+}\times\{1\}$, where $1\in\Gm^{(n_--m_-)+(n_+-m_+)}$ is the identity element.

Following the construction, one checks that for any $I$ as in \eqref{indexI}, we have a canonical isomorphism
$$\tau_{I,Y}: Y[m_-,m_+]\lra \tau_I\sta Y[\nn],
$$
lifting the $\tau_I$ in \eqref{I2}; for any pair $(I,I')$ of subsets in \eqref{indexI} satisfying \eqref{m},
we have a canonical isomorphism
$$\ti\tau_{I,I',Y}: Y[\nn]\times_{\bA\unn}{\bA\unn_{U(I')}}\lra  Y[\nn]\times_{\bA\unn}{\bA\unn_{U(I)}},
$$
lifting the $\ti\tau_{I,I'}$ in \eqref{tauII-}.

\begin{defi}\label{DY2}
We define $\fA_{\diamond, n_-+n_+}$ be the quotient $[\bA\unn/\!\!\sim\,]$, quotient by the equivalence relations generated by the $\Gm^{{}n_-+n_+}$-action and the equivalences $\ti\tau_{I,I'}$ for all allowable pairs $(I,I')$ in \eqref{indexI}; using \eqref{I2}, for $m_\pm\le n_\pm$, we have open immersion $\fA_{\diamond,m_-+m_+}\to\fA_{\diamond,n_-+n_+}$; we define $\fA_\diamond=\lim_{n_-,n_+}\fA_{\diamond, n_-+n_+}$.
$\fA_\diamond$ is an Artin stack.

We define $\fD_{n_\pm,\pm}\sub \fY_{n_-+n_+}$ be the quotient of $D[n_\pm]_\pm\sub Y[\nn]$ by $\Gm^{{}n_-+n_+}$ and the equivalences $\ti\tau_{I,I',Y}$ for all pairs $(I,I')$ satisfying \eqref{m}; we define $\fD_\pm\sub \fY$ be the limit of $\fD_{n_\pm,\pm}\sub \fY_{n_-+n_+}$ as $n_-,n_+\to +\infty$. We let $\p: \fY\to Y$ be the projection induced by the tautological $Y[\nn]\to Y$.
\end{defi}

\begin{theo}
The projections $\yn\to\bA\unn$ induce a representable morphism $\fD_\pm\sub \fY\to \fA_\diamond$.
\end{theo}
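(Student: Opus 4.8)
The plan is to mimic, in the relative setting, the argument that was used for $\fX\to\fC$. The key is that a morphism from a quotient stack $[\bA\unn/\!\!\sim\,]$ to another quotient stack $[\bA^{m_-+m_+}/\!\!\sim\,]$ (here $m_\pm = n_\pm$) is obtained by descending a compatible system of equivariant morphisms on the charts; so I only need to verify (i) the charts $\yn\to\bA\unn$ carry a morphism of families that is equivariant for the $\Gm^{{}n_-+n_+}$-actions and compatible with the $\ti\tau_{I,I'}$-equivalences on source and target, (ii) these morphisms are compatible with the open immersions defining the direct limits, and (iii) the resulting morphism $\fY\to\fA_\diamond$ is representable. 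For (i), the $\Gm^{{}n_-+n_+}$-equivariance of $\yn\to\bA\unn$ is exactly the last paragraph before Definition~\ref{DY2} (the $k$-th factor acts via the tautological action on $\Delta_k$ and trivially elsewhere, covering the standard action on $\bA\unn$), and the compatibility with the $\ti\tau_{I,I',Y}$ versus $\ti\tau_{I,I'}$ is precisely the statement that $\ti\tau_{I,I',Y}$ lifts $\ti\tau_{I,I'}$, which is recorded just before Definition~\ref{DY2}. Hence the pair of equivalence relations on $Y[\nn]$ maps to the pair on $\bA\unn$, and $\yn\to\bA\unn$ descends to $\fY_{n_-+n_+}\to\fA_{\diamond,n_-+n_+}$; the inclusions $D[n_\pm]_\pm\subset Y[\nn]$ descend likewise to $\fD_{n_\pm,\pm}\subset\fY_{n_-+n_+}$.

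For the limits, I would observe that the open immersions $\fA_{\diamond,m_-+m_+}\to\fA_{\diamond,n_-+n_+}$ and the corresponding immersions $\fY_{m_-+m_+}\to\fY_{n_-+n_+}$ are, on charts, induced by the standard embeddings $\tau_I$ of \eqref{I2} and their lifts $\tau_{I,Y}$; since $\tau_{I,Y}$ lifts $\tau_I$ over $\bA^{m_-+m_+}$, the square
\[
\begin{CD}
\fY_{m_-+m_+} @>>> \fY_{n_-+n_+}\\
@VVV @VVV\\
\fA_{\diamond,m_-+m_+} @>>> \fA_{\diamond,n_-+n_+}
\end{CD}
\]
commutes, and passing to the direct limit $\fA_\diamond=\lim\fA_{\diamond,n_-+n_+}$, $\fY=\lim\fY_{n_-+n_+}$ produces the desired morphism $\fD_\pm\subset\fY\to\fA_\diamond$, with $\p\colon\fY\to Y$ being the limit of the descents of $p\colon\yn\to Y$.

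Representability is the one point that needs a genuine argument rather than bookkeeping. I would check it on an atlas: pulling back $\fY\to\fA_\diamond$ along the smooth chart $\bA\unn\to\fA_\diamond$ should give a scheme, and in fact one expects $\fY\times_{\fA_\diamond}\bA\unn\cong \yn$, so the pulled-back morphism is just $\yn\to\bA\unn$, a morphism of schemes. Concretely, this amounts to showing that the fiber-product stack has trivial automorphisms over $\bA\unn$, which follows because the $\Gm^{{}n_-+n_+}$-action and the $\ti\tau_{I,I',Y}$ used to form $\fY$ lie over the corresponding data used to form $\fA_\diamond$: any automorphism of a point of $\fY$ lying over the identity of a point of $\fA_\diamond$ must be trivial, since the $\Gm$-stabilizer of a point of $\bA\unn$ acts faithfully on the corresponding expanded fiber $\yn_0$ (the tautological action on a nontrivial $\Delta_k$ is free away from $D_\pm\subset\Delta_k$, so no nontrivial stabilizer element fixes $\yn_0$ pointwise). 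I expect this verification of faithfulness of the stabilizer action — i.e. that forming $\fY$ does not introduce automorphisms beyond those already present in $\fA_\diamond$ — to be the main (though still routine) obstacle; everything else is a transcription of the $\fX\to\fC$ case with the index set $[-n_-,n_+]\setminus\{0\}$ in place of $[n+1]$.
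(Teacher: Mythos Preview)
Your outline is correct and is essentially what the paper has in mind, but note that the paper does not actually supply a proof of this theorem: Section~2 is expressly a review, and the authors state at its outset that ``the proofs of the results listed follow directly from the arguments in \cite{Li1}'' (see also \cite{Li-Survey}). The theorem is therefore asserted without argument, on the understanding that it is the exact relative-pair analogue of the earlier (also unproved) theorem that $\xn\to\cn$ induces a representable $\fX\to\fC$. Your three-step plan---descent of the $\Gm^{n_-+n_+}$-equivariant, $\ti\tau_{I,I'}$-compatible projections $\yn\to\bA\unn$ to the quotient stacks, compatibility with the standard embeddings $\tau_I$ giving the direct-limit morphism, and representability by checking on the smooth chart $\bA\unn\to\fA_\diamond$ that the fiber product is $\yn$---is exactly the transcription of the \cite{Li1} argument to the $[-n_-,n_+]\setminus\{0\}$ indexing, and your identification of the faithfulness of the stabilizer action on the expanded fiber as the only nontrivial point is accurate.
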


We call $\fD_\pm\sub \fY\to \fA_\diamond$ with $\p: \fY\to Y$ the stack of expanded relative pairs of $(Y,D_\pm)$.
Using $(\fD_\pm\sub\fY\to\fA_\diamond,\p)$, we define the collection $\fY(S)$ of expanded families of
pair $(Y,D_\pm)$ over a scheme $S$  be
$$\fD_\pm\times_{\fA_\diamond} S\sub \fY\times_{\fA_\diamond} S,\quad S\to\fA_\diamond.
$$

In case $Y=Y_-\cup Y_+$ is a union of two connected components, we use $D_\pm=\ti D\cap Y_\pm$.
We define the pair of stack
\beq\label{D-one}
\fD_+\sub \fY_+\defeq \fY\times_Y Y_+.
\eeq
Or $\fY_+$ can be defined as in Definition \ref{DY2} with $Y$ replaced by $Y_+$, $n_-=0$ and $D_-=\emptyset$.
The pair $\D_-\sub\fY_-$ is defined similarly.

\subsection{Decomposition of degenerations I}

To state the decomposition of good degenerations, we introduce the stack of node-marking objects in $\fX_0\defeq \fX\times_C 0$. This construction was first introduced in \cite{Kiem-Li-deg}.

\begin{defi}\label{node-mark}
A node-marking of $\xn_0$ is a marking of one of the singular divisor $D_k$ of $\xn_0$. A node-marking of a family $\cX\to S$ in $\fX_0(S)$ is an $S$-morphism $\eta: D\times S\to \cX$ so that for any closed $s\in S$, $\eta(D\times s)\sub\cX_s$ is a node-marking of $\cX_s$.

An arrow between two $\cX$ and $\cX'$ in $\fX_0(S)$ with node-markings $\eta$ and $\eta'$ is an arrow
$\rho:\cX\to \cX'$ in $\fX_0(S)$ so that for any closed $s\in S$, $\rho\circ\eta(D\times s)=\eta'(D\times s)$.
\end{defi}

\begin{prop}
The collection of families in $\fX_0$ with node-markings form an Artin stack, denoted by $\fX_0^\dagger$.
Forgetting the node-marking defines a morphism
$$ \fX_0^\dagger\lra \fX_0.
$$
\end{prop}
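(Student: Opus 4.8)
The plan is to exhibit the forgetful functor $\fX_0^\dagger\to\fX_0$ and prove that it is a representable morphism; since $\fX_0=\fX\times_C0$ is already known to be an Artin stack, representability will automatically make $\fX_0^\dagger$ an Artin stack and produce the morphism in the statement. That $(\cX\to S,\eta)\mapsto(\cX\to S)$ is a morphism of stacks is formal: it is functorial in $S$, sends arrows to arrows in the sense of Definition \ref{node-mark}, and is compatible with descent, since a family in $\fX_0(S')$ carrying a node-marking descends along an fppf cover $S'\to S$ (the family because $\fX_0$ is a stack, the marking by fppf descent for morphisms of $S$-schemes). So the content is the representability of $\fX_0^\dagger\to\fX_0$.

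Fix a scheme $S$ and a morphism $S\to\fX_0$, corresponding to a proper flat family $\cX\to S$ of expanded degenerations over $0$ together with the contraction $\p_S\colon\cX\to X_0\times S$; I must show the functor $T\mapsto\{\text{node-markings of }\cX_T\to T\}$ is representable by an $S$-scheme. Let $N=N_{\cX/S}\subset\cX$ be the relative non-smooth locus: a closed subscheme, proper over $S$, whose geometric fibre over $s$ is the singular locus $D_1\sqcup\cdots\sqcup D_{n(s)+1}$ of $\cX_s$, and on which $\p_S$ restricts to a finite morphism $N\to D\times S$ which on each node $D_k$ is the canonical isomorphism $D_k\xrightarrow{\sim}D$. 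A node-marking $\eta\colon D\times T\to\cX_T$ then factors through $N\times_ST$, is a section of $\p_S|_N$ over $D\times T$, and is fibrewise over $T$ an isomorphism onto a single node; conversely, such a section is a node-marking. Being ``fibrewise an isomorphism onto a single node'' is an open and closed condition, so the node-marking functor is representable by an $S$-scheme --- for instance realised as a union of connected components of the relative Hilbert scheme $\Hilb(N/S)$ (which exists since $\cX/S$, hence $N/S$, is projective).

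The structure of this scheme is most transparent on the smooth chart $U:=X[n]\times_C0\to\fX_0$: there the universal expanded degeneration is $U\times_{C[n]\times_C0}U$ with its tautological diagonal section, and by Lemma \ref{Lem-X}(4) (together with the description of the singular locus of $X[n]_0$) its relative singular locus over $U$ is $\coprod_{l=1}^{n+1}(D\times U_l)$, where $U_l:=X[n]|_{\{t_l=0\}}\hookrightarrow U$ is the closed locus over the $l$-th coordinate hyperplane of $C[n]$. One reads off that $\fX_0^\dagger\times_{\fX_0}U=\coprod_{l=1}^{n+1}U_l$, a finite disjoint union of closed subschemes of $U$. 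Hence $\fX_0^\dagger\to\fX_0$ is representable, finite and unramified; since $\fX_0$ is an Artin stack, so is $\fX_0^\dagger$, and forgetting the marking is the required morphism $\fX_0^\dagger\to\fX_0$.

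The one point needing care is the representability of the node-marking functor, i.e.\ passing from the fibrewise condition ``$\eta_s$ is an isomorphism onto one of the nodes $D_k$'' to an honest section scheme. This uses the properness of $\cX\to S$ established in this section --- so that a flat limit of nodes is again a node, whence the node-marking scheme is proper and, being quasi-finite and of finite type over $S$, finite --- together with the facts that $\cX_s$ has only finitely many nodes and that each is canonically identified with $D$ by $\p$. The remaining verifications (functoriality, the stack axioms, and the descent from the chart computation) are routine.
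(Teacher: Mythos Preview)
Your argument is correct and lands on the same chart description as the paper: both identify the key atlas as the disjoint union over $k$ of the hyperplane loci $\{t_k=0\}$, each carrying the tautological marking of the $k$-th singular divisor. The paper's proof is simply the direct construction of this atlas --- it writes down, for each $n$ and each $1\le k\le n+1$, the family $X[n]_{t_k=0}\to\bA^{n+1}_{t_k=0}$ together with the explicit node-marking $\eta_k\colon D\times\bA^{n+1}_{t_k=0}\to X[n]_{t_k=0}$, observes that this is an object of $\fX_0^\dagger(\bA^{n+1}_{t_k=0})$, and declares the resulting collection a smooth atlas. No abstract representability argument is made.

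Your route is more conceptual: you first prove the forgetful morphism is representable by interpreting node-markings as sections of the relative singular locus, and only afterwards verify the chart. This buys you the clean global statement that $\fX_0^\dagger\to\fX_0$ is finite and unramified (which the paper does not isolate), at the cost of a detour through $\Hilb(N/S)$ that is heavier than needed --- the chart computation alone already establishes representability on an atlas, hence everywhere. One small imprecision worth noting: the definition of a node-marking constrains only the image $\eta(D\times s)$, not that $\eta$ be a section of $\p_S|_N$; but since arrows in $\fX_0^\dagger$ identify any two markings with the same image, your reduction to sections is valid at the level of the stack.
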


\begin{proof}
The smooth chart $\xn\to \fX$ induces a smooth chart $\xn\times_C0 \to \fX_0$. We denote $\Ano_{t_k=0}=\{(t)\in\Ano\mid t_k=0\}$. Then $\Ano\times_\Ao 0=\bigcup_{k=1}^{n+1} \Ano_{t_k=0}$. Further,
$$\xn_{t_k=0}\defeq \xn\times_{\Ano}\Ano_{t_k=0}
$$
has normal crossing singularity and its singular divisor is the image of the $X\times\Ano_{t_k=0}$-morphism
\beq\label{etak}
\eta_k: D\times \Ano_{t_k=0}\lra \xn_{t_k=0}.
\eeq
According to Definition \ref{node-mark}, one checks that \eqref{etak} is a node-marking of $\xn_{t_k=0}$; thus
\beq\label{node-mark-ex}
(\xn_{t_k=0},\eta_k)\in \fX_0^\dagger( \Ano_{t_k=0}).
\eeq
The disjoint union of \eqref{node-mark-ex} for all $1\le k\le n+1$ form a smooth atlas of $\fX_0^\dagger$.
This proves that $\fX_0^\dagger$ is an Artin stack.
\end{proof}

It will be useful to construct a stack $\fC_0^\dagger$ and an arrow $\fC_0^\dagger\to \fC$ that fits into a Cartesian product
\beq\label{XCXC}
\begin{CD}
\fX_0^\dagger @>>> \fX\\
@VVV @VVV\\
\fC_0^\dagger@>>> \fC.
\end{CD}
\eeq

We construct $\fC_0^\dagger$ as follows. For a pair of integers $1\le k\le n+1$, we let $\Gm^{{}n}$ acts on $\Ano_{t_k=0}$ via the $\Gm^{{}n}$ action on $\Ano$ and the inclusion $ \Ano_{t_k=0}\sub\Ano$. Such action generates equivalence relation on $\Ano_{t_k=0}$.

For any $I\sub [n+1]$ and $k$ an integer, we denote $I_{<k}=\{i\in I\mid i<k\}$; similarly for $I_{>k}$.
Let $k\in I\sub [n+1]$ and $k'\in I'\sub [n+1]$ such that
\beq\label{II}
|I_{<k}|=|I'_{<k}| \and |I_{>k}|=|I_{>k}'|.
\eeq
The equivalence $\ti\tau_{I,I'}$ of \eqref{tauII} restricted to $\Ano_{t_k=0}\cap \Ano_{U(I')}$ defines
\beq\label{tauIII}
\tau_{(I,k),(I',k')}: \Ano_{{k^{\prime c}}}\cap \Ano_{U(I')}\mapright{\cong} \Ano_{{k^c}}\cap\Ano_{U(I)}.
\eeq
These isomorphisms generate equivalence relations too.

We define the closed immersion
\beq\label{+1}
\tau_{+1}:  \Ano_{t_k=0}\lra \bA^{\!n+2}_{{k^c}},\quad (z)\mapsto (z,1).
\eeq

\begin{defi}
We define $\fC_{n,0}^\dagger$ be the quotient $[\coprod_{k=1}^{n+1} \Ano_{{t_k=0}}/\sim]$, where $\sim$ is the equivalence generated by the $\Gm^n$ action on $\Ano_{t_k=0}$ and by $\tau_{(I,k),(I',k')}$ for all pairs $k\in I$ and $k'\in I'$ satisfying \eqref{II}; we define open immersions $\fC_{n,0}^\dagger\to \fC_{n+1,0}^\dagger$ using \eqref{+1}; we define $\fC_0^\dagger=\varinjlim \fC_{n,0}^\dagger$.
\end{defi}

\begin{prop}
The morphisms $\xn_{t_k=0}\to \Ano_{t_k=0}$, where $\xn_{t_k=0}$ is with the node-marking \eqref{node-mark-ex}, induce a morphism $\fX_0^\dagger\to\fC_0^\dagger$ that fits into the Cartesian product \eqref{XCXC}.
\end{prop}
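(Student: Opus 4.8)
The plan is to build the morphism $\fX_0^\dagger\to\fC_0^\dagger$ and the square \eqref{XCXC} on the level of the smooth atlases $\coprod_{k}\xn_{t_k=0}$ of $\fX_0^\dagger$ and $\coprod_{k}\Ano_{t_k=0}$ of $\fC_0^\dagger$, and to deduce descent, commutativity and the Cartesian property from the corresponding facts already proved for $\fX\to\fC$. First I would make the arrow $\fC_0^\dagger\to\fC$ explicit: for each $1\le k\le n+1$ the locus $\Ano_{t_k=0}\sub\Ano$ lies over $0\in\bA^1$, hence over $0\in C$, so the smooth chart $\Ano\to\fA$ together with the constant map $\Ano_{t_k=0}\to 0\in C$ produces a morphism $\Ano_{t_k=0}\to\fC$. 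These morphisms are $\Gm^{{}n}$-equivariant, they carry the equivalences $\tau_{(I,k),(I',k')}$ of \eqref{tauIII} (which are restrictions of the $\ti\tau_{I,I'}$ of \eqref{tauII}) to the equivalences defining $\fA$, and they are compatible with the immersions $\tau_{+1}$ of \eqref{+1} and \eqref{AA}; hence they descend to a morphism $\fC_0^\dagger\to\fC$, which visibly factors through $\fC_0\defeq\fC\times_C 0$.

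Next I would construct $\fX_0^\dagger\to\fC_0^\dagger$. On atlases it is the family of structure morphisms $\xn_{t_k=0}\to\Ano_{t_k=0}$ obtained by restricting $\xn\to\cn\to\Ano$. To see this descends, one checks compatibility with the generating equivalences of the two stacks. For the $\Gm^{{}n}$-action this is the $\Gm^{{}n}$-equivariance of $\xn\to\cn$ recorded in the Lemma preceding \eqref{tauX}. For the other equivalences one recalls that $\ti\tau_{I,I',X}$ of \eqref{tauX} covers $\ti\tau_{I,I'}$, and observes that when $k\in I$ and $k'\in I'$ satisfy \eqref{II} the induced isomorphism of central fibres carries the singular divisor $D_k$ of $\xn_0$ onto $D_{k'}$, i.e.\ carries the node-marking $\eta_k$ of \eqref{etak} to $\eta_{k'}$; hence the arrows of $\fX_0^\dagger$ between the objects $(\xn_{t_k=0},\eta_k)$ are exactly the restrictions of those $\ti\tau_{I,I',X}$ with $(I,k),(I',k')$ satisfying \eqref{II}, and under $\xn_{t_k=0}\to\Ano_{t_k=0}$ they map precisely to the generators $\tau_{(I,k),(I',k')}$ of $\fC_0^\dagger$. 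The same bookkeeping gives compatibility with the $+1$-immersions, so one obtains $\fX_0^\dagger\to\fC_0^\dagger$; that \eqref{XCXC} commutes is then checked on $\coprod_k\xn_{t_k=0}$, since by \eqref{sq} (equivalently, by representability of $\fX\to\fC$) the composite $\xn_{t_k=0}\hookrightarrow\xn\to\fX\to\fC$ agrees with $\xn_{t_k=0}\to\Ano_{t_k=0}\to\fC$.

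The step I expect to be the real obstacle is the Cartesian property. Since $\fC_0^\dagger\to\fC$ factors through $\fC_0$, it is enough to identify $\fX_0^\dagger$ with $\fX\times_\fC\fC_0^\dagger=\fX_0\times_{\fC_0}\fC_0^\dagger$, where $\fX_0\defeq\fX\times_C 0$. Pulling back along the chart $\coprod_k\Ano_{t_k=0}\to\fC_0^\dagger$ and using that, by construction, the chart $\cn\to\fC$ pulls $\xn\to\fX$ back to $\xn$ (so that $\fX\times_\fC\Ano_{t_k=0}=\xn\times_\cn\Ano_{t_k=0}=\xn_{t_k=0}$), one finds that $\fX_0\times_{\fC_0}\fC_0^\dagger$ carries the atlas $\coprod_k\xn_{t_k=0}$, the same atlas as $\fX_0^\dagger$ and with the comparison map the identity. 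It then remains to match the two groupoid structures on this common atlas: the one coming from $\fX_0^\dagger$ is generated by the $\Gm^{{}n}$-action and the node-marking-preserving isomorphisms $\ti\tau_{I,I',X}$, while the one coming from $\fX_0\times_{\fC_0}\fC_0^\dagger$ is the pullback along $\coprod_k\xn_{t_k=0}\to\coprod_k\Ano_{t_k=0}$ of the groupoid generated by $\Gm^{{}n}$ and the $\tau_{(I,k),(I',k')}$. Here the delicate point is to verify that the restriction of $\ti\tau_{I,I',X}$ to the locus $t_k=0$ is exactly the unique lift of $\tau_{(I,k),(I',k')}$ — invoking the uniqueness clause of the Lemma preceding \eqref{tauX} — and that the node-marking-preservation condition of Definition \ref{node-mark} is equivalent to the index condition \eqref{II}; granting this combinatorial identification of the node-relabelling induced by $\ti\tau_{I,I',X}$ with the prescription \eqref{II}, the two groupoids have the same generators, so $\fX_0^\dagger\cong\fX\times_\fC\fC_0^\dagger$ and \eqref{XCXC} is Cartesian. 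This last index-chasing step is the one genuinely fiddly part of the argument; everything else is a routine transcription of the construction of $\fX\to\fC$.
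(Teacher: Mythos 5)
Your argument is correct: the paper states this Proposition without proof, treating it as an immediate consequence of the way $\fC_0^\dagger$ was set up (the generators $\tau_{(I,k),(I',k')}$ of \eqref{tauIII} are by construction exactly the restrictions of the $\ti\tau_{I,I'}$ to the node-marked loci under the index condition \eqref{II}), and your chart-level verification — descent of $\xn_{t_k=0}\to\Ano_{t_k=0}$, followed by matching the two groupoid structures on the common atlas $\coprod_k\xn_{t_k=0}$ via representability of $\fX\to\fC$ and the identification of node-marking preservation with \eqref{II} — is precisely that routine argument, correctly carried out.
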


As $\coprod \Ano_{t_k=0}\to\fC_{n,0}^\dagger$ is a smooth chart of $\fC_{n,0}^\dagger$, and the former is the normalization of $\Ano\times_\Ao 0$, the morphism $\fC_0^\dagger \to\fC_0$ is a normalization. It is fitting to call $\fX_0^\dagger\to\fX_0$ the decomposition of locally complete intersection singularity of $\fX_0$.

The final step of the decomposition is the following isomorphism result.

\begin{prop}
There is a canonical isomorphism $\fC_0\shar\cong \fA_\diamond$ so that $\fX_0^\dagger$ is derived from $\fY$ by
identifying the stacks $\fD_-$ with $\fD_+$ via the isomorphisms $\fD_-\cong D\times \fA_\diamond\cong \fD_+$,
and declaring the identifying loci the node-marking.
\end{prop}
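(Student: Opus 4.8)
The plan is to build the isomorphism $\fC_0^\dagger\cong\fA_\diamond$ at the level of the defining atlases, to check that it respects the torus actions, the gluing equivalences and the stabilising immersions on the two sides, to descend it to the quotient stacks, and to pass to the colimits; the assertion about $\fX_0^\dagger$ is then read off from the Cartesian square \eqref{XCXC} together with an explicit description of the families $\xn_{t_k=0}$.

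For the isomorphism $\fC_0^\dagger\cong\fA_\diamond$: for each $n\ge0$ and each $1\le k\le n+1$ I would take the unique order preserving bijection between the coordinate indices $\{1,\dots,n+1\}\setminus\{k\}$ of $\Ano_{t_k=0}$ and the index set $[-n_-,n_+]\setminus\{0\}$ of $\bA\unn$, where $(n_-,n_+)$ is the pair with $n_-+n_+=n$ determined by $k$; as $(n,k)$ runs over all admissible pairs, $(n_-,n_+)$ runs over $\ZZ_{\ge0}^2$ exactly once, and one obtains isomorphisms of schemes $\Ano_{t_k=0}\cong\bA\unn$. Three compatibilities then have to be verified. (i) Under this identification the $\gn$-action used to define $\fC_0^\dagger$, that is, the twisted action on $\Ano$ restricted to $\{t_k=0\}$, becomes the coordinatewise $\Gm^{n_-+n_+}$-action on $\bA\unn$: on $\{t_k=0\}$ the chain of torus weights $t_i\mapsto\sigma_{i-1}^{-1}\sigma_i$ decouples into two subchains, one on $\{1,\dots,k-1\}$ and one on $\{k+1,\dots,n+1\}$, and on each the unipotent, hence invertible, substitution of the $\sigma_i$ by their partial products converts the twisted action into the coordinatewise one. (ii) The equivalences $\tau_{(I,k),(I',k')}$ of \eqref{tauIII} correspond to the equivalences $\ti\tau_{J,J'}$ of \eqref{tauII-}, with $J,J'$ the images of $I\setminus\{k\}$ and $I'\setminus\{k'\}$ under the bijections, once the $k$- and $k'$-charts have been brought into a common chart $\bA\unn$ by the stabilising immersions of (iii); the constraints \eqref{II} go over exactly into the constraints \eqref{m}, $|I_{<k}|$ and $|I_{>k}|$ becoming $m_-=|J\cap\ZZ_-|$ and $m_+=|J\cap\ZZ_+|$. (iii) The stabilising closed immersion $\tau_{+1}$ of \eqref{+1} corresponds, under the bijections, to the open immersion $\fA_{\diamond,m_-+m_+}\hookrightarrow\fA_{\diamond,n_-+n_+}$ induced by the relevant $\tau_I$ of \eqref{I2}. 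Granting (i)--(iii), the chart identifications descend to isomorphisms of $\fC_{n,0}^\dagger$ onto open substacks of $\fA_\diamond$ that are compatible with the maps $\fC_{n,0}^\dagger\to\fC_{n+1,0}^\dagger$, and the colimit over $n$ yields $\fC_0^\dagger\cong\fA_\diamond$; this is consistent with $\fC_0^\dagger\to\fC_0$ being a normalisation. I would record canonicity as the statement that this is the unique isomorphism compatible with the two family structures in the sense of the next paragraph.

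For the statement about $\fX_0^\dagger$: since the square \eqref{XCXC} is Cartesian, $\fX_0^\dagger=\fX\times_{\fC}\fC_0^\dagger$, so over the chart $\Ano_{t_k=0}$ it restricts to $\xn_{t_k=0}=\xn\times_{\Ano}\Ano_{t_k=0}$ carrying the node-marking $\eta_k$ of \eqref{etak}, and its central fibre is the necklace $\xn_0=Y\sqcup\Delta_1\sqcup\cdots\sqcup\Delta_n\sqcup(Y)$ with marked node $D_k$. Cutting the necklace at $D_k$ opens it into the chain $\Delta_k\sqcup\cdots\sqcup\Delta_n\sqcup Y\sqcup\Delta_1\sqcup\cdots\sqcup\Delta_{k-1}$, which under the index bijection above is exactly the $\yn_0$ of \eqref{Y-comp}. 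Comparing the iterated blow-up construction of $\yn$ recalled around \eqref{yn} with the characterisation of $\xn$ in Lemma \ref{Lem-X}, and matching the smoothing property~(5) of $\yn$ against Lemma \ref{Lem-X}(4) one coordinate line at a time, one finds that $\xn_{t_k=0}\to\Ano_{t_k=0}$ is obtained from $\yn\to\bA\unn$ by identifying its two relative divisors $D[n_-]_-\cong D\times\bA\unn\cong D[n_+]_+$ and declaring the glued locus a marked divisor. These identifications are again compatible with the torus actions and equivalences, hence descend and glue to an isomorphism of $\fX_0^\dagger$ with the stack obtained from $\fY$ by identifying $\fD_-$ with $\fD_+$ (via $\fD_-\cong D\times\fA_\diamond\cong\fD_+$, the relative divisor of any expanded pair being a constant copy of $D$), with node-marking the image of $\fD_\pm$; here one also checks that gluing $\fY$ along this relatively constant divisor yields a family that is still representable over $\fA_\diamond$, so that everything is compatible with \eqref{XCXC}.

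The step I expect to be the main obstacle is (ii): reconciling the two differently packaged systems of equivalences, that is, translating \eqref{II} into \eqref{m} and verifying that the colimit over the single index $n$ on the $\fC_0^\dagger$ side reproduces the colimit over the pair $(n_-,n_+)$ on the $\fA_\diamond$ side with nothing over- or under-identified, together with the parallel check that gluing $\fD_-$ to $\fD_+$ turns the chain family $\fY$ precisely into the necklace family $\fX_0^\dagger$. By contrast, the torus reparametrisation in (i) and the smoothing comparison in the third paragraph are routine once the index conventions are fixed, and representability of the glued family is clear because the gluing locus is a divisor which is constant (isomorphic to $D$) along the family.
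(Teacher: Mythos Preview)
Your overall strategy—constructing the isomorphism on atlases, verifying compatibility with the torus actions, the $\tau$-equivalences and the stabilising immersions, then descending and passing to the colimit—is correct and is exactly what the paper does, though the paper compresses all of it into a single displayed formula plus the sentence ``the remainder of the proof is straightforward.''

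There is, however, a genuine mismatch in your specific bijection. You take the order-preserving bijection $\{1,\dots,n+1\}\setminus\{k\}\to[-n_-,n_+]\setminus\{0\}$, which forces $n_-=k-1$ and $n_+=n+1-k$. But then your own description of the cut necklace, $\Delta_k\sqcup\cdots\sqcup\Delta_n\sqcup Y\sqcup\Delta_1\sqcup\cdots\sqcup\Delta_{k-1}$, has $n-k+1$ components to the left of $Y$ and $k-1$ to the right; comparing with $\yn_0=\Delta_{-n_-}\sqcup\cdots\sqcup Y\sqcup\cdots\sqcup\Delta_{n_+}$ these counts are swapped. Concretely, under your coordinate bijection the node $D_1=Y\cap\Delta_1$ of the necklace (adjacent to $Y$) is sent to $D_{-n_-}$ of the chain (at the far end, not adjacent to $Y$), so the component $Y$ of the necklace does not land on the component $Y$ of $\yn_0$. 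The base-stack isomorphism $\fC_0^\dagger\cong\fA_\diamond$ may still go through (the charts are just affine spaces with matching torus quotients), but the family identification $\xn_{t_k=0}\cong\text{glued }\yn$ does not, and the smoothing comparison in your third paragraph breaks down.

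The paper's formula
\[
(t_{-n_-},\dots,t_{-1},t_1,\dots,t_{n_+})\longmapsto(t_{-1},\dots,t_{-n_-},0,t_{n_+},\dots,t_1),\qquad k=n_-+1,
\]
is order-\emph{reversing} on each half, and that reversal is precisely what makes the nodes adjacent to $Y$ line up ($D_1\leftrightarrow D_{-1}$ and $D_{n+1}\leftrightarrow D_1$), hence $Y\leftrightarrow Y$. Once you make this correction, your checks (i)--(iii) go through as written (the torus reparametrisation in (i) is the homomorphism $\Gm^{n}\to\Gm^{n+1}$ the paper alludes to), and the rest of your outline is sound.
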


\begin{proof}
We define $\bA\unn\to \Ano_{t_k=0}$, $k=n_-+1,\ n=n_-+n_+$, via
$$(t_{-n_-},\ldots, t_{-1},t_1,\ldots,t_{n_+})\mapsto (t_{-1},\ldots, t_{-n_-}, 0,t_{n_+},\ldots, t_1).
$$
This is $\Gm^{{}n}$ equivariant via a homomorphism $\Gm^{{}n}\to \Gm^{{}n+1}$, and induces a morphism $\fA_\diamond\to\fC_0^\dagger$. The remainder of the proof is straightforward.
\end{proof}

\subsection{Decomposition of degenerations II}\label{subsec2.5}

This decomposition works for the case $Y=Y_-\cup Y_+$ is the union of two irreducible components; we let $D_\pm=\ti D\cap  Y_\pm$ and define $\fD_\pm\sub \fY_\pm$ as in \eqref{D-one}.

We fix an additive group $\Lambda$. Using $Y=Y_-\cup Y_+$,
we index the irreducible components of $\xn_0$ as $\Del_0=Y_-$, $\Del_{n+1}=Y_+$, and other $\Del_i$ are as usual.

\begin{defi}
A weight assignment of $\xn_0$ is a function
$$w: \{\Del_0,\ldots,\Del_{n+1}, D_1,\ldots, D_{n+1}\}\lra \Lam
$$
that assigns weights in $\Lam$ to $\Del_i$ and $D_j$ in $\xn_0$.
A weight assignment of $X_t$, $t\ne 0$, is a single value assignment $w(X_t)\in\Lambda$.
A weight assignment $w$ of $\cX\in \fX(S)$ is a collection $\{w_s\mid s\in S\}$
of weight assignments $w_s$ of $\cX_s$.
\end{defi}

We make sense of continuous weight assignments of families.
For any subchain $\Del_{[l,l']}\defeq \Delta_l\cup\ldots\cup \Delta_{l'}$ we define its weight to be
(recall $D_i=\Delta_{i-1}\cap \Delta_i$)
$$w(\Del_{[l,l']})=\sum_{l\le i\le l'} w(\Del_i)-\sum_{l<i\le l'} w(D_i).
$$
Let $s_0\in S$ be an irreducible curve, and let $w$ be a weight assignment of $\cX \in\fX(S)$. Suppose $\cX_{s_0}\cong \xn_0$ and $\cX_s\cong \xm_0$ for a general $s\in S$, Then $m\le n$, and there are
\beq\label{ki}
k_0=0< k_1< \ldots< k_{m+1}< k_{m+2}= n+2
\eeq
so that the $\Delta_i\sub \cX_s$ specializes to the chain $\Delta_{[k_i,k_{i+1}-1]}\sub \cX_{s_0}$, (i.e. the singular divisors $D_{k_i}\sub \cX_{s_0}$ are not smoothed in the family $\cX$.) The total weight of $w$ is $w(\xn_0)$.

\begin{defi}\label{rule2}
Let $s_0\in S$ be an irreducible curve, and $\cX\in\fX(S)$ be as stated. We say a weight assignment $w$ of $\cX$ is continuous at $s_0$ if the followings hold:\\
(1) In case for a general $s\in S$ we have $\cX_s=\xm_0$, letting $k_i$ be as in \eqref{ki}, then
$w_s(\Del_i)=w_{s_0}(\Delta_{[k_i,k_{i+1}-1}])$ and $w_s(D_i)=w_{s_0}(D_{k_i})$.\\
(2) In case for a general $s\in S$ we have $\cX_s=X_t$ for a $t\ne 0\in C$, then
$w_s(\cX_s)=w_{s_0}(\cX_{s_0})$.

In general, a weight assignment of $\cX\in \fX(S')$ is continuous if for any irreducible curve $s_0\in S$ and $S\to S'$, the pull back family $\cX\times_{S'} S$ with the induced weight assignment is continuous at $s_0$.
\end{defi}

\begin{exam}
Suppose $\dim X/C=1$. In case there is a locally free sheaf $\sE$ on $\cX$, assigning each $\Delta_k\sub\cX_s$
the degree of $\cE|_{\Delta_k}$ and assigning each $D_l\sub\cX_s$ zero is a continuous weight
assignment taking values in $\ZZ$.
\end{exam}

We define the stack of weighted expanded degenerations $\fX^\beta$.

\begin{defiprop}
Given a $\beta\in\Lam$, we define the groupoid $\fX^\beta(S)$ be the collections of pairs $(\cX,w)$, where $\cX \in \fX(S)$ and
$w$ is a continuous weight assignment of $\cX$ of total weights $\beta$. An arrow between $(\cX,w)$ and $(\cX',w')\in \fX^\beta(S)$ consists of an arrow $\rho: \cX\to \cX'$ in $\fX(S)$ that preserves the weights $w$ and $w'$.
The groupoid $\fX^\beta$ is an Artin stack.
\end{defiprop}

By forgetting the weights, we obtain the forgetful morphism $\fX^\beta\to \fX$. We claim that there is a weighted stack $\fC^\beta$ together with a forgetful morphism $\fC^\beta\to\fC$ so that $\fX^\beta$ is the Cartesian product
\beq\label{xcbeta}
\begin{CD}
\fX^\beta @>>> \fX\\
@VVV @VVV\\
\fC^\beta @>>> \fC.
\end{CD}
\eeq

The easiest way to do this is to define a weight assignment of a $t\in\cn$ be a weight of $\xn_t$. Or a weight of $S\to \fC$ is a weight of $\fX\times_{\fC} S$. We then define $\fC^\beta$ to be the groupoid consisting of
$(S\to \fC,w)$, where $w$ is a weight assignment of $S\to \fC$, etc.

\begin{prop}
The groupoid $\fC^\beta$ is an Artin stack, together with a tautological morphism $\fX^\beta\to\fC^\beta$;  the forgetful morphism $\fC^\beta\to\fC$ is \'etale and fits into the Cartesian square \eqref{xcbeta}.
\end{prop}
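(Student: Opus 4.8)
The plan is to construct an explicit \'etale atlas of $\fC^\beta$ lying over the smooth atlas $\{\cn\to\fC\}_{n\ge 0}$ of $\fC$, and to read every asserted property off that atlas. The key point is that, over a single chart $\cn$, a continuous weight assignment of the universal expanded family $\xn\to\cn$ is rigid: it is completely determined by, and may be chosen arbitrarily to be, the \emph{discrete} datum $\mathbf b=(b_0,\dots,b_{n+1};c_1,\dots,c_{n+1})$ of weights assigned to the $n+2$ components $\Del_0=Y_-,\Del_1,\dots,\Del_n,\Del_{n+1}=Y_+$ and the $n+1$ nodes $D_1,\dots,D_{n+1}$ of the maximally degenerate fibre $\xn_0$, subject only to the single constraint $\sum_i b_i-\sum_j c_j=\beta$ on the total weight.

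First I would establish this rigidity. The chart $\cn=C\times_\Ao\Ano$ is smooth and irreducible, its strata are indexed by the subsets $K\sub[n+1]$ of surviving nodes, each stratum is smooth and connected, and the deepest point $o_n\in\cn$, whose fibre is the full chain $\xn_0$, lies in the closure of every stratum. Given $\mathbf b$ one extends it across $\cn$ by the subchain rule: over the stratum $K=\{k_1<\dots<k_r\}$ (with $k_0=0$, $k_{r+1}=n+2$) the fibre is a chain whose $i$-th component is the contracted subchain $\Del_{[k_i,\,k_{i+1}-1]}$, to which one assigns $\sum_{k_i\le j<k_{i+1}}b_j-\sum_{k_i<j<k_{i+1}}c_j$, with the $i$-th node $D_{k_i}$ getting $c_{k_i}$; over the open stratum the single value is $\sum_i b_i-\sum_j c_j=\beta$. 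Since subchain sums compose, this assignment satisfies the continuity conditions of Definition \ref{rule2} along every curve in $\cn$, and conversely, applying continuity to curves running from the various strata down to $o_n$ forces any continuous weight assignment on $\cn$ to arise in this way and to be constant on each stratum. Hence the groupoid of continuous, total-weight-$\beta$ assignments over $\cn$ is the constant set $W_n(\beta)$ of such $\mathbf b$, and the candidate chart of $\fC^\beta$ over $\cn$ is $\coprod_{\mathbf b\in W_n(\beta)}\cn_{\mathbf b}$, each summand a copy of $\cn$.

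Next I would check that these charts descend, i.e.\ are compatible with the data assembling $\fC$ out of the $\cn$'s. The $\gn$-action on $\cn$ only rescales the fibres of the $\Del_i$, so it acts trivially on weight data; each relabelling isomorphism $\ti\tau_{I,I'}$ acts on $W_n(\beta)$ through the induced permutation of components and nodes; and the open immersion $\cn\hookrightarrow\cnpo$ inserts one trivial $\Del$ together with one node, which can be given a common weight without changing the total, yielding a compatible injection $W_n(\beta)\hookrightarrow W_{n+1}(\beta)$. It follows that $\fC^\beta=\bigl[\,\coprod_{n}\coprod_{\mathbf b\in W_n(\beta)}\cn_{\mathbf b}\,/\!\sim\,\bigr]$ is an Artin stack, carrying the same $\gn$-stabilizers as $\fC$, and that the forgetful morphism $\fC^\beta\to\fC$ is represented on charts by the disjoint union of isomorphisms $\coprod_{\mathbf b}\cn_{\mathbf b}\to\cn$; hence it is representable and \'etale.

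The tautological morphism and the Cartesian square \eqref{xcbeta} are then formal. An object $\cX\in\fX(S)$ is the same as a morphism $S\to\fX$; its composite with $\fX\to\fC$ is a morphism $S\to\fC$ whose associated family is $\fX\times_\fC S\cong\cX$, so a continuous total-weight-$\beta$ assignment of $\cX$ is literally the same datum as one of $S\to\fC$. Thus $(\cX,w)\mapsto(S\to\fC,w)$ is the tautological morphism $\fX^\beta\to\fC^\beta$, and unwinding the definitions identifies $\fX^\beta$ with $\fX\times_\fC\fC^\beta$, which is the square \eqref{xcbeta}; since $\fX\to\fC$ is representable, so is $\fX^\beta\to\fC^\beta$. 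The one genuinely delicate step is the rigidity claim of the second paragraph: one must extract from the continuity axiom not merely that a weight assignment is pinned down by its value at the deepest point, but that it is locally constant on strata, which is precisely what makes the atlas map \'etale rather than only smooth; everything else is bookkeeping with the $\ti\tau_{I,I'}$, the stabilizers $\gn$, and the open immersions.
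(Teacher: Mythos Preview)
The paper states this proposition without proof, so there is no argument to compare against; your construction of an explicit atlas $\coprod_{n,\mathbf b}\cn_{\mathbf b}$ via the rigidity of continuous weight assignments on each $\cn$ is the natural way to supply one, and the rigidity step (second paragraph) is correct.

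Two points should be tightened. First, the $\ti\tau_{I,I'}$ do not ``act on $W_n(\beta)$ by permuting components and nodes'': for $|I|<n+1$ the domain $\cn_{U(I')}$ misses $o_n$ entirely, so no permutation of the components of $\xn_0$ is defined. What actually holds is that continuous weight data on $\cn_{U(I')}$ and on $\cn_{U(I)}$ are each indexed by $W_m(\beta)$ with $m=|I|-1$, and $\ti\tau_{I,I',X}$ identifies these copies; the lifted equivalence relates $\cn_{\mathbf b}|_{U(I')}$ to $\cn_{\mathbf b'}|_{U(I)}$ whenever the $I'^\circ$- and $I^\circ$-contractions of $\mathbf b,\mathbf b'$ agree in $W_m(\beta)$. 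Second, the inference ``$\coprod_{\mathbf b}\cn_{\mathbf b}\to\cn$ is a disjoint union of isomorphisms, hence $\fC^\beta\to\fC$ is \'etale'' skips a step: the commuting square with atlases on the left and stacks on the right is \emph{not} Cartesian, because distinct $\mathbf b$'s restrict to the same weight assignment over any open missing $o_n$, so $\fC^\beta\times_\fC\cn$ is a non-separated gluing rather than the disjoint union. The clean fix is to argue directly that $\fC^\beta\to\fC$ is representable (an arrow in $\fC^\beta$ over an identity in $\fC$ is unique, since weight data has no automorphisms and every element of $\gn$ preserves each $\Del_i$ and $D_j$, hence every weight) and formally \'etale (weight data is discrete, so extends uniquely along nilpotent thickenings). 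The Cartesian square \eqref{xcbeta} is then, as you say, formal.
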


Replacing $\fX/\fC$ by $\fX_0\shar/\fC_0\shar$, we obtain a pair
$$\fX_0^{\dagger,\beta}\lra \fC_0^{\dagger,\beta},
$$
where closed points in $\fX_0^{\dagger,\beta}$ are $(\xn_0,D_k,w)$ of which $D_k\sub\xn_0$ are node-markings and $w$ are weight assignments of $\xn_0$ of total weights $\beta$. We define $\fC_0^{\dagger,\beta}$ parallelly, combining
the construction of $\fC_0\shar$ and $\fC^\beta$.

The pair $\fX_0^{\dagger,\beta}\lra \fC_0^{\dagger,\beta}$ is a disjoint union of open and closed substacks indexed by the set of splittings of $\beta$. We let
$$\Lambda_\beta^{\mathrm{spl}}=\{{\delta}=({\delta}_\pm,{\delta}_0)\mid {\delta}_-,\delta_+,{\delta}_0\in\Lambda,\
{\delta}_-+{\delta}_+-{\delta}_0=\beta\}.
$$
For each ${\delta}\in \Lambda_\beta^{\mathrm{spl}}$, we define $\fX_0^{\dagger,{\delta}}(\kk)$ be the collection of those $(\xn_0, D_k,w)\in \fX_0^{\dagger,\beta}(\kk)$ such that
$$w(\Delta_{[0,k-1]})={\delta}_-,\quad w(\Del_{[k,n+1]})={\delta}_+ \and w(D_k)={\delta}_0.
$$
It is both open and closed in $\fX_0^{\dagger,\beta}(\kk)$; thus defines an open and closed substack
$\fX_0^{\dagger,{\delta}}\lra  \fX_0^{\dagger,\beta}$.

Accordingly, we can form the stack $\fC_0^{\dagger,{\delta}}$ and a morphism $\fC_0^{\dagger,{\delta}}\to \fC_0^{\dagger,\beta}$ that fits into a Cartesian product
$$\begin{CD}
\fX_0^{\dagger,{\delta}}@>>>  \fX_0^{\dagger,\beta}\\
@VVV@VVV\\
\fC_0^{\dagger,{\delta}}@>{\Phi_\delta^\dagger}>> \fC_0^{\dagger,\beta}
\end{CD}
$$

We let
\beq\label{Phi-e}
\Phi_{\delta}: \fC_0^{\dagger,{\delta}}\lra  \fC^\beta
\eeq
be $\Phi_\delta^\dagger$ composed with the forgetful morphism $\fC_0^{\dagger,\beta}\to\fC^\beta$.
The following Proposition says that they are Cartier divisors.

\begin{prop}\label{prop2.24}
There are canonical line bundles with sections $(L_{\delta},s_{\delta})$ on $\fC^\beta$, indexed by ${\delta}\in
\Lambda_\beta^{\mathrm{spl}}$, such that
\begin{enumerate}
\item let $t\in\Gamma(\sO_\Ao)$ be the standard coordinate function and $\pi: \fC^\beta\to \Ao$ be the tautological projection, then
    $$\bigotimes_{{\delta}\in \Lambda_\beta^{\mathrm{spl}}} L_{\delta}\cong \sO_{\fC^\beta}\and     \prod_{{\delta}\in \splitset} s_{\delta}=\pi\sta t;$$
\item the morphism $\Phi_{\delta}$ factors through $s_{\delta}\upmo(0)\sub\fC^\beta$ and effects an isomorphism $\fC_0^{\dagger,{\delta}}\cong s_{\delta}\upmo(0)$.
\end{enumerate}
\end{prop}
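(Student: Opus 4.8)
The plan is to define the pairs $(L_\delta,s_\delta)$ by an explicit formula on the smooth charts of $\fC^\beta$, to check the formula is insensitive to the group action and the equivalences that cut out $\fC^\beta$, and then to read off (1) and (2) chart by chart. Recall $\fC$ is covered by the smooth charts $C[n]=C\times_\Ao\Ano$, and by the defining description of $\fC^\beta$ (the groupoid of pairs $(S\to\fC,w)$) the base change $C[n]\times_\fC\fC^\beta$ is the disjoint union $\coprod_w C[n]$ over weight assignments $w$ of $X[n]_0$ of total weight $\beta$ (a partial smoothing of $X[n]_0$ has its continuous weight assignment determined by that of $X[n]_0$), so these form a smooth atlas of $\fC^\beta$. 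On the $w$-copy of $C[n]$ write $t_1,\dots,t_{n+1}$ for the coordinates pulled back from $\Ano$, so $\pi^*t=t_1\cdots t_{n+1}$ (using that $C\to\Ao$ is \'etale at $0$ and carries its coordinate to $t$); for $1\le k\le n+1$ put $\delta^{(k)}(w)=\bigl(w(\Del_{[0,k-1]}),w(\Del_{[k,n+1]}),w(D_k)\bigr)\in\splitset$, the splitting recorded by keeping only the node $D_k$ unsmoothed, and for $\delta\in\splitset$ set
$$\tilde s_{\delta,w}=\prod_{k:\,\delta^{(k)}(w)=\delta}t_k\ \in\ \Gamma(\sO_{C[n]}),\qquad\text{an empty product being }1.$$

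The substantive step is that the local data (trivial bundle, section $\tilde s_{\delta,w}$) glue to a line bundle with section $(L_\delta,s_\delta)$ on $\fC^\beta$. Under the $\gn$-action on the $w$-copy of $C[n]$, reading off $(t)^\sigma$ from its definition shows that $t_k$ is a semi-invariant of weight $e_k-e_{k-1}$ of the standard basis (with $e_0=e_{n+1}=0$), hence $\tilde s_{\delta,w}$ is semi-invariant and descends to the quotient chart. For the equivalences $\ti\tau_{I,I'}$, lifted to $\fC^\beta$ compatibly with weights: on $\Ano_{U(I')}$ the coordinates $t_j$, $j\in(I')^\circ$, are units, and $\ti\tau_{I,I'}$ transports the coordinates $t_j$, $j\in I'$, in order onto those indexed by $I$ while carrying $w$ to the matching $w'$; the key is that this relabeling respects the recipe for $\delta^{(k)}$ — a node of the partially smoothed fibre over a point of $\Ano_{U(I')}$ and its image have the same left chain, right chain and node weight, because subchain weights are additive and the chain combinatorics is exactly what is relabeled. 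Hence $\ti\tau_{I,I'}$ pulls $\tilde s_{\delta,w}$ back to $\tilde s_{\delta,w'}$ up to a monomial in the units $t_j$, $j\in(I')^\circ$, and the $(L_\delta,s_\delta)$ are well defined on $\fC^\beta$. I expect this to be the main obstacle: on $\fC$ alone the individual $t_k$ do not descend (the equivalences mix them), so the whole content is that once the weights are remembered the partition of the coordinates by splitting is canonical, and only that partition enters $\tilde s_{\delta,w}$.

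Property (1) is then immediate chart by chart: every $k$ contributes to exactly one $\delta$, so $\prod_\delta\tilde s_{\delta,w}=\prod_{k=1}^{n+1}t_k=\pi^*t$; hence $\bigotimes_\delta L_\delta$ is the line bundle $\sO_{\fC^\beta}(\pi^{-1}(0))$ of the effective Cartier divisor $\pi^{-1}(0)$ (Cartier since $\fC^\beta\to\fC\to C\to\Ao$ is flat), which is the zero scheme of the globally defined function $\pi^*t$ and hence principal, giving $\bigotimes_\delta L_\delta\cong\sO_{\fC^\beta}$ with the trivialization matching $\prod_\delta s_\delta$ with $\pi^*t$.

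For (2), again on charts: $s_\delta^{-1}(0)$ restricted to the $w$-copy of $C[n]$ is $\{\prod_{k:\delta^{(k)}(w)=\delta}t_k=0\}=\bigcup_{k:\delta^{(k)}(w)=\delta}\{t_k=0\}$, and $\{t_k=0\}$ is the locus where the node $D_k$ is unsmoothed. By the proposition identifying $\fC_0^\dagger\to\fC_0$ with a normalization, $\fC_0^{\dagger,\beta}$ has smooth atlas $\coprod_{k,w}C[n]_{t_k=0}$, where $C[n]_{t_k=0}:=C[n]\times_\Ano\Ano_{t_k=0}$ carries the node-marking $D_k$ and the weight $w$, and $\fC_0^{\dagger,\delta}$ is the sub-union over those $(k,w)$ with $\delta^{(k)}(w)=\delta$; on these charts $\Phi_\delta$ is the closed immersion $C[n]_{t_k=0}=\{t_k=0\}\hookrightarrow C[n]$. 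Since $t_k$ divides $\tilde s_{\delta,w}$ for each such $k$, $\Phi_\delta$ factors through $s_\delta^{-1}(0)$; to see it induces an isomorphism $\fC_0^{\dagger,\delta}\cong s_\delta^{-1}(0)$ one constructs the inverse on charts — a point of $s_\delta^{-1}(0)$ lies over $0\in C$ and has an unsmoothed node of splitting $\delta$, and marking it returns a point of $\fC_0^{\dagger,\delta}$. The only delicate point here is a fibre carrying two or more unsmoothed nodes of the same splitting $\delta$, where one must reconcile the branches of $\fC_0^{\dagger,\delta}$ with the scheme-theoretic $s_\delta^{-1}(0)=\{\prod_{k:\delta^{(k)}(w)=\delta}t_k=0\}$ along the overlap locus; that check, together with keeping the $\gn$-equivariance straight throughout, is where the bookkeeping of the second paragraph is used once more.
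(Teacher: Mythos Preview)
The paper does not give a detailed proof, deferring to \cite{Li2}; its only comment is the sentence after the proposition, that $\coprod_\delta \fC_0^{\dagger,\delta}$ is the \emph{normalization} of $\fC_0^\beta$. Your explicit chart construction of $(L_\delta,s_\delta)$ via $\tilde s_{\delta,w}=\prod_{k:\,\delta^{(k)}(w)=\delta}t_k$, the descent check under the $\Gm^{{}n}$-action and the $\ti\tau_{I,I'}$, and the verification of (1) are all correct and are exactly the standard approach.

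Your treatment of (2) has a genuine gap at precisely the ``delicate point'' you flag, and the descent bookkeeping does \emph{not} resolve it. On a $w$-chart with several indices $k_1<k_2$ satisfying $\delta^{(k_1)}(w)=\delta^{(k_2)}(w)=\delta$, the scheme $s_\delta^{-1}(0)=\{\prod_k t_k=0\}$ is singular along $\{t_{k_1}=t_{k_2}=0\}$, whereas the corresponding chart $\coprod_k\{t_k=0\}$ of $\fC_0^{\dagger,\delta}$ is smooth. At the origin the distinct node-markings $D_{k_1},D_{k_2}$ give distinct points of $\fC_0^{\dagger,\delta}$ mapping to one point of $s_\delta^{-1}(0)$; no equivalence of $\fC^\beta$ separates the branches there, since every $\ti\tau_{I,I'}$ is defined only where some coordinate is a unit and the $\Gm^{{}n}$-action fixes the node labelling. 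So $\Phi_\delta$ is the normalization of $s_\delta^{-1}(0)$ rather than an isomorphism in general. The paper's own remark suggests reading the statement in this weaker sense; for the applications (Theorem~\ref{3.36} and the degeneration formula) one must then either check that the pullback to the moduli space avoids this locus, or verify that the refined Gysin class $c_1^{\mathrm{loc}}(L_\delta,s_\delta)$ agrees with the virtual class computed on the normalization.
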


The proof of this decomposition is essentially given in \cite{Li2}. Note that this Proposition states that $\fC_0^\beta\sub\fC^\beta$ is a complete intersection substack, and the disjoint union of $\fC_0^{\dagger,{\delta}}$ is its normalization.

\vsp

We complete the weighted decomposition by introducing the stack of weighted relative pairs. We define a weight assignment of $(Y_+[n],D_+[n])$ be a function $w$ that assigns values in $\Lam$ to the irreducible components of $Y_+[n]$, of its $D_k$'s, and of $D_+[n]$. We define the continuous weight assignments of $(\cY_+,\cD_+)\in \fY_+(S)$ parallel to Definition \ref{rule2}.

For a ${\delta}\in \splitset$, we define the stack $\fY_+^{{\delta}_+,{\delta}_0}$ so that $\fY_+^{{\delta}_+,{\delta}_0}(S)$ consists of data $(\cY_+,\cD_+, w)$, where $(\cY_+,\cD_+)\in \fY_+(S)$ and $w$ are weight assignments of $(\cY_+,\cD_+)$, so that for any closed $s\in S$, $w_{s}(\cD_{+,s})={\delta}_0$ and the total weights $w_{s}(\cY_{+,s})={\delta}_+$. The case for $(Y_-,D_-)$ and similar objects are defined with ``$+$'' replaced by ``$-$''.

We let $\fA_\diamond^{{\delta}_\pm,{\delta}_0}$ be the stack defined similarly so that we have Cartesian product
$$\begin{CD}\fY_\pm^{{\delta}_\pm,{\delta}_0} @>>> \fY_\pm\\
@VVV@VVV\\
\fA^{{\delta}_\pm,{\delta}_0}_\diamond @>>> \fA_\diamond
\end{CD}
$$

By gluing the two relative divisors $\cD_-$ and $\cD_+$ of $(\cY_\pm,\cD_\pm, w_\pm)\in \fY_\pm^{{\delta}_\pm,{\delta}_0}(S)$ and combining the weights $w_-$ and $w_+$, we obtain the following commutative square of morphisms
$$\begin{CD}
\fY_-^{{\delta}_-,{\delta}_0} \times \fY_+^{{\delta}_+,{\delta}_0} @>>> \fX^{\dagger,{\delta}}_0 \\
@VVV@VVV\\
\fA^{{\delta}_-,{\delta}_0}_\diamond \times \fA^{{\delta}_+,{\delta}_0}_\diamond@>{\Psi_{\delta}}>> \fC_0^{\dagger,{\delta}}
\end{CD}
$$

\begin{prop}\label{isom-stacks}
The morphism $\Psi_{\delta}$ is an isomorphism.
\end{prop}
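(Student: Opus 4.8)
The plan is to construct an explicit two-sided inverse to $\Psi_\delta$ on the level of smooth charts and to check that it descends to the quotient stacks. Recall that $\fA_\diamond^{\delta_-,\delta_0}$ and $\fA_\diamond^{\delta_+,\delta_0}$ carry smooth atlases built from the $\bA^{\!n_-+n_+}$'s of Definition \ref{DY2} (refined by the discrete weight data), while $\fC_0^{\dagger,\delta}$ carries a smooth atlas built from the pieces $\Ano_{t_k=0}$ of the construction of $\fC_0^\dagger$ (again refined by weights so that $w(\Del_{[0,k-1]})=\delta_-$, $w(\Del_{[k,n+1]})=\delta_+$, $w(D_k)=\delta_0$). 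The key observation, already exploited in the proof that $\fC_0^\dagger\cong\fA_\diamond$, is the reindexing map $\bA^{\!n_-+n_+}\to\Ano_{t_k=0}$ with $k=n_-+1$, $n=n_-+n_+$, sending $(t_{-n_-},\dots,t_{-1},t_1,\dots,t_{n_+})\mapsto(t_{-1},\dots,t_{-n_-},0,t_{n_+},\dots,t_1)$. Taking products over all $(n_-,n_+)$ and matching the $\Gm$-factors accordingly, this furnishes a morphism of atlases $\coprod \bA^{\!n_-+n_+}\times\bA^{\!0}\to\coprod\Ano_{t_k=0}$ covering $\Psi_\delta$; the first step is to verify this is an isomorphism of atlases, i.e. it is bijective on points, étale, and compatible with the two groupoid structures.

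The second step is to check that the groupoid relations match up. On the source, $\fA_\diamond^{\delta_-,\delta_0}\times\fA_\diamond^{\delta_+,\delta_0}$ has morphisms generated by the two independent $\Gm^{n_-}\times\Gm^{n_+}$-actions and by the reindexing isomorphisms $\ti\tau_{I,I'}$ for pairs $(I,I')$ of subsets of $[-n_-,n_+]-\{0\}$ satisfying \eqref{m}, one pair on the minus side and one on the plus side. On the target, $\fC_0^{\dagger,\delta}$ has morphisms generated by the $\Gm^n$-action on $\Ano_{t_k=0}$ and by the $\tau_{(I,k),(I',k')}$ with $k\in I$, $k'\in I'$ satisfying $|I_{<k}|=|I'_{<k}|$ and $|I_{>k}|=|I'_{>k}|$ (see \eqref{II}). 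Under the reindexing map, a subset $I\sub[-n_-,n_+]-\{0\}$ with $|I\cap\ZZ_-|=m_-$, $|I\cap\ZZ_+|=m_+$ corresponds exactly to a subset $\bar I\ni k$ of $[n+1]$ with $|\bar I_{<k}|=m_-$, $|\bar I_{>k}|=m_+$; the condition \eqref{m} becomes precisely \eqref{II}, and the two one-sided $\Gm$-actions assemble into the single $\Gm^n$-action since $k$ is never an index of a $\Gm$-factor. So the two presentations have identified object spaces and identified relation spaces, hence the quotient stacks are isomorphic; and because the construction is manifestly compatible at every finite stage $\fA_{\diamond,n_-+n_+}^{\delta_\pm,\delta_0}$ versus $\fC_{n,0}^{\dagger,\delta}$, and with the stabilization maps \eqref{+1} corresponding to inserting an end $\Del$, the isomorphism passes to the direct limits.

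The third and final step is to confirm that $\Psi_\delta$ is indeed the morphism induced by gluing $\cD_-$ to $\cD_+$ and combining weights, i.e. that the chart-level isomorphism we built agrees with the intrinsic description in the statement. This is a compatibility check: the reindexing of $\bA^{\!n_-+n_+}$ is exactly the one used in the proof of $\fC_0^\dagger\cong\fA_\diamond$, under which $\fY$ with $\fD_-$ glued to $\fD_+$ becomes $\fX_0^\dagger$ with the gluing locus declared the node-marking; adding the weight data $\delta_\pm,\delta_0$ on both sides is tautologically preserved because the weight of the subchain $\Del_{[0,k-1]}$ on the $\fX_0^\dagger$-side is by definition the total weight of the minus pair $\cY_-$, and similarly for the plus side and for $D_k$ versus $\cD_\pm$. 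The main obstacle is bookkeeping: making the reindexing of the $\Gm$-factors and of the subsets $I$ precise enough that \eqref{m} $\Leftrightarrow$ \eqref{II} is transparent, and checking that the stabilization maps $\bA^{\!m_-+m_+}\to\bA^{\!n_-+n_+}$ of \eqref{I2} correspond on the nose to the maps \eqref{+1} rather than merely up to the $\Gm$-equivalence. Once the indexing is set up carefully, the verification is routine and the Proposition follows.
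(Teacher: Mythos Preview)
Your proposal is correct and follows exactly the approach the paper intends: the paper gives no explicit proof of this proposition, leaving it to the reader as the weighted, product refinement of the isomorphism $\fC_0^\dagger\cong\fA_\diamond$ established just before Subsection~\ref{subsec2.5} via the reindexing map $(t_{-n_-},\ldots,t_{n_+})\mapsto(t_{-1},\ldots,t_{-n_-},0,t_{n_+},\ldots,t_1)$, which is precisely what you invoke and unpack. One small notational slip: the atlas of the source should be $\coprod\bA^{n_-}\times\bA^{n_+}$ (each factor of $\fA_\diamond^{\delta_\pm,\delta_0}$ is a one-divisor pair, so the other index is zero), not $\bA^{n_-+n_+}\times\bA^{0}$; since $\bA^{n_-}\times\bA^{n_+}\cong\bA^{n_-+n_+}$ this does not affect the argument.
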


\section{Admissible coherent sheaves}

We develop necessary technical results on admissible coherent sheaves on singular schemes. In this paper, we adopt the convention that for any closed or open $V\sub W$and $\sF$ a sheaf of $\sO_W$-modules, we denote $\sF|_V=\sF\otimes_{\sO_W}\sO_V$.

\subsection{Coherent sheaves normal to a closed subscheme}

Let $W$ be a noetherian scheme and $D\sub W$ be a closed subscheme.

\begin{defi}\lab{normal}
Let $\sF$ be a coherent sheaf on $W$. We say $\sF$ is normal to $D$ if $\Tor^{\sO_W}_1(\sF,\sO_D)=0$.
\end{defi}

In this paper, we are interested in two situations. One is when $D\sub W$ is a Cartier divisor; the other is when $W=W_1\cup W_2$ is a  union of subschemes $W_1$ and $W_2\sub W$ that intersect transversally along a Cartier divisor $D=W_1\cap W_2$.

\vsp

To study flat families of coherent sheaves, we quote the following known fact.

\begin{lemm} \lab{tech-1}
Let $(A,\fm)$ be a noetherian local ring with residue field $\bk$,
and $B$ a finitely generated $A$-algebra, flat over $A$. Let $M$ be a finitely generated $B$-module. Then $\Tor^B_1(M,B/\fm B)=0$ if and only if $M$ is flat over $A$.
\end{lemm}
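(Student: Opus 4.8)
The plan is to reduce the statement to a standard local criterion for flatness combined with the local structure of $\Tor$. Since the question is local on $\Spec B$, I would first pass to a localization $B_{\fq}$ of $B$ at a prime $\fq$ lying over $\fm$, replacing $M$ by $M_{\fq}$; flatness of $M$ over $A$ can be checked after localizing at all such primes, and the vanishing of $\Tor^B_1(M, B/\fm B)$ is likewise preserved and reflected under this localization (the module $B/\fm B$ only has support over $\fm$). Thus it suffices to treat the case $(B,\fn)$ a noetherian local ring, $A\to B$ local and flat, $M$ a finitely generated $B$-module, and $B/\fm B$ the closed fiber.

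The direction ``$M$ flat over $A$ $\Rightarrow$ $\Tor^B_1(M,B/\fm B)=0$'' is the easy one: choose a resolution of $\bk=A/\fm$ by finite free $A$-modules, and compute $\Tor^A_*(\bk, ?)$; flatness of $M$ over $A$ gives $\Tor^A_i(\bk, M)=0$ for $i>0$. Then I would invoke the base-change spectral sequence (or a direct argument via the flatness of $B$ over $A$): because $B$ is flat over $A$, tensoring an $A$-free resolution of $\bk$ with $B$ yields a $B$-free resolution of $B/\fm B = B\otimes_A \bk$, so $\Tor^B_i(B/\fm B, M) = \Tor^A_i(\bk, M)$ for all $i$, and in particular $\Tor^B_1(B/\fm B, M)=0$. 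The same computation handles the converse as well.

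For the converse ``$\Tor^B_1(M, B/\fm B)=0$ $\Rightarrow$ $M$ flat over $A$'', the key point is the local criterion of flatness. Having the identification $\Tor^B_1(B/\fm B, M) = \Tor^A_1(\bk, M)$ from the previous paragraph, the hypothesis gives $\Tor^A_1(\bk, M)=0$. Then I would apply the local criterion for flatness over the noetherian local ring $(A,\fm)$: a finitely generated module $M$ over a (module-finite or more generally essentially-of-finite-type) $A$-algebra is $A$-flat provided $\Tor^A_1(A/\fm, M)=0$ — here one needs $M$ to be ideally separated for $\fm$, which holds because $M$ is finitely generated over the noetherian ring $B$ and $B$ is noetherian, so $\bigcap_n \fm^n M = 0$ after localization, or one argues via the fiberwise criterion. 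Concretely I would cite EGA $0_{\mathrm{III}}$ 10.2.1 or Matsumura, Theorem 22.3: if $A$ is noetherian local, $M$ finitely generated over a noetherian $A$-algebra, then $M$ is $A$-flat iff $\Tor^A_1(\bk,M)=0$.

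The main obstacle — really the only nontrivial point — is getting the identification $\Tor^B_i(B/\fm B, M)\cong \Tor^A_i(\bk, M)$ cleanly, i.e. the flat base-change isomorphism for $\Tor$ along the flat map $A\to B$. Once that is in hand, both implications are immediate from standard homological algebra and the local flatness criterion. I would therefore spend the bulk of the write-up on that base-change step (taking a finite free $A$-resolution $P_\bullet\to\bk$, observing $P_\bullet\otimes_A B$ is acyclic since $B$ is $A$-flat hence a $B$-free resolution of $B\otimes_A\bk = B/\fm B$, and then $(P_\bullet\otimes_A B)\otimes_B M = P_\bullet\otimes_A M$, whose homology is $\Tor^A_\bullet(\bk, M)$), and simply quote the local flatness criterion and the localization remarks for the rest.
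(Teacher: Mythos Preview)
Your proposal is correct and follows essentially the same approach as the paper: both reduce the statement to the equivalence $\Tor^B_1(M,B/\fm B)=0 \Leftrightarrow \Tor^A_1(\bk,M)=0$ and then invoke the local criterion of flatness. The only difference is in packaging that equivalence: the paper takes a finite presentation $0\to M'\to B^{\oplus n}\to M\to 0$ and compares the two long exact sequences obtained by applying $-\otimes_B B/\fm B$ and $-\otimes_A \bk$ (using $B$ flat over $A$ to kill $\Tor_1^A(B^{\oplus n},\bk)$), whereas you obtain the identification $\Tor^B_i(B/\fm B,M)\cong\Tor^A_i(\bk,M)$ in one stroke via flat base change on a free $A$-resolution of $\bk$. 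Your argument is slightly more conceptual; the paper's is slightly more elementary. Your preliminary localization at primes of $B$ over $\fm$ is unnecessary (the local criterion already applies with $A$ local and $M$ finitely generated over a noetherian $A$-algebra), but it does no harm.
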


\begin{proof}
Since $M$ is a finitely generated $B$-module, it fits into an exact sequence
$$0\lra M'\lra B^{\oplus n}\lra M\lra 0.
$$
Tensoring with $B/\fm B$, we know $\Tor^B_1(M,B/\fm B)=0$ if and only if $M'/\fm M'=M'\otimes_A\bk\to (B/\fm B)^{\oplus n}$ is injective. On the other hand, applying $\otimes_A \bk$ to the above exact sequence, we obtain
$$\Tor_1^A(B^{\oplus n},\bk) \lra \Tor_1^A(M,\bk) \lra M'\otimes_A \bk\lra
B^{\oplus n}\otimes_A \bk=(B/\fm B)^{\oplus n}.
$$
Since $B$ is $A$-flat, $\Tor_1^A(B^{\oplus n},\bk)=0$. Thus the last arrow is injective if and only if $\Tor_1^A(M,\bk)=0$.
By local criterion of flatness \cite[Theorem 49]{Mat}, this is equivalent to $M$ being $A$-flat. This proves the Lemma.
\end{proof}

\vsp

For the case where $D\sub W$ is a Cartier divisor in a smooth $W$, a coherent sheaf $\sF$ on $W$ normal to $D$ is equivalent to that $\sF$ is flat along the ``normal direction'' of $D\sub W$. To make this precise, we assume $W$ is affine and pick a regular $z\in\Gamma(\sO_W)$ so that $D=(z=0)$. We define $\tau: W\to \Ao=\Spec \bk[u]$ via $\tau\sta(u)=z$. For any scheme $S$, we denote by $\pi_S\colon W\times S\to S$ the projection and view $W\times S$ as a family over $\Ao\times S$ via
\beq\lab{split}
(\tau,\pi_S): W\times S\lra \Ao\times S.
\eeq

\begin{prop}\lab{flat}
Let $D\sub W$, $S$ and \eqref{split} be as stated. Suppose $\sF$ an $S$-flat family of coherent sheaves on $W\times S$, and $s\in S$ is a closed point so that $\sF_s=\sF\otimes_{\sO_S} \bk(s)$ is normal to $D$. Then there is an open subset $(0,s)\in U\sub \Ao\times S$ so that the sheaf $\sF|_U$ is flat over $U$.

Conversely, let $U\sub \Ao\times S$ be an open subset such that $\sF$ is flat over $U$, then for $(0,s)\in U$, $\sF_s$ is normal to $D$.
\end{prop}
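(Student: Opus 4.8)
The plan is to translate the normality condition into a torsion‑freeness statement over a discrete valuation ring and then to propagate flatness along the family by the standard local criteria. Throughout, $z\in\Gamma(\sO_W)$ is the regular function with $D=(z=0)$ and $\tau\colon W\to\Ao$ is given by $\tau\sta u=z$. Since $W$ is smooth, $z$ is a non‑zero‑divisor and there is a Koszul resolution $0\to\sO_W\xrightarrow{\,\cdot z\,}\sO_W\to\sO_D\to 0$, which stays exact after the flat base change $W\times S\to W$; in particular $D\times S=(z=0)$ is a Cartier divisor in $W\times S$. Tensoring the resolution with a coherent sheaf $\sG$ on $W$ gives
\[
\Tor^{\sO_W}_1(\sG,\sO_D)=\ker\bigl(z\colon\sG\to\sG\bigr),\qquad \Tor^{\sO_W}_{\ge 2}(\sG,\sO_D)=0 ,
\]
and the left‑hand side, being killed by $z$, is an $\sO_D$‑module and hence supported on $D$. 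Thus ``$\sF_s$ is normal to $D$'' is equivalent to ``$z$ acts injectively on $\sF_s$'', equivalently to ``$z$ is injective on the stalk $(\sF_s)_{\bar w}$ for every $\bar w\in D$''.

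First I would prove the forward implication. Fix $w\in W\times S$ lying over $(0,s)$, with image $\bar w\in D\subset W$. Since $s$ is closed and $\kk$ is algebraically closed, $\kk(s)=\kk$ and $(\sF_s)_{\bar w}=\sF_w\otimes_{\sO_{S,s}}\kk(s)$; by the Koszul identity and the hypothesis, $z$ is injective on $(\sF_s)_{\bar w}$. As $\sO_{\Ao,0}=\kk[z]_{(z)}$ is a discrete valuation ring with uniformizer $z$, absence of $z$‑torsion means $(\sF_s)_{\bar w}$ is torsion free, hence flat, over $\sO_{\Ao,0}$; so $\sF_s$, viewed on $W$ via $\tau$, is flat over $\Ao$ at every point of $D$. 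Because $\sF$ is flat over $S$ and $\Ao\times S\to S$ is flat, the criterion for flatness by fibres (a standard strengthening of the local criterion of flatness underlying Lemma~\ref{tech-1}) now yields that $\sF$ is flat over $\Ao\times S$ at every point of $(\tau,\pi_S)\upmo(0,s)=D\times s$. The flat locus of $\sF$ over $\Ao\times S$ is open in $W\times S$, so its complement $B$ is closed and disjoint from $D\times s$; since $\mathrm{supp}\,\sF$ is proper over $S$ (as it is in all our applications, and otherwise one inserts a standard spreading‑out argument) one has $B\subseteq\mathrm{supp}\,\sF$, so $(\tau,\pi_S)(B)$ is a closed subset of $\Ao\times S$ not containing $(0,s)$, and we take $U$ to be its complement.

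For the converse, let $U\ni(0,s)$ be open with $\sF$ flat over $U$. Then $\sF$ is flat over $\Ao\times S$ at every $w\in (\tau,\pi_S)\upmo(0,s)=D\times s$, and since flatness is stable under base change along $\Spec\kk(s)\to S$, the sheaf $\sF_s=\sF\otimes_{\sO_S}\kk(s)$ is flat over $(\Ao\times S)\otimes_{\sO_S}\kk(s)=\Ao$ at every such $w$. Flatness over the discrete valuation ring $\sO_{\Ao,0}$ forces each stalk $(\sF_s)_{\bar w}$, $\bar w\in D$, to be $z$‑torsion free, so $\ker(z\colon\sF_s\to\sF_s)=\Tor^{\sO_W}_1(\sF_s,\sO_D)$ has vanishing stalks at every point of its support $D$; being supported on $D$, it is therefore zero, i.e.\ $\sF_s$ is normal to $D$.

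The step I expect to require the most care is the forward implication's descent of flatness: besides the bookkeeping in the criterion for flatness by fibres, the genuinely delicate point is passing from pointwise flatness along the fibre $D\times s$ to flatness over an honest open $U\subseteq\Ao\times S$, since $\tau\colon W\to\Ao$ is not proper; this is exactly where properness of $\mathrm{supp}\,\sF$ over $S$ (or, for arbitrary $S$, a spreading‑out argument) is used.
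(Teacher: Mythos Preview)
Your argument is sound in spirit and close to the paper's, but organized differently, and the step you yourself flag as delicate has a genuine gap.

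For the forward direction, the paper works directly in the \emph{base}: it sets
\[
U=\bigl\{\,x\in\Ao\times S \;\big|\; \sF\otimes_{\sO_{\Ao\times S}}\sO_{\Ao\times S,x}\ \text{is $\sO_{\Ao\times S,x}$-flat}\,\bigr\},
\]
notes this is open, and shows $(0,s)\in U$ by a single application of Lemma~\ref{tech-1} with $A=\sO_{\Ao\times S,(0,s)}$, $B=\Gamma(\sO_{W\times S}\otimes_{\sO_{\Ao\times S}}A)$, $M=\Gamma(\sF\otimes_{\sO_{\Ao\times S}}A)$. Your Koszul/DVR translation and the fibral flatness criterion reach the same pointwise conclusion---$\sF$ is flat over $\Ao\times S$ at every point of $D\times s$---but you then have to push the non-flat locus $B\subset W\times S$ down to $\Ao\times S$.

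That push-down is where the gap lies. You assume $\mathrm{supp}\,\sF$ is proper over $S$, which is \emph{not} in the hypotheses, and your ``spreading-out'' alternative is left unstated. The actual fix is cheaper and different: since $z$ is a non-zero-divisor on the smooth affine $W$, the map $\tau\colon W\to\Ao$ is flat, hence so is $(\tau,\pi_S)$. If $(0,s)$ lay in $\overline{(\tau,\pi_S)(B)}$ then, $(\tau,\pi_S)(B)$ being constructible, $(0,s)$ would specialize some $(\tau,\pi_S)(b)$ with $b\in B$; going-down for the flat map $(\tau,\pi_S)$ then produces a specialization $b'\in B$ lying over $(0,s)$, i.e.\ $b'\in D\times s$, contradicting what you proved. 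So no properness is needed. The paper's base-side definition of $U$ simply sidesteps the whole issue.

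Your converse is the same as the paper's---base-change of flatness to the fibre over $s$, then read off vanishing of $\Tor_1$---just phrased through the Koszul identification $\Tor_1^{\sO_W}(\sF_s,\sO_D)=\ker(z\colon\sF_s\to\sF_s)$ rather than through Lemma~\ref{tech-1}.
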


\begin{proof}
We let
$$U=\{ x\in \Ao\times S \mid \sF\otimes_{\sO_{\Ao\times S}}\sO_{\Ao\times S,x} \ \text{is $ \sO_{\Ao\times S,x}$-flat}\}.
$$
By \cite[Theorem 53]{Mat}, $U$ is an open subset of $\Ao\times S$ (possibly empty) and $\sF|_U$ is flat over $U$.

To prove the Proposition, we only need to show that $(0,s)\in U$. But this is a direct application of Lemma \ref{tech-1}. We let
$$A=\sO_{\Ao\times S,(0,s)},\quad B=\Gamma(\sO_{W\times S}\otimes_{\sO_{\Ao\times S}}A),\quad
M=\Gamma(\sF\otimes_{\sO_{\Ao\times S}}A).
$$
Since the assumption that $\sF_s$ is normal to $D$ implies that $\Tor^B_1(M,B/\fm B)=0$,
Lemma \ref{tech-1} implies that $M$ is flat over $A$, that is, $(0,s)\in U$.

For the converse, given $(0,s)\in U$, by the base change property of flatness, $\sF_s=\sF|_{W\times s}$ is flat over $U_s=U\cap (\Ao\times s)$.
Since $(0,s)\in U$, we have $0\in  U_s$. By Lemma \ref{tech-1}, $\Tor_1^{\sO_W}(\sF_s, \sO_D)=0$; by Definition \ref{normal}, $\sF_s$ is normal to $D$.
\end{proof}

\begin{coro}\lab{flat-2}
Let the situation be as in Proposition \ref{flat} and let $\sF$ be an $S$-flat family of coherent sheaves on $W\times S$. Then the set
$V=\{s\in S\mid \sF_s \ \text{is normal to } D\}$ is open in $S$,
and $\sF|_{D\times V}$ is a $V$-flat family of coherent sheaves on $D\times V$.
\end{coro}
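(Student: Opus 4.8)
The plan is to deduce both assertions from Proposition~\ref{flat} by comparing the fiberwise normality condition with the open locus over which $\sF$ is flat along the ``normal direction''. Since openness of $V$ and $V$-flatness of $\sF|_{D\times V}$ are local statements on $W$ and on $S$, I would first reduce to the local situation of Proposition~\ref{flat}: $W$ affine, $z\in\Gamma(\sO_W)$ regular with $D=(z=0)$, $\tau\colon W\to\Ao=\Spec\bk[u]$ given by $\tau\sta(u)=z$, and $W\times S$ viewed over $\Ao\times S$ via $(\tau,\pi_S)$ as in \eqref{split}; the general case follows by covering $W$ by finitely many affines and intersecting.

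Next I would introduce the maximal flat locus $U=\{x\in\Ao\times S\mid \sF\otimes_{\sO_{\Ao\times S}}\sO_{\Ao\times S,x}\ \text{is}\ \sO_{\Ao\times S,x}\text{-flat}\}$, which by \cite[Theorem 53]{Mat} is open and over which $\sF$ is flat (exactly as in the proof of Proposition~\ref{flat}). The two halves of Proposition~\ref{flat}, whose arguments apply verbatim to an arbitrary, not necessarily closed, point $s\in S$ since they use only that $\sO_{\Ao\times S,(0,s)}$ is noetherian local with residue field $\bk(s)$, say precisely that $\sF_s$ is normal to $D$ if and only if $(0,s)\in U$: the converse part gives one implication, while the direct part produces an open set containing $(0,s)$ over which $\sF$ is flat, hence contained in $U$, giving the other. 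Therefore $V=\sigma\upmo(U)$, where $\sigma\colon S\to\Ao\times S$ is the zero section $s\mapsto(0,s)$; as $\sigma$ is a morphism and $U$ is open, $V$ is open in $S$.

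For the second assertion I would replace $S$ by $V$, so that $\sF_s$ is normal to $D$ for every $s\in S$, i.e. $\sigma(S)=\{0\}\times S\sub U$. The key point is that $D\times S$ is cut out in $W\times S$ by $z=(\tau,\pi_S)\sta u$, so scheme-theoretically $D\times S=(W\times S)\times_{\Ao\times S}(\{0\}\times S)$, and hence $\sF|_{D\times S}=\sF\otimes_{\sO_{\Ao\times S}}\sO_{\{0\}\times S}$ as modules over $\sO_{\{0\}\times S}=\sO_S$. Now any $x\in D\times S$ maps under $(\tau,\pi_S)$ to a point $y\in\{0\}\times S\sub U$, so $\sF_x$ is flat over $\sO_{\Ao\times S,y}$; writing $\bar y\in S$ for the corresponding point, $(\sF|_{D\times S})_x=\sF_x\otimes_{\sO_{\Ao\times S,y}}\sO_{S,\bar y}$ is then flat over $\sO_{S,\bar y}$ by base change of flat modules. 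Since this holds at every point of $D\times S$, $\sF|_{D\times S}$ is $S$-flat, and it is coherent because $\sF$ is and $D\times S\hookrightarrow W\times S$ is a closed immersion; this is the desired conclusion for $\sF|_{D\times V}$.

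I do not expect a genuine obstacle: the corollary is essentially a repackaging of Proposition~\ref{flat}. The only mild points requiring care are (i) checking that the ``if and only if'' characterization of normality via membership in $U$ holds at all points of $S$, not just the closed ones, so that $V=\sigma\upmo(U)$ literally as sets (handled by noting the proof of Proposition~\ref{flat} is insensitive to whether $s$ is closed), and (ii) the identification of $\sF|_{D\times S}$ with the pullback $\sF\otimes_{\sO_{\Ao\times S}}\sO_{\{0\}\times S}$, which rests on $D=\tau\upmo(0)$ as schemes (valid because $z$ is regular and $D=(z=0)$), after which flatness passes to the restriction by ordinary base change.
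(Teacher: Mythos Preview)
Your proposal is correct and follows essentially the same route as the paper: introduce the maximal flat locus $U\sub\Ao\times S$ from the proof of Proposition~\ref{flat}, identify $V$ with the preimage of $U$ under the zero section (the paper writes this as $U\cap(0\times S)$), and deduce $V$-flatness of $\sF|_{D\times V}$ by base change of flatness along $\{0\}\times V\hookrightarrow U$. Your extra care about non-closed points and the explicit identification $D\times S=(W\times S)\times_{\Ao\times S}(\{0\}\times S)$ are welcome elaborations, but not a different argument.
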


\begin{proof}
Let $U$ be the open subset introduced in the proof of Proposition \ref{flat}. Then $U\cap (0\times S)\sub S$ is exactly the locus where $\sF_s$ is normal to $D$.

By Proposition \ref{flat}, we know that there exists an open subset $U\sub \Ao\times S$, so that $0\times V\sub U$ and
$\sF|_U$ is flat over $U$. Thus, by the base change property of flatness, $\sF|_{D\times V}$ is $V$-flat. This proves the second part of the Corollary.
\end{proof}

Now we move to the second case where $W=W_1\cup W_2$ is a union of two smooth schemes $W_1$ and $W_2$ intersecting transversally along a Cartier divisor $D=W_1\cap W_2$ (in $W_1$ and $W_2$). Assume $W$ is affine; we find $z_i\in\Gamma(\sO_W)$ so that $W_1=(z_2=0)$ and $W_2=(z_1=0)$, thus $D=(z_1=z_2=0)$. We let
$$T=\Spec \bk[u_1,u_2]/(u_1u_2),
$$
and let $\xi\colon W\to T$ be defined by $\xi\sta(u_i)=z_i$. As before, since the fiber of $W\to T$ over $0\in T$ is $D$, which is smooth, by shrinking $W$ if necessary, we can assume that $\xi$ is smooth.

Now let $S$ be any scheme, $\pi_S\colon W\times S\to S$ be the projection. We will view $W\times S$ as a
family over $T\times S$ via
\beq\label{split-2}
(\xi,\pi_S): W\times S\lra T\times S.
\eeq
By our choice, it is smooth.

\begin{prop}\lab{flat-4}
Proposition \ref{flat} and Corollary \ref{flat-2} hold with the family \eqref{split} replaced by the family \eqref{split-2}.
\end{prop}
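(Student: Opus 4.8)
The plan is to reduce Proposition~\ref{flat-4} to Proposition~\ref{flat} and Corollary~\ref{flat-2} by the same local-criterion-of-flatness mechanism, the only difference being that the base changes from the smooth affine line $\Ao$ to the nodal curve $T=\Spec\bk[u_1,u_2]/(u_1u_2)$. First I would observe that the key input, Lemma~\ref{tech-1}, never used that the base is regular: it only used that $B$ is a finitely generated $A$-algebra flat over the noetherian local ring $(A,\fm)$, and that $B/\fm B$ is the fiber. So the strategy is to run the proof of Proposition~\ref{flat} verbatim with $A=\sO_{T\times S,(0,s)}$, $B=\Gamma(\sO_{W\times S}\otimes_{\sO_{T\times S}}A)$, $M=\Gamma(\sF\otimes_{\sO_{T\times S}}A)$. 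The family $(\xi,\pi_S)$ is smooth by our choice of $W$ (after shrinking), so $B$ is $A$-flat; and the fiber of $\xi$ over $0\in T$ is exactly $D$, so $B/\fm B$ is (the relevant localization of) $\sO_{D\times S}$, hence the hypothesis that $\sF_s$ is normal to $D$ says precisely $\Tor^{\sO_W}_1(\sF_s,\sO_D)=0$, which globalizes to $\Tor^B_1(M,B/\fm B)=0$.

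The key steps, in order: (i) Set $U=\{x\in T\times S\mid \sF\otimes_{\sO_{T\times S}}\sO_{T\times S,x}\text{ is flat over }\sO_{T\times S,x}\}$; invoke \cite[Theorem~53]{Mat} to see $U$ is open and $\sF|_U$ is flat over $U$. (Note that \cite[Theorem~53]{Mat} is a statement about the locus of flatness over a base scheme and does not require the base to be regular.) (ii) Show $(0,s)\in U$: this is Lemma~\ref{tech-1} applied to the triple $(A,B,M)$ above, using smoothness of $(\xi,\pi_S)$ for the flatness of $B/A$ and the normality hypothesis for the Tor-vanishing. (iii) For the converse, if $(0,s)\in U$ then $\sF_s$ is flat over $U_s=U\cap(T\times s)$, hence over a neighborhood of $0\in T$; Lemma~\ref{tech-1} again (now with $A=\sO_{T,0}$, $B$ the corresponding localization of $\sO_W$) gives $\Tor^{\sO_W}_1(\sF_s,\sO_D)=0$, i.e. $\sF_s$ is normal to $D$. (iv) For the analogue of Corollary~\ref{flat-2}, note $U\cap(0\times S)$ is exactly the locus $V$ where $\sF_s$ is normal to $D$, so $V$ is open; and since $0\times V\sub U$ with $\sF|_U$ flat over $U$, base change along $D\times V\hookrightarrow U$ shows $\sF|_{D\times V}$ is $V$-flat.

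The one point that genuinely differs from the Cartier-divisor case and deserves a remark is that $T$ is not regular, so one cannot appeal to flatness criteria that presume a regular (or even Cohen--Macaulay) base; the proof must rely only on the local criterion of flatness in the form of Lemma~\ref{tech-1}, which is base-agnostic. I expect this to be the main (and only real) obstacle, and it is a mild one: the decomposition \eqref{split-2} was arranged precisely so that $W\times S\to T\times S$ is smooth, which is exactly what makes $B$ flat over $A$ and lets Lemma~\ref{tech-1} apply with $T$ in place of $\Ao$. Once that is noted, the arguments of Proposition~\ref{flat} and Corollary~\ref{flat-2} transcribe line by line, so I would simply state that the proofs are identical with $\Ao$ replaced by $T$ and the references \cite[Theorem~53]{Mat}, \cite[Theorem~49]{Mat} used as before.
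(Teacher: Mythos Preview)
Your proposal is correct and takes exactly the same approach as the paper: the paper's proof consists of the single sentence ``The proof is exactly the same,'' and your write-up is a faithful expansion of that, correctly identifying that Lemma~\ref{tech-1} and \cite[Theorem~53]{Mat} are base-agnostic and that smoothness of $(\xi,\pi_S)$ supplies the needed flatness of $B$ over $A$.
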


\begin{proof} The proof is exactly the same.
\end{proof}

\begin{prop}\lab{flat-5}
Let the situation be as in \eqref{split-2}. Let $\sF$ be an $S$-flat family of coherent sheaves on $W\times S$. Suppose for any $s\in S$ the sheaf $\sF_s$ is normal to $D$. Then $\sF_i=\sF|_{W_i\times S}$ is an $S$-flat family of coherent sheaves each of its members normal to $D$.
\end{prop}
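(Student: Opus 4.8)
The plan is to reduce this statement, which concerns a sheaf $\sF$ on the reducible scheme $W=W_1\cup W_2$, to the analysis of restriction to a single component $W_i$, and then to invoke the ``normal direction'' flatness criterion established in Propositions \ref{flat} and \ref{flat-4}. First I would fix a component, say $W_1=(z_2=0)\sub W$, and observe that $W_1\sub W$ is itself a Cartier divisor (cut out by the regular function $z_2$, which is regular because $\xi$ is smooth over $T$). The key local computation is to identify $\Tor^{\sO_W}_1(\sF_s,\sO_{W_1})$: from the short exact sequence $0\to \sO_W\xrightarrow{z_2}\sO_W\to\sO_{W_1}\to 0$ one gets that $\sF_s$ is normal to $W_1$ (in the sense of Definition \ref{normal}) precisely when multiplication by $z_2$ is injective on $\sF_s$. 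The hypothesis that $\sF_s$ is normal to $D=(z_1=z_2=0)$ should be leveraged to deduce exactly this: since $\Tor^{\sO_W}_1(\sF_s,\sO_D)=0$ and $\sO_D=\sO_W/(z_1,z_2)$, a Koszul-type argument (or the fact, recorded implicitly in the proof of Proposition \ref{flat}, that normality to $D$ makes $\sF_s$ flat over the ``nodal base'' $T=\Spec\kk[u_1,u_2]/(u_1u_2)$ in a neighborhood) shows that $z_2$ acts injectively on $\sF_s$, hence $\sF_s$ is normal to $W_1$ as a Cartier divisor.

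Once $\sF_s$ is known to be normal to the Cartier divisor $W_1\sub W$ for every $s\in S$, Proposition \ref{flat} (in its family form) applies directly: taking the auxiliary map $\tau\colon W\to\Ao$ with $\tau^*u=z_2$, normality of each $\sF_s$ to $W_1$ gives, for each $s$, a neighborhood of $(0,s)$ in $\Ao\times S$ over which $\sF$ is flat; since this holds for all $s$, the whole $0\times S$ lies in the flat locus $U$, and by base change $\sF|_{W_1\times S}=\sF_1$ is flat over $S$. The assertion that each member $\sF_{1,s}=\sF_1|_{W_1\times s}$ is again normal to $D$ should then follow by a second application of the same circle of ideas, now with $D\sub W_1$ a Cartier divisor cut out by $z_1|_{W_1}$: one checks $\Tor^{\sO_{W_1}}_1(\sF_{1,s},\sO_D)=0$, which amounts to multiplication by $z_1$ being injective on $\sF_{1,s}=\sF_s/z_2\sF_s$, and this in turn follows from the flatness of $\sF_s$ over $T$ near the node (normality of $\sF_s$ to $D$ again), because flatness over $\kk[u_1,u_2]/(u_1u_2)$ forces the expected regularity of the $z_1,z_2$-action on $\sF_s$. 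By symmetry the same holds for $W_2$ and $\sF_2$.

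The main obstacle I anticipate is the purely commutative-algebra step: extracting from the single hypothesis $\Tor^{\sO_W}_1(\sF_s,\sO_D)=0$ the two separate injectivity statements ``$z_2$ acts injectively on $\sF_s$'' and ``$z_1$ acts injectively on $\sF_s/z_2\sF_s$''. The cleanest route is probably not a direct Tor manipulation but rather to run through the equivalence of Proposition \ref{flat-4}: normality of $\sF_s$ to $D$ is equivalent (locally) to $\sF_s$ being flat over the base $T=\Spec\kk[u_1,u_2]/(u_1u_2)$ at the point $0$, and flatness of a module over the node has a well-known explicit description (it is flat iff it has no $u_1$-torsion that is killed by $u_2$ and vice versa, equivalently the two ``branch restrictions'' are the restrictions of flat families on the two lines). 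From that description the injectivity of $z_i$ on the appropriate quotient is immediate, and then Proposition \ref{flat} finishes the flatness of $\sF_i$ over $S$. I would therefore structure the proof as: (i) reduce to the local affine picture and replace normality-to-$D$ by flatness-over-$T$; (ii) read off from the node-flatness the regular-sequence behaviour of $z_1,z_2$ on $\sF_s$; (iii) apply Proposition \ref{flat}/\ref{flat-4} to get $S$-flatness of $\sF_i$; (iv) re-derive normality of $\sF_{i,s}$ to $D$ from step (ii), and conclude by symmetry. Everything after step (ii) is routine given the results already in the excerpt.
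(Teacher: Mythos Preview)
Your first approach contains a genuine error. In the reducible scheme $W=W_1\cup W_2$ one has locally $\sO_W\cong B[z_1,z_2]/(z_1z_2)$, so $z_2$ is a zerodivisor and $W_1=(z_2=0)$ is \emph{not} a Cartier divisor; the sequence $0\to\sO_W\xrightarrow{z_2}\sO_W\to\sO_{W_1}\to 0$ you write is not exact on the left. More seriously, the claim that normality of $\sF_s$ to $D$ forces $z_2$ to act injectively on $\sF_s$ is simply false: already for $\sF_s=\sO_W$ (certainly normal to $D$) multiplication by $z_2$ has kernel $(z_1)\ne 0$. Consequently Proposition \ref{flat} cannot be invoked for the map $W\to\Ao$ given by $z_2$ (that map is not even flat: its fiber over $0$ is $W_1$ while the general fiber is zero-dimensional in the local model), and the Cartier-divisor detour should be abandoned entirely.

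Your fallback route via flatness over the node $T$ is the correct one, and it is exactly what the paper does, in a more direct form. The paper applies Proposition \ref{flat-4} once to produce an open $U\sub T\times S$ containing $0\times S$ over which $\sF$ is flat as a family over $T\times S$, and then simply base-changes along the closed immersion $T_1=(u_2=0)\hookrightarrow T$: this makes $\sF_1$ flat over $T_1\times S$ on $U\cap(W_1\times S)$, hence flat over $S$ near $D\times S$. Away from $D$ the set $W_1-D$ is open in $W$, so $\sF_1=\sF$ there and $S$-flatness is inherited from $\sF$. Normality of each $(\sF_1)_s$ to $D\sub W_1$ follows from the same $T_1$-flatness via the converse direction of Proposition \ref{flat}. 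Your proposed step (ii), extracting ``regular-sequence behaviour of $z_1,z_2$'', is neither needed nor available in the form you state; the single base change of $T\times S$-flatness does all the work.
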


\begin{proof}
We prove the case $i=1$. Since this is a local problem, we assume $W$ is affine. We pick the morphism in \eqref{split-2}. Applying Proposition \ref{flat-4}, we can find an open $D\times S\sub U\sub W\times S$
so that $\sF|_U$ is flat over $T\times S$. By the base change property of flatness, $\sF|_{U\cap W_1\times S}$ is flat over $T_1\times S$, where $T_1=(u_2=0)$. Since $D\times S\sub U$, $\sF_1=\sF|_{W_1\times S}$ is flat over $S$ near $D\sub W_1\times S$. Since $W_1-D$ is open in $W$ and $\sF|_{W_1-D}=\sF_1|_{W_1-D}$, $\sF_1$ is flat over $S$.

Finally, because $\sF_s$ is normal to $D$, $\sF_1|_{W_1\times s}$ is normal to $D$ as well.
This proves the Proposition for $i=1$. The case $i=2$ is the same.
\end{proof}

We also have the converse.

\begin{lemm}\label{2-flat}
Let $\sF$ be a sheaf on $W$ in the situation \eqref{split-2}. Then $\sF$ is normal to $D\sub W$ if and
only if both $\sF|_{W_i}$, $i=1,2$, are normal to $D\sub W_i$.
\end{lemm}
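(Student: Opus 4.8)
The ``only if'' direction is the special case $S=\Spec\bk$ of Proposition \ref{flat-5} (a coherent sheaf on $W$ is tautologically a $\Spec\bk$-flat family over a point), so the content is the ``if'' direction, which I would prove by an explicit local computation with $\Tor$.

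First I would reduce to the affine local model of \eqref{split-2}: the assertion is local on $W$ and $\Tor$ sheafifies, so one may take $W=\Spec R$ with $z_1,z_2\in R$, $W_i=(z_j=0)$ and $D=(z_1=z_2=0)$ for $\{i,j\}=\{1,2\}$, together with a \emph{smooth} morphism $\xi\colon W\to T=\Spec\bk[u_1,u_2]/(u_1u_2)$ having $\xi\sta u_i=z_i$; write $R_i=R/z_jR$ for the coordinate ring of $W_i$ and let $M$ be the $R$-module of $\sF$, so $\sO_D\cong R/(z_1,z_2)$. The next step is to isolate the only ring-theoretic facts used: (i) $z_1z_2=0$ and $\ann_R(z_i)=z_jR$ in $R$; and (ii) the image of $z_i$ in $R_i$ is a nonzerodivisor. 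Here (ii) holds because $\xi|_{W_i}$ is smooth over $\bA^1=(u_j=0)\sub T$ with $D\sub W_i$ its fibre over the origin, and (i) follows from the elementary identities $\ann_A(u_i)=u_jA$ in $A:=\bk[u_1,u_2]/(u_1u_2)$ --- which also give the two-periodic free resolution of $\bk$ over $A$ --- by applying the flat base change $A\to R$.

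Tensoring that free resolution of $\bk$ with the flat $A$-algebra $R$ then yields, using (i), a free resolution of $\sO_D$ over $R$,
\[
\cdots\xrightarrow{\ \mathrm{diag}(z_1,z_2)\ }R^{\oplus2}\xrightarrow{\ \mathrm{diag}(z_2,z_1)\ }R^{\oplus2}\xrightarrow{\ (z_1,\ z_2)\ }R\lra\sO_D\lra0,
\]
so that applying $-\otimes_RM$ and taking homology gives
\[
\Tor_1^{\sO_W}(\sF,\sO_D)\ \cong\ \frac{\{(s,t)\in M^{\oplus2}\mid z_1s+z_2t=0\}}{\{(z_2u,\,z_1v)\mid u,v\in M\}};
\]
while by (ii) the two-term complex $0\to R_i\xrightarrow{\,z_i\,}R_i\to\sO_D\to0$ is a free resolution of $\sO_D$ over $R_i$, giving
\[
\Tor_1^{\sO_{W_i}}(\sF|_{W_i},\sO_D)\ \cong\ \frac{\{s\in M\mid z_is\in z_jM\}}{z_jM}.
\]

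The last step is a short module-theoretic check. Assume both right-hand quotients vanish, i.e.\ $\{s\in M\mid z_is\in z_jM\}=z_jM$ for $\{i,j\}=\{1,2\}$, and take $(s,t)\in M^{\oplus2}$ with $z_1s+z_2t=0$. Then $z_1s=-z_2t\in z_2M$, so $s=z_2u$ for some $u\in M$; hence $z_1s=z_1z_2u=0$ by (i), forcing $z_2t=0$, i.e.\ $t\in\ann_M(z_2)$; and $\ann_M(z_2)\sub z_1M$, since $z_2x=0$ implies $z_2x\in z_1M$ and therefore $x\in z_1M$. Thus $t=z_1v$ and $(s,t)=(z_2u,z_1v)$ is a boundary, so $\Tor_1^{\sO_W}(\sF,\sO_D)=0$, as wanted. (The same explicit description recovers the ``only if'' direction as well: if $z_1s=z_2m$ then $(s,-m)$ is a cycle, hence of the form $(z_2u,z_1v)$, so $s\in z_2M$.) I expect the only non-formal point to be fact (i), equivalently the two-periodic resolution of $\sO_D$ over $R$, which is exactly where the transversality of $W_1,W_2$ along $D$ enters through the smoothness of $\xi$; granting it, the remaining steps are routine.
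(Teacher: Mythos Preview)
Your proof is correct and takes a somewhat different route from the paper's. The paper establishes the ``if'' direction by showing $\sF$ is flat over $T$ near $0$: it uses that every ideal of $\hat\sO_{T,0}=\kk[[u_1,u_2]]/(u_1u_2)$ is either principal or of the form $(u_1^{a_1},u_2^{a_2})$, and checks that $I\otimes_{\hat\sO_{T,0}}\sF\to I\sF$ is injective by an element chase (if $z_1^{a_1}\alpha_1+z_2^{a_2}\alpha_2=0$, multiply by $z_1$ and use $z_1z_2=0$ to get $z_1^{a_1+1}\alpha_1=0$; then flatness of $\sF|_{W_1}$ over $T_1$ forces $\alpha_1\in z_2\sF$, so $u_1^{a_1}\otimes\alpha_1=0$, and symmetrically for $\alpha_2$). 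You instead stay with the definition $\Tor_1=0$ and compute both $\Tor_1^{\sO_W}(\sF,\sO_D)$ and $\Tor_1^{\sO_{W_i}}(\sF|_{W_i},\sO_D)$ from explicit free resolutions---the two-periodic one for $\sO_D$ over $R$ obtained by flat base change from $A$, and the two-term Koszul resolution over $R_i$---then compare cycles modulo boundaries directly. The core element manipulation is essentially the same in both arguments (each reduces to ``$z_1s\in z_2M\Rightarrow s\in z_2M$'' and its symmetric twin). Your approach is more self-contained, avoiding the detour through flatness over $T$ and the classification of ideals in $\hat\sO_{T,0}$; the paper's approach, on the other hand, yields the a priori stronger conclusion that $\sF$ is flat over $T$ near $0$, though by Proposition~\ref{flat-4} this is equivalent to normality to $D$ anyway.
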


\begin{proof}
Let $T_1=(u_2=0)$ and $T_2=(u_1=0)\sub T$. It is proved in Proposition \ref{flat-5} that $\sF$ normal to $D$ implies that both $\sF|_{W_i}$ are normal to $D$. Suppose both $\sF|_{W_i}$ are normal to $D$. Then both $\sF|_{W_i}$ are flat over $T_i$ near $0\in T_i$. We prove that $\sF$ is flat over $T$ near $0\in T$. Since $\hat\sO_{T,0}=\kk[[u_1,u_2]]/(u_1u_2)$, each ideal $I\sub\hat\sO_{T,0}$ is either principal or has the form $I=(u_1^{a_1}, u_2^{a_2})$. We show that $I\otimes_{\hat\sO_{T,0}} \sF\to I\sF$ is injective. Assume that $I=(u_1^{a_1}, u_2^{a_2})$; (for $I$ principal, the argument is the same.) Let $\alpha_i\in\sF$ so that
$$ u_1^{a_1}\otimes\alpha_1+ u_2^{a_2}\otimes\alpha_2\mapsto 0\in\sF.
$$
Since $\sO_T\to\sO_W$ is defined by $u_i\mapsto z_i$. Using $z_1z_2=0$, we get
$z_1^{a_1+1}\alpha_1=0$. Because $\sF|_{W_1}$ is flat over $0\in T_1$, this is possible only if $\alpha_1=z_2\beta$ for some $\beta\in\sF$. Then $u_1^{a_1}\otimes\alpha_1=u_1^{a_1}u_2\otimes\beta=0$. For the same reason, $u_2^{a_2}\otimes\alpha_2=0$. Hence $I\otimes_{\hat\sO_{T,0}} \sF\to I\sF$ is injective. This proves that $\sF$ is flat over $T$ near $0$.
%there is an ideal $I$ of $\hat\sO_{T,0}$ of the form $(u_1^{a_1}, u_2^{a_2})$ so that $I\otimes_{\hat\sO_{T,0}} \sF\to I\sF$ is not injective. Let $\alpha_i\in\sF$ so that $u_1^{a_1}\otimes\alpha_1+u_2^{a_2}\otimes\alpha_2\mapsto z_1^{a_1}\cdot\alpha_1+z_2^{a_2}\cdot \alpha_2=0\in\sF$. (Recall that $\sO_T\to\sO_W$ is defined by $u_i\mapsto z_i$.) Using $z_1z_2=0$, we get $z_1^{a_1+1}\alpha_1=0$. Because $\sF|_{W_1}$ is flat over $0\in T_1$, this is possible only if $\alpha_1=z_2\alpha_1'$ for some $\alpha_1'$. Then $u_1^{a_1}\otimes\alpha_1=u_1^{a_1}u_2\otimes \alpha_1'=0$. For the same reason, $u_2^{a_2}\otimes\alpha_2=0$. This proves that $\sF$ is flat over $T$ near $0$.
\end{proof}

We have a parallel result.

\begin{lemm}
Let $D_1,D_2\sub X$ be smooth divisors intersecting transversally in a smooth variety.
Suppose a sheaf $\sF$ is normal to $D_1$ and $D_2$, then it is normal to the union $D_1\cup D_2$.
\end{lemm}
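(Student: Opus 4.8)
The plan is to reduce to a local computation and read off the vanishing of $\Tor_1^{\sO_X}(\sF,\sO_{D_1\cup D_2})$ from a Koszul resolution. Since normality to a closed subscheme is a local condition, I would work in the local ring $\sO_{X,x}$ at a closed point $x$; if $x\notin D_1$ or $x\notin D_2$, then $D_1\cup D_2$ agrees with one of the $D_i$ near $x$ and the claim is one of the hypotheses, so assume $x\in D_1\cap D_2$. Because $X$ is smooth and $D_1,D_2$ meet transversally at $x$, I can choose local equations $z_1,z_2$ for $D_1,D_2$ that form part of a regular system of parameters; in particular $z_1,z_2$ is a regular sequence in the regular local ring $\sO_{X,x}$, so $(z_1)\cap(z_2)=(z_1z_2)$ and hence the scheme-theoretic union is $D_1\cup D_2=V(z_1z_2)$ locally.

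Next I would record the elementary dictionary behind all of these normality conditions: for a regular element $z$ with $D=V(z)$, the sequence $0\to\sO_X\xrightarrow{\,z\,}\sO_X\to\sO_D\to 0$ is a free resolution of $\sO_D$, so tensoring with $\sF$ and taking homology gives $\Tor_1^{\sO_X}(\sF,\sO_D)=\ker(z\colon\sF\to\sF)$. Thus, by Definition \ref{normal}, $\sF$ is normal to $V(z)$ exactly when $z$ is a non-zero-divisor on $\sF$. Applied to $z=z_1$ and $z=z_2$, the two hypotheses say precisely that multiplication by $z_1$ and by $z_2$ are injective on $\sF$.

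It then remains to note that $z_1z_2$ is also a non-zero-divisor on $\sF$: if $z_1z_2s=0$, then $z_2s\in\ker(z_1\colon\sF\to\sF)=0$, and therefore $s\in\ker(z_2\colon\sF\to\sF)=0$. Applying the dictionary a last time with $z=z_1z_2$ and $D=D_1\cup D_2$ gives $\Tor_1^{\sO_X}(\sF,\sO_{D_1\cup D_2})=\ker(z_1z_2\colon\sF\to\sF)=0$, i.e.\ $\sF$ is normal to $D_1\cup D_2$.

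I do not expect any real obstacle: the heart of the argument is the same short Koszul computation that powers Lemma \ref{2-flat}. The only step needing a moment's care is the local normalization --- using smoothness together with transversality to put $z_1,z_2$ into a regular system of parameters, which simultaneously makes each $z_i$ a non-zero-divisor and identifies $D_1\cup D_2$ with $V(z_1z_2)$ --- after which everything is formal. If one prefers to avoid choosing coordinates, an equivalent route is to apply the long exact $\Tor$ sequence to the Mayer--Vietoris sequence $0\to\sO_{D_1\cup D_2}\to\sO_{D_1}\oplus\sO_{D_2}\to\sO_{D_1\cap D_2}\to 0$ and observe that both $\Tor_1^{\sO_X}(\sF,\sO_{D_i})$ and $\Tor_2^{\sO_X}(\sF,\sO_{D_1\cap D_2})$ vanish, the latter because $\sO_{D_1\cap D_2}$ is resolved by the length-two Koszul complex on the regular sequence $z_1,z_2$, whose degree-two homology against $\sF$ is $\{s\in\sF\mid z_1s=z_2s=0\}=0$.
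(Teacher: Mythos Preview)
Your proof is correct. The paper does not give a proof of this lemma at all; it simply says ``The proof is similar, and will be omitted,'' pointing back to Lemma~\ref{2-flat}. That earlier lemma is argued by checking flatness over the auxiliary base $T=\Spec \kk[u_1,u_2]/(u_1u_2)$ ideal by ideal, so the implied proof here would presumably run through a similar flatness criterion over $\Spec\kk[u_1,u_2]$ or its analogue. Your route is more direct and more elementary: you translate ``normal to $V(z)$'' into ``$z$ is a non-zero-divisor on $\sF$'' via the two-term free resolution, and then the conclusion is the trivial fact that a product of non-zero-divisors is a non-zero-divisor. This avoids the ideal-by-ideal verification entirely and makes transparent exactly where transversality enters (only to guarantee that $D_1\cup D_2=V(z_1z_2)$ locally). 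The alternative Mayer--Vietoris/Koszul argument you sketch is also fine and closer in spirit to the paper's style, but the first argument is the cleanest.
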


\begin{proof}
The proof is similar, and will be omitted.
\end{proof}

\subsection{Admissible coherent sheaves}

We shall study coherent sheaves on a simple degeneration $\pi\colon X\to C$.

\begin{defi}\lab{admissible}
We call a coherent sheaf $\sF$ on $\xn_0$ admissible if it is normal to all $D_i\sub \xn_0$.
Let $(\cX,p)$ be an $S$-family of expanded degenerations. Let $\sF$ be an $S$-flat family of coherent sheaves on ${\cX}$. We say $\sF$ is an $S$-family of admissible coherent sheaves if $\sF_s\defeq\sF|_{\cX_s}$ is admissible for every closed $s\in S$.
\end{defi}

We agree that any coherent sheaf on a smooth  $X_t$ is admissible.

\begin{prop}\lab{open-2}
Let $\sF$ be an $S$-flat family of coherent sheaves on $\cX$. Then the set $\{s\in S\mid \sF_s \ \text{is admissible} \}$ is open in $S$.
\end{prop}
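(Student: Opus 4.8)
The plan is to reduce the statement to the local picture already analyzed in Section~3.1 together with the structural description of $\fX_0$ as glued from copies of $\fY$ along the singular divisors. The key observation is that admissibility of $\sF_s$ is the conjunction, over the finitely many singular divisors $D_i\subset\cX_s$, of the conditions ``$\sF_s$ is normal to $D_i$''. Since $\cX$ is an $S$-family of expanded degenerations, pulling back along the smooth chart $\xn\times_{\cn}0\to\fX_0$ (and a further chart of $S$) we may assume $S$ is affine and $\cX$ is one of the explicit schemes $\xn_{t_k=0}$, whose singular locus is the image of a section $\eta_k\colon D\times(\text{base})\to\cX$ as in \eqref{etak}. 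Each irreducible component meets its neighbors along such a divisor, and \'etale-locally near a point of a singular divisor the total space looks like $W=W_1\cup W_2$ intersecting transversally along a Cartier divisor $D=W_1\cap W_2$, i.e. exactly the situation \eqref{split-2}.

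First I would set up the decomposition: the singular locus of $\cX\to S$ (for $\cX$ in the chosen chart) is a disjoint union of sections $\Sigma_i\subset\cX$, each mapping isomorphically to $D\times S$, and near $\Sigma_i$ the family $\cX\to S$ admits the description \eqref{split-2} with $W_i$ the two adjacent components and $D=\Sigma_i$ the relative Cartier divisor. Then by Lemma~\ref{2-flat}, $\sF_s$ is normal to $D_i\subset\cX_s$ if and only if $\sF_s|_{W_1,s}$ and $\sF_s|_{W_2,s}$ are each normal to $D$; but on each $W_j$ this is the Cartier-divisor situation, so by Proposition~\ref{flat} (in the form of Corollary~\ref{flat-2}, applicable here because $\sF$ restricted to $W_j\times S$ is $S$-flat once we work on the smooth pieces) the locus of $s$ where this holds is open. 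Taking the intersection over the finitely many $\Sigma_i$ occurring in a quasi-compact piece of the chart shows openness of $\{s\in S\mid\sF_s\text{ admissible}\}$ on that chart.

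Finally I would descend the statement along the smooth charts $\xn\times_{\cn}0\to\fX_0$: openness is a local property in the smooth (hence in particular fppf) topology on $S$, so openness of the admissible locus after base change to each chart of $\fX_0\times_{\fC}S$ — together with the fact that the admissibility condition on $\sF_s$ is intrinsic to the fiber $\cX_s$ and hence invariant under the equivalences $\ti\tau_{I,I',X}$ and the $\Gm^n$-action — yields openness of the original locus in $S$. One must note that the number of singular divisors $D_i\subset\cX_s$ is locally bounded on $S$ (it jumps only along closed loci), so the ``finite intersection of opens'' step is legitimate on a quasi-compact base.

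The main obstacle I expect is the bookkeeping needed to pass between the two local models — the global family $\cX\to S$, whose fibers have varying numbers of components, and the fixed local model \eqref{split-2} near a single node. Concretely, one must check that near each singular section $\Sigma_i$ the family genuinely has the form \eqref{split-2} uniformly in $S$ (this is property~(2) of Lemma~\ref{Lem-X} combined with the \'etale-local structure of simple degenerations), and that the restrictions $\sF|_{W_j\times S}$ are $S$-flat so that Corollary~\ref{flat-2} applies; the latter follows from Proposition~\ref{flat-5} once one knows admissibility at a point, but for the \emph{openness} statement one instead argues directly: away from $\Sigma_i$ the component $W_j$ is open in $\cX$ so $\sF|_{W_j}$ is automatically $S$-flat, and near $\Sigma_i$ one uses the local criterion as in the proof of Proposition~\ref{flat-5}. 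Everything else is a routine assembly of the cited results.
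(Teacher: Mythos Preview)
Your detour through the irreducible components is unnecessary and introduces a circularity you do not fully close. To apply Corollary~\ref{flat-2} on each smooth piece $W_j$ you need $\sF|_{W_j\times S}$ to be $S$-flat; you note this follows from Proposition~\ref{flat-5} once admissibility is known, and then say that ``for the openness statement one instead argues directly \ldots\ near $\Sigma_i$ one uses the local criterion as in the proof of Proposition~\ref{flat-5}.'' But the proof of Proposition~\ref{flat-5} itself invokes Proposition~\ref{flat-4}, which already assumes fiberwise normality to $D$. So the component-by-component route either assumes what you are trying to prove, or collapses back to a direct use of Proposition~\ref{flat-4}.

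The paper's proof is a single line: it applies Proposition~\ref{flat-4} directly. That proposition is exactly the statement that, in the local model \eqref{split-2} with $W=W_1\cup W_2$ meeting along $D$, the locus $\{s\in S\mid \sF_s\text{ is normal to }D\}$ is open --- no splitting into components, no auxiliary flatness of restrictions required. Your reduction to local charts and the observation that admissibility is a finite conjunction of such local conditions are fine and are implicitly what makes the one-line citation work; the problematic step is only the attempt to pass through Lemma~\ref{2-flat} and the smooth Cartier-divisor version instead of using the nodal version \eqref{split-2} directly.
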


\begin{proof}
This follows directly from Proposition \ref{flat-4}.
\end{proof}

Similarly, we have the relative version. We agree that for $(\cY,\cD_\pm)$ an $S$-family of relative pairs
and $s\in S$ a closed point, we denote $\cY_s=\cY\times_S s$ and $\cD_{\pm,s}=\cD_\pm\times_S s$.

\begin{defi}\lab{relative}
We call a coherent sheaf $\sF$ on $Y[\nn]_0$ relative to $D[n_\pm]_{\pm,0}$
if it is normal to all $D_i\in Y[\nn]$ and is normal to the distinguished divisor $D[n_\pm]_{\pm,0}$.
Let $(\cY,\cD_\pm)$ be an $S$-family of relative pairs. We say an $S$-flat sheaf $\sF$ on $\cY$ relative to $\cD_\pm$ if for every closed $s\in S$, $\sF_s$ is a sheaf on $\cY_s$ relative to $\cD_{\pm, s}$.
\end{defi}

\begin{prop}\lab{open-3}
Let $\sF$ be an $S$-flat family of coherent sheaves on $\cY$. Then the set $\{s\in S\mid \sF_s \ \text{is relative to }\cD_{\pm, s} \}$ is open in $S$.
\end{prop}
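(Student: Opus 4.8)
The plan is to reduce Proposition \ref{open-3} to the already-established openness results for the two local models, namely Proposition \ref{flat-4} (for a sheaf normal to a Cartier divisor inside a smooth scheme, or to the transverse intersection of two smooth schemes) and the analogous statement for a pair of transverse divisors. The conditions defining the set in question are: (a) $\sF_s$ is normal to each singular divisor $D_i\subset \cY_s$, and (b) $\sF_s$ is normal to the distinguished relative divisor $\cD_{\pm,s}$. Each is an open condition, and a finite (locally) intersection of open conditions is open, so it suffices to treat (a) and (b) separately and locally on $\cY$ and on $S$.

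First I would handle condition (a). Working on the smooth chart $Y[\nn]\to\fY$, the family $\cY$ is locally of the form $Y[\nn]\times_{\bA\unn}S'$ for a map $S'\to\bA\unn$. Near a point of a singular divisor $D_i$ the total space $\cY$ looks étale-locally like $W_1\cup W_2$ with $W_1,W_2$ smooth and meeting transversally along the Cartier divisor $D_i=W_1\cap W_2$ (this is exactly the local picture of a normal-crossing degeneration), so we are in the situation \eqref{split-2}. Proposition \ref{flat-4}, which extends Proposition \ref{flat} and Corollary \ref{flat-2} to that setting, then says the locus in $S$ where $\sF_s$ is normal to $D_i$ is open. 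Since $\sF$ is coherent and the $\{D_i\}$ are locally finite, intersecting over the finitely many relevant $D_i$ keeps the locus open. This is essentially a restatement of Proposition \ref{open-2}, the non-relative case.

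Next I would handle condition (b). Here $\cD_{\pm,s}$ is the distinguished relative divisor $D[n_\pm]_{\pm,0}$, which sits inside a single smooth component ($\Del_{-n_-}$ or $\Del_{n_+}$) of $\cY_s$ and is a Cartier divisor there. So étale-locally near a point of $\cD_\pm$ we are in the situation \eqref{split} of Proposition \ref{flat}: $\cD_\pm\subset\cY$ is cut out by one regular function, the total space is smooth along it, and we get a map to $\Ao\times S$. Proposition \ref{flat} (via the open locus $U$ and Corollary \ref{flat-2}) gives that the set of $s$ with $\sF_s$ normal to $\cD_{\pm,s}$ is open. Intersecting the open loci from (a) and (b)—a finite intersection locally on $S$—yields the desired openness, and gluing over an open cover of $\cY$ finishes the proof.

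The only mild subtlety, and the step I would be most careful with, is checking that the various local models genuinely apply: one must verify that $\sF$, which is assumed $S$-flat on $\cY$, really does restrict to an $S$-flat family on the relevant smooth chart, and that the étale-local identification of $\cY$ near $D_i$ (resp. near $\cD_\pm$) with the model $W_1\cup W_2$ (resp. with a smooth $W\supset D$) is compatible with the chosen defining functions $z_i$ (resp. $z$), so that the map to $T\times S$ (resp. $\Ao\times S$) of \eqref{split-2} (resp. \eqref{split}) is smooth as required. Once that bookkeeping is in place, the result is immediate from Proposition \ref{flat-4} applied to condition (a) and Proposition \ref{flat} applied to condition (b). I would simply write: ``The proof is the same as that of Proposition \ref{open-2}, together with the observation that $\cD_{\pm,s}\subset\cY_s$ is locally a Cartier divisor in a smooth scheme, so Proposition \ref{flat} applies; intersecting the two open loci gives the claim.''
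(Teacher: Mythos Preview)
Your proposal is correct and matches the paper's approach exactly: the paper's proof is the one-line ``This follows directly from Corollary \ref{flat-2} and Proposition \ref{flat-4},'' which is precisely your split into condition (b) (the Cartier divisor $\cD_\pm$ in a smooth component, handled by Corollary \ref{flat-2}) and condition (a) (the singular divisors $D_i$, handled by Proposition \ref{flat-4}). Your more detailed write-up and the final one-sentence summary you propose both capture what the paper intends.
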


\begin{proof}
This follows directly from Corollary \ref{flat-2} and Proposition \ref{flat-4}.
\end{proof}

\vsp
For later study, we show that the failure of admissible property of a class of $\Gm$-equivariant quotient
sheaves are constant in $t$. Since this is a local study, we work with modules.
We let $B$ be an integral $\kk$-algebra of finite type; let $A$ be the $\Gm$ $\kk$-algebra
\beq\label{action}
A= B[z_1,z_2,t]/(z_1z_2);\quad z_1^\si=\si^a z_1,\ z_2^\si=z_2,\ t^\si=\si^b t; \quad  a\in \ZZ_+,\ b\in \ZZ_-.
\eeq
We let $R=A^{\oplus m}$ be an $A$-module with the $\Gm$-action acting on individual factors as in \eqref{action}.

Given an $A$-module $M$, for $f\in M$ we denote by $\ann(f)\sub A$ the annihilator of $f$: $\ann(f)=\{a\in A\mid af=0\}$. Let
$$I=(z_1,z_2)\sub A
$$
be the ideal generated by $z_1$ and $z_2$. We define $M_{I}=\{f\in M\mid \ann(f)\supset I^k\text{ for some $k\in \NN$}\}$. Namely, $M_{I}$ consists of elements annihilated by $I^k$ for some $k$.

We use the $\Gm$-spectral decomposition to study $\Gm$-sheaves. Given a $\Gm$-module $M$, we let
$M_{[\ell]}=\{v\in M\mid v^\si=\si^\ell v\}$. Since $\Gm$ is reductive and commutative, we have direct sum decomposition $M=\oplus_{\ell\in \ZZ} M_{[\ell]}$. We call an element $v\in M$ of weight $\ell$ if $v\in M_{[\ell]}$. By the weight assignments of $z_i$ and $t$, we see that for an element $f\in A$ of weight $\ell\geq 0$ and is divisible by $t$, then $f$ is divisible by $z_1$.

Let $A_0=A/(t)$ be the quotient ring. For any $R$-module $M$, we denote $M_0=M\otimes_A A_0$.
Let  $R_0=A_0^{\oplus m}=R\otimes_A A_0$, and $I_0=(z_1,z_2)\sub A_0$.

\begin{lemm}\label{A}
Let $\varphi: R\to M$ be a $\Gm$-equivariant quotient $A$-module. Suppose $M$ is $\kk[t]$-flat,
then the natural homomorphism $M_I\otimes_A A_0\to (M_0)_{I_0}$ is an isomorphism.
\end{lemm}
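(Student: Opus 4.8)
The plan is to work entirely with the $\Gm$-spectral decomposition and reduce the statement to a weight-by-weight comparison. First I would observe that since $\varphi\colon R\to M$ is $\Gm$-equivariant and $\Gm$ is reductive, both $M$ and $M_0$ split into weight spaces $M=\bigoplus_\ell M_{[\ell]}$, $M_0=\bigoplus_\ell (M_0)_{[\ell]}$, and the submodules $M_I$, $(M_0)_{I_0}$ are $\Gm$-stable, hence also split into weight pieces. The natural map $M_I\otimes_A A_0\to (M_0)_{I_0}$ is $\Gm$-equivariant, so it suffices to prove it is an isomorphism on each weight space. The key structural fact I would exploit is the one singled out just before the statement: an element of $A$ of weight $\ell\ge 0$ divisible by $t$ is divisible by $z_1$ (and dually, by symmetry of the argument, an element of weight $\ell\le 0$ divisible by $t$ is divisible by $z_2$, using $b<0$ and the weight of $z_2$ being $0$; actually one must be a bit careful here — $z_2$ has weight $0$, so $t\cdot(\text{weight }\ell)$ has weight $\ell + b$, and I would re-examine exactly which divisibility holds in which weight range).

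The heart of the argument is surjectivity and injectivity of $M_I\otimes_A A_0\to (M_0)_{I_0}$. For \emph{surjectivity}: given $\bar f\in (M_0)_{I_0}$ of weight $\ell$, lift it to $f\in M_{[\ell]}$; then $I_0^k\bar f=0$ means $I^k f\subset tM$. Using $\kk[t]$-flatness of $M$ (so that $t$ is a nonzerodivisor on $M$) together with the divisibility fact above, I would show that $f$ can be adjusted by an element of $tM$ to land in $M_I$, which gives a preimage. For \emph{injectivity}: suppose $f\in (M_I)_{[\ell]}$ maps to $0$ in $(M_0)_{I_0}$, i.e.\ $f\in tM$; write $f=tg$ with $g\in M_{[\ell-b]}$ (weight shift by $-b>0$). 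Since $I^k f=0$ and $t$ is a nonzerodivisor, $I^k g=0$, so $g\in M_I$; then $f=tg\in tM_I$, which is exactly $0$ in $M_I\otimes_A A_0$. This last step is clean precisely because $t$ is a nonzerodivisor on $M$ and on $M_I$ (the latter because $M_I\subset M$ and $M$ is $\kk[t]$-flat).

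The main obstacle I anticipate is the bookkeeping in the surjectivity step: controlling how multiplication by $I=(z_1,z_2)$ interacts with the weight grading and with divisibility by $t$. One needs that if $I^k f\subset tM$ then — weight by weight, splitting $I^k$ into monomials $z_1^i z_2^j$ of the appropriate weights — each $z_1^iz_2^j f$ is $t$ times something of a \emph{specific} weight, and then invoke the divisibility fact to rewrite $t$ times that something back in terms of $z_1$ or $z_2$, thereby showing $z_1^iz_2^j f$ itself (not just $t$ times it) lies in a smaller power of $I$ after modifying $f$. Iterating this a bounded number of times (the exponents drop, or one uses that $z_1z_2=0$ kills the cross terms) terminates. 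I would also need the elementary remark that $\ann_A(f)\supset I^k$ for the element of $M$ one constructs, which follows from $\ann_{A_0}(\bar f)\supset I_0^k$ together with $t$ being a nonzerodivisor. The rest is routine diagram-chasing with graded modules over the explicit ring $A=B[z_1,z_2,t]/(z_1z_2)$.
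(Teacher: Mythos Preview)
Your injectivity argument is correct and is essentially what the paper asserts in one line: $M_I\otimes_A A_0\to M_0$ is injective because $tM\cap M_I=tM_I$, which is exactly what your computation with $f=tg$ shows.

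The surjectivity argument, however, has a real gap. The divisibility fact you want to use --- an element of weight $\ell\ge 0$ that is divisible by $t$ is divisible by $z_1$ --- is a statement about elements of the \emph{free} module $R=A^{\oplus m}$ (or of $A$ itself), where one can read off monomial supports. It says nothing about elements of the quotient $M$. So when you write ``$z_1^iz_2^j f=t\cdot(\text{something})$ and then invoke the divisibility fact to rewrite $t$ times that something back in terms of $z_1$ or $z_2$'', there is no mechanism available: the ``something'' lives in $M$, not in $R$, and you cannot factor out a $z_1$ from it. No amount of iteration on exponents fixes this, because the basic step never gets started.

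The paper's proof avoids this by passing through $R$ and the kernel $K=\ker(R\to M)$. Given $v\in (M_0)_{I_0}$ of weight $\ell$, one lifts $v$ to $\bar v\in R_0$ (not to $M$). Then $z_1^k\bar v\in K_0$. Now $\kk[t]$-flatness of $M$ is used in the form $K\otimes_A A_0\cong K_0$ (from the long exact sequence of $0\to K\to R\to M\to 0$), so $z_1^k\bar v$ lifts to some $w\in K\subset R$. Inside the free module $R$ the weight/divisibility bookkeeping is legitimate: writing $w=\sum t^i w_i$ with $w_i\in (B[z_1,z_2]/(z_1z_2))^{\oplus m}$ of weight $\ell+ka-bi\ge ka$, each $w_i$ is divisible by $z_1^k$, so $w=z_1^k w'$. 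Then $\varphi(w')\in M$ is a lift of $v$ with $z_1^k\varphi(w')=0$; a separate lift of $z_2^h\bar v\in K_0$ handles the $z_2$-annihilation. Thus the crucial use of flatness in the surjective direction is \emph{not} ``$t$ is a nonzerodivisor on $M$'' but rather the identification $K_0=K/tK$, which lets one transport the problem to the free module where your divisibility fact actually applies.
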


We next study the failure of the flatness of $M$ over $T=\kk[z_1,z_2]/(z_1z_2)$. We let $A^-=A/(z_2)$, let $M^-=M\otimes_A A^-$, $R^-=R\otimes_A A^-$, and define $K^-=\ker\{R^-\to M^-\}$. We consider the localization
$K^-_{(t)}$ of $K^-$ by the ideal $(t)$;\footnote{Since $(K^-)\llt=(K_{(t)})^-$, there is no confusion using
$K^-_{(t)}$ to denote either.} consider its further localization by $(z_1)$, its intersecting with $R^-\llt$,
and the quotient:
\beq\label{Cgen} \Bigl( (K^-\llt)_{(z_1)}\cap R^-\llt\Bigr)\otimes_{A^-\llt} A^-\llt/(z_1)\sub B[t,t\upmo]^{\oplus m}.
\eeq
By the construction, the inclusion is $\Gm$-invariant, thus the $B[t,t\upmo]$-submodule is generated by elements in $B^{\oplus m}$. In other words, there is a $B$-submodule
$C_{\text{gen}}\sub B^{\oplus m}$ such that as submodules of $B[t,t\upmo]^{\oplus m}$,
$$
C_{\mathrm{gen}}\otimes_B B[t,t\upmo]=\Bigl( (K^-\llt)_{(z_1)}\cap R^-\llt\Bigr)\otimes_{A^-\llt} A^-\llt/(z_1).
$$

Applying the same construction to the module $K_0^-=\ker\{ R_0^-\to M^-_0\}$, where $R^-_0=R^0\otimes_{A_0}A_0^-$, where $A_0^-=A_0/(z_2)$, and same for $M^-_0$, we obtain a submodule $C_0\sub B^{\oplus m}$
such that as submodules of $B^{\oplus m}$,
$$C_0=\Bigl((K_0^-)_{(z_1)}\cap R^-_0\Bigr)\otimes_{A_0^-} A_0^-/(z_1).
$$

\begin{lemm}\label{B}
Let the situation be as in Lemma \ref{A}. Then as $B$-modules, $C_0\sub B^{\oplus m}$ coincide
with $C_{\mathrm{gen}}\sub B^{\oplus m}$.
\end{lemm}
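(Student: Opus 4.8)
This is the companion of Lemma~\ref{A}, and the plan is to run the same mechanism: decompose all the modules in sight into $\Gm$-weight spaces (each of which is a $B$-module, as $B$ has weight $0$) and exploit the elementary observation recorded above, that an element of $A$ of weight $\geq 0$ divisible by $t$ is automatically divisible by $z_1$. Write $K^-=\ker(R^-\to M^-)$ and let $S\sub R^-$ be its $z_1$-saturation, $S=\{x\in R^-\mid z_1^Nx\in K^-\text{ for some }N\}$; it is a $\Gm$-stable $A^-$-submodule, and since localization at $t$ commutes with forming saturations we have $S\llt=(K^-\llt)_{(z_1)}\cap R^-\llt$. Hence $C_{\mathrm{gen}}\otimes_B B[t,t\upmo]$ is the image of $S\llt$ in $R^-\llt/(z_1)=B[t,t\upmo]^{\oplus m}$, and $C_{\mathrm{gen}}$ is its weight-$0$ part. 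Because $t$ has strictly negative weight, every weight space of this image is carried onto the one immediately below it by multiplication by $t$, so — using also the $\kk[t]$-flatness of $M$ exactly as in Lemma~\ref{A} — one reads $C_{\mathrm{gen}}$ off the weight-$0$ part of the image of $S$ in $R^-/z_1R^-$ already \emph{without} inverting $t$.

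Next I would identify $K_0^-$. Tensoring $0\to K^-\to R^-\to M^-\to 0$ with $A_0^-=A^-/(t)$ and using $M^-/tM^-=M/(z_2,t)M=M_0^-$, right exactness shows that $K_0^-=\ker(R_0^-\to M_0^-)$ is precisely the image of $K^-$ under the reduction $R^-\to R^-/tR^-=R_0^-$ (here the flatness hypothesis controls the relevant $\Tor$ term and its compatibility with the weight decomposition, as in Lemma~\ref{A}). Writing $\overline{(\cdot)}$ for reduction modulo $t$, it follows that $C_0$ is obtained from $\overline S\sub R_0^-$ by $z_1$-saturating inside $R_0^-$ and reducing modulo $z_1$. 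So the whole statement reduces to the claim that, on weight-$0$ elements, $\overline S$ already coincides with the $z_1$-saturation of $\overline{K^-}$ in $R_0^-$ — in other words, that $z_1$-saturation commutes with the specialization $t\mapsto 0$ in the relevant weights.

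The core of the proof is this last commutation, and it is where the divisibility observation does the work. One inclusion is formal. For the other, let $\bar x\in R_0^-$ be homogeneous of weight $0$ with $z_1^N\bar x\in\overline{K^-}$; lift to a weight-$0$ element $x\in R^-$ and write $z_1^Nx=k+tr$ with $k\in K^-$ and $r\in R^-$. Then $tr=z_1^Nx-k$ has weight $aN\geq 0$ and is divisible by $t$, hence by $z_1$, say $tr=z_1r'$; and since $z_1$ is a nonzerodivisor on $R^-/tR^-=B[z_1]^{\oplus m}$, the relation $z_1r'=tr$ forces $r'\in tR^-$, say $r'=tr''$. Then $z_1\bigl(z_1^{N-1}x-tr''\bigr)=k\in K^-$, so $z_1^{N-1}x-tr''\in S$, and its reduction modulo $t$ is $z_1^{N-1}\bar x$; thus $z_1^{N-1}\bar x\in\overline S$. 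Iterating on $N$ and absorbing the correction terms $r''$ yields $\bar x\in\overline S$, hence $C_0=C_{\mathrm{gen}}$.

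The step I expect to be the real obstacle is this descent. One must track the weights of $x,r,r',r''$ carefully so that the hypothesis ``weight $\geq 0$'' of the divisibility observation is met at \emph{every} stage (each step lowers the weight by $a$, and it must stay $\geq 0$), and the correct inductive statement is one about the entire $B$-module of weight-$0$ elements rather than a single $x$, so that the correction terms produced along the way remain within its scope. A secondary point needing equal care is the claim in the first paragraph that passing to $S\llt$ does not enlarge $C_{\mathrm{gen}}$; this is again a consequence of the $\Gm$-weight grading together with $t$ having negative weight, but it should be verified in tandem with the flatness hypothesis, just as in Lemma~\ref{A}.
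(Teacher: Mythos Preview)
Your route is a genuine reorganization of the paper's argument and is correct in outline. The paper proves the two inclusions $C_{\mathrm{gen}}\sub C_0$ and $C_0\sub C_{\mathrm{gen}}$ separately, and in each case passes through the kernel $K\sub R$ in the \emph{full} ring (with $z_2$ present): it uses $z_1z_2=0$ to transport elements between $K^-$ and $K$ (multiplying by $z_1$ kills the $z_2$-ambiguity), then invokes $\kk[t]$-flatness of $M$ either to cancel $t^k$ inside $K$ or to lift from $K_0$ to $K$. You instead stay entirely in $R^-$, introducing the $z_1$-saturation $S$ and reducing everything to the single claim that $(\overline S)_{[0]}$ equals both $C_{\mathrm{gen}}$ and $C_0$. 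The descending induction you call the ``core'' is correct (at each stage the weight is $aM>0$ until $M=0$, so the concern you flag about the weight dropping below $0$ never arises), and with the easy inclusion $\overline S\sub z_1\text{-sat}(\overline{K^-})$ it gives $C_0=(\overline S)_{[0]}$. Two small corrections: the identity $K_0^-=\overline{K^-}$ holds by right-exactness of $\otimes$ alone and needs no flatness; and where you write ``$C_0$ is obtained from $\overline S$\ldots'' you mean $\overline{K^-}$.

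The one place your sketch genuinely needs more than a gesture is the other half, $C_{\mathrm{gen}}\sub(\overline S)_{[0]}$. Unwinding, this amounts to showing that $S$ is $t$-saturated in $R^-$, i.e.\ that $R^-/S$ is $t$-torsion-free. This \emph{does} follow from the flatness of $M$, but not quite ``as in Lemma~\ref{A}''; the mechanism is that the $t$-torsion of $M^-=M/z_2M$ is annihilated by $z_1$ (if $tm\in z_2M$ then $z_1tm=z_1z_2(\cdots)=0$, whence $z_1m=0$ since $M$ is $t$-torsion-free), hence vanishes in $R^-/S=M^-/(z_1\text{-torsion})$. With that in hand, from $t^k\gamma=s+z_1\rho$, $s\in S$, one uses that negative-weight elements of $R^-$ are divisible by $t$ to write $s=ts_1$, $\rho=t\rho_1$, and $t$-saturation gives $s_1\in S$; iterating brings $k$ down to $0$. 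This is exactly the paper's $z_1z_2=0$ trick, repackaged as a single module-theoretic property of $S$ rather than as a detour through $K$. What you gain is a cleaner isolation of where flatness enters; what the paper's version gains is brevity, since it never introduces $S$.
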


The proofs will be given in Appendix.

\subsection{Numerical criterion}

We introduce numerical criterion to measure the failure of a coherent sheaf normal to a closed subscheme.
This will be used to prove the properness of moduli spaces.

Let $I_l\sub \sO_{\xn_0}$ be the ideal sheaf of $D_l\sub \xn_0$. For a sheaf $\sF$ on $\xn_0$, as in the previous subsection, we define
$$\sF_{I_l}=\{ v\in\sF\mid \ann(v)\subset I_l^k\ \text{for some } k\in \ZZ_+\}.
$$
We define
$$ \sF\utf=\sF/(\oplus_{l=1}^{n+1}\sF_{I_l}).
$$
It is the sheaf $\sF$ quotient out its subsheaf supported on a sufficiently thickening of the singular loci of $\xn_0$.
We then denote $(\sF\utf)_l=\sF\utf|_{\Delta_l}$, and form
$$(\sF\utf)_{l,I_l}:=\bl(\sF\utf)_l\br_{I_l}\and
(\sF\utf)_{l,I_{l+1}}:=\bl(\sF\utf)_l\br_{I_{l+1}};
$$
they are subsheaves of $(\sF\utf)_l$ supported along $D_l$ and $D_{l+1}$ respectively.

\vsp
\noindent
{\textbf{Example}}: We give an example of non-admissible quotient sheaf of $\sO_X$. For simplicity, we consider the affine case where $Y=\Delta_1\cap \Delta_2\sub \bA^4$ is defined via $\Delta_1=\{(z_i)|z_2=0\}$ and $\Delta_2=\{(z_i)|z_1=0\}$. We let
$$\sF_1=\sO_{\Delta_1}/(z_4,z_3^3,z_3^2z_1),\and \sF_2=\sO_{\Delta_2}/(z_3,z_4^3,z_4^2z_2).
$$
Let $\iota_i:\Delta_i\to Y$ be the inclusion. We define $\sF=\ker\{\iota_{1*}\sF_1\oplus\iota_{2*}\sF\to k(0)\}$, where $k(0)$ is the structure sheaf of the origin $0\in \bA^4$. Then $\sF_1\utf=\sO_{\Delta_1}/(z_4,z_3^2)$, and $\sF_2\utf=\sO_{\Delta_2}/(z_3,z_4^2)$ (c.f. \eqref{77} below); further
$$\sF\utf=\ker\{\iota_{1*}\sF_1\utf\oplus \iota_{2*}\sF_2\utf\to k(0)\},\quad \mathrm{length}(\sF/\sF\utf)=2,
$$
and $\sF\utf|_{\Delta_1}$ has a dimension zero element support at $0$.

\vsp

For an integer $v$, we continue to denote by $\sF(v)=\sF\otimes p\sta H^{\otimes v}$, where $p: \xn_0\to X$ is the
projection.

\begin{defi}
We define the $l$-th error of $\sF$ be
\beq\label{err}
\err_l\sF=\chi(\sF_{I_l}(v))+\frac{1}{2}\chi\bl(\sF\utf)_{l,I_l}(v)\br+\frac{1}{2}\chi\bl(\sF\utf)_{l-1,I_{l}}(v)\br;
\eeq
we define the total error of $\sF$ be $\displaystyle \err\sF=\sum_{l=0}^{n+1} \err_l \sF$.
\end{defi}

\begin{lemm}
A sheaf $\sF$ on $\xn_0$ is admissible along $D_l$ if and only if all $\sF_{I_l}$, $(\sF\utf)_{l,I_l}$
and $(\sF\utf)_{l-1,I_{l}}$ are zero.
\end{lemm}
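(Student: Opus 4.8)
The plan is to reduce the statement to a local computation in a normal-crossing model around $D_l$ and then to feed that model into Lemma~\ref{2-flat} together with the standard resolution of a node. First I would choose an affine open $U\sub\xn_0$ with $D_l\sub U$ but $U\cap D_{l'}=\emptyset$ for every $l'\ne l$; this is possible because the singular divisors $D_1,\dots,D_{n+1}$ of $\xn_0$ are pairwise disjoint (inside each $\Del_i$ the two relevant divisors are the disjoint sections $\bP(1)$ and $\bP(N_+)$ of the $\bP^1$-bundle $\Del$, and $D_-,D_+$ are disjoint in $Y$). On such a $U$ all the other torsion subsheaves $\sF_{I_{l'}}$ vanish, so $\sF\utf|_U=(\sF/\sF_{I_l})|_U$; moreover $U=U_1\cup U_2$ with $U_1=U\cap\Del_{l-1}$ and $U_2=U\cap\Del_l$ smooth and meeting transversally along the Cartier divisor $D_l=U_1\cap U_2$. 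I would then pick local functions $z_1,z_2$ on $U$ with $U_i=(z_{3-i}=0)$, so that $I_l=(z_1,z_2)$ and $z_1z_2=0$, and record the $2$-periodic free resolution $\cdots\to\sO_U^{\oplus 2}\xrightarrow{\mathrm{diag}(z_2,z_1)}\sO_U^{\oplus 2}\xrightarrow{(z_1,z_2)}\sO_U\to\sO_{D_l}\to0$, which yields $\Tor_1^{\sO_U}(\sF,\sO_{D_l})=\{(a,b)\in\sF^{\oplus 2}\mid z_1a+z_2b=0\}/\{(z_2c,z_1d)\}$.

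Next I would set up two reductions. By Lemma~\ref{2-flat}, $\sF$ is normal to $D_l$ iff $\sF|_{U_i}$ is normal to $D_l\sub U_i$ for $i=1,2$; and since $D_l\sub U_i$ is cut out by the nonzerodivisor $z_i|_{U_i}$, the Koszul resolution of a Cartier divisor shows $\sF|_{U_i}$ is normal to $D_l$ iff multiplication by $z_i$ on $\sF|_{U_i}=\sF/z_{3-i}\sF$ is injective. Because $z_{3-i}$ acts by zero on $\sF/z_{3-i}\sF$, the $I_l$-power-torsion of $\sF|_{U_i}$ equals its $z_i$-torsion, so this condition is exactly $(\sF|_{U_i})_{I_l}=0$; and $(\sF|_{U_1})_{I_l}$, $(\sF|_{U_2})_{I_l}$ become $(\sF\utf)_{l-1,I_l}$, $(\sF\utf)_{l,I_l}$ precisely once we also know $\sF_{I_l}=0$.

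The ``if'' direction is then immediate: $\sF_{I_l}=0$ forces $\sF\utf=\sF$ near $D_l$, so the hypotheses give $(\sF|_{U_i})_{I_l}=0$, hence each $\sF|_{U_i}$ is normal to $D_l\sub U_i$, hence $\sF$ is normal to $D_l$ by Lemma~\ref{2-flat}. For the ``only if'' direction, normality gives $(\sF|_{U_i})_{I_l}=0$ for $i=1,2$, and the step I expect to be the main obstacle is to extract $\sF_{I_l}=0$ from this. Here I would argue: given $v\in\sF$ with $I_l^kv=0$, its image in each $\sF/z_{3-i}\sF$ is $I_l$-power-torsion, hence $0$, so $v\in z_1\sF\cap z_2\sF$; writing $v=z_2u=z_1u'$ gives $z_1u'-z_2u=0$, so by the $\Tor_1$-description there are $c,d\in\sF$ with $u'=z_2c$ and $u=-z_1d$, and therefore $v=z_2u=-z_1z_2d=0$. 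With $\sF_{I_l}=0$ in hand, $\sF\utf=\sF$ near $D_l$, and the vanishings $(\sF|_{U_i})_{I_l}=0$ already obtained are exactly $(\sF\utf)_{l,I_l}=(\sF\utf)_{l-1,I_l}=0$.

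Thus the only genuinely delicate point is the cancellation $v=z_2u=z_1(-d)\Rightarrow v=-z_1z_2d=0$ forcing $\sF_{I_l}=0$; everything else is a routine application of Lemma~\ref{2-flat}, the elementary fact that normality to a Cartier divisor means the defining equation acts injectively, the disjointness of the $D_l$'s (so that $\sF\utf$ is locally just $\sF/\sF_{I_l}$ near $D_l$), and the explicit periodic resolution of $\sO_{D_l}$ over the local ring of the node.
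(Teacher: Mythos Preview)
Your argument is correct and follows the same local strategy as the paper: work on an affine chart around $D_l$ missing the other singular divisors, use Lemma~\ref{2-flat} to reduce admissibility to normality of each restriction $\sF|_{U_i}$ along the Cartier divisor $D_l\sub U_i$, and identify those normality conditions with the vanishing of the three torsion pieces; your explicit extraction of $\sF_{I_l}=0$ from $\Tor_1=0$ via the periodic resolution of $\sO_{D_l}$ spells out what the paper compresses into ``by the flatness criterion.'' The only cosmetic fix needed is that a single affine $U$ containing all of $D_l$ generally does not exist when $D_l$ is projective, so you should phrase this as covering $D_l$ by such affines, exactly as the paper does at the end of its proof.
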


\begin{proof}
This is a local problem. We pick an affine open $W\sub \xn_0$ so that $W\sub \Delta_{l-1}\cup\Delta_l-D_{l-1}\cup D_{l+1}$. We let $W_1=W\cap \Delta_{l-1}$ and $W_2=W\cap \Delta_l$. We form $\xi:W\to T$ as in \eqref{split-2} so that
for $T_i\sub T$ the lines $\Ao\cong T_i\sub T$, we have $W_i=W\times_T T_i$; thus $\xi\upmo(0)=D_l\cap W$.

By Proposition \ref{flat} and Lemma \ref{2-flat}, $\sF|_W$ is admissible if and only if $\sF|_{W_i}$ are flat over $T_i$ near $0$. Let $J$ (resp. $J_i$) be the ideal sheaf of $W_1\cap W_2\sub W$ (resp. $W_1\cap W_2\sub W_i$);
let $(\sF|_W)_J$ be the torsion subsheaf of $\sF|_W$ supported along $W_1\cap W_2$, and let $\sF|_W\utf=(\sF|_W)/(\sF|_W)_J$. By the flatness criterion, this is true if and only if $(\sF|_W)_J=0$ and $\bl (\sF|_W\utf)|_{W_i})_{J_i}=0$ for $i=1,\, 2$. This proves that $\sF|_W$ is admissible if and only if all $\sF_{I_l}|_W$, $(\sF\utf)_{l,I_l}|_W$ and $(\sF\utf)_{l-1,I_{l}}|_W$ are zero. Going over a covering of $D_l\sub \xn_0$, the lemma follows.
\end{proof}

There is a useful identity expressing $\chi(\sF(v))$ in terms of $\err\sF$ and the Hilbert polynomial of
\beq\label{77}
\sF_l\utf\defeq \sF|_{\Del_l}/(\sF_{l,I_l}\oplus \sF_{l,I_{l+1}}).
\eeq
(It is $\sF|_{\Del_l}$ quotient out its subsheaf support along $D_l\cup D_{l+1}\sub \Del_l$.)

\begin{lemm} \label{delta}Let
$$\delta_{l,i}=\chi(\sF\utf_l(v))+\chi\bl (\sF\utf)_{l,I_{l+i}}(v)\br-\chi\bl \sF\utf|_{D_{l+i}}(v)\br,\ i=0, \, 1.
$$
Then we have the identity
\beq\label{the-id}
\chi(\sF(v))= \exc\sF+\frac{1}{2}\sum_{l=0}^{n+1}\bl\delta_{l,0}+\delta_{l,1}\br .
\eeq
\end{lemm}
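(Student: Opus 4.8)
The plan is to deduce \eqref{the-id} from additivity of the Euler characteristic on short exact sequences, by splitting $\sF$ along the singular locus and then running a Mayer--Vietoris argument on the torsion--cleaned quotient $\sG\defeq\sF\utf$. All equalities below will be equalities of polynomials in $v$, and the twist $\sF\mapsto\sF(v)=\sF\otimes p\sta H^{\otimes v}$ may be carried along freely since $p\sta H^{\otimes v}$ is locally free; likewise $\chi$ is unchanged under pushing forward along the closed immersions $\iota_l\colon\Delta_l\hookrightarrow\xn_0$ and $\iota_m\colon D_m\hookrightarrow\xn_0$. First I would record the defining sequence
$$0\lra\bigoplus_l\sF_{I_l}\lra\sF\lra\sG\lra 0,$$
exact because the $D_l\sub\xn_0$ are pairwise disjoint; it already accounts for the terms $\sum_l\chi(\sF_{I_l}(v))$ of $\err\sF$, so it suffices to prove the corresponding identity for $\sG$. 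Next, on each smooth component $\Delta_l$ the two boundary divisors $D_l,D_{l+1}$ are disjoint (distinct sections of the $\Po$-bundle $\Del$, resp.\ the disjoint $D_+,D_-$ of $Y$), so the subsheaf of $\sG|_{\Delta_l}$ supported on $D_l\cup D_{l+1}$ splits as $(\sF\utf)_{l,I_l}\oplus(\sF\utf)_{l,I_{l+1}}$ and we get
$$0\lra(\sF\utf)_{l,I_l}\oplus(\sF\utf)_{l,I_{l+1}}\lra\sG|_{\Delta_l}\lra\sF\utf_l\lra 0.$$
Here one first checks that this quotient of $\sG|_{\Delta_l}$ really is the sheaf $\sF\utf_l$ of \eqref{77}, i.e.\ that $\sG|_{\Delta_l}$ and $\sF|_{\Delta_l}$ have the same ``torsion-free along $D_l\cup D_{l+1}$'' quotient; this holds because $\ker(\sF|_{\Delta_l}\to\sG|_{\Delta_l})$ is supported on $D_l\cup D_{l+1}$.

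The heart of the argument, and the only non-formal step, is the exactness of the Mayer--Vietoris complex
$$0\lra\sG\lra\bigoplus_l\iota_{l*}\bl\sG|_{\Delta_l}\br\lra\bigoplus_m\iota_{m*}\bl\sG|_{D_m}\br\lra 0,$$
where the second arrow is the signed difference of the two restrictions at each node. Exactness is local; away from the singular locus $\xn_0$ is smooth and the complex is trivially exact, so everything reduces to a point of a node $D_m$, where $\sO_{\xn_0}$ has the form $A_0=B[z_1,z_2]/(z_1z_2)$ with $D_m=(z_1=z_2=0)$ and the two branches $(z_1=0),(z_2=0)$. For a finitely generated $A_0$-module $M$ with $M_{(z_1,z_2)}=0$ one checks by hand that $0\to M\to M/z_1M\oplus M/z_2M\to M/(z_1,z_2)M\to 0$ is exact: vanishing of the composite and surjectivity on the right are immediate; if $\bar a-\bar b$ dies in $M/(z_1,z_2)M$, write $a-b=z_1c+z_2d$ and note that $m:=a-z_2d=b+z_1c$ has image $(\bar a,\bar b)$, giving exactness in the middle; and if $z_1a=z_2b$ then $z_1(z_1a)=z_1z_2b=0$ and $z_2(z_1a)=0$, so $(z_1,z_2)$ kills $z_1a$, whence $z_1a\in M_{(z_1,z_2)}=0$ and $M\hookrightarrow M/z_1M\oplus M/z_2M$. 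The needed hypothesis $\sG_{I_m}=0$ at each node is immediate from the construction of $\sG=\sF\utf$: a local section killed by a power of $I_m$ lifts to a section of $\sF$ killed by a power of $\prod_jI_j$, hence (the $D_j$ being disjoint) lies in $\bigoplus_j\sF_{I_j}$ and maps to $0$ in $\sG$. Taking Euler characteristics of the Mayer--Vietoris sequence then gives
$$\chi(\sG(v))=\sum_l\chi\bl\sG|_{\Delta_l}(v)\br-\sum_m\chi\bl\sG|_{D_m}(v)\br.$$

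Finally I would assemble the pieces. Substituting $\chi(\sG|_{\Delta_l}(v))=\chi(\sF\utf_l(v))+\chi((\sF\utf)_{l,I_l}(v))+\chi((\sF\utf)_{l,I_{l+1}}(v))$ from the second sequence, rewriting $\sum_m\chi(\sG|_{D_m}(v))=\tfrac12\sum_l\bl\chi(\sG|_{D_l}(v))+\chi(\sG|_{D_{l+1}}(v))\br$ (each node is the left divisor of one component and the right divisor of the next), and expanding $\delta_{l,0}+\delta_{l,1}$ by definition, a direct comparison of coefficients shows that $\chi(\sG(v))$ equals $\tfrac12\sum_l(\delta_{l,0}+\delta_{l,1})$ plus $\tfrac12\sum_l\bl\chi((\sF\utf)_{l,I_l}(v))+\chi((\sF\utf)_{l,I_{l+1}}(v))\br$ — and the latter is exactly $\err\sF-\sum_l\chi(\sF_{I_l}(v))$ after reindexing $l\mapsto l+1$ in the $(\sF\utf)_{l-1,I_l}$ terms. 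Adding back $\sum_l\chi(\sF_{I_l}(v))$ from the first sequence yields \eqref{the-id}. I expect the exactness of the Mayer--Vietoris complex — concretely, verifying that $\sF\utf$ carries no nonzero subsheaf supported on the nodes so that the first map is injective — to be the only real content; everything else is bookkeeping with disjoint supports and the additivity of $\chi$.
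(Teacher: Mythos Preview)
Your proof is correct and follows essentially the same route as the paper: split off the $I$-torsion, apply a Mayer--Vietoris sequence to $\sF\utf$, decompose each $\sF\utf|_{\Delta_l}$ into its torsion along $D_l\cup D_{l+1}$ plus $\sF\utf_l$, and regroup. The paper simply asserts the exactness of the sequence $0\to\sF\utf\to\bigoplus_l\sF\utf|_{\Delta_l}\to\bigoplus_m\sF\utf|_{D_m}\to 0$, whereas you supply the local verification over $B[z_1,z_2]/(z_1z_2)$ and check that $(\sF\utf)_{I_m}=0$; this added detail is sound and fills what the paper leaves implicit.
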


\begin{proof}
Since $\sF\utf=\sF/\sF_I$,
\beq\label{eq-1}
\chi(\sF(v))=\chi(\sF\utf(v))+\chi(\sF_I(v)).
\eeq
For $\sF\utf$, we have the exact sequence
\beq\label{eq-2}
0\lra \sF\utf\lra \bigoplus_{l=0}^{n+1}\sF\utf|_{\Del_l}\lra \bigoplus_{l=1}^{n+1}\sF\utf|_{D_l}\lra 0.
\eeq
(Here we view both $\sF\utf|_{\Del_l}$ and $\sF\utf|_{D_l}$ as sheaves of $\sO_{\xn_0}$-modules.) Using
$$\chi(\sF\utf|_{\Del_l}(v))=\chi(\sF\utf_l(v))+\chi((\sF\utf)_{l,I_l}(v))+ \chi((\sF\utf)_{l,I_{l+1}}(v))
$$
and \eqref{eq-2}, after regrouping, we conclude
$$\chi(\sF(v))=\Bigl(\chi(\sF_I(v))+\frac{1}{2}\sum_{l=0}^{n+1}\sum_{i=0}^1\chi\bl (\sF\utf)_{l,I_{l+i}}(v)\br\Bigr)+
\frac{1}{2}\sum_{l=0}^{n+1}(\delta_{l,0}+\delta_{l,1}).
$$
This proves the lemma.
\end{proof}

We have the following positivity in case $\sF$ is a quotient sheaf of $p\sta\sV$ for a locally free sheaf $\sV$
on $X$.

\begin{lemm}\label{vanishing}
Suppose $\sF$ is a quotient sheaf of $p\sta\sV$. For $1\le l\le n$, the leading coefficients of  $\delta_{l,0}$ and $\delta_{l,1}$
are non-negative; if one of $\delta_{l,i}$ is zero, then the other is also zero, which happens when
$p\sta\sV|_{\Delta_l}\to \sF\utf_l$ is a pull back of a quotient $p\sta\sV|_{D_l}\to \sE$ of sheaves on $D_l$ via the projection
$\pi_l:\Delta_l\to D_l$.
\end{lemm}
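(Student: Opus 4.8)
The plan is to reduce the whole statement to a computation on a $\bP^1$. The crucial point, and the reason for the restriction $1\le l\le n$, is that $\Delta_l=\bP_D(N_+\oplus 1)\xrightarrow{\pi_l}D$ is an inserted component, hence is contracted by $p\colon\xn_0\to X_0$; so $p\sta H|_{\Delta_l}=\pi_l\sta(H|_D)$ and $p\sta\sV|_{\Delta_l}=\pi_l\sta(\sV|_D)$ are both pulled back from $D$ along the $\bP^1$-bundle $\pi_l$. Consequently, for any coherent sheaf $\mathcal{G}$ on $\Delta_l$ the projection formula --- the fibres being $\bP^1$, only $R^0\pi_{l*}$ and $R^1\pi_{l*}$ survive --- gives
$$\chi(\mathcal{G}(v))=\chi\bl D,R^0\pi_{l*}\mathcal{G}\otimes H_D^{\otimes v}\br-\chi\bl D,R^1\pi_{l*}\mathcal{G}\otimes H_D^{\otimes v}\br,$$
a polynomial in $v$ of degree $\le\dim D$ whose leading coefficient is a fixed positive multiple of $\chi(F,\mathcal{G}|_F)$, where $F$ is the generic fibre of $\pi_l$ over a component of $D$. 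The same applies to $(\sF\utf)_{l,I_{l+i}}$ and $\sF\utf|_{D_{l+i}}$, which are supported on the section $D_{l+i}$ of $\pi_l$. Thus the leading coefficients of $\delta_{l,0}$ and $\delta_{l,1}$ are read off from the restrictions to $F$ of the three sheaves $\sF\utf_l$, $(\sF\utf)_{l,I_{l+i}}$ and $\sF\utf|_{D_{l+i}}$.

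On $F\cong\bP^1_K$, where $K$ is the function field of a component of $D$, the restriction $\mathcal{H}$ of $(\sF\utf)_l$ --- a quotient of $\pi_l\sta(\sV|_D)$, hence of $\mathcal{O}_{\bP^1}^{\oplus r}$ --- is globally generated, so $\mathcal{H}\cong\bigl(\bigoplus_j\mathcal{O}(a_j)\bigr)\oplus\mathcal{T}$ with all $a_j\ge 0$ and $\mathcal{T}$ torsion; write $\mathcal{T}=\mathcal{T}_0\oplus\mathcal{T}_1\oplus\mathcal{T}'$ for the parts of $\mathcal{T}$ supported at the two marked points $q_i=D_{l+i}\cap F$ and away from both. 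The generic fibres of the three relevant sheaves are then $\sF\utf_l|_F\cong\bigl(\bigoplus_j\mathcal{O}(a_j)\bigr)\oplus\mathcal{T}'$, $(\sF\utf)_{l,I_{l+i}}|_F\cong\mathcal{T}_i$, and $\sF\utf|_{D_{l+i}}$ restricts to the fibre $\mathcal{H}\otimes k(q_i)$; a one-line Euler-characteristic count on $\bP^1$ then gives that the leading coefficient of $\delta_{l,i}$ is a fixed positive multiple of
$$\sum_j a_j+\mathrm{length}(\mathcal{T}')+\bigl(\mathrm{length}(\mathcal{T}_i)-\dim_K(\mathcal{T}_i\otimes k(q_i))\bigr)\ \ge\ 0.$$
Equality forces $a_j=0$ for all $j$, $\mathcal{T}'=0$ and $\mathcal{T}_i$ reduced, so that $\sF\utf_l|_F\cong\mathcal{O}_K^{\oplus r'}$ is trivial. (Vanishing of the leading coefficient of one $\delta_{l,i}$ alone gives $a_j=0$ and $\mathcal{T}'=0$, hence the fibrewise triviality of $\sF\utf_l$; that it forces the other leading coefficient to vanish is part of the next step, which works with the full polynomials.)

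It remains to identify the case in which $\delta_{l,i}$ vanishes identically with the pullback description. The ``if'' direction is the easy one: if $p\sta\sV|_{\Delta_l}\to\sF\utf_l$ is $\pi_l\sta$ of a quotient $p\sta\sV|_{D_l}\to\sE$ --- so that, $(\sF\utf)_l$ being torsion-free along $D_l$ and $D_{l+1}$, one has $(\sF\utf)_{l,I_{l\pm}}=0$ and $\sF\utf|_{D_{l\pm}}\cong\sE$ --- then $\chi(\pi_l\sta\sE(v))=\chi(D,\sE\otimes H_D^{\otimes v}\otimes\pi_{l*}\mathcal{O}_{\Delta_l})=\chi(\sE(v))$, because $\pi_{l*}\mathcal{O}_{\Delta_l}=\mathcal{O}_D$ and $R^1\pi_{l*}\mathcal{O}_{\Delta_l}=0$; hence $\delta_{l,0}\equiv\delta_{l,1}\equiv 0$. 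For the converse one starts from the fibrewise triviality of $\sF\utf_l$ on every generic fibre obtained above, and promotes it --- using the vanishing of $\delta_{l,i}$ as a polynomial (not merely of its leading term), via $\delta_{l,0}=\chi(\mathcal{H}_0(v))-\chi(\mathcal{H}_0|_{D_l}(v))$ with $\mathcal{H}_0=(\sF\utf)_l/(\sF\utf)_{l,I_{l+1}}$, together with a semicontinuity/constancy-of-Hilbert-polynomial argument over $D$ --- to the statement that $\sF\utf_l$ is a vector bundle on $\Delta_l$ trivial along $\pi_l$, i.e. $\pi_l\sta\pi_{l*}\sF\utf_l\xrightarrow{\ \sim\ }\sF\utf_l$; this exhibits $p\sta\sV|_{\Delta_l}\to\sF\utf_l$ as the pullback of $p\sta\sV|_{D_l}\to\pi_{l*}\sF\utf_l$, and the pullback description being symmetric in $D_l$ and $D_{l+1}$ accounts for the symmetry of the two $\delta_{l,i}$ (one vanishing identically forces the other).

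I expect the main obstacle to be this last step and the bookkeeping it rests on. The sheaves $\sF_{I_l}$, $(\sF\utf)_{l,I_l}$, the restriction $\sF\utf|_{D_l}$ and the quotient $\sF\utf_l$ are built by restricting either $\sF$ or $\sF\utf$ in different orders and killing torsion along different divisors --- on $\Delta_l$ the sections $D_l$ and $D_{l+1}$ are disjoint, so these torsions are independent, which is what makes the count close --- so one must verify that all these operations commute with passage to the generic fibre of $\pi_l$, and pin down exactly how the pullback condition forces $(\sF\utf)_{l,I_{l\pm}}=0$; and the upgrade from ``fibrewise trivial on generic fibres'' to ``globally a $\pi_l$-pullback'' genuinely needs the identity $\delta_{l,i}\equiv 0$ rather than merely the vanishing of its leading coefficient, since $\sF\utf_l$ is not a priori flat over $D$.
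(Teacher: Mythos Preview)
Your approach is correct but differs genuinely from the paper's. You restrict to the generic fibre $F\cong\bP^1_K$ of $\pi_l$ and compute the leading coefficients via an explicit splitting $\mathcal H\cong\bigoplus_j\sO(a_j)\oplus\mathcal T$ of the globally generated quotient there; this makes the non-negativity transparent term by term. The paper instead works globally with the pushforward $\pi_{l*}$. Its key step is the vanishing $R^1\pi_{l*}\sF\utf_l=0$, obtained from the long exact sequence for $0\to\sK\to p^*\sV|_{\Delta_l}\to\sF\utf_l\to 0$ together with $R^1\pi_{l*}(p^*\sV|_{\Delta_l})=0$ (fibres trivial) and $R^{\ge2}\pi_{l*}\sK=0$ (fibres are $\bP^1$). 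This yields $\chi(\sF\utf_l(v))=\chi\bigl((\pi_{l*}\sF\utf_l)(v)\bigr)$ as polynomials, and the non-negativity follows directly from the surjection $\pi_{l*}\sF\utf_l\twoheadrightarrow\sF\utf|_{D_l}$ given by restriction to the section. For the equality case the paper reads off $\pi_{l*}\sF\utf_l\cong\sF\utf|_{D_l}$ and then checks that the counit $\pi_l^*\pi_{l*}\sF\utf_l\to\sF\utf_l$ is an isomorphism (injective because it is an isomorphism along the section $D_l$; surjective by a second Hilbert-polynomial comparison). The pullback description is then immediate, and its symmetry in $D_l$, $D_{l+1}$ gives $\delta_{l,1}=0$.

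The practical difference is precisely the obstacle you flag at the end: your route must upgrade fibrewise triviality over generic points of $D$ to a global pullback statement, and must track how torsion-killing commutes with restriction to $F$. The paper's pushforward-and-adjunction argument never descends to a single fibre, so that upgrade step simply does not arise; the global $R^1=0$ replaces your explicit $\bP^1$ Euler-characteristic count, and the counit map delivers the pullback description in one stroke. Your method has the compensating virtue of exhibiting exactly which numerical pieces (the $a_j$, the lengths of $\mathcal T'$ and $\mathcal T_i$) contribute to the inequality, which is useful when finer control is needed.
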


\begin{proof}
Let $1\le l\le n$. The quotient $p\sta\sV\to \sF$ induces a quotient homomorphism $p\sta\sV|_{\Del_l}\to \sF\utf_l$. We let $\sK$ be its kernel, which fits into the exact sequence
$$0\lra \sK\lra p\sta\sV|_{\Delta_l}\lra \sF\utf_l\lra 0.
$$
Let $\pi_l: \Del_l\to D_l$ be the projection. We claim $R^1\pi_{l\ast}\sF\utf_l=0$. Indeed, since $\pi_l$ is a $\Po$-bundle, $R^{\ge 2}\pi_{l\ast}\sK=0$. By base change, $R^1\pi_{l\ast}(p\sta\sV|_{\Delta_l})=0$ since for all closed $x\in D_l$, $H^1(\pi_l\upmo(x), p\sta\sV|_{\pi_l\upmo(x)})=0$. Applying $\pi_{l\ast}$ to the above exact sequence, by the induced long exact sequence, we conclude that $R^1\pi_{l\ast}\sF\utf_l=0$. Therefore, since $p\sta H|_{D_l}$ is ample, for large $v$,
$$\chi(\sF\utf(v))=\chi(\pi_{l\ast}\sF\utf(v))=\chi((\pi_{l\ast}\sF\utf)(v)).
$$

On the other hand, the surjective homomorphisms $p\sta\sV|_{\Delta_l}\to\sF\utf\to \sF\utf|_{D_l}$
induces a surjective $\pi_{l\ast}\sF\utf\to  \sF\utf|_{D_l}$. This implies that the leading coefficient of
$\chi((\pi_{l\ast}\sF\utf)(v))-\chi(\sF\utf|_{D_l}(v))$ is non-negative; and is zero if and only if
$\pi_{l\ast}\sF\utf=\sF\utf|_{D_l}$.

Finally, we suppose $\delta_{l,0}=0$. Then $\pi_{l\ast}\sF\utf=\sF\utf|_{D_l}$. Using $\pi_l\sta \pi_{l\ast}\sF\utf\to \sF\utf$, we obtain a homomorphism $\pi_l\sta (\sF\utf|_{D_l})\to \sF\utf$. As this homomorphism is an isomorphism when restricted to $D_l$, it is injective. Suppose it has non-trivial cokernel, then $\chi(\sF\utf(v))\ne \chi(\pi_l\sta \sF\utf|_{D_l}(v))= \chi(\sF\utf|_{D_l}(v))$, a contradiction. This proves the lemma.
\end{proof}

A parallel result holds for coherent sheaves on $Y[\nn]_0$. For the singular divisor $D_l\sub Y[\nn]_0$, we define
$\err_l\sF$ be as in \eqref{err}. For the relative divisor $D[n_\pm]_{\pm,0}$, we let $I_\pm$ be the ideal sheaf of
$D[n_\pm]_{\pm,0}\sub Y[\nn]_0$, and define $\err_\pm\sF=\chi(\sF_{I_\pm}(v))$. We define
\beq\label{err-re}
\err\sF=\sum_{-n_-\le l\le n_+} \err_l\sF+\err_-\sF+\err_+\sF.
\eeq

\begin{lemm}\label{criteria-4}
A coherent sheaf $\sF$ on $Y[\nn]_0$ is relative to $D[n_\pm]_{\pm,0}$ if and only if $\excz=0$.
\end{lemm}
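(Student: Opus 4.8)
The plan is to reduce Lemma~\ref{criteria-4} to the already-established criterion for admissibility (the Lemma following \eqref{the-id}), plus the analogous statement for a single smooth Cartier divisor (Proposition~\ref{flat}). By definition, $\sF$ on $Y[\nn]_0$ is relative to $D[n_\pm]_{\pm,0}$ if and only if it is normal to every singular divisor $D_l$, $-n_-\le l\le n_+$, $l\ne 0$, \emph{and} normal to the two relative divisors $D[n_\pm]_{\pm,0}$. These conditions are about disjoint loci of $Y[\nn]_0$ (the singular divisors lie in the interiors of the chain, away from the two end divisors), so $\excz$, being a sum of non-negative-supported terms, vanishes if and only if each group of terms vanishes separately.

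First I would treat the singular divisors. For each $D_l$ with $-n_-\le l\le n_+$, $l\ne 0$, the local picture near $D_l$ is exactly that of \eqref{split-2}: $Y[\nn]_0$ is locally a transverse union of two smooth pieces $\Del_{l-1}$ and $\Del_l$ (or $Y$ and a $\Del$) meeting along $D_l$. Hence the argument proving the Lemma after \eqref{the-id} applies verbatim: $\sF$ is normal to $D_l$ if and only if $\sF_{I_l}$, $(\sF\utf)_{l,I_l}$ and $(\sF\utf)_{l-1,I_l}$ all vanish, which is exactly $\err_l\sF=0$ (each of the three summands in \eqref{err} is the Euler characteristic of a sheaf, supported in the corresponding locus, hence is zero iff that sheaf is zero — using that twisting by $p\sta H^{\otimes v}$ for $v\gg0$ detects non-vanishing).

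Next I would treat the relative divisors. Near $D[n_+]_{+,0}$ the variety $Y[\nn]_0$ is smooth (it is the smooth locus $D_-\sub\Del_{n_+}$, away from the node $D_{n_+}$), and $D[n_+]_{+,0}$ is a smooth Cartier divisor in it. By Proposition~\ref{flat} (and Lemma~\ref{tech-1}), $\sF$ is normal to $D[n_+]_{+,0}$ precisely when the torsion subsheaf $\sF_{I_+}$ supported along $D[n_+]_{+,0}$ vanishes, i.e.\ when $\err_+\sF=\chi(\sF_{I_+}(v))=0$ for $v\gg0$. The same for $D[n_-]_{-,0}$ and $\err_-\sF$. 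Combining, $\excz=\sum_l\err_l\sF+\err_-\sF+\err_+\sF=0$ if and only if every one of these Euler characteristics vanishes, if and only if $\sF$ is normal to all $D_l$ and to both relative divisors, if and only if $\sF$ is relative to $D[n_\pm]_{\pm,0}$.

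The one point requiring a little care — the main (mild) obstacle — is the sign/positivity bookkeeping: I must check that each term appearing in $\excz$ is genuinely the Euler characteristic of an honest torsion sheaf (so that, after twisting high, it is a non-negative integer vanishing iff the sheaf is zero), and that these terms cannot cancel against one another. Non-negativity of the relevant leading coefficients is exactly what Lemma~\ref{vanishing} supplies for the interior divisors when $\sF$ is a quotient of $p\sta\sV$; for a general admissible-type statement one instead observes directly that $\sF_{I_l}$, $(\sF\utf)_{l,I_l}$, $(\sF\utf)_{l-1,I_l}$ and $\sF_{I_\pm}$ are all torsion sheaves of dimension strictly less than that of the ambient pieces, so that for $v\gg0$ each $\chi(\,\cdot\,(v))\ge0$, with equality iff the sheaf is zero — and since the supports of the summands for distinct $l$ (and for $\pm$) are disjoint, no cancellation is possible. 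Granting this, the equivalence is immediate from the two local criteria above.
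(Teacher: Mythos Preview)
Your argument is correct and matches the paper's implicit approach: the paper states Lemma~\ref{criteria-4} as the ``parallel result'' to the earlier admissibility criterion for $\xn_0$ and gives no separate proof, and you have supplied exactly that parallel argument, handling the singular divisors via the earlier lemma and the two relative divisors via the smooth Cartier-divisor case. One small remark: your invocation of Lemma~\ref{vanishing} is a slight mis-citation (that lemma concerns the $\delta_{l,i}$, not the error terms), but you immediately give the correct direct argument---that each summand is $\chi$ of a torsion sheaf supported on a thickening of some $D_l$ or $D[n_\pm]_{\pm,0}$, on which $p\sta H$ is ample, so for $v\gg 0$ each term is non-negative and vanishes iff the sheaf does---which is all that is needed.
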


\section{Degeneration of Quot schemes and coherent systems}

We construct good degenerations of Quot schemes and moduli spaces of certain types of coherent systems. We shall focus on the case of Quot schemes. For coherent systems, we will comment on
the modification needed at the end of the section.

\subsection{Stable admissible quotients}
We let $\pi: X\to C$ be a simple degeneration. We fix a relative ample
line bundle $H$ on $X/C$, and fix a locally free sheaf $\sV$ on $X$.

We begin with admissible quotients on $\xn_0$. Let $\pnn:\xn_0\to X$ be the projection.

\begin{defi}\lab{admissible-quot}
We call a quotient (sheaf) $\pfrmob$ on $\xn_0$ admissible if $\sF$ is admissible.
\end{defi}

For two quotients $\frmobo$ and $\frmobt$ on $\xn_0$, an equivalence between them consists of a pair $(\sigma,\psi)$, where $\sigma\colon\xn_0\to\xn_0$ is an automorphism induced from the canonical $\Gm^{{}n}$ action on $\xn_0$, and $\psi\colon\sF_1\cong\sigma\sta\sF_2$ is an isomorphism, so that the following square is commutative:
$$\begin{CD}
p\sta\sV @>{\lambda_1}>> \sF_1\\
@V{\sigma^{\natural}}VV @V{\psi}VV\\
p\sta\sV\cong \sigma\sta p\sta\sV @>{\sigma\sta\lambda_2}>> \sigma\sta\sF_2.
\end{CD}
$$
Here the isomorphism $p\sta\sV\cong \sigma\sta p\sta\sV$ is the (unique) one
whose restriction to $\Delta_0\cup \Delta_{n+1}$ is the identity map.

Suppose $(\sigma,\psi_1)$ and $(\sigma,\psi_2)$ are autoequivalences of a quotient $\pfrmob$, then $\psi_2^{-1}\circ\psi_1$ is an automorphism of $\pfrmob$, which is identity. Therefore $\psi_1=\psi_2$. It follows that the group $\Aut_{\X}\phi$ of autoequivalences of $\pfrmob$ is a subgroup of $\gn$.

\begin{defi}\lab{stable-quot}
We say a quotient $\pfrmob$ on $\xn_0$ is stable if it is admissible and $\Aut_{\X}\phi$ is finite.
\end{defi}

Let $(\cX,p)\in\fX(S)$ be an $S$-family of expanded degenerations, let $\sF$ be a coherent sheaf on ${\cX}$ and $\pfrmob$ be a quotient. We call $\pfrmob$ an $S$-flat family of stable quotients if $\sF$ is flat over $S$, and for every closed point $s\in S$ the restriction $\phi_s\colon p^*\sV|_{\cX_s}\to\sF|_{\cX_s}$ (of $\phi$ to $\cX_s$) is stable.
\begin{lemm}\lab{stable-open}
Let $\pfrmob$ be an $S$-flat family of quotients on $(\cX,p)\in\fX(S)$.
Then the set $\{s\in S\mid \phi_s\colon p^*\sV|_{\cX_s}\to\sF|_{\cX_s}\ \text{is stable}\}$ is an open subset of $S$.
\end{lemm}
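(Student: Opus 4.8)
The plan is as follows. By Definition \ref{stable-quot}, $\phi_s$ is stable iff $\sF_s$ is admissible and $\Aut_{\X}\phi_s$ is finite. The admissible locus of $S$ is already open by Proposition \ref{open-2}, so it suffices to prove that $S_{\mathrm{fin}}=\{\,s\in S\mid \Aut_{\X}\phi_s\ \text{is finite}\,\}$ is open, and then intersect the two open sets. Openness is local on $S$ for the smooth topology, and $\{\cn\to\fC\}_n$ is an atlas, so I would first reduce to the case where $(\cX,p)$ is pulled back from a chart: pick $S\to\cn$ with $\cX=\xn\times_{\cn}S$ compatibly with $p$, so that $\cX$ carries a $\gn$-action over the one on $\cn$; I may also assume $S$ noetherian.

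Next I would build a ``relative automorphism group scheme'' $G\to S$ with $G_s=\Aut_{\X}\phi_s$. Let $H\sub\gn\times S$ be the stabilizer group scheme of $S\to\cn$, i.e.\ the closed subscheme of pairs $(\sigma,s)$ with $\sigma$ fixing the image of $s$ in $\cn$; for such a pair $\sigma$ induces an automorphism of $\cX_s$ trivial over $X$, and $H_s=\Aut_{\X}(\cX_s)\sub\gn$. Let $\cQ\to S$ be the relative Quot scheme of all coherent quotients of $p\sta\sV|_{\cX}$; choosing a relatively ample line bundle on $\cX/S$, it is the disjoint union over Hilbert polynomials of Quot schemes projective over $S$, hence separated over $S$, and $\phi$ defines a section $\bar\phi\colon S\to\cQ$. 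The $H$-action on $\cX$ over $X$ induces an action of $H$ on $\cQ$ over $S$, via the canonical identification $\sigma\sta p\sta\sV|_{\cX_s}=p\sta\sV|_{\cX_s}$ that is the identity on $\Delta_0\cup\Delta_{n+1}$. I would then set $G\sub H$ equal to the preimage of the diagonal of $\cQ\times_S\cQ$ under the $S$-morphism
\[ H\lra\cQ\times_S\cQ,\qquad (\sigma,s)\longmapsto\bigl((\sigma,s)\cdot\bar\phi(s),\ \bar\phi(s)\bigr).
\]
Separatedness of $\cQ\to S$ makes the diagonal closed, so $G$ is a closed subscheme of $H$, hence an affine $S$-group scheme of finite type; and the rigidity noted before Definition \ref{stable-quot} (any two isomorphisms $\psi$ intertwining a fixed $\sigma$ coincide) identifies $G_s$ with the stabilizer of $\bar\phi(s)$ in $H_s$, that is, with $\Aut_{\X}\phi_s$.

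Finally I would conclude by upper semicontinuity of fibre dimension. Pull back along the identity section $e\colon S\to G$ the upper semicontinuous function $g\mapsto\dim_g G_{\pi(g)}$ on $G$; since each fibre $G_s$ is a group scheme over a field it is homogeneous, so $\dim_{e(s)}G_s=\dim G_s$, and hence $s\mapsto\dim G_s$ is upper semicontinuous. As $G_s$ is affine of finite type over a field, it is finite iff $\dim G_s=0$, so $S_{\mathrm{fin}}=\{\,s\mid\dim G_s=0\,\}=S\setminus\{\,s\mid\dim_{e(s)}G_s\ge 1\,\}$ is open. Intersecting with the admissible locus, and descending along the chart cover, gives the lemma. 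The step I expect to be the main obstacle is the construction of $G$ and the verification that it is closed with the correct fibres: one must define the $\gn$-action on the relative Quot scheme carefully, track the varying number of bubbles $\Delta_i$ in $\cX_s$ through the stabilizer group scheme $H$ so that $H_s$ (and not a larger torus) is $\Aut_{\X}(\cX_s)$, and combine separatedness of the Quot scheme with the rigidity of $\psi$ so that $G_s$ is genuinely the automorphism group of $\phi_s$ and not merely the locus where the two quotients are abstractly isomorphic.
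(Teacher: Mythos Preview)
Your argument is correct and takes the same approach as the paper: admissibility is open by Proposition~\ref{open-2}, and finiteness of $\Aut_{\fX}\phi_s$ is an open condition, so their intersection is open. The paper's own proof is a single sentence asserting both facts without elaboration; your construction of the relative automorphism group scheme $G\to S$ via the stabiliser in $\gn\times S$ of the image in $\cn$ and then the stabiliser of the section of the relative Quot scheme, followed by upper semicontinuity of fibre dimension along the identity section, is precisely the standard justification for the second assertion that the paper leaves implicit.
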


\begin{proof}
Because automorphism groups being finite is an open condition, the Lemma follows from Proposition \ref{open-2}.
\end{proof}

We define the category $\Qx$ of families of stable quotients. For any scheme $S$ over $C$, we define $\Qx(S)$ be the set of all $(\phi;\cX,p)$ so that $(\cX,p)\in \X(S)$ and $\pfrmob$ is an $S$-flat family of stable quotients on $\cX$. An arrow between $(\phi_1;\cX_1,p)$ and $(\phi_2;\cX_2,p)$ in $\Qx(S)$ is an arrow $\sigma\colon\cX_1\to\cX_2$ in $\X(S)$ so that $\phi_1\cong \sigma\sta\phi_2$. For $\rho: S\to T$, the map $\Qx(\rho): \Qx(T)\to \Qx(S)$ is defined by pull back.

Sending $(\phi; \cX,p)\in \Qx$ to the base scheme of $\cX$ defines $\Qx$ as a groupoid over $C$.

\begin{prop}\label{DM stack}
$\Qx$ is a Deligne-Mumford stack locally of finite type.
\end{prop}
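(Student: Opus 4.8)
The plan is to realize $\Qx$ as relatively representable over the stack $\fX$ --- which has already been shown to be an Artin stack --- by means of a relative Quot space, so that algebraicity and local finite type come essentially for free, and then to pass to the Deligne--Mumford case by inspecting automorphism groups, where the stability hypothesis does all the work. First I would check that $\Qx$ is a stack in the fppf topology: since $\fX$ is a stack the expanded degenerations $(\cX,p)$ glue; a surjection $\phi\colon p\sta\sV\to\sF$ with $\sF$ an $S$-flat sheaf satisfies fppf descent; and by Proposition \ref{open-2} and Lemma \ref{stable-open} the condition that every fibre $\phi_s$ be stable is open on the base, hence fppf-local. Thus $\Qx$ is a stack over $C$.

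Next I would prove algebraicity. Sending $(\phi;\cX,p)\in\Qx(S)$ to $(\cX,p)\in\fX(S)$ defines a forgetful morphism $F\colon\Qx\to\fX$ over $C$, and I claim $F$ is representable by separated algebraic spaces, locally of finite type. Indeed, for $(\cX,p)\in\fX(S)$ the fibre product $\Qx\times_\fX S$ is the functor of $S$-flat quotients $\phi\colon p\sta\sV\to\sF$ on $\cX$ all of whose fibres are stable, which is exactly the open subfunctor cut out by Proposition \ref{open-2} and Lemma \ref{stable-open} inside the relative Quot functor of $p\sta\sV$ on $\cX/S$. Now $\cX\to S$ is proper (it is, fppf-locally on $S$, pulled back from the proper family $\xn\to\cn$ of Lemma \ref{Lem-X}, whose fibres $\xn_0$ and smooth $X_t$ are projective) and of finite presentation, so Artin's representability of the Quot functor for a proper, finitely presented morphism shows $\Qx\times_\fX S$ is a separated algebraic space locally of finite type over $S$. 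Since $\fX=\varinjlim\fX_n$ with $\fX_n=[\xn/\!\!\sim\,]$ and $\xn\to\cn$ of finite type, $\fX$ is an Artin stack locally of finite type over $C$, and hence so is $\Qx$. Concretely, composing $F$ with the smooth charts $\coprod_n\xn\to\fX$ produces a smooth atlas of $\Qx$ by the open loci of stable admissible quotients inside the relative Quot spaces $\Quot^{p\sta\sV}_{\xn/\cn}$, each of them locally of finite type.

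To upgrade this to a Deligne--Mumford stack I would verify that the diagonal is unramified, i.e. that objects carry no infinitesimal automorphisms. By the remark preceding Definition \ref{stable-quot}, an autoequivalence of a quotient $\phi$ is determined by its underlying automorphism $\sigma$ of the expanded degeneration, and for a quotient on a fixed $\xn_0$ these $\sigma$ form the subgroup $\Aut_\fX\phi$ of $\gn$, which stability forces to be finite. Running the same analysis in families, the automorphism group scheme of an object of $\Qx(S)$ is a closed subgroup scheme of $\gn_S$ with finite fibres; since the characteristic is $0$, so that finite group schemes are reduced, such a group scheme is unramified over $S$. Hence $\Qx$ has unramified diagonal and is a Deligne--Mumford stack, locally of finite type over $C$.

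I expect the algebraicity step to be where the (still routine) care is needed: checking that $F$ is well defined, that $\Qx\times_\fX S$ is precisely the asserted open subfunctor of a relative Quot functor, and bookkeeping the $\Gm^{{}n}$-actions and the equivalences $\ti\tau_{I,I',X}$ of \eqref{tauX} when moving between $\Qx$ and its charts. The descent and the automorphism analysis are then essentially formal; the genuinely hard assertions --- boundedness, separatedness and properness of $\Qx$ over $C$ --- are the business of the later sections, not of this Proposition.
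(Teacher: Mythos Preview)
Your argument is correct but follows a genuinely different route from the paper's. Rather than factoring through a forgetful morphism and then invoking representability plus an unramified-diagonal check, the paper builds an explicit \'etale atlas: for each $n$ it takes the open stable locus $\Quot^{p\sta\sV,\st}_{\xn/\cn}$ inside Grothendieck's relative Quot scheme, observes that $\gn$ acts on it with finite stabilizers (this is exactly the stability hypothesis), forms the Deligne--Mumford quotient stack $[\Quot^{p\sta\sV,\st}_{\xn/\cn}/\gn]$, and checks that the induced morphisms $F_n$ to $\Qx$ are \'etale and jointly surjective. Your approach is cleaner as abstract stack theory and makes the role of the unramified diagonal transparent; the paper's buys concrete charts that are the ones actually used later in the properness and boundedness arguments of Section~5.

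One point to correct: the target of your forgetful morphism should be $\fC$, not $\fX$. In this paper $\fX\to\fC$ is the \emph{universal family} over the base stack $\fC$ (see the square \eqref{sq}); the groupoid of $S$-families of expanded degenerations is really $\fC(S)$, and the paper's phrase ``$(\cX,p)\in\fX(S)$'' is informal shorthand for a morphism $S\to\fC$ together with the induced family $\cX=\fX\times_\fC S$. Accordingly the smooth charts you want to pull back along are $\cn\to\fC$, not $\xn\to\fX$; with that replacement your fibre $\Qx\times_\fC S$ is indeed the open stable locus in the relative Quot of $p\sta\sV$ on $\cX/S$, and the rest of your argument goes through unchanged. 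Since $\xn\to\cn$ is in fact projective you may also cite Grothendieck rather than Artin for representability, matching the paper's implicit choice.
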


\begin{proof}
First we show that $\Qx$ is a stack. We let $\text{Sch}_C$ be the category of schemes over $C$.
For any $S$ in $\text{Sch}_C$ and two families $\phi_1,\phi_2$ in $\Qx(S)$, we define a functor
\[ \Isom_S(\phi_1,\phi_2):\text{Sch}_C\to (\Sets)
\]
that associates to any morphism $\rho:S'\to S$ the set of isomorphisms in $\Qx(S')$ between $\rho^*\phi_1$ and $\rho^*\phi_2$. Since stable quotients have finite automorphism groups, by
a standard argument, $\Isom_S(\phi_1,\phi_2)$ is represented by a finite group scheme over $S$.
An application of descent theory shows that $\Qx$ is a stack.

Now we show that $\Qx$ admits an \'{e}tale cover by a Deligne-Mumford stack locally of finite type.
Let  $\pnn:\xn\to X$ be the projection; let $\Quot^{\pnn\sta \sV}_{\xn/\cn}$ be the Quot scheme on $\xn/\cn$ of $\pnn\sta\sV$.
We form the subset $\Quot^{\pnn\sta \sV,\st}_{\xn/\cn}\sub \Quot^{\pnn\sta \sV}_{\xn/\cn}$ of stable quotients as defined in
Definition \ref{stable-quot}. By Lemma \ref{stable-open}, it is open in $\Quot^{\pnn\sta \sV}_{\xn/\cn}$. Since $\gn$ acts on $\xn/\cn$,
it acts on $\Quot^{\pnn\sta \sV}_{\xn/\cn}$, and then on $\Quot^{\pnn\sta \sV,\st}_{\xn/\cn}$.
By the stable assumption, $\Gm^n$ acts with finite stabilizers on $\Quot^{\pnn\sta \sV,\st}_{\xn/\cn}$,
thus the quotient stack $[\Quot^{\pnn\sta \sV,\st}_{\xn/\cn}/\gn]$ is a Deligne-Mumford stack.

Let
\[ F_n: [\Quot^{\pnn\sta \sV,\st}_{\xn/\cn}/\gn]\lra \Qx
\]
be the morphism induced by the universal family over $\Quot^{\pnn\sta \sV,\st}_{\xn/\cn}$.
By construction $F_n$ is \'etale. Hence, the induced
\[  \coprod_{n\ge 0} F_n:\coprod_{n\ge 0} [\Quot^{\pnn\sta \sV,\st}_{\xn/\cn}/\gn]\lra \Qx
\]
is \'etale and surjective. This proves the Proposition.
\end{proof}

We define relative stable quotients on an expanded pair in the same way by replacing $\xn_0$ with
$(Y[\nn]_0,D[n_\pm]_{\pm,0})$. Let
$$\sV_0=\sV\otimes_{\sO_X}\sO_{Y},
$$
where $Y\to X$ is induced by the normalization $Y\to X_0\sub X$. Let
$$p\colon (Y[\nn]_0,D[n_\pm]_{\pm,0})\to (Y,D_\pm)
$$
be the projection. For any quotient $\phi: p\sta \sV_0\to\sF$ on $Y$, the group $\Aut_{\Y}\phi$ is defined in the same way as that of $\Aut_\fX\phi$, which is a subgroup of $\gn$.

\begin{defi}\lab{relative-quot}
Let $(Y[\nn]_0,D[n_\pm]_{\pm,0})$ be a relative pair. A relative quotient $\phi: \pnn\sta \sV_0\to\sF$ on
$(Y[\nn]_0,D[n_\pm]_{\pm,0})$ is a quotient so that $\sF$ is admissible and is normal to $D[n_\pm]_{\pm,0}$.
We call $\phi: \pnn\sta \sV_0\to\sF$ stable if in addition $\Aut_{\Y}\phi$ is finite.
\end{defi}

We define families of relative quotients on $(\cY,\cD_\pm,p)\in (\fD_\pm\sub\fY)(S)$ similarly. We have

\begin{prop}
Let $\phi: p\sta\sV_0\to\sF$ be an $S$-flat family of relative quotients on $(\cY,\cD_\pm)$. Then the restriction
$\phi_{\cD_\pm}\colon p\sta\sV_0|_{\cD_\pm}\to\sF|_{\cD_\pm}$ is an $S$-flat family of quotients on $\cD_\pm$.
\end{prop}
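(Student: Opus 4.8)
The plan is to deduce this from the flatness criteria of Section 3, since both assertions---surjectivity of $\phi_{\cD_\pm}$ and $S$-flatness of $\sF|_{\cD_\pm}$---are local on $\cY$ and on $S$. Surjectivity is immediate: tensoring the surjection $\phi\colon p\sta\sV_0\to\sF$ with $\sO_{\cD_\pm}$ over $\sO_\cY$ is right exact, so $\phi_{\cD_\pm}\colon p\sta\sV_0|_{\cD_\pm}\to\sF|_{\cD_\pm}$ is again surjective. Thus the only content is that $\sF|_{\cD_\pm}=\sF\otimes_{\sO_\cY}\sO_{\cD_\pm}$ is flat over $S$.

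The first step is to observe that $\cD_\pm$ lies in the smooth locus of the structure morphism $\cY\to S$, and is a relative Cartier divisor there. Indeed, by the construction of the expanded relative pairs (cf.\ \eqref{Y-comp}), the divisor $D[n_\pm]_{\pm,0}$ is the distinguished section $D_\mp\subset\Delta_{\pm n_\pm}$ at the free end of the chain $Y[\nn]_0$, hence is disjoint from every singular divisor $D_l$ of $Y[\nn]_0$. Since $Y[\nn]\to\bA\unn$ is smooth along its central fibre away from the $D_l$'s, and $(\cY,\cD_\pm)$ is, \'etale-locally on $S$, a base change of $D[n_\pm]_\pm\subset Y[\nn]\to\bA\unn$, the morphism $\cY\to S$ is smooth in an open neighbourhood of $\cD_\pm$ and $\cD_\pm$ is a smooth relative divisor in that neighbourhood.

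The second step is to invoke Corollary \ref{flat-2}. Passing to an affine \'etale chart $W$ of $\cY$ meeting $\cD_\pm$ and an affine chart of $S$, choose a regular function $z$ on $W$ cutting out $\cD_\pm$ and form $\tau\colon W\to\Ao$ with $\tau\sta u=z$; because $\cY\to S$ is smooth near $\cD_\pm$, this realizes exactly the local situation \eqref{split}, with $D=\cD_\pm$. By hypothesis $\sF$ is $S$-flat, and since $\phi$ is a family of relative quotients, Definition \ref{relative} gives that $\sF_s$ is normal to $\cD_{\pm,s}$ for \emph{every} closed $s\in S$; that is, in the notation of Corollary \ref{flat-2}, the open locus $V$ equals all of $S$. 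Corollary \ref{flat-2} then yields that $\sF|_{\cD_\pm}$ is $S$-flat on this chart, and patching over a cover of $\cD_\pm$ completes the proof.

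The computations are routine; the one point requiring a little care---and the only mild obstacle---is the first step, namely verifying that $\cD_\pm$ avoids the singular locus of the fibres of $\cY\to S$, so that the one-variable local model \eqref{split} (and hence Corollary \ref{flat-2} verbatim) is the relevant one rather than the two-component model \eqref{split-2}. Once this is checked, the Proposition is a direct consequence of the results of Section 3.
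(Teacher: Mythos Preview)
Your proof is correct and follows the same approach as the paper, which simply cites Corollary~\ref{flat-2}. You have supplied the details the paper leaves implicit---surjectivity by right exactness, and the verification that $\cD_\pm$ sits in the smooth locus of $\cY\to S$ so that the one-variable local model \eqref{split} applies---but the core argument is identical.
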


\begin{proof}
This follows from Corollary \ref{flat-2}.
\end{proof}

We remark that Lemma \ref{stable-open} still holds after replacing families $\cX/S$ by families $\cY/S$.
We define the category $\Qyd$ of families of relative stable quotients accordingly.

\begin{prop}
$\Qyd$ is a Deligne-Mumford stack locally of finite type.
\end{prop}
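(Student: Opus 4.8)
The plan is to follow the proof of Proposition~\ref{DM stack} closely, with the stack $(\fX\to\fC,p)$ replaced by the stack of expanded relative pairs $(\fD_\pm\sub\fY\to\fA_\diamond,p)$ and the Quot scheme $\Quot^{p\sta\sV}_{\xn/\cn}$ replaced by Grothendieck's Quot scheme of $p\sta\sV_0$ along the fibres of $\yn\to\bA\unn$. First I would show $\Qyd$ is a stack. For a scheme $S$ and two families $\phi_1,\phi_2\in\Qyd(S)$, one forms the functor $\Isom_S(\phi_1,\phi_2)\colon\mathrm{Sch}\to(\Sets)$ sending $\rho\colon S'\to S$ to the set of isomorphisms in $\Qyd(S')$ between $\rho\sta\phi_1$ and $\rho\sta\phi_2$. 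Since relative stable quotients have finite automorphism groups (Definition~\ref{relative-quot}), the standard argument represents $\Isom_S(\phi_1,\phi_2)$ by a finite group scheme over $S$, and descent then gives that $\Qyd$ is a stack.

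Next I would produce an étale cover by a Deligne--Mumford stack locally of finite type. Fix $(n_-,n_+)$ and let $Q\defeq\Quot^{p\sta\sV_0}_{\yn/\bA\unn}$ be the Quot scheme of quotients of $p\sta\sV_0$ on the fibres of $\yn\to\bA\unn$; since $\yn$ is projective over $\bA\unn$, $Q$ is a disjoint union over Hilbert polynomials of subschemes proper over $\bA\unn$, hence is locally of finite type over $\kk$. Inside $Q$ let $Q^{\mathrm{r.st}}$ be the locus of quotients $\phi\colon p\sta\sV_0\to\sF$ that are relative (that is, $\sF$ is normal to every singular divisor $D_l\sub\yn$ and to the distinguished divisors $D[n_\pm]_\pm$) and stable (that is, $\Aut_\fY\phi$ is finite). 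By Proposition~\ref{open-3} the ``relative'' condition is open on $Q$, and by the relative version of Lemma~\ref{stable-open} so is finiteness of the automorphism group; hence $Q^{\mathrm{r.st}}\sub Q$ is open, and is locally of finite type over $\kk$. The torus $\Gm^{{}n_-+n_+}$ acts on $\yn/\bA\unn$, fixing $D[n_\pm]_\pm$, hence acts on $Q$ and preserves $Q^{\mathrm{r.st}}$; by stability all stabilizers are finite, so the quotient stack $[Q^{\mathrm{r.st}}/\Gm^{{}n_-+n_+}]$ is a Deligne--Mumford stack locally of finite type.

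The universal quotient over $Q^{\mathrm{r.st}}$, together with the tautological expanded pair $(\yn,D[n_\pm]_\pm)$ pulled back along the smooth chart $\bA\unn\to\fA_\diamond$, is $\Gm^{{}n_-+n_+}$-equivariant and therefore descends to a morphism
$$F_{n_-,n_+}\colon[Q^{\mathrm{r.st}}/\Gm^{{}n_-+n_+}]\lra\Qyd.$$
I expect $F_{n_-,n_+}$ to be étale (because $\bA\unn\to\fA_\diamond$ is a smooth chart and $Q^{\mathrm{r.st}}$ carries the universal family), and $\coprod_{n_-,n_+\ge0}F_{n_-,n_+}$ to be étale and surjective, which together exhibit $\Qyd$ as a Deligne--Mumford stack locally of finite type. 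The step I expect to be the main obstacle is exactly the surjectivity and étaleness of $\coprod F_{n_-,n_+}$: one must show that an arbitrary $(\cY,\cD_\pm,p)\in\fY(S)$ is, étale-locally on $S$, the pullback of the standard chart $\yn\to\bA\unn$ along some lift $S\to\bA\unn$ of the classifying map $S\to\fA_\diamond$ --- this is built into the construction of $\fA_\diamond$ in Definition~\ref{DY2} --- after which the $S$-flat quotient $\sF$ is classified by a point of $Q$ necessarily landing in $Q^{\mathrm{r.st}}$, and the remaining ambiguity among such charts is precisely a $\Gm^{{}n_-+n_+}$-torsor; the residual descent bookkeeping is then identical to that in Proposition~\ref{DM stack}.
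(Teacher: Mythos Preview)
Your proposal is correct and follows exactly the approach the paper intends: the paper's own proof is the single sentence ``The proof is parallel to that of Proposition~\ref{DM stack},'' and what you have written is precisely that parallel argument, with $\xn/\cn$ replaced by $\yn/\bA\unn$, admissibility replaced by the relative condition (invoking Proposition~\ref{open-3} in place of Proposition~\ref{open-2}), and the torus $\Gm^{{}n}$ replaced by $\Gm^{{}n_-+n_+}$.
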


\begin{proof}
The proof is parallel to that of Proposition \ref{DM stack}.
\end{proof}

\subsection{Coherent systems}

Coherent systems we will consider are sheaf homomorphisms
$$\varphi\colon\sO_{\xn_0}\to\sF
$$
(or on $Y[\nn]_0$) so that $\sF$ is pure of dimension one and $\varphi$ has finite cokernel. Since an automorphism of $\varphi\colon\sO_{\xn_0}\to\sF$ is a sheaf isomorphism $\sigma\colon\sF\cong\sF$ so that $\sigma\circ\varphi=\varphi$, that $\sF$ is pure of dimension one and $\coker\varphi$ is finite implies that $\sigma$ is the identity map. We define the group $\Aut_{\X}\varphi$ be the collection of pairs $(\si,\xi)$ so that $\si\in\Gm^n$ and $\xi$ is an isomorphism of $\varphi: \sO_{\xn_0}\to \sF$ with $\si\sta\varphi: \sO_{\xn_0}=\si\sta \sO_{\xn_0}\to \si\sta \sF$; it is a subgroup of $\gn$.

\begin{defi}
We say a coherent system $\cohs$ admissible if both $\sF$ and $\coker\varphi$ are admissible.
We say it is stable if it is admissible and $\Aut_{\fX}\varphi$ is finite.
\end{defi}

Since $\coker\varphi$ has dimension zero and $\sF$ is pure, $\varphi$ is admissible implies that
$\coker\varphi$ is away from the singular locus of $\xn_0$.
We adopt the convention that any coherent system on a smooth $X_t$ is admissible and stable. We define families of stable coherent systems in the same way as families of stable quotients. We have

\begin{prop}
Let $\varphi\colon \sO_{\cX}\to\sF$ be an $S$-flat family of coherent systems on an expanded degeneration $(\cX,p)
\in\fX(S)$. Then the set $\{s\in S\mid \varphi_s\colon \sO_{\cX_s}\to\sF_s\ \text{is stable}\}$ is an open subset of $S$.
\end{prop}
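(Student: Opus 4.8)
The plan is to run the proof of Lemma \ref{stable-open} verbatim, inserting one extra ingredient to account for the cokernel. By the definition of stability for a coherent system, $\varphi_s$ is stable exactly when (i) $\sF_s$ is admissible, (ii) $\coker\varphi_s$ is admissible, and (iii) $\Aut_{\fX}\varphi_s$ is finite; hence the stable locus is the intersection of the three subsets of $S$ cut out by these conditions, and it is enough to show each of them is open. For the $\sF_s$-admissibility locus this is immediate from Proposition \ref{open-2}. For the finiteness-of-automorphism locus, one argues exactly as in Lemma \ref{stable-open}: since $\Aut_{\fX}\varphi_s$ is a subgroup of $\gn$, the automorphisms of the family are cut out by a group scheme of finite type over $S$, and the locus where its geometric fibers are finite (equivalently, zero-dimensional) is open by upper semicontinuity of fiber dimension.

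The new point is openness of the set $U=\{s\in S\mid \coker\varphi_s\ \text{is admissible}\}$. First I would reduce to the case $S$ affine (openness is local on $S$); then the family $(\cX,p)\to S$ factors through some $\fX_n$ and is in particular proper over $S$. Let $\Sigma\subset\cX$ be the closed subset of points lying in the singular loci of the fibers of $\cX\to S$ (it lies over $0\in C$, the fibers over $c\ne 0$ being smooth). Since $\coker\varphi_s$ is zero-dimensional for every $s$ in a family of coherent systems, one observes — using Lemma \ref{2-flat} together with Nakayama's lemma, precisely as in the remark that multiplication by a local equation of a singular divisor is nilpotent, hence has nonzero kernel, on a nonzero finite-length module — that a zero-dimensional sheaf on a fiber $\cX_s$ is admissible if and only if its support is disjoint from $\Sigma\cap\cX_s$. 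Because the formation of $\coker\varphi$ commutes with base change, $\mathrm{Supp}(\coker\varphi_s)=\mathrm{Supp}(\coker\varphi)\cap\cX_s$, so
\[
S\setminus U=\bigl\{\,s\in S \ \big|\ \bigl(\mathrm{Supp}(\coker\varphi)\cap\Sigma\bigr)\cap\cX_s\ne\emptyset\,\bigr\}
\]
is exactly the image in $S$ of the closed subset $\mathrm{Supp}(\coker\varphi)\cap\Sigma$ of $\cX$. Since $\cX$ is proper over $S$, this image is closed, and therefore $U$ is open.

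Intersecting the three open sets proves the Proposition. I expect the only genuine obstacle to be the handling of $\coker\varphi$: one must recognize that its fiberwise admissibility is a pointwise condition on supports (via Lemma \ref{2-flat} and Nakayama), rephrase it as a closed condition on the total space $\cX$, and then use properness of $\cX/S$ to push that closed locus down to $S$ — which is exactly why the reduction to quasi-compact $S$ is made. Everything else is a routine repetition of the argument for Lemma \ref{stable-open}, and the same reasoning applies with $\cX$ replaced by an expanded relative pair $\cY$.
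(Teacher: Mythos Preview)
Your proof is correct. The paper in fact states this proposition without proof, evidently regarding it as a routine variant of Lemma~\ref{stable-open}; you have supplied the details the paper omits. In particular, the one point that is not a verbatim repetition of Lemma~\ref{stable-open} --- the openness of the locus where $\coker\varphi_s$ is admissible --- is handled correctly by your support argument: since $\coker\varphi$ is not assumed $S$-flat, Proposition~\ref{open-2} does not apply to it directly, and your route via the identification of admissibility of a zero-dimensional sheaf with disjointness of its support from the singular locus, combined with properness of $\cX\to S$, is the right substitute. This is exactly the observation the paper makes informally just before the definition (``$\varphi$ is admissible implies that $\coker\varphi$ is away from the singular locus of $\xn_0$''), and your argument makes it precise in families.
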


We form the category $\P_{\X/\C}$ of families of stable coherent systems. We have

\begin{prop}\lab{coh-stack}
$\P_{\X/\C}$ is a Deligne-Mumford stack locally of finite type.
\end{prop}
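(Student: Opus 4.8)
The plan is to follow the proof of Proposition~\ref{DM stack} almost verbatim, the one genuinely new ingredient being a relative moduli space that parametrizes the coherent-system data, needed because here $\sF$ is not a priori a quotient of a fixed sheaf. First, to see that $\P_{\X/\C}$ is a stack over $C$: for $S$ in $\mathrm{Sch}_C$ and two families $\varphi_1,\varphi_2\in\P_{\X/\C}(S)$, form $\Isom_S(\varphi_1,\varphi_2)$ sending $\rho\colon S'\to S$ to the set of isomorphisms between $\rho^*\varphi_1$ and $\rho^*\varphi_2$ in $\P_{\X/\C}(S')$. Such an isomorphism is a pair $(\sigma,\xi)$ with $\sigma\in\gn$ and $\xi$ a compatible sheaf isomorphism; since each fiber $\sF_s$ is pure of dimension one with $\coker\varphi_s$ finite, $\xi$ is unique once $\sigma$ is fixed, so $\Isom_S(\varphi_1,\varphi_2)$ is a subfunctor of $\gn\times S$, representable by a finite unramified group scheme over $S$ by finiteness of $\Aut_{\fX}\varphi$. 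Combined with the fact that $\fX$ is a stack (so the underlying expanded degenerations descend) and faithfully flat descent for the quasi-coherent data $(\sF,\varphi)$, the standard argument then shows $\P_{\X/\C}$ is a stack with quasi-finite, unramified diagonal.

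Next I would build an \'etale atlas. For each $n\ge 0$ let $\sM_n$ be the relative moduli space of coherent systems $\varphi\colon\sO_{\xn}\to\sF$ on $\xn/\cn$ with $\sF$ flat over $\cn$, fiberwise pure of dimension one, and $\coker\varphi$ fiberwise finite. I claim $\sM_n$ exists as an algebraic space locally of finite type over $\cn$: on any component where $\chi^H_{\sF}$ is fixed, for $m\gg 0$ the twist $\sF(m)$ is fiberwise globally generated with no higher cohomology and $\chi(\sF(m))=:N$ is constant, so $(\varphi,\sF)$ together with a basis of $H^0(\sF(m))$ determines a surjection $\sO_{\xn}\oplus\sO_{\xn}(-m)^{\oplus N}\twoheadrightarrow\sF$, that is, a point of a relative Quot scheme on $\xn/\cn$; then $\sM_n$ is the open locus where the first summand maps to a generating section with finite cokernel, modulo the free $GL_N$-action (equivalently, apply Le~Potier's construction \cite{Le} fiberwise). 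By the openness statement just preceding this Proposition (compare Lemma~\ref{stable-open} and Proposition~\ref{open-2}), the stable locus $\sM_n^{\st}\subset\sM_n$ is open. Since $\gn$ acts on $\xn/\cn$, it acts on $\sM_n$ preserving $\sM_n^{\st}$, and by stability this action has finite stabilizers on $\sM_n^{\st}$, so $[\sM_n^{\st}/\gn]$ is a Deligne-Mumford stack locally of finite type. The universal coherent system over $\sM_n^{\st}$ induces an \'etale morphism $[\sM_n^{\st}/\gn]\to\P_{\X/\C}$, and since every stable coherent system on some $\xn_0$ or on a smooth $X_t$ occurs, up to equivalence, over $\sM_n^{\st}$ for a suitable $n$, the induced
$$\coprod_{n\ge 0}[\sM_n^{\st}/\gn]\lra\P_{\X/\C}$$
is \'etale and surjective, proving the Proposition.

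The hard part will be the construction of $\sM_n$ and the verification of its properties, precisely because, unlike in Proposition~\ref{DM stack}, $\sF$ is not itself a quotient of a fixed sheaf, which forces the Le~Potier device above: one must check that $\sM_n$ is locally of finite type, that $\gn$ acts with finite stabilizers exactly on the stable locus, and that the morphism to $\P_{\X/\C}$ is \'etale. The \'etaleness should follow from the fact that an expanded degeneration over a test scheme $T$ is \'etale-locally pulled back from $\xn/\cn$, which converts the lifting of a $T$-point of $\P_{\X/\C}$ into a literal $T$-point of $\sM_n$ with indeterminacy exactly the $\gn$-equivalence; I expect this step to be routine but tedious, in the spirit of \cite{Li1}.
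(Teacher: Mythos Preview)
Your proposal is correct and follows exactly the route the paper has in mind: the paper gives no explicit proof of this Proposition, leaving it as the evident analogue of Proposition~\ref{DM stack} with Grothendieck's Quot-scheme replaced by Le~Potier's moduli of coherent systems \cite{Le}. Your identification of this as the one nontrivial modification, and your outline of how to build $\sM_n$ via the rigidified-quotient trick, are precisely what is needed.
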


Accordingly, we have the following relative version.

\begin{defi}
We say a coherent system $\cohsrel$ relative if both $\sF$ and $\coker\varphi$ are admissible, and $\coker\varphi$ is normal to $D_\pm[n_\pm]_0$. We say it is stable if it is admissible and $\Aut_{\Y}\varphi$ is finite.
\end{defi}

\begin{prop}
Let $\varphi\colon \sO_{\cY}\to\sF$ be an $S$-flat family of relative coherent systems on $(\cY,\cD_\pm)$.
Then the restriction $\varphi_{\cD_+}\colon \sO_{\cD_+}\to\sF|_{\cD_+}$ and $\varphi_{\cD_-}\colon \sO_{\cD_-}\to\sF|_{\cD_-}$ are $S$-flat families of quotient
sheaves on $\cD_+$ and $\cD_-$.
\end{prop}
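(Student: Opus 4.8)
The plan is to follow the argument used for the analogous statement about relative quotients, reducing the claim to a fibrewise normality statement and then quoting Proposition~\ref{flat} and Corollary~\ref{flat-2}. The assertion is local on $S$ and compatible with base change, so it suffices to verify two things: first that $\varphi_{\cD_\pm}$ is surjective, and second that $\sF|_{\cD_\pm}$ is flat over $S$. Granting these, $\varphi_{\cD_\pm}\colon\sO_{\cD_\pm}\to\sF|_{\cD_\pm}$ is a bona fide $S$-flat family of quotient sheaves; and since the whole discussion is symmetric in $D_+$ and $D_-$, it is enough to treat $\cD_+$.

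For surjectivity I would argue fibrewise. For a closed point $s\in S$ the sheaf $\coker\varphi_s$ is $0$-dimensional (by definition $\varphi_s$ has finite cokernel) and, by the relative hypothesis, is normal to the Cartier divisor $\cD_{+,s}$. A finite length module $M$ over a local ring that is normal to a Cartier divisor through its support point must vanish: if $t$ is a local defining equation then normality forces multiplication by $t$ to be injective on $M$, but an injective endomorphism of a finite length module is bijective, whence $M=tM\sub\fm M$ and Nakayama gives $M=0$. Thus $\coker\varphi_s$ is supported away from $\cD_{+,s}$ for every $s$, so $(\coker\varphi)|_{\cD_+}$, being supported on a closed subset of $\cD_+$ disjoint from every fibre of $\cD_+\to S$, vanishes. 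Tensoring the canonical surjection $\sO_{\cY}\to\sF\to\coker\varphi\to0$ with $\sO_{\cD_+}$ and using right exactness shows $\varphi_{\cD_+}$ is surjective. This is the same mechanism used earlier in the text to see that the cokernel of a coherent system avoids the singular loci.

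For the $S$-flatness of $\sF|_{\cD_+}$ it suffices, by Proposition~\ref{flat} and Corollary~\ref{flat-2} applied locally near $\cD_+$ — where each fibre $\cY_s$, and indeed $\cY$ itself over $S$, is smooth, since the distinguished divisor lies in the smooth locus of $Y[\nn]_0$ — to check that $\sF_s$ is normal to $\cD_{+,s}$ for every closed $s$. Near $\cD_{+,s}$ the map $\varphi_s$ is surjective by the previous paragraph, so $\sF_s$ is locally $\sO_{\cZ_s}$ for a closed subscheme $\cZ_s$; being pure of dimension one it satisfies Serre's condition $S_1$, hence is Cohen--Macaulay (its dimension being $\le 1$). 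Since $\cD_{+,s}$ is a Cartier divisor meeting $\mathrm{Supp}\,\sF_s$ properly — i.e.\ containing no one-dimensional component of it, which is the coherent-system analogue of the relative requirement in Definition~\ref{relative-quot} and Lemma~\ref{criteria-4} — its local equation is a nonzerodivisor on $\sO_{\cZ_s}$, so $\Tor^{\sO_{\cY_s}}_1(\sF_s,\sO_{\cD_{+,s}})=0$, i.e.\ $\sF_s$ is normal to $\cD_{+,s}$. Corollary~\ref{flat-2} then delivers the desired $S$-flatness of $\sF|_{\cD_+}$.

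The only step that is not formal is the last one: upgrading the hypothesis ``$\coker\varphi$ is normal to $\cD_\pm$'' to ``$\sF$ is normal to $\cD_\pm$'' fibrewise. I expect this to be the main point. It rests on purity of $\sF$ combined with the observation that, where $\coker\varphi$ is absent, a coherent system is nothing but a quotient of the structure sheaf, so the flatness results already established (Proposition~\ref{flat-4}, Lemma~\ref{2-flat}) apply verbatim to $\sF$ itself near $\cD_\pm$. Once this fibrewise normality is in hand, the passage to $S$-flatness and the remaining bookkeeping are immediate from the results quoted above.
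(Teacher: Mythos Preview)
Your overall structure --- cokernel away from $\cD_\pm$ gives surjectivity, then Corollary~\ref{flat-2} gives $S$-flatness --- is exactly the paper's, stated there in three lines. The difference is that you try to justify the hypothesis Corollary~\ref{flat-2} needs, namely that $\sF_s$ is normal to $\cD_{\pm,s}$, via a Cohen--Macaulay argument reducing to the claim that no one-dimensional component of $\mathrm{Supp}\,\sF_s$ lies in $\cD_{+,s}$. You assert this is ``the coherent-system analogue of the relative requirement,'' but the paper's definition of a relative coherent system only asks $\coker\varphi$, not $\sF$, to be normal to $D[n_\pm]_{\pm,0}$. Take $n_\pm=0$, $Y$ smooth of dimension $\ge 3$, and $\sF=\sO_C$ for a smooth curve $C\subset D_+$: then $\coker\varphi=0$, $\sF$ is pure and admissible, yet $\sF$ is not normal to $D_+$; in the family over $S=\bA^1$ where $C$ moves transversally off $D_+$, each fibre is a relative coherent system in the paper's sense but $\sF|_{\cD_+}$ is supported over a single point of $S$ and is not $S$-flat.

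So the step you correctly identified as ``the main point'' cannot be filled from the stated hypotheses; your appeal to analogy with Definition~\ref{relative-quot} and Lemma~\ref{criteria-4}, and your closing remark that ``the flatness results apply verbatim,'' are assuming precisely what is in question. The paper's terse proof skips over the same point. The resolution lies at the level of the definition: in parallel with Definitions~\ref{relative} and~\ref{relative-quot}, a relative coherent system should also require $\sF$ to be normal to $D[n_\pm]_{\pm,0}$, after which the direct appeal to Corollary~\ref{flat-2} suffices and your Cohen--Macaulay detour becomes unnecessary.
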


\begin{proof}
This is because for a family of relative coherent systems $\varphi\colon \sO_{\cY}\to\sF$, $\coker\varphi$ is away from $\cD_+$ and $\cD_-$. Therefore, the restrictions $\varphi_{\cD_+}\colon \sO_{\cD_+}\to\sF|_{\cD_+}$ and $\varphi_{\cD_-}\colon \sO_{\cD_-}\to\sF|_{\cD_-}$ are surjective. The flatness follows from Corollary \ref{flat-2}.
\end{proof}

We form the stack $\P_{\fD_\pm\sub\fY}$ of families of relative coherent systems. Analogue to Proposition \ref{coh-stack}, we have

\begin{prop}
$\P_{\fD_\pm\sub\fY}$ is a Deligne-Mumford stack locally of finite type.
\end{prop}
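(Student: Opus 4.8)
The plan is to imitate, essentially word for word, the proofs of Proposition~\ref{DM stack} and Proposition~\ref{coh-stack}, working now over the stack of expanded relative pairs $\fD_\pm\sub\fY\to\fA_\diamond$ in place of $\fX\to\fC$, and with the moduli of coherent systems replaced by its relative analogue.

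First I would check that $\P_{\fD_\pm\sub\fY}$ is a stack. Given a scheme $S$ and two families $\varphi_1,\varphi_2\in\P_{\fD_\pm\sub\fY}(S)$, form the functor $\Isom_S(\varphi_1,\varphi_2)$ sending $\rho\colon S'\to S$ to the set of isomorphisms between $\rho^*\varphi_1$ and $\rho^*\varphi_2$. An isomorphism of relative coherent systems is a pair $(\sigma,\xi)$, with $\sigma$ an arrow between the underlying families of expanded pairs (valued fibrewise in $\Gm\unn$) and $\xi\colon\sF_1\cong\sigma^*\sF_2$ intertwining the two structure maps; since $\sF$ is pure of dimension one and $\coker\varphi$ is finite, $\xi$ is uniquely determined by $\sigma$. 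Hence $\Isom_S(\varphi_1,\varphi_2)$ is a subscheme of the automorphism group scheme of the underlying expanded pairs, and by stability its fibres over $S$ are finite; so it is representable by a finite group scheme over $S$, and descent theory gives the stack property.

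Next I would produce an \'etale surjective cover by a Deligne-Mumford stack locally of finite type. Let $p\colon\yn\to Y$ be the tautological family over $\bA\unn$. Using Le Potier's construction \cite{Le} relativized over $\bA\unn$, form the moduli space of coherent systems $\sO_{\yn}\to\sF$ on $\yn/\bA\unn$ with $\sF$ pure of dimension one and finite cokernel; it is locally of finite type. Let $\cW_{\nn}$ be the subset of those restricting on each closed fibre to a \emph{stable relative} coherent system. This locus is open: admissibility of $\sF$ and of $\coker\varphi$, and normality of $\coker\varphi$ to $D[n_\pm]_{\pm,0}$, are open by the relative versions of Corollary~\ref{flat-2}, Proposition~\ref{flat-4} and Proposition~\ref{open-3}, while finiteness of $\Aut_\fY\varphi$ is open because $\Aut_\fY\varphi\sub\Gm\unn$. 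The $\Gm\unn$-action on $\yn/\bA\unn$ restricts to $\cW_{\nn}$ with finite stabilizers by stability, so $[\cW_{\nn}/\Gm\unn]$ is Deligne-Mumford and locally of finite type; the universal relative coherent system induces an \'etale morphism
\[
[\cW_{\nn}/\Gm\unn]\lra\P_{\fD_\pm\sub\fY},
\]
and the disjoint union over all $\nn\ge 0$ is \'etale and surjective, since every stable relative coherent system is defined on some $Y[\nn]_0$. This exhibits $\P_{\fD_\pm\sub\fY}$ as a Deligne-Mumford stack locally of finite type.

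The hard part will be the only step that is not pure bookkeeping: the relative version of Le Potier's existence theorem, i.e.\ representability by a scheme (or algebraic space) locally of finite type of the functor of pairs $\sO\to\sF$ with $\sF$ pure of dimension one on the family $\yn/\bA\unn$. This follows from the usual combination of Quot-scheme boundedness and a GIT quotient, adapted to the base $\bA\unn$ and to the pair datum; alternatively one may quote the absolute case fibrewise and glue. The openness of the ``relative'' condition, which at first sight looks like the delicate point, is already supplied by Proposition~\ref{open-3}, and the remainder of the argument is identical to the propositions already proved.
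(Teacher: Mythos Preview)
Your proposal is correct and follows exactly the approach the paper intends: the paper gives no separate proof for this proposition, treating it (like the analogous statements for $\Qyd$ and $\P_{\X/\C}$) as a routine adaptation of the proof of Proposition~\ref{DM stack}, which is precisely what you have written out.
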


\subsection{Components of the moduli stack}

The moduli stacks $\Qx$ and $\Mx$ can be decomposed into disjoint pieces according to the
topological invariants of the sheaves. We will discuss the case for Quot scheme; it is the same for the moduli of coherent systems.

We use Hilbert polynomials to keep track of the topological data of quotients. For any coherent sheaf $\sF$ on an  $(\cX,p)\in\fX(S)$, %we denote $\sF(v)=\sF\otimes p\sta H^{\otimes v}$,
and for a closed $s\in S$, denote $\sF_s=\sF|_{\cX_s}$ and define
$$%\chi_{_{\sF_s}}(v)= \chi(\sF(v)|_{\cX_s}), \quad v\in \mathbb Z.
\chi^H_{{\sF_s}}(v)= \chi(\sF_s\otimes p\sta H^{\otimes v}), \quad p:\cX_s\lra X, \quad v\in \mathbb Z.
$$

Let $P(v)$ be a fixed polynomial. We define $\Qxh(\kk)\sub\Qx(\kk)$ be the subset consisting of $[\cohs]\in \Qx(\kk)$ so that $\chi^H_{{\sF}}=P$. Since the Hilbert polynomials of a flat family of sheaves are locally constant in their parameter space, $\Qxh(\kk)\sub\Qx(\kk)$ is both open and closed. Thus it defines an open and closed substack
$\Qxh\sub\Qx$.

Similarly, we let $q:Y\to X$ and $p:Y[\nn]_0\to Y$ be the projections; for a sheaf $\sF$ on $Y[\nn]_0$,
we denote $\chi^H_{\sF}(v)=\chi(\sF\otimes p\sta q\sta H^{\otimes v})$.
We define the open and closed substack $\Qydp\sub \Qyd$ be so that $\Qydp(\bk)$ consists of relative stable quotients
$\pfrmob$ such that $\chi^H_{{\sF}}=P$.

For moduli of coherent systems, following the same procedure, we have open and closed substacks $\Mxh$ of $\Mx$ and
$\Mydh$ of $\Myd$.

We state the main theorems of the first part of this paper whose proofs will occupy the next section.

\begin{theo}\label{properA}
The Deligne-Mumford stacks $\Qxh$ and $\Mxh$ are separated, proper over $C$, and of finite type.
\end{theo}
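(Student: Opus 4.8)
The plan is to establish, in order, that $\Qxh$ (and likewise $\Mxh$) is of finite type, separated over $C$, and proper over $C$, treating $\Qxh$ in detail and indicating the parallel modifications for $\Mxh$ at the end. By Proposition~\ref{DM stack} and the fact that $\Qxh\subset\Qx$ is open and closed, finite type amounts to quasi-compactness.

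\textbf{Boundedness.} First I would bound the length $n$ of the chain $\Delta_1,\dots,\Delta_n$ that can occur for a stable admissible quotient $\phi\colon p\sta\sV\to\sF$ with $\chi^H_\sF=P$. For admissible $\sF$ one has $\err\sF=0$ and $\sF\utf=\sF$, so the identity of Lemma~\ref{delta} reads $\chi^H_\sF(v)=\tfrac12\sum_{l=0}^{n+1}(\delta_{l,0}+\delta_{l,1})$, and Lemma~\ref{vanishing} shows that for $1\le l\le n$ each $\delta_{l,i}$ is a Hilbert polynomial with non-negative leading term which vanishes identically precisely when $p\sta\sV|_{\Delta_l}\to\sF_l$ is pulled back from $D_l$. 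Stability forbids the latter for every $l=1,\dots,n$, since a bubble on which $\sF$ is a pullback supports a one-parameter subgroup inside $\Aut_\fX\phi\subset\gn$. Hence, for a fixed large $v$ chosen uniformly (legitimate by Grothendieck boundedness of quotients of $\sV$ on each fixed component and the vanishing $R^1\pi_{l\ast}\sF_l=0$ of Lemma~\ref{vanishing}), each inner $\delta_{l,i}(v)\ge 1$, and comparing with the fixed number $P(v)$, after accounting for the contributions of the two end components, bounds $n$. With $n$ bounded there are finitely many isomorphism classes of $\xn$, finitely many compatible ways of distributing $P$ among the components, and for each of them the stable locus of $\Quot^{p\sta\sV}_{\xn/\cn}$ is of finite type; hence $\Qxh$ is quasi-compact. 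The same argument with \eqref{err-re} and Lemma~\ref{criteria-4} gives boundedness of the relative stacks, hence of $\Mxh$ (with $\err$ replaced by $\excz$ in the relative and coherent-system variants).

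\textbf{Separatedness and properness.} Both are verified via the valuative criterion for a DVR $R$ with fraction field $K$, and both reduce to one local statement: a stable admissible quotient over $\Spec K$ admits, after a finite base change, an admissible stable limit over $\Spec R$, unique up to the equivalence defining $\Qx$. For existence I would start from an arbitrary flat limit in $\Quot^{p\sta\sV}_{X\times_C\Spec R}$ (which exists by properness of Quot schemes), which need be neither admissible nor stable, and repair it by the expansion procedure of \cite{Li1}: blow up $X\times_C\Spec R$ along the singular locus of its central fibre to insert a copy of $\Delta$, re-take the flat limit, and elementary-modify the quotient so as to move the non-admissible part, measured by $\err$, into the new bubble. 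The key input that this strictly decreases $\err$, hence terminates by the boundedness above, is Lemmas~\ref{A} and~\ref{B}: they identify the submodule $C_{\mathrm{gen}}\subset B^{\oplus m}$ controlling the non-flatness of the generic $\Gm$-equivariant quotient with the submodule $C_0$ controlling it on the central fibre, so the obstruction is already visible on the central fibre and is resolved by one canonical blow-up. A final contraction of all bubbles on which the limit sheaf is pulled back yields a stable admissible quotient. For uniqueness I would dominate two given limits by a common expansion as in \cite{Li1}, pull both quotients back to it, and observe that over $\Spec K$ they coincide while on the common expansion both have $R$-flat kernels inside the locally free $p\sta\sV$; two $R$-flat subsheaves of an $R$-flat sheaf agreeing over $K$ are equal, because the $R$-torsion subsheaf of the $R$-flat $\sF_i=p\sta\sV/\sK_i$ vanishes, so the pulled-back quotients agree, and stability of the two original limits identifies each with the common stabilization. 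Since every construction is carried out over $C$, properness over $C$ follows.

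\textbf{Main obstacle and the coherent-system case.} The crux is the existence half of the valuative criterion: the inductive control, blow-up by blow-up, of the failure of admissibility of the flat limit, and this is exactly where the technical Lemmas~\ref{A}--\ref{B} on $\Gm$-equivariant quotient modules are indispensable, guaranteeing that the generic obstruction already appears on the central fibre and can therefore be resolved by a single canonical expansion. For $\Mxh$ the arguments are parallel: a coherent system $\varphi\colon\sO_{\xn_0}\to\sF$ with $\sF$ pure of dimension one and $\coker\varphi$ finite has its cokernel automatically away from the singular loci, so admissibility of $\sF$ alone governs the limit, and the numerical and equivariant lemmas apply verbatim.
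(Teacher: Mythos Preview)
Your separatedness argument is essentially the paper's, and your description of the existence step is in the right spirit, but there are two genuine gaps.

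\textbf{Boundedness.} Your direct argument does not work. Granting that for each inner bubble $\Delta_l$ the polynomial $\delta_{l,i}$ has nonnegative leading coefficient and is nonzero by stability, you cannot choose a single $v$ for which all $\delta_{l,i}(v)\ge 1$: the lower-order terms of $\delta_{l,i}$ are not controlled a priori, and invoking ``Grothendieck boundedness of quotients of $\sV$ on each fixed component'' to justify a uniform $v$ is circular, since that boundedness is exactly what you are trying to establish. Even if the inner contributions were $\ge 1$, you have not bounded the contributions of the two end components $\Delta_0,\Delta_{n+1}$ from below; these involve $\chi(\sF|_Y(v))$ and $\chi(\sF|_{D_1}(v))$, neither of which is fixed by $P$. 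The paper's proof (Proposition~\ref{bdd2}) handles this by induction on $\deg P$: cutting by a generic $S\in|p^*H|$ (Lemma~\ref{generic-tran}) drops the degree, the induction hypothesis bounds the number $n_\phi$ of bubbles on which the restricted quotient is \emph{not} $\Gm$-invariant, and then two sublemmas show that the finitely many ``active'' bubbles have controlled $\chi$ while each ``inactive'' one contributes $\ge 1$ to \eqref{chi-id}. Your argument has no substitute for this inductive input.

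\textbf{Existence of limits.} Your claim that the modification procedure ``terminates by the boundedness above'' is incorrect: the bound on $n$ applies to \emph{stable} quotients, whereas the intermediate quotients in the procedure have finite automorphisms but are not admissible, so that bound is inapplicable. Termination is instead forced by Lemma~\ref{chain}, the descending chain condition on $(\sA,\prec)$, once one proves $\err\sF'_{\eta'_0}\prec\err\sF_{\eta_0}$ at each step. Establishing this strict decrease is also considerably more delicate than ``blow up, re-take the flat limit, elementary-modify'': the paper constructs a $\Gm$-equivariant regular surface $V$ dominating $S\times\Gm$, pulls back the universal quotient along $\bar\jmath:V\to\Qu$, and analyses the resulting family over a chain $\Sigma_0\cup\cdots\cup\Sigma_{k+1}$ of rational curves in $V$ (Lemmas~\ref{5.11}--\ref{localexc}). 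Lemmas~\ref{A} and~\ref{B} enter precisely in Lemma~\ref{localexc}, to compare $\err_l$ and $\err_{l+1}$ at adjacent fixed points $q_i,q_{i+1}\in\Sigma$ and to locate a segment $\Sigma_i$ over whose generic point the error drops strictly. Your one-sentence summary of their role (``the generic obstruction already appears on the central fibre and can therefore be resolved by a single canonical expansion'') does not capture this, and without it the strict $\prec$-decrease is unproved.
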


\begin{theo}\label{properB}
The Deligne-Mumford stacks $\Qydp$ and $\Mydh$ are separated, proper and of finite type.
\end{theo}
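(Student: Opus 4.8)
The plan is to verify the three assertions of Theorem \ref{properB}---finite type, separatedness, properness---in that order, for $\Qydp$; the argument for $\Mydh$ is the same (and in fact easier, since purity of $\sF$ removes one of the steps below), and Theorem \ref{properA} is obtained by the identical scheme with $\fD_\pm\sub\fY$ replaced by $\fX/\fC$ and ``proper'' replaced by ``proper over $C$''. \textbf{Finite type.} Exactly as in Proposition \ref{DM stack}, $\Qydp$ is covered by the \'etale charts $[\Quot^{p\sta\sV_0,\st}_{Y[\nn]/\bA\unn}/\Gm^{n_-+n_+}]$ indexed by $(n_-,n_+)$, and each of these Quot schemes with fixed Hilbert polynomial $P$ is of finite type; so it suffices to bound $n_-+n_+$ in terms of $P$. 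Here stability is essential: if $\phi\colon p\sta\sV_0\to\sF$ is stable then $\Aut_\fY\phi$ is finite, which forces that on each bubble $\Del_l$ the induced surjection $p\sta\sV_0|_{\Del_l}\to\sF\utf_l$ is not pulled back from a quotient on $D_l$ (were it such a pullback, the tautological $\Gm$-action on $\Del_l$ would fix $\phi$, making the stabilizer infinite). By Lemma \ref{vanishing} the leading coefficients of $\delta_{l,0}$ and $\delta_{l,1}$ are then strictly positive, and since $p\sta\sV_0|_{\Del_l}$ is pulled back from a fixed bundle on $D$ these leading coefficients lie in a discrete set bounded below by a fixed positive constant. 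Feeding this into the identity \eqref{the-id} and its relative analogue (together with $\err\sF\ge0$ from Lemma \ref{criteria-4}) bounds the degree growth contributed by the bubbles, hence bounds $n_-+n_+$ by some $N=N(P,\sV_0,H)$; thus $\Qydp$ is a finite union of charts of finite type.

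\textbf{Separatedness.} I would use the valuative criterion. Let $R$ be a DVR with fraction field $K$, and let two objects of $\Qydp(R)$ be given whose restrictions to $\Spec K$ are isomorphic. Spreading the isomorphism out and pulling both families back to a common smooth chart $Y[\nn]/\bA\unn$ (legitimate since which singular divisors of the central fibre are smoothed is determined by the $K$-family together with flatness), the two resulting $R$-points of $\Quot^{p\sta\sV_0}_{Y[\nn]/\bA\unn}$ agree over $K$; as this Quot scheme is separated over $\bA\unn$, the two families coincide up to the $\Gm^{n_-+n_+}$-action and the equivalences $\ti\tau_{I,I',Y}$. Finiteness of the stabilizer then pins the identifying arrow down uniquely, giving the required isomorphism over $R$.

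\textbf{Properness.} This is the heart of the matter, proved by the valuative criterion for existence of a limit. Given an object of $\Qydp(K)$, one must, after a finite extension $R'\supset R$, fill in the closed fibre. Using properness of ordinary Quot schemes, take the flat limit of the $K$-family inside $\Quot^{p\sta\sV_0}_{Y[\nn]/\bA\unn}$ over a suitable chart; the limiting quotient $\phi_0$ on $Y[\nn]_0$ need not be relative, and its failure is measured by the non-negative integer $\err\sF_0$ (Lemma \ref{criteria-4}). The procedure is then an iterated expansion: wherever $\sF_0$ fails to be normal to a divisor $D_l$ (or to $D[n_\pm]_{\pm,0}$) one passes to a chart with one extra bubble inserted at that divisor, base-changes the DVR to absorb ramification, and retakes the flat limit. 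The role of Lemmas \ref{A} and \ref{B} is precisely to make this step effective: the ``failure module'' $C_{\mathrm{gen}}$ extracted over the generic point coincides with its central-fibre counterpart $C_0$, so the expansion genuinely removes the obstruction rather than relocating it, and each step strictly decreases the total error. Since $\err\sF_0$ is a non-negative integer and, by the finite-type bound above, $n_-+n_+$ cannot exceed $N(P)$, the process terminates with an admissible relative quotient; after discarding the zero-dimensional subsheaves supported on the singular loci (the passage $\sF\rightsquigarrow\sF\utf$) and checking finiteness of the resulting automorphism group, one obtains an object of $\Qydp(R')$ extending the given $K$-point. I expect the termination of this admissibilization procedure---controlling $\err$ under expansion via the appendix lemmas---to be the main obstacle; everything else is a routine transcription of the stable-reduction arguments of \cite{Li1}.
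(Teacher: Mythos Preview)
Your outline has the right three-part shape, but there are genuine gaps in each part, and two of them are not minor.

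\textbf{Properness.} The quantity $\err\sF_0$ is a \emph{polynomial}, not a non-negative integer; see \eqref{err} and \eqref{err-re}. Termination of the admissibilization loop is therefore not immediate from ``non-negative integer decreases''; the paper orders the set $\sA$ of polynomials by comparing degree and leading coefficient (Definition \ref{ordering}) and invokes the descending chain condition (Lemma \ref{chain}), showing only that $\err$ strictly drops in the $\prec$ order at each step (Lemma \ref{1.2}, Lemma \ref{relative-exist}). Relatedly, your appeal to the bound $n_-+n_+\le N(P)$ for termination is circular: that bound applies to \emph{stable} objects, whereas during the procedure the number of bubbles may grow. Finally, ``insert a bubble and retake the flat limit'' understates what is needed: the paper builds a $\Gm$-equivariant regular surface $V$ by blowing up $S\times\Ao$, analyzes the pullback family along a chain $\Sigma_0\cup\cdots\cup\Sigma_{k+1}$ of rational curves (Lemmas \ref{5.11}--\ref{localexc}), and only then locates a curve $S'\subset V$ on which the error has strictly dropped. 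Lemmas \ref{A} and \ref{B} enter exactly in Lemma \ref{localexc} to compare the $l$- and $(l{+}1)$-errors at consecutive fixed points $q_i$, which is how one finds the good $\Sigma_i$. Your passage $\sF\rightsquigarrow\sF\utf$ is not part of the construction; one does not modify the sheaf, one changes the extension.

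\textbf{Finite type.} Your argument via Lemma \ref{vanishing} controls only the bubble contributions $\delta_{l,i}$ for $1\le l\le n$; it says nothing about the $Y$-piece ($l=0$), whose $\delta_{0,i}$ can be arbitrarily negative a priori, so the identity \eqref{the-id} alone does not bound the number of bubbles. The paper's boundedness proof (Proposition \ref{bdd2}) is an induction on $\deg P$: slice by a generic $S\in|p\sta H|$ (Lemma \ref{generic-tran}) to drop the degree, use the inductive bound to control the ``essential'' bubbles and the Hilbert polynomials $\chi^H_{\sF|_{D_i}}$ (Sublemma 1), then derive lower bounds $\chi(\sF|_{\Del_i}(-D_i))\ge -M$ on both the bubbles and the $Y$-piece (Sublemma 2), and finally feed these into \eqref{chi-id} to bound $n$. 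Your one-step argument does not do this.

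\textbf{Separatedness.} ``Pull both families back to a common smooth chart'' hides the actual content: the two central fibres could a priori have different numbers of components, and one must \emph{prove} $n_1=n_2$ and that the singular divisors match up. The paper does this by writing down explicit refinements $\bar\xi_i:S\to C[n]$ with $n=\sum e_{1,j}=\sum e_{2,j}$ (equations \eqref{prime}--\eqref{bar}), producing contractions $h_i:\bar\cX_i\to\cX_i$, and then using separatedness of the ordinary Quot scheme together with admissibility to force $e_{1,j}=e_{2,j}$.
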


\section{Properness of the moduli stacks}

We apply the valuative criterion to prove Theorem \ref{properA} and \ref{properB}. We let $S$ be an affine scheme such that $\Gamma(\sO_S)$ is a discrete valuation $\kk$-algebra; let $\eta$ and $\eta_0\in S$ be its generic and closed point. We will often denote by $S'\to S$ a finite base change; in this case we denote by $\eta'$ and $\eta'_0$ its generic and closed points. %Without lose of generality, we assume $S$ is smooth.

For any quotient homomorphism $\phi: p\sta\sV\to\sF$ on $(\cX,p)\in\fX(S)$, we denote by
$\phi_\eta$ and $\phi_{\eta_0}$ the restriction of $\phi$ to $\cX_\eta=\cX\times_S \eta$ and $\cX_{\eta_0}$,
respectively.

\begin{prop}\lab{existence}
Let $(S, \eta, \eta_0)$ be as stated. Given any $(\phi_\eta,\cX_\eta)\in\Qxh(\eta)$, we can find a finite base change $S'\to S$ so that $(\phi_\eta,\cX_\eta)\times_\eta\eta' \in  \Qxh(\eta')$ extends to a family in $ \Qxh(S')$.
Further, the same conclusion holds for $\Qydp$.
\end{prop}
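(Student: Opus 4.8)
The plan is to verify this instance of the valuative criterion by the expansion-and-limit procedure of \cite{Li1}, with the progress of the induction measured by the error function of Section~3 (equation~\eqref{err} and Lemmas~\ref{delta}, \ref{vanishing}). I treat $\Qxh$; the argument for $\Qydp$ is parallel and is indicated at the end.

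First I would reduce to the essential case. The family $\cX_\eta$ is classified by a morphism $\eta\to\fC$, which, since $\cn\to\fC$ is a smooth chart, lifts after a finite base change $S'\to S$ to $\eta'\to\cn$ for some $n$; using the $\Gm^n$-equivariance one may arrange that this extends to an $S'$-point of $\cn$, producing a flat $S'$-family $\cX'=\xn\times_{\cn}S'$ of expanded degenerations extending $\cX_{\eta'}$. If the image of $\eta_0$ in $C$ lies in $C\sta$, then no expansion occurs and the claim is just the properness of the Quot scheme of $p\sta\sV$ over the smooth projective morphism $X\times_C C\sta\to C\sta$, which is classical; so assume the image of $\eta_0$ in $C$ is $0$. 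Now extend $\phi_{\eta'}$ over $S'$ to a quotient $\phi'\colon p\sta\sV\to\sF'$ by taking its scheme-theoretic closure inside the proper scheme $\Quot^{p\sta\sV}_{\cX'/S'}$; its special fibre is a quotient $\phi'_0\colon p\sta\sV|_{\cX'_0}\to\sF'_0$ with $\chi^H_{\sF'_0}=P$, but $\sF'_0$ need not be admissible.

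The core of the argument is to repair this. By Lemma~\ref{delta}, $\chi(\sF'_0(v))$ is $\err\sF'_0$ plus the sum of the terms $\frac{1}{2}(\delta_{l,0}+\delta_{l,1})$, and $\err\sF'_0=0$ exactly when $\sF'_0$ is admissible. If $\err\sF'_0\neq0$, I would run a semistable-reduction step: after a further finite base change, expand $\cX'$ by inserting one more $\bP^1$-bundle $\Delta$ along a singular divisor $D_l\subset\cX'_0$ along which $\sF'_0$ fails to be normal — a blow-up that again yields an $S'$-family of expanded degenerations in the sense of Lemma~\ref{Lem-X} — and take the new flat limit. Using Lemma~\ref{vanishing}, namely the nonnegativity of the leading coefficients of the $\delta_{l,i}$ together with the fact that $\delta_{l,i}=0$ forces the quotient on $\Delta_l$ to be pulled back from $D_l$, one shows that this step strictly decreases the (nonnegative) total error; as the error is bounded in terms of $P$, the loop terminates after finitely many steps and produces an admissible limit $\phi_0\colon p\sta\sV|_{X[m]_0}\to\sF_0$ lying in $\fX_0$.

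Finally I would arrange stability and treat the relative case. If $\Aut_{\X}\phi_0$ is infinite, then $X[m]_0$ has an inserted component $\Delta_l$ sitting in a sub-chain on which $\sF_0$ is pulled back from $D_l$ (again Lemma~\ref{vanishing}); contracting that sub-chain through a morphism $\bar\tau_I\colon C[m']\to C[m]$ and the induced isomorphism of Lemma~\ref{Lem-X}(3) descends the family to a strictly smaller expansion without changing the Hilbert polynomial or the generic fibre, and iterating terminates at a stable admissible limit — the desired extension in $\Qxh(S')$. For $\Qydp$ the same steps apply with $(\cX,p)$ replaced by $(\cY,\cD_\pm,p)$, using the relative error~\eqref{err-re}, the criterion of Lemma~\ref{criteria-4}, and Proposition~\ref{open-3}; the one extra point is that $\sF'_0$ may also fail to be normal to a relative divisor $D[n_\pm]_{\pm,0}$, which is restored by expanding along that divisor in exactly the same way. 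The main obstacle is the termination of the expansion-and-limit loop, i.e.\ the strict decrease of the error under an expansion step; this rests on the positivity Lemma~\ref{vanishing}, and in the full proof will require a careful local computation on the new central fibre together with the bookkeeping of the finite base changes needed to realize each expanded family as a genuine pullback from the charts $\cn$.
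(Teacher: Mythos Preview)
Your overall strategy---extend, measure failure of admissibility by $\err$, expand along a bad divisor, iterate---is the paper's strategy. But two load-bearing points are missing or misstated, and without them the argument does not go through.

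\medskip
\textbf{Termination.} You write that the step ``strictly decreases the (nonnegative) total error; as the error is bounded in terms of $P$, the loop terminates.'' But $\err\sF_{\eta_0}$ is a \emph{polynomial}, not an integer, and there is no obvious well-ordering in which it decreases. The paper introduces the partial order $\prec$ on $\sA\subset\bQ[k]$ (Definition~\ref{ordering}) comparing only the degree and leading coefficient, proves the descending chain condition for $\prec$ (Lemma~\ref{chain}), and then shows that each modification produces $\err\sF'_{\eta'_0}\prec\err\sF_{\eta_0}$. Your proposal never invokes this order, and ``bounded in terms of $P$'' does not by itself give termination for polynomial-valued invariants.

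\medskip
\textbf{Why the error drops.} You assert that expanding along $D_l$ and taking the new flat limit decreases the error, citing only Lemma~\ref{vanishing}. This is the step where the real work lies, and Lemma~\ref{vanishing} alone is not enough: a naive flat limit after a single blow-up need not have smaller error. The paper's mechanism (Lemma~\ref{1.2}) is substantially more delicate: one builds a $\Gm$-equivariant regular surface $V$ resolving the rational map $S\times\Gm\dashrightarrow\Qu$, pulls back the universal family, and studies it over the chain $\Sigma=\Sigma_0\cup\cdots\cup\Sigma_{k+1}$ of exceptional curves. Lemmas~\ref{5.11}--\ref{5.12} identify the fibres at the ends with $\phi_{\eta_0}$ and show normality to $D_l$ (resp.\ $D_{l+1}$) at $q_1$ (resp.\ $q_{k+1}$); Lemma~\ref{localexc}, which in turn rests on the $\Gm$-module Lemmas~\ref{A} and~\ref{B}, locates an interior point $a\in\Sigma_i$ where $\err_l+\err_{l+1}$ strictly drops in the $\prec$ order. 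The new $S'$ is then a curve in $V$ through that particular $a$. None of this is visible in your sketch, and it is precisely here that Lemma~\ref{vanishing} enters---not to compare the old and new limits directly, but inside the proof of Lemma~\ref{localexc} via the identity~\eqref{the-id}.

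\medskip
A smaller point: for singular $\cX_\eta\cong\xn_0\times\eta$ the paper does not run the surface argument globally; it restricts to each irreducible piece $\Delta_l$, extends and repairs each piece separately (controlling the extra $\Gm$-symmetries on the ruled pieces), and then glues using flatness of the restrictions to the interface divisors. Your reduction ``lift to $\eta'\to\cn$ and extend to an $S'$-point of $\cn$'' glosses over this, and in particular over why the glued family is again admissible and has finite $\Aut_\fX$.
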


\begin{prop}\lab{unique}
Let $(S, \eta, \eta_0)$ be as stated. Given $(\phi_1,\cX_1)$, $(\phi_2,\cX_2)\in \Qxh(S)$, any isomorphism $(\phi_1,\cX_1)\times_S \eta\cong (\phi_2,\cX_2)\times_S \eta $ in $\Qxh(\eta)$ extends to an isomorphism $(\phi_1,\cX_1)\cong(\phi_2,\cX_2)$ in $\Qxh(S)$. Further, the same conclusion holds for $\Qydp$.
\end{prop}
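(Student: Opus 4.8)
The plan is to verify the valuative criterion of separatedness directly, in the spirit of the separatedness proof for the stack of expanded degenerations. Write $R=\Gamma(\sO_S)$ for the given discrete valuation $\bk$-algebra, with uniformizer $\varpi$, fraction field $K$ and residue field $\bk$; by the usual reduction for finite-type algebraic stacks I would first reduce to $R$ complete with $\bk$ algebraically closed. Given $(\phi_i;\cX_i,p_i)\in\Qxh(S)$, $i=1,2$, and an isomorphism $\Theta$ between their restrictions over $\eta$, it suffices to produce an extension of $\Theta$ over $S$, since $\Qxh$ being Deligne--Mumford makes such an extension automatically unique. The families $(\cX_i,p_i)$ are classified by morphisms $g_i\colon S\to\fC$; composing with $\fC\to C$ gives a single morphism $S\to C$ (as $C$ is a scheme), over which I work. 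If $\eta_0\mapsto C-0$ no expansion occurs and $\Qxh$ restricted over $C-0$ is the ordinary relative Quot-scheme, which is projective hence separated, so there is nothing to prove; thus assume $\eta_0\mapsto 0\in C$ and let $z\in\fm_R$ be the pullback of the coordinate on $\Ao$, with $d\ge 1$ (possibly $\infty$, i.e.\ $z=0$) its valuation in $R$.

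The second step is to normalize the $g_i$. After a harmless unramified base change, lift $g_i$ through the smooth chart $C[n_i]=C\times_{\Ao}\bA^{n_i+1}\to\fC$; then $g_i$ is a tuple $(t^{(i)}_1,\dots,t^{(i)}_{n_i+1})\in R^{n_i+1}$ with $\prod_j t^{(i)}_j=z$. Using the $\Gm^{n_i}$-action on $\bA^{n_i+1}$ (which lets one rescale the tuple by any $(u_1,\dots,u_{n_i},(u_1\cdots u_{n_i})^{-1})$, $u_j\in R^\times$) together with the equivalences $\ti\tau_{I,I'}$ (which drop any coordinate that is a unit of $R$, contracting the corresponding trivial bubble), I would put each $g_i$ in standard form, so that its only invariant is the composition $(a^{(i)}_1,\dots,a^{(i)}_{n_i+1})$ of $d$ into positive parts, where $a^{(i)}_j$ is the valuation of $t^{(i)}_j$. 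In standard form $\cX_{i,\eta_0}=X[n_i]_0$ with the $l$-th singular divisor $D_l$ smoothed to order $a^{(i)}_l$ (Lemma~\ref{Lem-X}), while $\cX_{i,\eta}=X_\eta$; and since $\fX$ over $C-0$ is the scheme $X$, $\Theta$ must be $\id_{X_\eta}$, so $\phi_{1,\eta}=\phi_{2,\eta}=:\phi_\eta$.

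The crux, and the step I expect to be the main obstacle, is to show that the composition attached to $g_i$ is determined by $\phi_\eta$ together with the stability of $\phi_i$ -- equivalently, that a stable admissible quotient canonically determines its expanded degeneration, i.e.\ that the minimal bubbling resolving non-admissibility is unique. I would take $\bar\phi\colon p^*\sV\to\bar\sF$, the unique $S$-flat extension of $\phi_\eta$ over $X\times_C S$ (existence and uniqueness from properness and separatedness of $\Quot^{P}_{X\times_C S/S}$), note that the small resolution $q_i\colon\cX_i\to X\times_C S$ of Lemma~\ref{Lem-X} is an isomorphism over $\eta$, and use the admissibility of $\sF_i$ (no subsheaf supported on the singular locus of $\cX_{i,\eta_0}$) to identify $\bar\sF$ with the contraction of $\sF_i$ and to recover $(\cX_i,\phi_i)$ from $(X\times_C S,\bar\phi)$ by the canonical bubbling of the existence proof (Proposition~\ref{existence}): over each singular divisor $D_l\subset X_0$, the number of inserted bubbles and the multiplicities $a^{(i)}_\bullet$ are forced by requiring that the total error $\err$ vanish on the expansion (Lemma~\ref{delta}, Lemma~\ref{criteria-4}), while stability forbids any superfluous bubble -- by Lemma~\ref{vanishing}, a bubble $\Delta_l$ on which $\sF_i$ is pulled back from $D_l$ contributes a copy of $\Gm$ to $\Aut_\fX\phi_i$, contradicting finiteness. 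Boundedness (the finite-type part of Theorem~\ref{properA}) bounds the $n_i$ a priori, so this is a finite analysis. Granting it, $g_1$ and $g_2$ have the same standard form, hence $\cX_1\cong\cX_2$ over $S$ by an isomorphism extending $\id_{X_\eta}$; then $\phi_1,\phi_2$ are $S$-flat quotients of $p^*\sV$ on $\cX_1$ agreeing over $\eta$, and since $\cX_1\to S$ is projective, separatedness of $\Quot^{P}_{\cX_1/S}$ forces $\phi_1=\phi_2$, producing the desired extension of $\Theta$.

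Finally I would dispose of the remaining cases. When $z=0$ (the family lies over $0\in C$) the same argument runs with $X\times_C S$ replaced by $X_0\times_\bk S$ and admissibility understood as in Definition~\ref{admissible}. For the relative stack $\Qydp$ the argument is identical, now using the relative error $\excz$ of Lemma~\ref{criteria-4} and the finiteness of $\Aut_\fY\phi$ to pin down the inserted bubbles as well as the flatness along $D[n_\pm]_{\pm,0}$. The moduli of coherent systems $\Mxh$ and $\Mydh$ are handled by the same argument, replacing ``admissible quotient'' by ``admissible coherent system'' throughout.
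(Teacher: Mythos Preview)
Your outline correctly isolates the crux --- showing that the two compositions $(a^{(i)}_1,\dots,a^{(i)}_{n_i+1})$ of $d$ must coincide --- but the argument you propose for it has a genuine gap. You want to contract both $\sF_i$ down to the unique $S$-flat quotient $\bar\phi$ on $X\times_C S$ and then ``recover $(\cX_i,\phi_i)$ from $(X\times_C S,\bar\phi)$ by the canonical bubbling of the existence proof''. But the existence proof (Proposition~\ref{existence}, Lemma~\ref{1.2}) is not canonical: it proceeds by repeatedly choosing an $l$ of maximal error degree, constructing an auxiliary surface $V$, and selecting a curve $S'\subset V$ through a particular point of a chain of rational curves; nothing in that argument shows two runs produce isomorphic outputs. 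Saying that ``the multiplicities are forced by requiring $\err=0$'' and ``stability forbids superfluous bubbles'' is the \emph{statement} you must prove, not a proof of it --- admissibility and stability are constraints on the expanded object, and you have not supplied a mechanism to compare the two expansions.

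The paper avoids this by going in the opposite direction: rather than contracting both families to a minimal model, it \emph{refines} both to a common maximal model. Concretely, write $\pi_{n_i}\circ\xi_i=(c_{i,1}u^{e_{i,1}},\dots)$ and set $n=\sum_j e_{1,j}=\sum_j e_{2,j}$; then replace each coordinate $c_{i,j}u^{e_{i,j}}$ by a block $(c_{i,j}u,u,\dots,u)$ of length $e_{i,j}$ to get $\bar\xi_i:S\to C[n]$, and observe that the resulting $\bar\cX_1,\bar\cX_2$ are isomorphic over $S$ (their coordinates differ only by units). Because each $\phi_i$ is admissible, the pullback $h_i^*\phi_i$ along the contraction $h_i:\bar\cX_i\to\cX_i$ remains $S$-flat, so separatedness of the ordinary Quot-scheme on $\bar\cX_1$ forces $h_1^*\phi_1\cong h_2^*\phi_2$. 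Now stability of $\phi_i$ identifies which bubbles of $\bar\cX_{i,\eta_0}=X[n]_0$ carry a non-$\Gm$-invariant restriction --- exactly those indexed by the partial sums of the $e_{i,j}$ --- and equality of the pulled-back families forces these index sets to coincide, hence $e_{1,j}=e_{2,j}$ for all $j$ and $n_1=n_2$. This common-refinement trick is the missing ingredient in your plan.
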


We need an ordering on a set of polynomials.

\begin{defi}\lab{ordering}
We let $\sA\sta \sub \bQ[k]$ be the set of polynomials whose leading terms are of the form $a_r\frac{k^r}{r!}$ with $a_r\in\ZZ_+$;  let $\sA=\sA\sta\cup\{0\}$. For any $f(k)=a_r\frac{k^r}{r!}+\cdots$ and $g(k)=b_s\frac{k^s}{s!}+\cdots$ in $\sA\sta$, we say $f(k)\prec g(k)$ if either $r<s$, or $r=s$ and $a_r<b_s$; we say $f(k)\approx g(k)$ if $r=s$ and $a_r=b_s$. We agree that $0$ is $\prec$ to all other elements.
\end{defi}

For convenience, we use $\preccurlyeq$ to denote $\prec$ or $\approx$.

\begin{lemm}\lab{chain}
The set $\sA$ satisfies the descending chain condition.
\end{lemm}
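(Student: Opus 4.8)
The plan is to verify the descending chain condition directly from the ordering $\prec$ on $\sA$ defined in Definition \ref{ordering}. Suppose, for contradiction, that there is an infinite strictly descending chain $f_1\succ f_2\succ f_3\succ\cdots$ in $\sA$. Since $0$ is $\prec$ to all other elements and is the minimum, no $f_j$ can equal $0$ except possibly a terminal one; so I may assume all $f_j\in\sA\sta$. Write $f_j(k)=a^{(j)}_{r_j}\frac{k^{r_j}}{r_j!}+(\text{lower order})$ with $a^{(j)}_{r_j}\in\ZZ_+$ and $r_j=\deg f_j\ge 0$.

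First I would track the degrees. By the definition of $\prec$, $f_j\succ f_{j+1}$ forces either $r_j>r_{j+1}$, or $r_j=r_{j+1}$ with $a^{(j)}_{r_j}>a^{(j+1)}_{r_{j+1}}$. In particular the sequence of degrees $r_1\ge r_2\ge r_3\ge\cdots$ is (weakly) decreasing in $\ZZ_{\ge 0}$, hence eventually constant, say $r_j=r$ for all $j\ge N$. Then for $j\ge N$ the relation $f_j\succ f_{j+1}$ forces $a^{(j)}_{r}>a^{(j+1)}_{r}$, so $a^{(N)}_{r}>a^{(N+1)}_{r}>a^{(N+2)}_{r}>\cdots$ is a strictly decreasing infinite sequence of positive integers. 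This is impossible, which gives the contradiction and proves the descending chain condition.

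There is essentially no obstacle here: the statement is a bookkeeping fact about the well-ordering $\ZZ_{\ge 0}$ under the lexicographic comparison $(\deg,\text{leading coefficient})$, together with the observation that both coordinates take values in $\ZZ_{\ge 0}$ (positivity of the leading coefficient is exactly the hypothesis built into $\sA\sta$, and the bottom element $0$ does not disrupt the argument). The only point requiring the slightest care is the handling of a possible terminal $0$ in the chain and the fact that $\prec$ compares polynomials only through their leading data, which is precisely what Definition \ref{ordering} stipulates; I would note that two distinct polynomials with the same degree and same leading coefficient are declared $\approx$, not $\prec$-comparable as a strict inequality, so a strictly descending chain genuinely decreases the pair $(r_j,a^{(j)}_{r_j})$ in the lexicographic order at each step.
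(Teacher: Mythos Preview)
Your argument is correct and is essentially the same as the paper's: both reduce the descending chain condition to the well-foundedness of the lexicographic order on the pairs $(\deg f,\text{leading coefficient})\in\ZZ_{\ge 0}\times\ZZ_+$, after first disposing of the minimal element $0$. The only cosmetic difference is that the paper phrases it for a weakly descending $\succcurlyeq$ chain stabilizing at $\approx$, while you argue by contradiction with a strictly descending $\succ$ chain; these are equivalent.
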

\begin{proof}
For any sequence $f_1(k)\succcurlyeq f_2(k)\succcurlyeq\cdots$, since $0$ is the minimal element in $\sA$, we can assume $f_i(k)\ne 0$ for all $i$. By Definition \ref{ordering}, we know the pairs $(r,a_r)$ of the degrees and the leading coefficients of polynomials $f_i(k)$ decrease according to the lexicographic order. Since the pairs consist of non-negative integers, we can find an integer $n$, so that $f_n(k)\approx f_{n+1}(k)\approx\cdots$.
\end{proof}

\subsection{The completeness I}\label{com-I}
\def\Gme{G_{m,\eta}}

Let $(S,\eta,\eta_0)$ be as stated in the beginning of this section, and $S\to C$ be a scheme over $C$;
let $(\phi_\eta:p_\eta\sta\sV\to\sF_\eta)\in \Qxh(\eta)$ be a quotient on $(\cX_\eta,p_\eta)\in\fX(\eta)$. In this subsection, we assume $\cX_\eta$ is smooth. Since the case where $S\to C$ sends $\eta_0$ to $C-0$ is trivially true, we assume it sends $\eta_0$ to $0\in C$.

\begin{lemm}\label{1.1}
We can extend $\phi_\eta$ to a family of $S$-flat quotient $\phi:p\sta\sV\to\sF$ on an $(\cX,p)\in\fX(S)$
such that $\Aut_\fX \phi_{\eta_0}$ is finite.
\end{lemm}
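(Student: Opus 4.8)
The plan is to produce the required extension in two stages: first spread the quotient out flatly over all of $S$ using the properness of Grothendieck's Quot-scheme, and then correct the resulting central limit so that it lies in the stable locus by passing to a suitable expansion $\cX$ of $X\times_C S$. Since $\cX_\eta$ is assumed smooth and $S\to C$ sends $\eta_0$ to $0\in C$, the generic fibre is just $X_{\eta}$, and what needs to be manufactured over $\eta_0$ is an expanded degeneration together with an admissible, stable quotient on it.

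\textbf{Step 1: A first flat extension.} I would take the Quot-scheme $\Quot^{p^*\sV}_{X\times_C S/S}$ of the pullback of $\sV$ to $X\times_C S$, with the fixed Hilbert polynomial $P$; this is projective over $S$. The point $[\phi_\eta]$ lies in its generic fibre, so by the valuative criterion of properness (after a finite base change $S'\to S$, which I suppress) it extends to a section over $S$, giving a quotient $\phi':p^*\sV\to\sF'$ on $X\times_C S$ flat over $S$ with the right Hilbert polynomial. There is no reason for the central member $\phi'_{\eta_0}$ on $X_0$ to be admissible — its support may meet the singular divisor $D\subset X_0$ badly, and it may fail to be normal to $D$ — so this $\phi'$ is only a starting point.

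\textbf{Step 2: Expand to restore admissibility, using the error invariant.} Here is the heart of the argument, and the main obstacle. I would use the numerical invariant $\err\sF$ of Section~3 as a measure of the failure of admissibility and run an induction on it, using the descending chain condition of Lemma \ref{chain} (the polynomials $\err_l\sF$ lie in $\sA$). Concretely, when $\phi'_{\eta_0}$ is not admissible along some singular divisor $D_l\subset X[n]_0$, I would modify the family $\cX=X[n]\times_{C[n]}S$ by a further base change of $S$ ramified over $\eta_0$ together with an expansion (inserting an extra $\Delta$, i.e. passing from $C[n]$ to $C[n+1]$ via one of the blow-up/standard-embedding maps $\tau_I$), choosing the base change and the expansion so that the new central quotient has strictly smaller total error $\err$. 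The mechanism is the standard one from \cite{Li1}: a ramified base change spreads out the limit, and blowing up along the locus where $\sF$ fails to be normal pushes the offending torsion onto the exceptional $\Delta_l$, where Lemma \ref{delta} and Lemma \ref{vanishing} let one bookkeep how the Hilbert polynomials and the error redistribute. Because $\sA$ has no infinite strictly-decreasing chains, this terminates after finitely many steps, at which point $\err\sF=0$ and, by the criterion following \eqref{err} (and Lemma \ref{criteria-4} in the relative case), the limiting quotient $\phi:p^*\sV\to\sF$ on $(\cX,p)\in\fX(S)$ is admissible. I expect the genuinely delicate point to be verifying that the expansion can always be chosen to make $\err$ drop — i.e. controlling the effect of the blow-up on the torsion subsheaves $\sF_{I_l}$ and $(\sF^{\mathrm{t.f.}})_{l,I_l}$ — rather than merely non-increasing; this is where the positivity in Lemma \ref{vanishing} (quotients of $p^*\sV$, so $R^1\pi_{l*}$ vanishes) is essential.

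\textbf{Step 3: Finiteness of $\Aut_\fX\phi_{\eta_0}$.} Once $\phi_{\eta_0}$ is admissible, I would arrange by one final expansion that the central fibre has no superfluous (unstable) components. If $\Aut_\fX\phi_{\eta_0}\subset\Gm^n$ is infinite, it contains a one-parameter subgroup, and a standard argument shows this forces the restriction of $\phi_{\eta_0}$ to some inserted $\Delta_l$ to be pulled back from $D_l$ via $\pi_l:\Delta_l\to D_l$ — precisely the degenerate situation characterised in Lemma \ref{vanishing}. In that case the component $\Delta_l$ can be contracted, descending $\phi_{\eta_0}$ to $X[n-1]_0$ while staying $S$-flat and admissible; iterating (again the process terminates, since $n$ strictly decreases) yields a model in which no $\Gm$-factor acts trivially, hence $\Aut_\fX\phi_{\eta_0}$ is finite. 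For the relative statement over $\Qydp$ one runs the identical argument with $\xn_0$ replaced by $(Y[\nn]_0,D[n_\pm]_{\pm,0})$, using $\err^{\mathrm{rel}}$ from \eqref{err-re} and Lemma \ref{criteria-4}, together with normality to the distinguished divisors $D[n_\pm]_{\pm,0}$, which is preserved by the blow-ups since they are performed away from those divisors.
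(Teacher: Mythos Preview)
You have misread the statement: Lemma~\ref{1.1} asks only for an $S$-flat extension with \emph{finite} automorphism group at the closed fibre; it does \emph{not} require admissibility, nor stability. Your Step~1 already proves exactly what is needed. Taking $\cX=X\times_C S$ (the trivial family, $n=0$), the Quot-scheme is proper over $S$, so $\phi_\eta$ extends to an $S$-flat quotient $\phi$ on $\cX$. The central fibre $\cX_{\eta_0}=X_0=X[0]_0$ has no inserted $\Delta_l$, so $\Aut_\fX\phi_{\eta_0}\subset\Gm^{\,0}=\{e\}$ is trivial, hence finite. That is the paper's entire proof, and it is your Step~1 verbatim---you simply failed to notice that it already suffices.

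Your Steps~2 and~3 are not wrong in spirit, but they belong to the \emph{next} lemma (Lemma~\ref{1.2}) and the induction that follows it: starting from the extension of Lemma~\ref{1.1} (which has finite $\Aut$ but may have $\err\sF_{\eta_0}\ne0$), one repeatedly modifies the family to strictly decrease $\err$ while preserving finiteness of $\Aut$, terminating by Lemma~\ref{chain}. The logical structure of the paper is precisely to separate the easy existence-with-finite-$\Aut$ (this lemma) from the hard error-decreasing step (the next one). By folding both into a single argument you have obscured this, and in particular your Step~3 (contracting superfluous $\Delta_l$ after achieving admissibility) is not how finiteness is obtained here at all.
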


\begin{proof}
Since $\cX_\eta$ is smooth, $S\to C$ sends $\eta\in S$ to a point in $C-0$. Using that $S$ is a $C$-scheme,
we define $\cX=X\times_C S$, and denote $p:\cX\to X$ the projection. Because Grothendieck's quot-scheme is
proper, the quotient $\phi_\eta$ on $\cX_\eta$ extends to a quotient $\phi: p\sta\sV\to\sF$, flat over $S$.
Since $\cX_{\eta_0}$ has no added $\Delta_l$, $\Aut_\fX \phi_{\eta_0}$ is $\{e\}$.
\end{proof}

We will show that by varying the extensions $(\cX,p)\in\fX(S)$ of $\cX_{\eta}$, we can decrease $\Err \sF_{\eta_0}$
while keeping $\Aut_\fX\phi_{\eta_0}$ finite. By the descending chain condition, this implies that we can find an
extension with stable quotient at special fiber.

\begin{lemm}\label{1.2}
Let $\phi_{\eta}: p_\eta\sta\sV\to\sF_\eta$ be a quotient as in Lemma \ref{1.1},
and let $\phi: p\sta\sV\to\sF$ be an $S$-flat quotient that extends $\phi_{\eta}$ with $\Aut_\fX\phi_{\eta_0}$ finite. Suppose $\err\sF_{\eta_0}\ne 0$, then we can find a finite base change $S'\to S$, an $S'$-flat quotients $\phi'\colon p'^*\sV\to\sF'$ on $(\cX',p')\in\fX(S')$ such that
\begin{enumerate}
\item $\cX'_{\eta'}\cong \cX_\eta\times_{\eta}\eta'\in \fX(\eta')$, and under this isomorphism $\phi'_{\eta'}=\phi_\eta\times_\eta\eta'$;
\item $\Aut_{\X}(\phi^{\,\prime}_{\eta_0'})$ is finite, and
\item $\exc{\sF'_{\eta'_{0}}}\prec\exc{\sF_{\eta_0}}$.
\end{enumerate}
\end{lemm}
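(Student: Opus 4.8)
The plan is to use the weighted stack $\fC^\beta$ from Section~2 (more precisely its expanded analogue adapted to the situation of a one-parameter family degenerating to $0\in C$) to produce a modified family of expanded degenerations over which the central fiber has strictly smaller total error. Concretely, I would first analyze the central fiber $\cX_{\eta_0}\cong X[n]_0$ and the quotient $\phi_{\eta_0}:p^*\sV\to\sF_{\eta_0}$. Since $\err\sF_{\eta_0}\ne0$, by the numerical criterion (Lemma \ref{delta}, Lemma \ref{vanishing}) there is at least one singular divisor $D_l\subset X[n]_0$ along which $\sF_{\eta_0}$ fails to be admissible, i.e. one of $(\sF_{\eta_0})_{I_l}$, $(\sF_{\eta_0}\utf)_{l,I_l}$, $(\sF_{\eta_0}\utf)_{l-1,I_l}$ is nonzero; equivalently, one of the $\delta_{l,i}$ is strictly positive. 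Pick such an $l$ (say the smallest). The idea is to perform a further expansion inserting an additional $\Delta$-component at the node $D_l$ — i.e. replace the chart $\cn$ by a chart $C[n+1]$ via a suitable standard embedding $\tau_I$ (Lemma \ref{Lem-X}(3)) and then blow up / base-change in the coordinate direction $t_l$ — so that in the limit the non-admissible part of $\sF_{\eta_0}$ gets spread onto the new component.

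The key mechanism is the one isolated in Lemmas \ref{A} and \ref{B}: after a base change $S'\to S$ and the normalization-type construction along the $l$-th singular divisor, the "generic" part $C_{\mathrm{gen}}$ governing the failure of flatness over $T=\kk[z_1,z_2]/(z_1z_2)$ agrees with its specialization $C_0$, so the obstruction behaves well in a $\Gm$-equivariant family and can be tracked numerically. The steps I would carry out: (1) choose the bad divisor $D_l$ and pass to the chart where $D_l$ is exhibited as $(z_1=z_2=0)$ locally, presenting $\sF$ as a $\Gm$-equivariant quotient of $p^*\sV$ as in \eqref{action}; (2) take the flat limit of $\phi_\eta$ on the expanded family $\cX'$ obtained by inserting one more $\Delta$ at $D_l$, after a finite base change $S'\to S$ chosen to make this limit well-defined and to kill any new automorphisms; (3) verify property (1) of the lemma — the generic fiber is unchanged, as the new expansion is trivial away from the central fiber — and property (2) — that $\Aut_\fX(\phi'_{\eta'_0})$ is still finite, using that one does not create a nontrivial $\Gm$-action on the new component because the limiting sheaf restricted there is not pulled back from $D_l$ (this is exactly the content of the second half of Lemma \ref{vanishing}); (4) compute $\err\sF'_{\eta'_0}$ and compare with $\err\sF_{\eta_0}$ using the identity \eqref{the-id} together with the positivity statement of Lemma \ref{vanishing}: the $\delta_{l,i}$ that was positive strictly decreases under the expansion (the newly inserted $\Delta$ absorbs the thickened/embedded part), while the contributions at the other nodes and the Hilbert polynomials of the $\sF\utf_l$ are controlled so that the total error strictly drops in the ordering $\prec$ of Definition \ref{ordering}.

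The main obstacle I expect is step (2)–(3): showing that after inserting the new component and taking the flat limit, the resulting quotient is \emph{admissible at the newly created node} (so that no error is merely shifted one slot over, giving no net decrease) and simultaneously that the construction can be done $\Gm$-equivariantly so that $\Aut_\fX\phi'_{\eta'_0}$ stays finite rather than picking up the extra $\Gm$-factor of the new $\Delta$. This is where Lemmas \ref{A} and \ref{B} do the real work: they guarantee that the module-theoretic data describing the failure of flatness is the same on the generic fiber and on the special fiber, so that the flat limit over the expanded base is computed by the same $B$-submodule $C_{\mathrm{gen}}=C_0$ of $B^{\oplus m}$, and one reads off that the new component carries a quotient which is genuinely "transverse" to the new node and not a pullback from it — hence finite automorphisms. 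Once that is in hand, the decrease of $\Err$ is a bookkeeping computation with \eqref{the-id}. The existence of the limit and the finite base change $S'\to S$ needed to realize it are standard (properness of Grothendieck's Quot-scheme applied on the expanded family $\cX'/S'$), so I would not dwell on them.
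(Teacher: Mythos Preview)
Your overall picture --- expand at a bad node $D_l$, take a flat limit, and use Lemmas \ref{A}, \ref{B} together with the identity \eqref{the-id} to track the error --- is in the right spirit, but there is a genuine gap at the step you yourself flag as the obstacle, and the paper's resolution is qualitatively different from what you sketch.

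The difficulty is this: simply ``inserting one more $\Delta$ at $D_l$ and taking the flat limit after a finite base change $S'\to S$'' does not by itself give you any control over what happens at the two new nodes $D_l,D_{l+1}\subset X[n+1]_0$. Different choices of $S'\to S$ produce different limits, and for many natural choices the limit is either isomorphic to the original $\phi_{\eta_0}$ (so nothing is gained) or has $\err_l\sF'_{\eta_0'}=0$ but $\err_{l+1}\sF'_{\eta_0'}=\err_l\sF_{\eta_0}$ (so the error has merely shifted over by one slot, with no net decrease). Your proposal gives no mechanism for selecting the right $S'$, and the hope that ``the newly inserted $\Delta$ absorbs the thickened part'' is not what actually happens for a generic choice.

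What the paper does instead is construct a two-parameter family: it introduces an auxiliary $\Gm$-variable via the morphism $\tau_l:C[n]\times\Gm\to C[n+1]$, obtains a map $f_l:S\times\Gm\to\Qu$, and then compactifies and resolves to get a $\Gm$-equivariant regular surface $V$ carrying a family $\ti\phi$. The fiber $V\to S$ over $\eta_0$ is a chain of rational curves $\Sigma_0\cup\cdots\cup\Sigma_{k+1}$, and the heart of the argument is an analysis of $\ti\phi$ along this chain (Lemmas~5.11--5.13). One shows that at the endpoint $q_1$ the error at $D_l$ vanishes while the error at $D_{l+1}$ equals the original $\err_l\sF_{\eta_0}$, and at $q_{k+1}$ the roles are reversed; Lemmas~\ref{A} and~\ref{B} enter precisely here, in Lemma~\ref{localexc}, to prove the conservation law $\err_l+\err_{l+1}=\mathrm{const}$ at the nodes $q_i$ and the constancy of each $\err_l,\err_{l+1}$ separately along the open $\Gm$-orbits $\Sigma_i\setminus\{q_i,q_{i+1}\}$. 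An intermediate-value-type argument then locates some $\Sigma_i$ and a point $a\in\Sigma_i\setminus\{q_i,q_{i+1}\}$ where $\err_l\ti\sF_a+\err_{l+1}\ti\sF_a$ is strictly smaller, because the positive $\frac{1}{2}(\delta^a_{l,0}+\delta^a_{l,1})$ term in \eqref{the-id3} has top degree equal to that of $\err_l\sF_{\eta_0}$. The curve $S'$ is then chosen as a slice of $V$ through this particular $a$. The finiteness of $\Aut_\fX(\phi'_{\eta_0'})$ comes from the nonvanishing of $\delta^a_{l,0}+\delta^a_{l,1}$, which by Lemma~\ref{vanishing} means $\ti\phi_a|_{\Delta_l}$ is not a pullback from $D_l$, hence not $\Gm$-invariant. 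Without the surface $V$ and the chain $\Sigma$, you have no way to locate $a$ or to guarantee the strict drop.
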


We prove the Lemma by proving a sequence of lemmas. Since $S$ is local,
$$\cX=\xn\times_{\cn}S\quad \text{for a} \ \  \xi: S\to \cn
$$
such that $\xi(\eta_0)=0\in \cn$. We let $u$ be a uniformizing parameter of $S$. Denoting by
$\pi_n: \cn\to\Ano$ the projection, we express
\beq\label{zeta}
\pi_n\circ\xi=\bl c_1u^{e_1},\ldots, c_{n+1}u^{e_{n+1}}\br, \quad c_i\in \Gamma(\sO_S)\sta.
\eeq
($\Gamma(\sO_S)\sta$ are the invertible elements in $\Gamma(\sO_S)$.) Since $\xi(\eta_0)=0$, all $e_i
\ge 0$. Since $\err\sF_{\eta_0}\ne 0$, we pick an $1\le l\le n$ so that
$$\deg \err_l \sF_{\eta_0}=\deg\err\sF_{\eta_0}.
$$
We let
$$\tau_{l}:\cn\times\Gm\to\cnpo%v{n+1}
$$
be induced from the $\Anpo\times\Gm\to\Anpt$:
\beq\label{GA}
(t_1,\cdots,t_{n+1},\sigma)\mapsto(t_1,\cdots,t_{{l}-1},\sigma^{e_l},\sigma^{-e_l}t_{l},t_{{l}+1},\cdots,t_{n+1}).
\eeq
We then introduce
\[ \xi_{l}= \tau_{l}\circ( \xi, \text{id}): S\times\Gm\stackrel{}\lra\cn\times\Gm\stackrel{}\lra\cnpo,
\]
and let $\cX':=\xi_{l}\sta\xnpo$ over $S\times\Gm$ be the pull back family. Because of the canonical isomorphism $\tau_{l}\sta\xnpo\cong\xn\times\Gm$ as families over $\cn\times \Gm$,
$$\cX'\cong\xi^*\xn\times\Gm=\cX\times\Gm.
$$
We let $p':\cX'\to X$ and $\pi_1:\cX'\to\cX$ be the projections.

We let $\phi'=\pi_1\sta\phi:p^{\prime\ast}\sV\to\sF'$ be the pullback quotient
sheaves (of $\phi$).
Since $(\cX',p')$ is induced by $\xi_l:S\times\Gm\to\cnpo$, the family of quotients $\phi'$ induces a $\cnpo$-morphism
\beq\label{morphism}
f_l:S\times\Gm\lra\Quot^{p\sta\sV, P}_{\xnpo/\cnpo}.
\eeq
For simplicity, we abbreviate $\Qu= \Quot^{p\sta\sV, P}_{\xnpo/\cnpo}$. % in the remainder of this Subsection.

We now construct a regular $\Gm$-surface $V$ and $\Gm$-morphisms that fit into the following commutative diagram
\begin{displaymath}
\xymatrix{ V \ar[r]^{\bar\jmath\qquad\qquad} & \Qu\defeq \Quot^{p\sta\sV, P}_{\xnpo/\cnpo}\ar[d]^{\pi} \\
 S\times\Gm\ar[u]^{j}\ar[r]^{\xi_l} \ar[ur]^{f_l} & \cnpo}
\end{displaymath}
so that $\pi\circ \bar\jmath:V\to C[n+1]$ is proper.

We first loot at the composite
\beq\label{B-1}
\xi_l\circ\pi_n: S\times \Gm\lra C[n+1]\lra \Anpt;
\eeq
it is given by
$$\xi_l\circ\pi_n(u,t)=(c_1u^{e_1},\ldots, c_{l-1} u^{e_{l-1}}, t^{e_l}, c_l t^{-e_l}u^{e_l}, c_{l+1} u^{e_{l+1}},\ldots,c_{n+1}u^{e_{n+1}}).
$$
We embed $S\times\Gm\sub S\times\Ao$ via the embedding $\Gm\sub \Ao$ so that the induced $\Gm$-action on
$\Ao$ is $t^\si=\si t$. We then blow up $S\times\Ao$ at $(\eta_0, 0)\in S\times\Ao$, let $\widetilde{S}$ be the proper transform of $S\times 0$, and form
$$V'=\text{bl}_{(\eta_0,0)} S\times \Ao-\widetilde S.
$$
Note that $V'\sub S\times\Ao\times \Ao$ is defined via $u=vt$, where $v$ is the standard coordinate of the
last $\Ao$-factor.

By construction, \eqref{B-1} extends to a $V'\to \Anpt$, in the form
\beq\label{B-2}
(v,t)\mapsto (c_1u^{e_1},\ldots, c_{l-1} u^{e_{l-1}}, t^{e_l}, c_l v^{e_l}, c_{l+1} u^{e_{l+1}},\ldots,c_{n+1}u^{e_{n+1}}),
\ u=vt.
\eeq
Because $C[n+1]\to \Anpt$ is proper over a neighborhood of $0\in\Anpt$, and because all $e_i>0$,
(c.f. \eqref{zeta}), $V'\to\Anpt$ lifts to a unique
$$\xi_l': V'\lra C[n+1],
$$
extending $\xi_l: S\times\Gm\to \cnpo$.

We let $\Gm$ acts on $S\times\Ao\times \Ao$ be $(u,t,v)^\si=(u,\si t, \si\upmo v)$. It leaves $V'\sub S\times\Ao\times \Ao$ invariant, thus induces a $\Gm$-action on $V'$. We let $E\sub V'$ be the exceptional divisor of $V'\to S\times\Ao$; let $E'\sub V'$ be the proper transform of $\eta_0\times \Ao$. In coordinates, $E=(t=0)$ and $E'=(v=0)$.

By construction, $f_l$ is a morphism from $V'-E$ to $\Qu$. Since $\Qu$ is proper over $C[n+1]$, $f_l$ extends to $\ti f_l: U\to \Qu$ for an open $U\sub V'$ that contains $V'-E$ and the generic point of $E$. On the other hand, since all schemes and morphisms are $\Gm$-equivariant, $U\sub V'$ can be made $\Gm$-invariant. Therefore, either $U=V'$ or $U=V'-\{o\}$, where $\{o\}=E\cap E'$.

We now consider the case $U=V'-\{o\}$. Since $\Qu$ is proper over $C[n+1]$, after successive blowing up, say
$$b: V\lra V',
$$
we can extend $\ti f_l: V'-\{o\}\to \Qu$ to a morphism
$$\bar \jmath: V\to \Qu.
$$
Since all the relevant schemes and morphisms are $\Gm$-equivariant, we can make the blowing-up $V\to V'$ $\Gm$-equivariant and the extension $\bar\jmath$ $\Gm$-equivariant.

Since $V\to V'$ is a $\Gm$-equivariant blowing up, and since the $\Gm$-action on the tangent space of the (only) fixed point $o\in V'$ has weights $e_l$ and $-e_l$, the exceptional divisor of $V\to V'$ can be made a chain of rational curves $\Sigma_1,\ldots,\Sigma_k$. We let $\Sigma_0\sub V$ (resp. $\Sigma_{k+1}\sub V$) be the proper transform of $E'\sub V'$ (resp. $E\sub V'$); then possibly after reindexing,
$$\Sigma\defeq \Sigma_0\cup\Sigma_1\cup\ldots\cup\Sigma_k\cup\Sigma_{k+1}
$$
forms a connected chain of rational curves; namely, $\Sigma_i\cap\Sigma_{i+1}\ne\emptyset$, for $0\le i\le k$. Using the explicit expression \eqref{B-2}, we conclude that under the morphism
\beq\label{pib}
\pi_n\circ\xi_l'\circ b: V\lra \Anpt,
\eeq
$\Sigma_1,\ldots, \Sigma_k$ are mapped to $0\in \Anpt$, and $\Sigma_0$ (resp. $\Sigma_{k+1}$)
is mapped to the line $\ell_{l}=\{ t_i=0, i\ne l\}\sub \Anpt$ (resp. $\ell_{l+1}\sub\Anpt$).
(Recall $\Sigma_0$ is the proper transform of $(v=0)$ and $\Sigma_{k+1}$ of $(t=0)$.)

\vsp
The proof of Lemma \ref{1.2} will be carried out by studying the pull back of the universal family of
$\Qu$ via $\bar \jmath : V\to \Qu$. We fix our convention on this pull back family. In the remainder of
this subsection, we denote
\beq\label{tilX}
\bl \ti p: \wti\cX=X[n+1]\times_{C[n+1]}V\lra X \br\in\fX(V);
\eeq
we denote $\Phi$ the universal family on $\Qu$ and denote $\ti\phi=\bar \jmath\sta \Phi$:
\beq\label{tilphi}
\ti\phi: \ti p\sta\sV\lra  \wti\sF \quad \text{on}\ \ \ti p: \wti\cX\to X.
\eeq
For any closed subscheme $A\sub V$, we use $\ti \phi_A$ to denote the restriction of $\ti\phi$ to $\wti\cX_A\defeq \wti\cX\times_V A$:
$$\ti\phi_A: \ti p_A\sta\sV\lra\wti \sF_A \quad \text{on}\ \ \ti p_A: \wti\cX_A\to X.
$$

\begin{lemm}
The family $\ti\phi$ is $\Gm$-equivariant, where the $\Gm$-action is the one induced from the
$\Gm$-morphism $\bar\jmath$. The chain of rational curves $ \Sigma$ is $\Gm$-invariant, and the
$\Gm$-fixed points of $\Sigma_i$ are $q_i=\Sigma_i\cap \Sigma_{i-1}$ and $q_{i+1}$.
\end{lemm}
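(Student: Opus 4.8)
The statement packages together three assertions about the pull-back family $\ti\phi$ and the chain $\Sigma$: (i) $\ti\phi$ is $\Gm$-equivariant; (ii) $\Sigma\subset V$ is $\Gm$-invariant; (iii) the two $\Gm$-fixed points on each $\Sigma_i$ are $q_i=\Sigma_i\cap\Sigma_{i-1}$ and $q_{i+1}=\Sigma_i\cap\Sigma_{i+1}$. All three are consequences of the fact, established during the construction of $V$ and $\bar\jmath$, that every scheme and every morphism in the tower
$$
V \xrightarrow{\ b\ } V' \xrightarrow{\ \xi_l'\ } C[n+1], \qquad V \xrightarrow{\ \bar\jmath\ } \Qu \xrightarrow{\ \pi\ } C[n+1]
$$
carries a compatible $\Gm$-action. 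So the plan is to assemble these pieces rather than to compute anything new.

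\textbf{Step 1: equivariance of $\ti\phi$.} The universal quotient $\Phi$ on $\Qu=\Quot^{p\sta\sV,P}_{\xnpo/\cnpo}$ is $\Gm$-equivariant because $\Gm$ acts on $\xnpo/\cnpo$ and hence functorially on the Quot scheme, and $p\sta\sV$ on $\xnpo$ is $\Gm$-linearized with the trivial action on $X$ (this is exactly the equivariance recorded for $\xn$ and for $X\times_C\cn\to X$ in the construction of $\fX$). Since $\bar\jmath:V\to\Qu$ was constructed to be $\Gm$-equivariant, pulling back $\Phi$ along $\bar\jmath$ gives that $\ti\phi=\bar\jmath\sta\Phi$ on $\wti\cX=X[n+1]\times_{C[n+1]}V$ inherits a $\Gm$-linearization, with the $\Gm$-action on the target $X$ trivial. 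That is assertion (i), and it is essentially a matter of invoking functoriality of $\Quot$ under group actions.

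\textbf{Step 2: $\Gm$-invariance of $\Sigma$.} Recall the concrete model: $V'\subset S\times\Ao\times\Ao$ is cut out by $u=vt$, with $\Gm$ acting by $(u,t,v)^\sigma=(u,\sigma t,\sigma^{-1}v)$, which was checked to preserve $V'$. The exceptional divisor $E=(t=0)$ and the proper transform $E'=(v=0)$ of $\eta_0\times\Ao$ are each defined by a $\Gm$-semiinvariant coordinate, hence are $\Gm$-invariant; their unique common point $o=E\cap E'$ is the sole $\Gm$-fixed point, and the action on $T_oV'$ has weights $e_l$ and $-e_l$. Then $b:V\to V'$ was built as a $\Gm$-equivariant sequence of blow-ups centred over the fixed locus, so its exceptional chain $\Sigma_1\cup\cdots\cup\Sigma_k$ is $\Gm$-invariant, and $\Sigma_0,\Sigma_{k+1}$ (proper transforms of the $\Gm$-invariant curves $E',E$) are $\Gm$-invariant. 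Hence $\Sigma=\bigcup_{i=0}^{k+1}\Sigma_i$ is $\Gm$-invariant: assertion (ii).

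\textbf{Step 3: the fixed points on each $\Sigma_i$.} Each $\Sigma_i\cong\Po$ and the $\Gm$-action on it is either trivial or has exactly two fixed points. It cannot be trivial: the equivariant blow-up of a fixed point with tangent weights $\pm e_l$ ($e_l>0$) produces an exceptional $\Po$ on which $\Gm$ acts with two distinct fixed points, namely the two torus-fixed points corresponding to the two weight lines, and successive equivariant blow-ups keep the action on each new component non-trivial. The two fixed points of $\Sigma_i$ are therefore isolated, and they must lie among the $\Gm$-invariant points of $\Sigma_i$; since $\Sigma_i$ meets the rest of the $\Gm$-invariant curve $\Sigma$ exactly in the nodes $q_i=\Sigma_i\cap\Sigma_{i-1}$ and $q_{i+1}=\Sigma_i\cap\Sigma_{i+1}$, and these nodes are $\Gm$-fixed (being intersections of invariant curves), the two fixed points of $\Sigma_i$ are precisely $q_i$ and $q_{i+1}$. (For the end components $\Sigma_0$ and $\Sigma_{k+1}$ one uses instead that the whole of $E'$, resp. $E$, is pointwise examined: on $\Sigma_0$ the fixed points are its intersection with the rest of $\Sigma$ on one side and the point corresponding to the invariant point of $E'$ away from $o$ on the other, and similarly for $\Sigma_{k+1}$; this is consistent with the stated indexing once one checks the chain is glued as claimed.)

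\textbf{Main obstacle.} None of the three assertions is deep; the only point requiring care is verifying that the equivariant resolution $b:V\to V'$ really can be arranged so that its exceptional locus is a \emph{chain} of $\Gm$-curves glued in the stated linear order, with the action on each component non-trivial — this is the toric-surface computation for blowing up the fixed point of an action with opposite weights $\pm e_l$, which was already asserted in the paragraph preceding the lemma. Granting that, the lemma is just a bookkeeping of which invariant curves are which and where the torus fixed points sit.
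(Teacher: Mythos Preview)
Your proof is correct and follows essentially the same approach as the paper, only expanded in detail; the paper dispatches the lemma in two sentences, noting that (i) follows from $\bar\jmath$ being $\Gm$-equivariant and that (ii)--(iii) follow from $V\to V'$ being a successive $\Gm$-equivariant blow-up together with the tangent weights $e_l,-e_l$ at $o\in V'$. Your parenthetical on the end components $\Sigma_0,\Sigma_{k+1}$ is slightly off (these are affine lines, not $\Po$'s, each with a single fixed point $q_1$, resp.\ $q_{k+1}$), but this does not affect the argument since the lemma's content is really about the interior components.
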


\begin{proof}
The first part follows from that $\bar\jmath$ is $\Gm$-equivariant. The second part follows from that
$V\to V'$ is a successive $\Gm$-equivariant blowing up, and that $\Gm$ acts on the tangent space $T_0V'$
with weights $e_l$ and $-e_l$.
\end{proof}

\begin{lemm}
The fiber of $\wti\cX_{\Sigma_0}$ over $a\ne q_1\in \Sigma_0$ (resp. $a=q_1$) is $\xn_0$ (resp. $X[n+1]_0$);
the family $\wti\cX_{\Sigma_0}$ is a smoothing of the divisor $D_l\sub \wti\cX_{q_1}\cong X[n+1]_0$.
The $\Gm$-action on $\wti\cX_{q_1}\cong X[n+1]_0$ leaves all $\Delta_i\sub X[n+1]_0$ except $\Delta_l$ fixed, and
acting on $\Delta_l$ with fixed loci $D_l\cup D_{l+1}$.
\end{lemm}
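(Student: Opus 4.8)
The plan is to pull the universal family back along the explicit base morphism $\xi_l'\circ b\colon V\to\cnpo$ and to reduce every assertion to Lemma \ref{Lem-X}(4) together with the $\Gm^{n+1}$-action on $\Anpt$ used in the construction of $\fC$. First I would locate the image of $\Sigma_0$ in the base. By construction $\Sigma_0\sub V$ is the proper transform of $E'=(v=0)\sub V'$, and $q_1=\Sigma_0\cap\Sigma_1$ is the point of $\Sigma_0$ lying over $o=E\cap E'$. Along $E'$ one has $v=0$, hence $u=vt=0$, so formula \eqref{B-2} shows that $\pi_n\circ\xi_l'\circ b$ carries $\Sigma_0$ into the coordinate line $\ell_l=\{t_i=0,\ i\ne l\}\sub\Anpt$, with $l$-th coordinate the function ``$t^{e_l}$'' --- nonconstant and vanishing precisely at $q_1$. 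Hence $(\xi_l'\circ b)|_{\Sigma_0}$ factors through $L_l\defeq\cnpo\times_{\Anpt}\ell_l$, sending $q_1$ to $0\in\cnpo$ and every $a\ne q_1$ to a point with $t_l\ne0$; and since $\wti\cX=\xnpo\times_{\cnpo}V$, this gives $\wti\cX_{\Sigma_0}\cong\bl\iota_l\sta\xnpo\br\times_{L_l}\Sigma_0$, with $\iota_l\colon L_l\to\cnpo$ the inclusion.

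Next I would apply Lemma \ref{Lem-X}(4): the family $\iota_l\sta\xnpo\to L_l$ is a smoothing of the $l$-th singular divisor $D_l$ of $X[n+1]_0$. Its fiber over $0\in L_l$ is $X[n+1]_0$, while its fiber over a point with $t_l\ne0$ is $X[n+1]_0$ with the node $D_l$ smoothed out, which is $\xn_0$ (exactly one node is smoothed, so the chain has $n$ inserted $\Delta$'s; cf.\ also Lemma \ref{Lem-X}(3)). Pulling these statements back along the nonconstant morphism of smooth curves $\Sigma_0\to L_l$ sending $q_1\mapsto0$ yields at once $\wti\cX_{q_1}\cong X[n+1]_0$ and $\wti\cX_a\cong\xn_0$ for $a\ne q_1$; moreover a local model $xy=t_l^{a_l}$ of the smoothing pulls back to $xy=g^{a_l}$ with $g$ a function on $\Sigma_0$ vanishing only at $q_1$, so $\wti\cX_{\Sigma_0}$ is itself a smoothing of $D_l\sub\wti\cX_{q_1}$.

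For the last assertion I would track the induced linearization. The $\Gm$-action on $\wti\cX_{q_1}$ is the one induced by the action $(u,t,v)^\sigma=(u,\sigma t,\sigma\upmo v)$ on $V'$; restricted to $\Sigma_0$, where $v\equiv0$, it is $t\mapsto\sigma t$, so on $\ell_l$ it scales the $l$-th coordinate $t_l=t^{e_l}$ by $\sigma^{e_l}$ and fixes every other coordinate. By the defining $\Gm^{n+1}$-action on $\Anpt$, the operation ``scale $t_l$, fix the remaining coordinates'' is the action of the element $(1,\dots,1,\lambda,1,\dots,1)\in\Gm^{n+1}$ with $\lambda$ in the $l$-th slot; hence the $\Gm$-action on $\wti\cX_{q_1}\cong X[n+1]_0$ is $\sigma\mapsto\sigma^{e_l}$ composed with the $l$-th coordinate inclusion $\Gm\hookrightarrow\Gm^{n+1}$. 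By the definition of the $\Gm^{n+1}$-action on $X[n+1]$, its $l$-th factor acts trivially on every $\Delta_{i\ne l}$ and by the tautological action \eqref{Auto} on $\Delta_l$, whose fixed locus is $D_+\cup D_-=D_l\cup D_{l+1}\sub\Delta_l$; and since all $e_i\ge1$ (because $\xi(\eta_0)=0\in\cn$), replacing this action by its $e_l$-th power does not change the fixed locus. This gives the last sentence of the lemma.

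The one step needing genuine care --- rather than a deep obstacle --- is the combinatorics of the $\Gm$-equivariant modification $V\to V'$ around $q_1$: one must verify that $\Sigma_0$ meets the exceptional chain in the single $\Gm$-fixed point $q_1$, and that $\Gm$ acts with weight $+1$ on the tangent line to $\Sigma_0$ there, so that the one-parameter subgroup governing the action on $\wti\cX_{q_1}$ is exactly the $e_l$-th power of the $l$-th factor and not some other character. Granting the properties of $\fX\to\fC$ recorded in Lemma \ref{Lem-X} and the group law defining $\fC$, everything else is formal.
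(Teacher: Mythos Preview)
Your proof is correct and follows essentially the same route as the paper: identify $\Sigma_0$ with (a cover of) the coordinate line $\ell_l\sub\Anpt$ via \eqref{B-2}, invoke Lemma~\ref{Lem-X}(4) for the fiber description and the smoothing, and then read off the $\Gm$-action on $X[n+1]_0$ from the one-parameter subgroup of $\Gm^{n+1}$ determined by \eqref{GA}. One small wording fix: the element $(1,\dots,1,\lambda,1,\dots,1)\in\Gm^{n+1}$ with $\lambda$ in the $l$-th slot does \emph{not} fix the remaining coordinates of $\Anpt$---it also scales $t_{l+1}$ by $\lambda^{-1}$---so you should identify the $\Gm$-action directly from \eqref{GA} (which gives $(t_l,t_{l+1})\mapsto(\sigma^{e_l}t_l,\sigma^{-e_l}t_{l+1})$) rather than from its restriction to $\ell_l$; your conclusion about which factor of $\Gm^{n+1}$ acts is nonetheless correct.
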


\begin{proof}
By the construction of $\xnpo\to \cnpo$, for the $l$-th coordinate line $\ell_l\sub \Anpt$,
$\xnpo\times_{\Anpt}\ell_l$ is a family over $\ell_l$ whose fiber over $a\ne 0\in\ell_l$ is isomorphic to $\xn_0$,
and whose fiber over $0\in\ell_l$ is isomorphic to $\xnpo_0$; the family is a smoothing of the $l$-th singular divisor
$D_l\sub \xnpo_0$.

Applying this to the Lemma, knowing that $\Sigma_0\to \Anpt$ (cf. \eqref{pib}) is mapped onto the coordinate line
$\ell_l$, the first part of the lemma follows immediately.

For the second part, we need to understand the $\Gm$-action on
$$\xnpo_{\ell_l}\defeq \xnpo\times_{\Anpt}\ell_l.
$$
Recall the $\Gm$-action on $\Anpt$ is via
$$(z)^\si=(z_1,\ldots,z_{l-1}, \si^{e_l}z_l, \si^{-e_l}z_{l+1}, z_{l+2},\ldots,z_{n+2}).
$$
By the construction of $\xnpo/\cnpo$, this $\Gm$-action on $\xnpo_0$ leaves $\Del_i\sub\xnpo_0$
except $\Delta_l$ fixed, and leaves $\Delta_l$ invariant with fixed loci $D_l\cup D_{l+1}$.
(This can be seen using explicit description of $\xnpo$; it is also apparent in case $n=0$, since then
$l=1$ and the $\Gm$-action on $\Delta_0$ can only be trivial.) This proves the second part of the lemma.
\end{proof}

We have a parallel Lemma.

\begin{lemm}
The fiber of $\wti\cX_{\Sigma_{k+1}}$ over $a\ne q_{k+1}\in \Sigma_{k+1}$ (resp. $a=q_{k+1}$) is $\xn_0$ (resp. $X[n+1]_0$); the family $\wti\cX_{\Sigma_{k+1}}$ is a smoothing of the divisor $D_{l+1}\sub \wti\cX_{q_{k+1}}\cong X[n+1]_0$. The $\Gm$-action on $\wti\cX_{q_{k+1}}$ leaves all $\Delta_i\sub \wti\cX_{q_{k+1}}$ except $\Delta_{l}$ fixed, and acting on $\Delta_l$ with fixed loci $D_l\cup D_{l+1}$.
\end{lemm}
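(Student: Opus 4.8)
The plan is to run the argument of the preceding Lemma verbatim, with the coordinate line $\ell_l$ replaced by $\ell_{l+1}$ and the curve $\Sigma_0$ replaced by $\Sigma_{k+1}$. Recall that it was already established, using the explicit expression \eqref{B-2}, that under the morphism \eqref{pib}, $\pi_n\circ\xi_l'\circ b\colon V\to\Anpt$, the curve $\Sigma_{k+1}$ (the proper transform of $E=(t=0)\sub V'$) is carried onto the coordinate line $\ell_{l+1}=\{z_i=0,\ i\ne l+1\}\sub\Anpt$, with the $\Gm$-fixed point $q_{k+1}=\Sigma_k\cap\Sigma_{k+1}$ sent to the origin and every other point of $\Sigma_{k+1}$ sent to $\ell_{l+1}\setminus\{0\}$.

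For the first assertion I would invoke the construction of $\xnpo\to\cnpo$: the restriction of $\xnpo$ to the $(l+1)$-st coordinate line of $\cnpo$ is a one-parameter family whose fiber over a nonzero point is $\xn_0$, whose fiber over the origin is $\xnpo_0$, and which smooths the singular divisor $D_{l+1}\sub\xnpo_0$. Base-changing this family along the finite morphism $\Sigma_{k+1}\to\ell_{l+1}$, which is an isomorphism away from the point $q_{k+1}$ (which maps to $0$), gives precisely the stated description of $\wti\cX_{\Sigma_{k+1}}$: generic fiber $\xn_0$, fiber over $q_{k+1}$ isomorphic to $\xnpo_0$, and the total family a smoothing of $D_{l+1}\sub\wti\cX_{q_{k+1}}$.

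For the statement about the $\Gm$-action I would restrict the $\Gm$-action on $\Anpt$, namely $(z)^\si=(z_1,\ldots,z_{l-1},\si^{e_l}z_l,\si^{-e_l}z_{l+1},z_{l+2},\ldots,z_{n+2})$, to $\ell_{l+1}$: there it scales the one free coordinate $z_{l+1}$ by $\si^{-e_l}$ and fixes all the rest, so it is the $l$-th factor of the $\Gm^{n+1}$-action on $\cnpo$ composed with $\si\mapsto\si^{e_l}$. Consequently the induced action on $\wti\cX_{q_{k+1}}\cong\xnpo_0$ is the tautological $\Gm$-action on the component $\Delta_l$, which leaves every $\Delta_i$ with $i\ne l$ pointwise fixed and acts on $\Delta_l$ with fixed locus the two distinguished sections $D_l\cup D_{l+1}$.

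The only point that deserves a second look is that the $\Gm$-action on $\Anpt$ is literally the same action whether one restricts it to $\ell_l$ or to $\ell_{l+1}$; it therefore always corresponds to the $l$-th factor of $\Gm^{n+1}$ and always moves $\Delta_l$ (and not $\Delta_{l+1}$), even though the family direction now smooths $D_{l+1}$ instead of $D_l$. Keeping this index shift straight is the main thing to be careful about; beyond it nothing new is required, and the argument is the verbatim translation of the proof just given for $\Sigma_0$.
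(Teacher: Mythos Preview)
Your argument is correct and is precisely the parallel argument the paper intends; the paper itself supplies no separate proof, labeling this a ``parallel Lemma'' to the one for $\Sigma_0$. One small slip worth fixing: the map $\Sigma_{k+1}\to\ell_{l+1}$ is not an isomorphism away from $q_{k+1}$ but a degree-$e_l$ cover (the $(l{+}1)$-st coordinate in \eqref{B-2} is $c_lv^{e_l}$), though this is harmless since the fiber of $\xnpo$ over every nonzero point of $\ell_{l+1}$ is still $\xn_0$.
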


Using that the families over $\Sigma_i$, $1\le i\le k$ are all pull backs of the central fiber $\xnpo_0$ over $0\in\cnpo$, and combined with the results proved in the previous two Lemmas, we have

\begin{lemm}
For $1\le i\le k$, $\wti\cX_{\Sigma_i}\cong \xnpo_0\times\Sigma_i$; the $\Gm$-action on $\wti\cX_{\Sigma_i}$
are the product action of the $\Gm$-action on $\Sigma_i$, and the $\Gm$-action on $\wti\cX_{q_1}$, (which is identical to that on $\wti\cX_{q_{k+1}}$).
\end{lemm}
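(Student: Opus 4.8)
The plan is to show that the chain $\Sigma_1,\ldots,\Sigma_k$ is contracted by $\pi\circ\bar\jmath$ to the single point $0\in\cnpo$ lying over $0\in\Anpt$ --- the $\Gm$-fixed point over which the fibre of $\xnpo$ is $\xnpo_0$ --- and then to read off both the isomorphism and the $\Gm$-action by base change from the family $\xnpo\to\cnpo$.

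First I would pin down the image of each $\Sigma_i$ ($1\le i\le k$) under $\pi\circ\bar\jmath\colon V\to\cnpo=C[n+1]$. By construction the $\Sigma_i$ are the exceptional curves of the $\Gm$-equivariant blow-up $b\colon V\to V'$ contracting the fixed point $o\in V'$, and by \eqref{pib} (equivalently the explicit formula \eqref{B-2}) the composite $V\to C[n+1]\to\Anpt$ maps all of $\Sigma_1\cup\cdots\cup\Sigma_k$ to $0\in\Anpt$. Since $\cnpo=C\times_{\Ao}\Anpt$ and $C\to\Ao$ is \'etale with $0\in C$ the only point over $0\in\Ao$, the fibre of $\cnpo\to\Anpt$ over the reduced point $0\in\Anpt$ is itself a single reduced point, namely $0\in\cnpo$. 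As each $\Sigma_i$ is a reduced irreducible curve, its scheme-theoretic image in $\Anpt$ is exactly that reduced point; hence $\Sigma_i\hookrightarrow V\to\cnpo$ factors through $0\in\cnpo$, i.e. it is the constant morphism with that image.

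Then, using $\wti\cX=\xnpo\times_{\cnpo}V$ from \eqref{tilX}, base change along $\Sigma_i\hookrightarrow V$ gives
\[\wti\cX_{\Sigma_i}=\xnpo\times_{\cnpo}\Sigma_i\cong\bl\xnpo\times_{\cnpo}\{0\}\br\times\Sigma_i\cong\xnpo_0\times\Sigma_i,\]
the last step being Lemma \ref{Lem-X}(2) (applied with $n$ replaced by $n+1$), which identifies the fibre $\xnpo\times_{\cnpo}\{0\}$ with $\xnpo_0$. This yields the first assertion. For the $\Gm$-action, recall that the $\Gm$-action on $\wti\cX$ is the one induced from the $\Gm$-equivariant morphism $\bar\jmath$; since $\wti\cX=\xnpo\times_{\cnpo}V$ it is the diagonal of the $\Gm$-action on $\xnpo$ --- the restriction of the tautological $\Gm^{n+1}$-action along the one-parameter subgroup $\lambda\colon\Gm\to\Gm^{n+1}$ through which $\Gm$ acts on $\cnpo$ (cf. \eqref{B-2}) --- and the constructed $\Gm$-action on $V$, both covering the $\Gm$-action on $\cnpo$. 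Because $0\in\cnpo$ is $\Gm$-fixed, this restricts on $\wti\cX_{\Sigma_i}=\xnpo_0\times\Sigma_i$ to $\sigma\cdot(x,s)=(\lambda(\sigma)\cdot x,\sigma\cdot s)$, which is exactly the product of the $\Gm$-action on the fibre $\xnpo_0$ with the $\Gm$-action on $\Sigma_i$.

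Finally I would note that this fibrewise action is the one already described on $\wti\cX_{q_1}$: the points $q_1=\Sigma_1\cap\Sigma_0$ and $q_{k+1}=\Sigma_{k+1}\cap\Sigma_k$ both lie on curves of the chain $\Sigma_1,\ldots,\Sigma_k$, hence map to $0\in\cnpo$, so $\wti\cX_{q_1}\cong\wti\cX_{q_{k+1}}\cong\xnpo\times_{\cnpo}\{0\}=\xnpo_0$ carry precisely the $\lambda$-action --- the action spelled out above for $\wti\cX_{q_1}$. I do not expect a genuine obstacle here: the substantive geometry (the shapes of $\wti\cX_{\Sigma_0}$, $\wti\cX_{\Sigma_{k+1}}$ and the $\Gm$-action on $\wti\cX_{q_1}$) has already been settled, and the only point requiring care is the scheme-theoretic statement that $\Sigma_i\to\cnpo$ factors through the fixed point $0\in\cnpo$ and that the diagonal $\Gm$-action therefore restricts to an honest product --- both of which follow formally from reducedness of $\Sigma_i$ and the fact that $0\in\cnpo$ is $\Gm$-fixed.
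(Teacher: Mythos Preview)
Your argument is correct and follows exactly the approach the paper intends: the paper does not give a detailed proof of this lemma, introducing it only with the remark that the families over $\Sigma_i$ for $1\le i\le k$ are pullbacks of the central fiber $\xnpo_0$ over $0\in\cnpo$, and that combined with the previous two lemmas the statement follows. You have simply filled in the routine details---the scheme-theoretic factorization of $\Sigma_i\to\cnpo$ through $0$ via reducedness, the base-change identification, and the splitting of the diagonal $\Gm$-action as a product---that the paper leaves implicit.
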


%%%%%%%%%

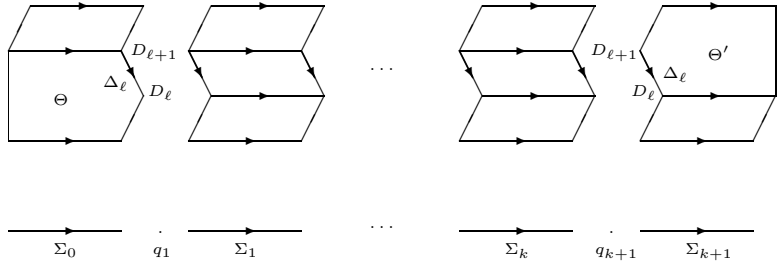
\begin{figure}[h!]
\begin{center}
\setlength{\unitlength}{0.6cm}
\begin{picture}(25,6)(1,-1)
\linethickness{0.075mm}\tiny
\put(3,2){\line(1,0){2.5}}
\put(3,4){\line(1,0){2.5}}
\put(3,4){\line(0,-1){2}}
\put(3,4){\line(1,2){0.5}}
\put(5.5,4){\line(1,-2){0.5}}
\put(5.5,2){\line(1,2){0.5}}
\put(5.5,4){\line(1,2){0.5}}
\put(6,5){\line(-1,0){2.5}}

\put(3.5,2){\vector(1,0){1}}
\put(3.5,4){\vector(1,0){1}}
\put(3.8,5){\vector(1,0){1}}
%\put(5.8,3.5){\vector(1,-2){0.1}}
\put(5.5,4){\vector(1,-2){0.3}}

\put(6.1,3){$D_{\ell}$}
\put(5.7,3.9){$D_{\ell+1}$}
\put(5.1,3.2){$\Delta_{\ell}$}
\put(4,2.8){$\Theta$}

\put(3,0){\line(1,0){2.5}}\put(4,-0.5){$\Sigma_0$}\put(6.3,-0.1){$\cdot$}\put(6.2,-0.5){$q_1$}
\put(3,0){\vector(1,0){1.5}}
%%%%%%%%%%%%%%%%%%%%%%%%%%%%
\put(7,2){\line(1,0){2.5}}
\put(7,4){\line(1,0){2.5}}
\put(7,4){\line(1,-2){0.5}}
\put(7,2){\line(1,2){0.5}}
\put(7,4){\line(1,2){0.5}}
\put(9.5,4){\line(1,-2){0.5}}
\put(9.5,2){\line(1,2){0.5}}
\put(9.5,4){\line(1,2){0.5}}
\put(10,5){\line(-1,0){2.5}}
\put(10,3){\line(-1,0){2.5}}

\put(7.5,2){\vector(1,0){1}}
\put(7.5,4){\vector(1,0){1}}
\put(7.8,3){\vector(1,0){1}}
\put(7.8,5){\vector(1,0){1}}
\put(9.5,4){\vector(1,-2){0.3}}
\put(7,4){\vector(1,-2){0.3}}

\put(7,0){\line(1,0){2.5}}\put(8,-0.5){$\Sigma_1$}
\put(7,0){\vector(1,0){1.5}}
%%%%%%%%%%%%%%%%%%%%%%%%%%%%
\put(11,3.5){$\cdots$}\put(11,0){$\cdots$}

%%%%%%%%%%%%%%%%%%%%%%%%%%%%
\put(13,2){\line(1,0){2.5}}
\put(13,4){\line(1,0){2.5}}
\put(13,4){\line(1,-2){0.5}}
\put(13,2){\line(1,2){0.5}}
\put(13,4){\line(1,2){0.5}}
\put(15.5,4){\line(1,-2){0.5}}
\put(15.5,2){\line(1,2){0.5}}
\put(15.5,4){\line(1,2){0.5}}
\put(16,5){\line(-1,0){2.5}}
\put(16,3){\line(-1,0){2.5}}

\put(13.5,2){\vector(1,0){1}}
\put(13.5,4){\vector(1,0){1}}
\put(13.8,3){\vector(1,0){1}}
\put(13.8,5){\vector(1,0){1}}
\put(15.5,4){\vector(1,-2){0.3}}
\put(13,4){\vector(1,-2){0.3}}

\put(13,0){\line(1,0){2.5}}\put(14,-0.5){$\Sigma_k$}\put(16.3,-0.1){$\cdot$}\put(16,-0.5){$q_{k+1}$}
\put(13,0){\vector(1,0){1.5}}
%%%%%%%%%%%%%%%%%%%%%%%%%%%%%
\put(17,2){\line(1,0){2.5}}
%\put(17,4){\line(1,0){2.5}}
\put(17,4){\line(1,-2){0.5}}
\put(17,2){\line(1,2){0.5}}
\put(17,4){\line(1,2){0.5}}
\put(20,5){\line(0,-1){2}}
\put(19.5,2){\line(1,2){0.5}}
%\put(19.5,4){\line(1,2){0.5}}
\put(20,5){\line(-1,0){2.5}}
\put(20,3){\line(-1,0){2.5}}

\put(17.5,2){\vector(1,0){1}}

\put(17.8,3){\vector(1,0){1}}
\put(17.8,5){\vector(1,0){1}}
\put(17,4){\vector(1,-2){0.3}}

\put(16.8,3){$D_{\ell}$}
\put(15.9,3.9){$D_{\ell+1}$}
\put(17.5,3.4){$\Delta_{\ell}$}
\put(18.5,3.8){$\Theta'$}

\put(17,0){\line(1,0){2.5}}\put(18,-0.5){$\Sigma_{k+1}$}
\put(17,0){\vector(1,0){1.5}}
\end{picture}
\end{center}
\caption{{\small In the figure, the slated lines represent $\Delta_i$; the horizontal lines represent $\Delta_i\times\Sigma_j$;
the arrows represent the $\Gm$-action; lines w/o arrows are fixed by $\Gm$.}}\label{lll}
\end{figure}

%%%%%%

In the figure, the left column represents $\wti\cX_{\Sigma_0}$, of which only $\Delta_{l+1}\times\Sigma_0$ (the top parallelgram) and the $\Theta$ are shown. The piece $\Theta$ is the blowing up
of $\Delta_{l-1}\times\Sigma_0$ along $D_l\times q_1$, where $\Delta_{l-1}\sub\xn_0$. We endow
$\Theta$ with the $\Gm$-action induced by the product action on $\xn_0\times\Sigma_0$, where
$\Gm$-acts on $\Delta_{l-1}$ trivially, and acts on $\Sigma_0$ by that induced from the $\Gm$-action on $V$.
The family $\wti\cX_{\Sigma_0}$ is by replacing $\Del_{l-1}\times\Sigma_0\sub \xn_0\times\Sigma_0$
with $\Theta$.

The right column represents $\wti\cX_{\Sigma_{k+1}}$. The piece $\Theta'\sub \wti\cX_{\Sigma_{k+1}}$ is constructed similarly: it is the blowing up of $\Delta_l\times\Sigma_{k+1}$ along
$D_l\times q_{k+1}$; the total family $\wti\cX_{\Sigma_{k+1}}$ is by replacing $\Delta_l\times\Sigma_{k+1}$ in
$\xn_0\times\Sigma_{k+1}$ by $\Theta'$. The $\Gm$-action is the one induced from the product action
on $\xn_0\times\Sigma_{k+1}$, where the action on $\xn_0$ is via the  trivial action, and on $\Sigma_{k+1}$ is
via the one induced from that on $V$.

The next lemma explains the role of the families $\wti\cX_{\Sigma_i}$ in our proof of Lemma \ref{1.2}

\begin{lemm}\label{5.11}
For $a\in\Sigma_0-q_1$ or $a\in \Sigma_{k+1}-q_{k+1}$, $\ti\phi_a: \ti p_a\sta\sV\to \wti\sF_a$ on $\wti\cX_a$ is isomorphic to $\phi_{\eta_0}: p_{\eta_0}\sta\sV\to\sF_{\eta_0}$.
\end{lemm}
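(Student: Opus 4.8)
The plan is to handle the two ends of the chain $\Sigma$ separately: the end $\Sigma_0$ is immediate from the construction of $f_l$, while the end $\Sigma_{k+1}$ requires propagating the quotient along the whole chain. First I would dispose of $a\in\Sigma_0-q_1$. Since the blow-up $b\colon V\to V'$ is an isomorphism over $E'-\{o\}$, the open subset $\Sigma_0-q_1\subset V$ is identified with $E'-\{o\}=(v=0,\,t\ne 0)\subset V'$, which is precisely $\{\eta_0\}\times\Gm\subset S\times\Gm$. As $\Qu$ is separated and $\bar\jmath$ agrees with $f_l$ on the dense open $S\times\Gm$, it agrees with $f_l$ on $\{\eta_0\}\times\Gm$. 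By construction $f_l$ is induced by $\phi'=\pi_1^*\phi$ on $\cX'\cong\cX\times\Gm$, so over $\{\eta_0\}\times\Gm$ the family of spaces is $\cX_{\eta_0}\times\Gm$ and the quotient is the pull-back of $\phi_{\eta_0}$; all isomorphisms here commute with the projections to $X$. Combined with the fact (recorded in the lemmas above, or directly from \eqref{B-2} and Lemma \ref{Lem-X}) that the fiber of $X[n+1]$ over a nonzero point of $\ell_l$ is canonically $\xn_0$ over $X$, this shows $\wti\cX_a\cong\xn_0$ over $X$ carrying $\ti\phi_a$ to $\phi_{\eta_0}$.

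For $a\in\Sigma_{k+1}-q_{k+1}$ the same argument is unavailable: $\Sigma_{k+1}-q_{k+1}=E-\{o\}$ lies in the closure of $S\times\Gm$ but not in $S\times\Gm$, and the only points of $S\times\Gm$ over $\eta_0$ — where the quotient is literally $\phi_{\eta_0}$ — make up $\Sigma_0-q_1$. Instead I would first invoke the description of $\wti\cX_{\Sigma_{k+1}}$ from the preceding lemma: over $\Sigma_{k+1}-q_{k+1}$ it is the product family $\xn_0\times(\Sigma_{k+1}-q_{k+1})$ with $\Gm$ acting trivially on $\xn_0$ and freely and transitively on $\Sigma_{k+1}-q_{k+1}$. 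A $\Gm$-equivariant flat family of quotients of $p^*\sV$ over such a base, with trivial action on the fibers, is the pull-back of a single quotient $\psi\colon p^*\sV\to\sG$ on $\xn_0$; hence $\ti\phi$ is the constant family $\psi$ over $\Sigma_{k+1}-q_{k+1}$, and in particular $\ti\phi_a\cong\psi$ for every such $a$. So the claim reduces to showing $\psi\cong\phi_{\eta_0}$.

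To establish $\psi\cong\phi_{\eta_0}$ I would use that $\bar\jmath$ is a single $\Gm$-equivariant morphism on the connected chain $\Sigma=\Sigma_0\cup\cdots\cup\Sigma_{k+1}$: over $\Sigma_0$ it is the constant family $\phi_{\eta_0}$, over $\Sigma_{k+1}$ the constant family $\psi$, and over the inner curves $\Sigma_1,\dots,\Sigma_k$ the total space is the constant $X[n+1]_0$, so there $\bar\jmath$ is a $\Gm$-equivariant morphism into $\Quot^{p^*\sV,P}_{X[n+1]_0}$ with $\Gm$ acting through its action on the single component $\Delta_l$ (fixed loci $D_l\cup D_{l+1}$). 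Since $\Qu$ is separated, the flat limits $\ti\phi_{q_1}$ and $\ti\phi_{q_{k+1}}$ are uniquely determined: $\ti\phi_{q_1}$ is the flat limit of the constant family $\phi_{\eta_0}$ across the blow-up $\wti\cX_{\Sigma_0}$, i.e. the re-expansion of $\phi_{\eta_0}$ obtained by inserting $\Delta_l$ at $D_l\subset\xn_0$, and $\ti\phi_{q_{k+1}}$ is likewise the re-expansion of $\psi$ obtained by inserting $\Delta_l$ at $D_{l+1}$. Running the $\Gm$-orbit of each inner $\Sigma_i$ (whose orbit closure joins the consecutive fixed points $\ti\phi_{q_i}$ and $\ti\phi_{q_{i+1}}$), together with the fact that the $\Gm$-action on $X[n+1]_0$ contracts $\Delta_l$ onto $D_l$ as $\sigma\to 0$ and onto $D_{l+1}$ as $\sigma\to\infty$ (by \eqref{Auto}) — so that contracting $\Delta_l$ from either side recovers $\xn_0$ with the corresponding node as image — forces $\ti\phi_{q_1}$ and $\ti\phi_{q_{k+1}}$ to be the re-expansions at the two nodes of one and the same quotient; contracting $\Delta_l$ then yields $\psi\cong\phi_{\eta_0}$. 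I expect this last step — tracking the flat limits along the chain and matching the two re-expansions — to be the main obstacle; it is exactly where the explicit blow-up descriptions of $\wti\cX_{\Sigma_0}$, $\wti\cX_{\Sigma_i}$ and $\wti\cX_{\Sigma_{k+1}}$, and the separatedness of $\Qu$, do the real work.
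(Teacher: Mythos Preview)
Your treatment of $\Sigma_0-q_1$ is correct and essentially matches the paper's argument: this locus is literally $\{\eta_0\}\times\Gm\subset S\times\Gm$, where $\bar\jmath=f_l$ by construction (no separatedness is even needed), so $\ti\phi_a\cong\phi_{\eta_0}$ can be read off directly. The paper phrases this via the section $\varphi_1:S\to V$ lifting $S\times 1$.

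For $\Sigma_{k+1}-q_{k+1}$ there is a genuine gap. Your reduction to a single quotient $\psi$ on $\xn_0$ is fine, but the chain argument meant to yield $\psi\cong\phi_{\eta_0}$ does not go through. Along the inner curves $\Sigma_1,\dots,\Sigma_k$ the $\Gm$-action moves the quotient nontrivially on $\Delta_l\subset X[n+1]_0$, so the fixed-point quotients $\ti\phi_{q_1},\dots,\ti\phi_{q_{k+1}}$ are in general pairwise \emph{distinct} (this is exactly what Lemma~\ref{localexc} exploits). The only invariant you can extract from the chain is the restriction to $X[n+1]_0\setminus\Delta_l$; after the two different smoothings (of $D_l$ over $\Sigma_0$ and of $D_{l+1}$ over $\Sigma_{k+1}$) this determines $\phi_{\eta_0}$ and $\psi$ only away from the two components of $\xn_0$ adjacent to $D_l$, and says nothing about them there. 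The phrase ``contracting $\Delta_l$ then yields $\psi\cong\phi_{\eta_0}$'' is not well-defined: a quotient on $X[n+1]_0$ is in general not a pull-back of one on $\xn_0$, so there is no contraction to apply.

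The paper avoids the chain entirely for this half of the lemma. It introduces a second $S$-curve $\varphi_2:S\to V$, the lift of $u\mapsto(u,u)\in S\times\bA^{\!1}$, whose image at $\eta_0$ lies on the other end. The composites $h_i=\pi\circ\bar\jmath\circ\varphi_i:S\to\cnpo$ differ only in the $l$-th and $(l+1)$-th coordinates of $\Anpt$, and the stack equivalence $\ti\tau_{I,I',X}$ with $I=[n+2]\setminus\{l\}$, $I'=[n+2]\setminus\{l+1\}$ (cf.\ \eqref{tauX}) furnishes an isomorphism of the pulled-back families $h_1^*\xnpo\cong h_2^*\xnpo$ over all of $S$, extending the $\Gm^{n+1}$-identification over $\eta$. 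Transporting $\phi$ across this isomorphism and invoking separatedness of $\Qu$ over the generic fiber forces $\varphi_2^*\ti\phi\cong\phi$, hence $\ti\phi_{\varphi_2(\eta_0)}\cong\phi_{\eta_0}$; a single $\Gm$-orbit then covers all of $\Sigma_{k+1}-q_{k+1}$. The missing idea in your approach is precisely this use of the built-in equivalences of $\fX$, rather than an analysis of limits along $\Sigma$.
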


\begin{proof}
We comment that since $\cnpo=C\times_{\Ao}\Anpt$, a morphism $h:S\to \cnpo$ is given by a pair of
morphisms $h': S\to C$ and $h\dpri: S\to \Anpt$ so that their corresponding compositions $S\to C\to \Ao$
and $S\to \Anpt\to\Ao$ coincide.

We pick a morphism $\varphi_1: S\to V$ that is the lift of $S=S\times 1\mapright{\sub} S\times\Gm$. By the description of $V\to V'\to \Anpt$ (cf. \eqref{B-2}), we see that $\varphi_1(\eta_0)\in V$ lies over $(\ldots, 0,1,0,\ldots)\in\Anpt$, thus $\varphi_1(\eta_0)\in\Sigma_{k+1}-q_{k+1}$.

By the construction of $\varphi_1$, we see that the composite $\bar\jmath\circ\varphi_1: S\to V\to\Qu$ coincide with the restriction of $f_l$ (cf. \eqref{morphism}) to $S\times 1$: $\jmath\circ\varphi_1=f_l|_{S\times 1}$. Since $f_l$ is induced by the family $\phi$, we obtain
$$\phi\cong (\bar\jmath\circ\varphi_1)\sta \Phi\cong \varphi_1\sta\ti\phi,
$$
where $\Phi$ is the universal family of $\Qu$. Let $a'=\varphi_1(\eta_0)$; this proves $\ti\phi_{a'}\cong \phi_{\eta_0}$. Finally, since all points in $\Sigma_{k+1}-q_{k+1}$ form a single $\Gm$-orbit, for $a\in \Sigma_{k+1}-q_{k+1}$, $\ti\phi_a\cong \ti\phi_{a'}\cong \phi_{\eta_0}$. This prove the part of the Lemma
for the case $\Sigma_{k+1}-q_{k+1}$.

For the other case, we let $\varphi_2: S\to V$ be the lift of $(1_S,\rho):S\to S\times\Ao$,
where $\rho: S\to\Ao$ is via $\rho\sta(t)=u$. By the construction, we see that $\varphi_2(\eta_0)\in \Sigma_0-q_1$.

We let $h_i=\pi\circ\bar\jmath\circ \varphi_i: S\to \cnpo$ be the composite of $\varphi_i$ with the tautological
$V\to\cnpo$. By inspection, we see that the composites of $h_1$ and $h_2$ with $\cnpo\to C$ are identical, and their composites with $\Anpt\to\Ao$ are of the form
$$h\dpri_1(u)=(\cdots, 1, c_l u^{e_l},\cdots)\and h\dpri_2(u)=(\cdots,u^{e_l}, c_l,\cdots).
$$
Here the expressed terms are in the $l$ and $(l+1)$-th places, and the omitted terms of $h_1\dpri$ and $h_2\dpri$
are identical.

We let $\wti\cX_i\defeq\xnpo\times_{h_i}S$. Using the isomorphism $\tilde\tau_{I,I',X}$ in \eqref{tauX} with $I=[n+2]-\{l\}$ and $I'=[n+2]-\{l+1\}$, we conclude that
\begin{enumerate}
\item the generic points $h_1(\eta)$ and $h_2(\eta)$ lie in the same $\Gm^{n+1}$-orbit;
\item there is an isomorphism $\wti\cX_1\cong \wti\cX_2$ extending the isomorphism
$\wti\cX_1\times_S \eta\cong \wti\cX_2\times_S\eta$ given by the $\Gm^{n+1}$-action in (1).
\end{enumerate}

Let $\varphi_2\sta\ti\phi$ be the pull back of $\ti\phi$ via $\varphi_2: S\to V$; it is an $S$-flat family of quotient
sheaves on $\ti\cX_2$.
Since $\varphi_1(\eta)$ and $\varphi_2(\eta)$ lie in the same $\Gm$-orbit in $V$, (following
from the construction,) we have an induced isomorphism
\beq\label{iso-eta}
(\varphi_1\sta\ti\phi)_\eta\cong (\varphi_2\sta\ti\phi)_\eta.
\eeq
(Recall $(\varphi_1\sta\ti\phi)_\eta=(\varphi_1\sta\ti\phi)\times_S \eta$.) As the $\Gm^{n+1}$-action on $\Qu$
is induced by the $\Gm^{n+1}$-action on $\xnpo/\cnpo$, the isomorphism \eqref{iso-eta} is compatible with the isomorphism $\wti\cX_1\times_S\eta\cong \wti\cX_2\times_S\eta$ in (1).

Finally, using $\wti\cX_1\cong \wti\cX_2$ given by (2), we pull back the family $\phi$ on $\wti\cX_1\cong \cX$
to a quotient family $\bar\phi$ on $\wti\cX_2$; knowing that the isomorphism $\wti\cX_1\cong \wti\cX_2$
extends the isomorphism $(\wti\cX_1)_\eta\cong(\wti\cX_2)_\eta$ given by \eqref{iso-eta},
the isomorphism \eqref{iso-eta} gives an isomorphism $(\bar\phi)_\eta\cong (\varphi_2\sta\ti\phi)_\eta$.

Let $\ti p_2: \wti\cX_2\to X$ be the projection.
Since both $\bar\phi$ and $\varphi_2\sta\ti\phi$ are $S$-flat family of quotient sheaves of $\ti p_2\sta\sV$,
and are isomorphic as quotient sheaves over the generic fiber of $\wti\cX_2/S$, by that $\Qu$ is separated,
we conclude $\bar\phi\cong \varphi_2\sta\ti\phi$.
This implies
$$(\varphi_2\sta\ti\phi)_{\eta_0}\cong (\bar\phi)_{\eta_0}\cong (\varphi_1\sta\ti\phi)_{\eta_0}\cong \phi_{\eta_0}
$$
as quotient sheaves on $\xn_0$. In the end, using that $\Sigma_0-q_1$ is a single $\Gm$-orbit, $\ti\phi_a\cong \phi_{\eta_0}$ for all $a\in\Sigma_0-q_1$; the Lemma follows.
\end{proof}

\begin{lemm}\label{5.12}
The sheaf $\wti\sF_{q_1}$ (resp. $\wti\sF_{q_{k+1}}$) is normal to $D_l$ (res. $D_{l+1}$).
Let $\Delta_l\sta=\Delta_l-D_l\cup D_{l+1}$;
the restriction $\ti\phi_{q_1}|_{\Del_l\sta}$ (resp. $\ti\phi_{q_{k+1}}|_{\Del_l\sta}$) is $\Gm$-invariant.
\end{lemm}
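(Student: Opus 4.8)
The plan is to reduce both assertions to a local computation of the flat limit near the node $D_l\sub\wti\cX_{q_1}$ (and near $D_{l+1}\sub\wti\cX_{q_{k+1}}$ for the parenthetical statement), using the explicit model of $\wti\cX_{\Sigma_0}$ around $D_l$ furnished by the preceding lemmas (see Figure \ref{lll}). Recall that there $\wti\cX_{\Sigma_0}$ is the piece $\Theta$, i.e. the blow-up of $\Delta_{l-1}\times\Sigma_0$ along $D_l\times q_1$; its total space is smooth, its fiber over $q_1$ is locally $\Delta_{l-1}\cup_{D_l}\Delta_l$ with $\Delta_l$ the exceptional $\bP^1$-bundle, and the blow-down followed by the projection, $\rho\colon\Theta\to\Delta_{l-1}\times\Sigma_0\to\Delta_{l-1}$, restricts on each $\wti\cX_a$ ($a\ne q_1$) to the isomorphism $\wti\cX_a\cong\xn_0$ followed by the projection to the component $\Delta_{l-1}$, which near $D_l$ only sees $\Delta_{l-1}\setminus D_l$.

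First I would pin down the limit sheaf. By Lemma \ref{5.11}, over $\Sigma_0-q_1$ the quotient $\ti\phi_a$ is $\cong\phi_{\eta_0}$; hence on the open set $\Theta\setminus\wti\cX_{q_1}$ the quotient $\ti\phi$ is the $\rho$-pullback of the restriction to $\Delta_{l-1}\setminus D_l$ of the quotient $\ti p\sta\sV|_{\Delta_{l-1}}\to\sF_{\eta_0}|_{\Delta_{l-1}}$. Let $\cG$ be the quotient of $\sF_{\eta_0}|_{\Delta_{l-1}}$ by its maximal subsheaf supported along $D_l$; then $\cG$ is a quotient of $\ti p\sta\sV|_{\Delta_{l-1}}$, it agrees with $\sF_{\eta_0}|_{\Delta_{l-1}}$ away from $D_l$, and by construction it has no section supported on $D_l$, so by Proposition \ref{flat} it is flat along the normal direction to $D_l\sub\Delta_{l-1}$, i.e. normal to $D_l$. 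Consequently $\rho\sta\cG$ agrees with $\ti\phi$ over $\Theta\setminus\wti\cX_{q_1}$, and, since $\cG$ is flat along the normal direction to $D_l$, it is Tor-independent of the center $D_l\times q_1$ of the blow-up, so $\rho\sta\cG$ is flat over $\Sigma_0$. Being moreover a quotient of $\ti p\sta\sV|_\Theta=\rho\sta(\ti p\sta\sV|_{\Delta_{l-1}})$ that agrees with $\ti\phi|_\Theta$ on the generic fiber of $\Theta/\Sigma_0$, it must equal $\ti\phi|_\Theta$ by the uniqueness of the flat limit (separatedness of the relative Quot-scheme, together with the properness of $\wti\cX_{q_1}$). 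Thus $\wti\sF_{q_1}$ coincides near $D_l$ with $(\rho\sta\cG)|_{\wti\cX_{q_1}}$.

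Granting this identification, both claims are routine. Write $\Delta_{l-1}=\Spec\bk[y,\underline d]$ locally with $D_l=\{y=0\}$; the chart of $\Theta$ meeting $D_l\sub\wti\cX_{q_1}$ is $\Spec\bk[y,w,\underline d]$ with $\rho\sta y=y$, the $\Sigma_0$-coordinate equal to $yw$, and $\wti\cX_{q_1}=\{yw=0\}$, $\Delta_{l-1}=\{w=0\}$, $\Delta_l=\{y=0\}$, $D_l=\{y=w=0\}$. Then $\rho\sta\cG=\cG[w]$ and $\wti\sF_{q_1}=\cG[w]/(yw)$. Restricting to the two branches $\{w=0\}$ and $\{y=0\}$ and using that $y$ is a nonzerodivisor on $\cG$ (the defining property of $\cG$) and that $w$ is a regular parameter, Lemma \ref{2-flat} shows $\wti\sF_{q_1}$ is normal to $D_l$. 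For the second claim, restriction to $\Delta_l=\{y=0\}$ gives $\wti\sF_{q_1}|_{\Delta_l}=(\cG/y\cG)[w]=\pi_l\sta(\cG|_{D_l})$, where $\pi_l\colon\Delta_l\to D_l$ is the projection; since this chart contains $\Del_l\sta$ (it omits only the other section $D_{l+1}$), the quotient $\ti\phi_{q_1}|_{\Del_l\sta}$ is the $\pi_l$-pullback of a quotient $\ti p\sta\sV|_{D_l}\to\cG|_{D_l}$ on $D_l$, i.e. $\Gm$-invariant. The statements for $q_{k+1}$ follow verbatim, now blowing up the component on the other side of $D_l$ (the piece $\Theta'$ over $\Sigma_{k+1}$) and replacing $D_l$ by $D_{l+1}$.

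The step I expect to be the real point is the identification $\wti\sF|_\Theta=\rho\sta\cG$: one must see that the subsheaf of $\sF_{\eta_0}|_{\Delta_{l-1}}$ supported on $D_l$ — the part responsible for the non-admissibility at $D_l$, nonzero precisely because $\err\sF_{\eta_0}\ne0$ — disappears from the flat limit near $D_l$ (it ``flows onto the new component $\Delta_l$ toward $D_{l+1}$''), and that the pulled-back sheaf $\rho\sta\cG$ is genuinely flat over $\Sigma_0$. Once these are in place, everything else is bookkeeping with the coordinate charts of Figure \ref{lll} and the flatness criteria of Section 3.
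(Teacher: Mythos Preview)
Your approach is correct and essentially identical to the paper's: identify $\wti\sF$ near $D_l\sub\wti\cX_{q_1}$ with the pullback $\rho^*\cG$ of the $D_l$-torsion-free quotient of $\sF_{\eta_0}|_{\Delta_{l-1}}$, using flatness of $\rho^*\cG$ over $\Sigma_0$ together with separatedness of the relative Quot-scheme, and then read off both conclusions (the $\Gm$-invariance because $\rho|_{\Delta_l}$ factors through the projection $\pi_l\colon\Delta_l\to D_l$). One wording issue: your assertion that $\rho^*\cG$ agrees with $\ti\phi$ on all of $\Theta\setminus\wti\cX_{q_1}$ is not literally true, since over $\Sigma_0-q_1$ the blow-up is an isomorphism and $\rho$ is just the first projection, which does hit $D_l$ where $\cG$ differs from $\sF_{\eta_0}|_{\Delta_{l-1}}$; it \emph{is} true in your coordinate chart $(y,w)$, where the complement of the central fiber is $\{yw\ne0\}$ and hence has $y\ne0$, and that is all you actually use. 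The paper makes the same restriction by passing to the open $\Theta^*=\Theta\setminus\overline{\wti\cX_{\Sigma_0}\setminus\Theta}$, which excises the proper transform of $D_l\times\Sigma_0$ and so plays the same role as your chart.
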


\begin{proof}
We prove the case for $\wti\sF_{q_1}$. We consider the $\Theta\sub \wti\cX_{\Sigma_0}$ mentioned before Lemma
\ref{5.11}. Let $\Theta\sta=\Theta-\text{closure}(\wti\cX_{\Sigma_0}-\Theta)$. We let $\text{bl}: \Theta\to \Delta_{l-1}\times\Sigma_0$ be the blowing up morphism, and $g$ be the composite
$$g: \Theta\sta\mapright{\sub} \Theta\mapright{\text{bl}} \Delta_{l-1}\times\Sigma_0\mapright{\text{pr}}\Delta_{l-1}.
$$

Let $p_{l-1}: \Delta_{l-1}\to X$ be the tautological projection, let $\sF\utf_{\eta_0,l-1}$ be
$\sF_{\eta_0}|_{\Del_{l-1}}$ quotient by its subsheaf supported along $D_l\cup D_{l-1}$. By Lemma \ref{tech-1} and Proposition \ref{flat}, $\sF\utf_{\eta_0,l-1}$ is normal to both $D_l$ and $D_{l-1}$.

We consider the quotient on $\Del_{l-1}$ induced by $\phi_{\eta_0}|_{\Delta_{l-1}}$:
$$\phi_{l-1}\utf: p_{l-1}\sta\sV\lra \sF\utf_{\eta_0,l-1}.
$$
We claim
\beq\label{claim-0}
g\sta\phi\utf_{l-1}\cong \ti\phi_{\Sigma_0}|_{\Theta\sta}.
\eeq
First, we know that $\Theta$ is a blowing up of $\Del_{l-1}\times\Sigma_0$ along $D_l\times 0$, and that
$\Gm$-acts on $\Theta$ via the trivial action on $\Del_{l-1}$ and that
on $\Sigma_0$ with the only fixed point $q_1$. Second, we know that $\ti\phi_{\Sigma_0}: \ti p_{\Sigma_0}\sta\sV\to \wti\sF_{\Sigma_0}$
is $\Gm$-equivariant, and for an $a\in \Sigma_0-q_1$, $\ti\phi_a\cong \phi_{\eta_0}$. From these two, we conclude
\beq\label{iso-5}
g\sta\phi\utf_{l-1}|_{\Theta\sta-\wti\cX_{q_1}}\cong \ti\phi_{\Sigma_0}|_{\Theta\sta-\wti\cX_{q_1}}.
\eeq

To conclude the claim, we notice that the isomorphism
\beq\label{iso-6}
g\sta p\sta_{l-1}\sV|_{\Theta\sta-\wti\cX_{q_1}}\cong  \ti p\sta_{\Sigma_0}\sV|_{\Theta\sta-\wti\cX_{q_1}},
\eeq
which is part of the isomorphism \eqref{iso-5}, is the identity map of the pull back of $\sV$ via the tautological projection;
$\Theta\sta-\wti\cX_{q_1}\to X$. Thus
\eqref{iso-6} extends to a
\beq\label{iso-7}
g\sta p\sta_{l-1}\sV|_{\Theta\sta}\cong  \ti p\sta_{\Sigma_0}\sV|_{\Theta\sta}.
\eeq
On the other hand, the family $\ti p\sta_{\Sigma_0}\sV|_{\Theta\sta}\to \wti\sF_{\Sigma_0}|_{\Theta\sta}$ is flat over $\Sigma_0$.
By the uniqueness of flat completion of quotient sheaves, the claim follows if we can show that $g\sta p\sta_{l-1}\sV
\to g\sta \sF_{l-1}\utf$ is flat over $\Sigma_0$.

Since $\sF\utf_{l-1}$ is normal to $D_l$, by Proposition \ref{flat}, $\sF_{l-1}\utf$ is flat along the normal direction of $D_l\sub \Del_{l-1}$.
Thus $g\sta \sF_{l-1}\utf$ is flat along the normal direction of the exceptional divisor of $\Theta\sta\to \Del_{l-1}\times\Sigma_0$.
Applying Proposition \ref{flat}, we conclude that it is flat over $\Sigma_0$, and in addition, $g\sta \sF_{l-1}\utf|_{\Theta\sta\cap \wti\cX_{q_1}}$ is
admissible.

This proves that $\wti\sF_{q_1}$ is normal to $D_l$. It is $\Gm$-equivariant because $\ti\phi_{\Sigma_0}$ is $\Gm$-equivariant.
\end{proof}

\begin{lemm}\label{localexc}
For all $1\le i\le k$, we have
\beq \label{eq-3}\err_l\wti\sF_{q_i}+\err_{l+1}\wti\sF_{q_i}= \err_l\wti\sF_{q_{i+1}}+\err_{l+1}\wti\sF_{q_{i+1}}.
\eeq
Suppose for an $1\le i\le k$, $\err_l\wti\sF_{q_i}\prec\err_{l+1}\wti\sF_{q_i}$ and $\err_l\wti\sF_{q_{i+1}} \succcurlyeq\err_{l+1}\wti\sF_{q_{i+1}}$,
then for $a\in \Sigma_i-\{q_i,q_{i+1}\}$,
\beq \label{ineq-2}\err_l\wti\sF_{q_i}+\err_{l+1}\wti\sF_{q_i}\succ \err_l\wti\sF_{a}+\err_{l+1}\wti\sF_{a} .
\eeq
\end{lemm}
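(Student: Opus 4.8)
The plan is to fix $1\le i\le k$ and work with the product structure $\wti\cX_{\Sigma_i}\cong X[n+1]_0\times\Sigma_i$ and the $\Gm$-equivariance of $\ti\phi_{\Sigma_i}$, recalling that $\Gm$ acts on $X[n+1]_0$ trivially off $\Delta_l$, on $\Delta_l$ with fixed loci $D_l\cup D_{l+1}$, and on $\Sigma_i\cong\Po$ with fixed points $q_i,q_{i+1}$. Two facts are immediate and used repeatedly: (a) since $\Sigma_i-\{q_i,q_{i+1}\}$ is a single $\Gm$-orbit and each $\err_m$ is invariant under the $\Gm$-action on $X[n+1]_0$, the polynomial $\err_m\wti\sF_a$ is the same for all $a\in\Sigma_i-\{q_i,q_{i+1}\}$; denote this common value $\err_m^\circ$, so that the right-hand side of both \eqref{eq-3} and \eqref{ineq-2} is $\err_l^\circ+\err_{l+1}^\circ$; (b) $\chi(\wti\sF_a(v))$ is independent of $a\in\Sigma_i$ because $\wti\sF_{\Sigma_i}$ is flat over the proper base $\Sigma_i\cong\Po$.

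For \eqref{eq-3}, the key observation is that for $m\notin\{l,l+1\}$ a neighborhood of $D_m$ in $\wti\cX_{\Sigma_i}$ is $\Gm$-fixed, so the classifying morphism from $\Sigma_i$ to the corresponding local Quot functor is $\Gm$-equivariant with image in the $\Gm$-fixed locus, hence constant; thus $\err_m\wti\sF_a=\err_m^\circ$ for all $a\in\Sigma_i$, and, once Lemma \ref{A} is invoked to see that the node-torsions $\sF_{I_l},\sF_{I_{l+1}}$ do not jump (it applies at whichever of $q_i,q_{i+1}$ meets the sign hypothesis of \eqref{action} for the node in question), the same reasoning shows that $\wti\sF_a\utf|_{\Delta_{l-1}}$ near $D_l$, $\wti\sF_a\utf|_{\Delta_{l+1}}$ near $D_{l+1}$, and $\wti\sF_a\utf|_{D_l}$, $\wti\sF_a\utf|_{D_{l+1}}$ are independent of $a$. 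Feeding these constancies and (b) into the identity $\chi(\wti\sF_a(v))=\exc\wti\sF_a+\tfrac12\sum_m(\delta_{m,0}+\delta_{m,1})$ of Lemma \ref{delta} leaves, as the only $a$-dependent contribution, the quantity $\err_l\wti\sF_a+\err_{l+1}\wti\sF_a+\tfrac12(\delta_{l-1,1}+\delta_{l,0}+\delta_{l,1}+\delta_{l+1,0})(\wti\sF_a)$, which is therefore constant on $\Sigma_i$. It then remains to check that the displayed combination of $\delta$-terms takes the same value at $q_i$ as at $q_{i+1}$; I would do this directly from \eqref{77}, \eqref{eq-2} and Lemma \ref{vanishing}, using that at a $\Gm$-fixed point $q$ the restriction $\wti\sF_q\utf|_{\Delta_l}$ is $\Gm$-invariant with $R^1\pi_{l*}(\wti\sF_q\utf|_{\Delta_l})=0$, so that its pushforward along $\pi_l$ treats the two sections $D_l,D_{l+1}\subset\Delta_l$ interchangeably. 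Then \eqref{eq-3} follows.

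For \eqref{ineq-2}, I would first pin down which fixed point is ``good'' for which node. The $\Gm$-action on $\Delta_l$ has weight $\pm e_l$ on $N_{D_l/\Delta_l}$ and $\mp e_l$ on $N_{D_{l+1}/\Delta_l}$, while the $\Gm$-weight on the $\Sigma_i$-coordinate is $\mp e_l$ at $q_i$ and $\pm e_l$ at $q_{i+1}$; hence the sign condition $a\in\ZZ_+$, $b\in\ZZ_-$ of \eqref{action} holds, for the node $D_l$, at exactly one of $q_i,q_{i+1}$ and, for $D_{l+1}$, at the other. Call them $q$ (good for $D_l$) and $q'$ (good for $D_{l+1}$). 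Lemmas \ref{A} and \ref{B} then give $\err_l\wti\sF_q=\err_l^\circ$ and $\err_{l+1}\wti\sF_{q'}=\err_{l+1}^\circ$: these two particular errors do not jump. The remaining input is that each $\err_m$ is lower semicontinuous along $\Sigma_i$, i.e. can only increase in the order $\preccurlyeq$ at $q_i$ or $q_{i+1}$, which follows from the openness of the admissible locus (Proposition \ref{open-2}) together with the semicontinuity of Euler characteristics of torsion subsheaves. Now a short bookkeeping with the order $\prec$ of Definition \ref{ordering}, using \eqref{eq-3}, the no-jump facts, this semicontinuity, and the two hypotheses $\err_l\wti\sF_{q_i}\prec\err_{l+1}\wti\sF_{q_i}$ and $\err_l\wti\sF_{q_{i+1}}\succcurlyeq\err_{l+1}\wti\sF_{q_{i+1}}$, yields $E:=\err_l\wti\sF_{q_i}+\err_{l+1}\wti\sF_{q_i}\succ\err_l^\circ+\err_{l+1}^\circ$ in the case $q=q_i$ (treating separately whether the common degree of $E$ is attained by $\err_l^\circ$ or by $\err_{l+1}^\circ$); and in the case $q=q_{i+1}$ one finds the two hypotheses to be mutually incompatible — they force $\err_l^\circ\prec\err_{l+1}^\circ$ and $\err_l^\circ\succcurlyeq\err_{l+1}^\circ$ at once — so there is nothing to prove. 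By (a) this is exactly \eqref{ineq-2}.

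The hardest part will be \eqref{eq-3}: isolating the variation of $\err_l+\err_{l+1}$ inside the identity of Lemma \ref{delta} requires careful tracking of which pieces of the fiberwise data are $\Gm$-invariant along $\Sigma_i$ — this is precisely where Lemmas \ref{A} and \ref{B} are needed — and then verifying that the leftover combination of $\delta$-terms is symmetric between the two fixed points via Lemma \ref{vanishing}. The order-$\prec$ case analysis behind \eqref{ineq-2} is elementary but must be carried out with care, since $\preccurlyeq$ records only leading terms, so one genuinely needs the exact (not merely leading-order) no-jump statements of Lemmas \ref{A}--\ref{B} rather than semicontinuity alone.
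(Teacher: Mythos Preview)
Your overall strategy is right, but there is a genuine gap in the argument for \eqref{ineq-2}, and a missed simplification in \eqref{eq-3}.

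\textbf{The real gap is your semicontinuity claim.} You assert that each $\err_m$ can only increase (in the order $\preccurlyeq$) at $q_i,q_{i+1}$, and you justify this by Proposition~\ref{open-2} plus ``semicontinuity of Euler characteristics of torsion subsheaves.'' Neither works. Proposition~\ref{open-2} says the locus $\{\err=0\}$ is open; this gives no control on the leading term of $\err$ when it is nonzero. And torsion subsheaves of a flat family do not form a flat family, so their Euler characteristics have no a priori semicontinuity. In fact the only reason the jump is nonnegative here is the following \emph{exact} computation, which is what the paper does: once you know $\err_l\wti\sF_{q_i}=\err_l^\circ$ and $\err_{l+1}\wti\sF_{q_{i+1}}=\err_{l+1}^\circ$ from Lemmas~\ref{A}--\ref{B}, combining with \eqref{eq-3} and with the constant quantity \eqref{the-id2} gives
\[
\err_{l+1}\wti\sF_{q_i}-\err_{l+1}^\circ=\err_l\wti\sF_{q_{i+1}}-\err_l^\circ=\tfrac12\bigl(\delta^a_{l,0}+\delta^a_{l,1}\bigr),\qquad a\in\Sigma_i-\{q_i,q_{i+1}\},
\]
and this has nonnegative leading coefficient by Lemma~\ref{vanishing}. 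That positivity is the substitute for your semicontinuity; without it your ``short bookkeeping'' cannot be completed, and your claimed contradiction in the case $q=q_{i+1}$ likewise collapses. (Incidentally, the geometry of the chain fixes the orientation so that $q_i$ is good for $D_l$ and $q_{i+1}$ for $D_{l+1}$; no case split is needed.)

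\textbf{For \eqref{eq-3} you overcomplicate.} You keep $\delta_{l-1,1}$ and $\delta_{l+1,0}$ in the $a$-dependent part and then try to argue they are ``symmetric between the two fixed points''; this is vague as stated. The clean route is sharper: at \emph{each} $\Gm$-fixed point $q$, the quotient $\ti\phi_q|_{\Delta_l^*}$ is $\Gm$-invariant, so $p^*\sV|_{\Delta_l}\to(\wti\sF_q)^{\mathrm{t.f.}}_l$ is the pullback of a quotient on $D$; Lemma~\ref{vanishing} then gives $\delta^{q}_{l,0}=\delta^{q}_{l,1}=0$ outright, not merely equal at $q_i$ and $q_{i+1}$. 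Feeding this into the constancy of $\err_l+\err_{l+1}+\tfrac12(\delta_{l,0}+\delta_{l,1})$ along $\Sigma_i$ yields \eqref{eq-3} immediately. Your invocation of Lemma~\ref{A} in this step is also misplaced: the paper uses Lemmas~\ref{A}--\ref{B} only for \eqref{ineq-2}, not for \eqref{eq-3}.
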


\begin{proof}
{Since $\wti\sF$ is flat over $\Sigma$, we get $\chi(\wti\sF_{q_i}(v))=\chi(\wti\sF_{q_{i+1}}(v))$ for all $1\le i\le k$. Moreover, since $\Gm$ leaves $\Delta_j$ fixed for $j\ne l$, we know the restriction of $\wti\sF$ to
$(X[n+1]_0-\Delta_l)\times\Sigma_i$ is a constant family of sheaves parameterized by $\Sigma_i$ for all $1\le i\le k$.
Therefore, for any $j\ne l,l+1$, the quantities $\exc_j\wti\sF_{a}$ are the same for all $a\in\Sigma_i$.
If we let $\delta^a_{j,i}$ be the quantities associated to sheaf $\wti\sF_a$ defined as $\delta_{l,i}$ in Lemma \ref{delta}, then for $j\ne l$, $\delta^a_{j,0}$ (resp. $\delta^a_{j,1}$) are the same for all $a\in\Sigma_i$.

Applying identity \eqref{the-id} in Lemma \ref{delta}, and subtracting these identical quantities from the right hand side of \eqref{the-id}, we conclude that
\beq\label{the-id2}
\err_l\wti\sF_{a}+\err_{l+1}\wti\sF_{a}+\frac{1}{2}(\delta^a_{l,0}+\delta^a_{l,1})
\eeq
have the same values for all $a\in\Sigma_1\cup\cdots\cup\Sigma_{k}$.

Since by Lemma 5.7 and 5.10, $q_i,q_{i+1}\in \Sigma_i$ are $\Gm$-fixed points of $\Sigma_i$,
and $\Gm$ acts linearly on $\Delta_l$ with fixed locus $D_l\cup D_{l+1}$, we know the restriction of
$\ti\phi_{q_i}$ to $\Delta_l^*$ is $\Gm$-invariant. Moreover, for $1\le l\le k$,
$$\phi_{q_i,l}\utf: p_{l}\sta\sV\lra \sF\utf_{q_i,l}
$$
is the pull back of a quotient sheaf on $D_l$ via the projection $\Delta_l\to D_l$. Applying Lemma \ref{vanishing},
$\delta^{q_i}_{l,0}=\delta^{q_i}_{l,1}=0$ for $1\le l\le k$. For the same reason,
we have $\delta^{q_i}_{l+1,0}=\delta^{q_i}_{l+1,1}=0$ for $2\le i\le k+1$.
The identity \eqref{eq-3} follows from that  \eqref{the-id2} takes same values for $a=q_1,\cdots,q_{k+1}$.

Next we prove \eqref{ineq-2}. By \eqref{the-id2} and the previous argument, for any $a\in\Sigma_i$,
\beq\label{the-id3}\err_l\wti\sF_{q_i}+\err_{l+1}\wti\sF_{q_i}=\err_l\wti\sF_{a}+\err_{l+1}\wti\sF_{a}+
\frac{1}{2}(\delta^a_{l,0}+\delta^a_{l,1})
\eeq
Applying Lemma \ref{A} and \ref{B}, for $a\in \Sigma_i-\{q_i,q_{i+1}\}$, we have
$$\err_l\wti\sF_{q_i}=\err_l\wti\sF_{a} \and \err_{l+1}\wti\sF_{q_{i+1}}=\err_{l+1}\wti\sF_{a}.
$$
Therefore, \eqref{the-id3} gives us
$$\err_{l+1}\wti\sF_{q_i}=\err_{l+1}\wti\sF_{q_{i+1}}+\frac{1}{2}(\delta^a_{l,0}+\delta^a_{l,1}), \quad a\in\Sigma_i-\{q_i,q_{i+1}\}.
$$
Applying \eqref{eq-3}, we also have
$$\err_{l}\wti\sF_{q_{i+1}}=\err_{l}\wti\sF_{q_{i}}+\frac{1}{2}(\delta^a_{l,0}+\delta^a_{l,1}).
$$
Now suppose for a $1\le i\le k$, $\err_l\wti\sF_{q_i}\prec\err_{l+1}\wti\sF_{q_i}$ and
$\err_l\wti\sF_{q_{i+1}} \succcurlyeq\err_{l+1}\wti\sF_{q_{i+1}}$.
Then  $\deg(\delta^a_{l,0}+\delta^a_{l,1})=\deg\err_{l+1}\wti\sF_{q_i}\ge\deg\err_l\wti\sF_{q_i}$.
Therefore, in the identity \eqref{the-id3}, the degree of the left hand side is equal to the degree of the last
term on the right hand side; because of the weak positivity of $\delta^a_{l,0}+\delta^a_{l,1}$ proved in
Lemma \ref{vanishing}, \eqref{ineq-2} follows.
}\end{proof}

\begin{proof}[Proof of Lemma \ref{1.2}]
For any quotient $\phi_{\eta}: p_\eta\sta\sV\to\sF_\eta$ and its extension to an $S$-flat quotient
$\phi: p\sta\sV\to\sF$ such that $\err\sF_{\eta_0}\ne 0$ as stated in the Lemma, according to our construction,
we pick $1\le l\le n$ so that
$$\deg\err_l\sF_{\eta_0}=\deg\err\sF_{\eta_0},
$$
and form a regular $\Gm$-surface $V$, together with a family $\ti p:\wti\cX\to X$ in $\X(V)$ and a
$\Gm$-equivariant quotient $\ti\phi:\ti p^*\sV\to\wti\sF$ on $\wti\cX$.

We further find a connected chain of rational curves $\Sigma=\Sigma_0\cup\cdots\cup\Sigma_{k+1}$ in $V$
so that the restriction of $\ti\phi$ to $\Sigma$ satisfies the properties stated in Lemma \ref{5.11} and \ref{5.12};

According to Lemma \ref{5.12}, we know
$$0=\err_l\wti\sF_{q_1}\prec\err_{l+1}\wti\sF_{q_1}=\err_l\sF_{\eta_0}\ne 0
$$
and $0=\err_{l+1}\wti\sF_{q_{k+1}}\prec\err_{l}\wti\sF_{q_{k+1}}=\err_l\sF_{\eta_0}\ne 0$.
By \eqref{eq-3} in Lemma \ref{localexc}, we can find a $\Sigma_i$, so that the assumptions in Lemma \ref{localexc}
$\err_l\wti\sF_{q_i}\prec\err_{l+1}\wti\sF_{q_i}$ and $\err_l\wti\sF_{q_{i+1}} \succcurlyeq\err_{l+1}\wti\sF_{q_{i+1}}$
are satisfied. For such $i$,
$$ \err_l\wti\sF_{q_i}+\err_{l+1}\wti\sF_{q_i}\succ \err_l\wti\sF_{a}+\err_{l+1}\wti\sF_{a},\quad a\in\Sigma_i-\{q_i,q_{i+1}\}.
$$
Moreover, $\wti\sF_a|_{\Delta_l^*}$ is not $\Gm$-invariant by the non-vanishing of its associated quantity
$\delta^a_{l,0}+\delta^a_{l,1}$ via Lemma 3.18.
By our choice of $l$, we conclude that $\err\wti\sF_{q_i}\succ\err\wti\sF_a$. Combined with $\err\wti\sF_{q_i}=\err\sF_{\eta_0}$,
we have the $\err\wti\sF_a\prec\err\sF_{\eta_0}$, and $\dim \Aut_{\X}(\ti\phi_a)\le \dim \Aut_{\fX}(\phi_{\eta_0})$.

Finally we find the desired curve $S'\sub V$. %, so that $\eta'_0=a\sub S'$ and a finite map $S'\to S$.
Because $V$ is smooth at the point $a$, and $b:V\to V'$ is a sequence of blow-ups whose exceptional divisor contains $a$,
we can find a smooth curve $S'\sub V'$ that contains $a$, and that the composition $S'\to V'\to S\times \bA^1\to S$ is non constant, thus branched at $\eta'_0=a\in S'$, and $S'\to S$ is finite.
Furthermore, we can take such $S'$ so that its image in $V'$ is not contained in $E'\cup E\sub V'$.
For such an $S'\sub V$, the induced family of quotients $\phi'=\ti\phi_{S'}$ on $p':\cX'=\wti\cX\times_V S'\to X$ satisfies the properties stated in Lemma \ref{1.2}.
\end{proof}

\subsection{The completeness II}
\def\lleta{_{l,\eta}}
\def\lletaz{_{l,\eta_0}}

We complete the proof of Theorem \ref{properA} and \ref{properB} by working out the remainder cases.

Let $(S, \eta, \eta_0)$ be as stated in the beginning of this section. We prove a Lemma analogous to
Lemma \ref{1.2} for $\Qydp$.

\begin{lemm}\label{relative-exist} Let $(\phi_\eta,\cY_\eta)\in\Qydp(\eta)$,
and let $\phi: p\sta \sV_0\to\sF$ be an $S$-flat extension of $\phi_\eta$
over $(\cY,p)\in\fY(S)$.
Suppose $\cY_\eta$ is smooth, $\Aut_\fY\phi_{\eta_0}$ is finite, and $\err\sF_{\eta_0}\ne 0$. Then
we can find a finite base change $S'\to S$, an $S'$-flat quotients $\phi'\colon p'^*\sV_0\to\sF'$ on $(\cY',p')\in\fY(S')$ such that
\begin{enumerate}
\item $\cY'_{\eta'}\cong \cY_\eta\times_{\eta}\eta'\in \fY(\eta')$, and under this isomorphism $\phi'_{\eta'}=\phi_\eta\times_\eta\eta'$;
\item $\Aut_{\Y}(\phi^{\,\prime}_{\eta'_0})$ is finite, and
\item $\exc{\sF'_{\eta'_{0}}}\prec\exc{\sF_{\eta_0}}$.
\end{enumerate}
\end{lemm}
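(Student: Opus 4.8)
The plan is to transpose, mutatis mutandis, the argument of Section \ref{com-I} to the relative setting, working with the stack $\fD_\pm\subset\fY\to\fA_\diamond$ of expanded relative pairs in place of $\fX\to\fC$ and with the affine charts $\bA^{\!m_-+m_+}$ of Section 2.3 (with their index convention on $[-n_-,n_+]\setminus\{0\}$) in place of the $\bA^{\!m}$'s of Section 2.1. As in the absolute case, it suffices to produce one finite base change $S'\to S$ together with one $S'$-flat extension $\phi'$ over some $(\cY',p')\in\fY(S')$ satisfying (1), (2) and the strict inequality (3): Lemma \ref{relative-exist} then feeds into the valuative-criterion argument (the relative half of Proposition \ref{existence}) exactly as Lemma \ref{1.2} does for $\Qxh$, termination being guaranteed by the descending chain condition of Lemma \ref{chain} applied to $\err\sF_{\eta_0}\in\sA$. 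The starting $S$-flat extension is supplied by the hypothesis, so no relative analogue of Lemma \ref{1.1} is needed. Write $\cY_{\eta_0}=Y[n_-,n_+]_0$.

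Since $\err\sF_{\eta_0}\ne 0$, choose a summand of $\err\sF_{\eta_0}$ in \eqref{err-re} of maximal degree; its index is either a singular node $D_l$ ($l\ne 0$) of $\cY_{\eta_0}$ or one of the two relative divisors of $\cY_{\eta_0}$. If it is a singular node $D_l$, the construction of Section \ref{com-I} goes through essentially word for word: split the $l$-th coordinate of $\bA^{\!n_-+n_+}$ as in \eqref{GA} (placing the inserted coordinate by means of the isomorphisms $\ti\tau_{I,I'}$ of Section 2.3), form $\xi_l\colon S\times\Gm\to\bA^{\!n_-+n_++1}$, compactify the $\Gm$-factor and blow up to obtain the regular $\Gm$-surface $V$ with the $\Gm$-equivariant morphism $\bar\jmath$ to the relative Quot scheme of the enlarged expanded pair, and extract the connected chain $\Sigma_0\cup\cdots\cup\Sigma_{k+1}$ of rational curves. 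Lemmas \ref{5.11}--\ref{localexc} transfer unchanged: their content is local around $D_l$ and the adjacent $\Delta$'s, a locus disjoint from the relative divisors, and they invoke only the Cartier- and normal-crossing-flatness results of Section 3 (Propositions \ref{flat}, \ref{flat-4}), Lemma \ref{vanishing}, and the identity \eqref{the-id} of Lemma \ref{delta}. One then obtains a point $a$ on the chain with $\err\wti\sF_a\prec\err\sF_{\eta_0}$ and $\dim\Aut_\fY(\ti\phi_a)\le\dim\Aut_\fY(\phi_{\eta_0})$ still finite, and cuts out $S'\subset V$ as at the end of the proof of Lemma \ref{1.2}.

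If instead the maximal summand is $\err_+\sF_{\eta_0}=\chi\bl(\sF_{\eta_0})_{I_+}(v)\br$ (the case of $\err_-$ being symmetric), one runs the same scheme but bubbles at the relative divisor $D[n_+]_{+,0}$: one passes to the blow-up $Y[n_-,n_++1]=\mathrm{bl}_{D[n_+]_+\times 0}\bl\yn\times\bA^1\br$ of item (2) in the construction of \eqref{yn}, pulls $\phi$ back along a $\Gm$-weighted morphism $S\times\Gm\to\bA^{\!n_-+(n_++1)}$ introducing the new $(n_++1)$-st coordinate (the one scaled by the tautological $\Gm$ on $\Delta_{n_++1}$), and then compactifies, blows up, and obtains a regular $\Gm$-surface $V$ mapping to the relative Quot scheme of $Y[n_-,n_++1]$ together with its chain of rational curves. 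The one structural difference is that this chain is one-sided: the new bubble $\Delta_{n_++1}$ meets $\Delta_{n_+}$ along the fresh singular node $D_{n_++1}$ and carries the new relative divisor $D_-\subset\Delta_{n_++1}$, and only the former is a smoothing direction (the $(n_++1)$-st coordinate line smooths $D_{n_++1}$ and recovers $\yn_0$); so exactly one end of the chain carries the fibre $\phi_{\eta_0}$, while the interior links and the far end are governed by the $\Gm$-action on $Y[n_-,n_++1]_0$ fixing every $\Delta_{i\ne n_++1}$ and tautological on $\Delta_{n_++1}$. In this picture $\err_+$ plays the role that $\err_l+\err_{l+1}$ plays in Lemma \ref{localexc}: since $D[n_++1]_{+,0}$ is a Cartier divisor in the total space, Propositions \ref{flat} and \ref{flat-4} give normality of $\wti\sF$ at the relevant $\Gm$-fixed point, and combining \eqref{the-id} with the analogues of Lemmas \ref{A}, \ref{B} and \ref{vanishing} (applied with the relative divisor in place of one node) forces $\err\wti\sF_a\prec\err\sF_{\eta_0}$ somewhere along the chain. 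Finiteness of $\Aut_\fY$ is again open (the relative form of Lemma \ref{stable-open}, cf. Proposition \ref{open-3}), and one extracts $S'\subset V$ as before.

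The step I expect to be the main obstacle is the relative-divisor case: one must check that the geometric lemmas of Section \ref{com-I} — the explicit description of the families $\wti\cY_{\Sigma_i}$, their $\Gm$-actions, and the step-by-step monotonicity of the error along the chain — admit the one-sided analogue sketched above, and in particular that the Cartier-divisor versions of Lemmas \ref{A} and \ref{B} governing $\err_\pm$ hold. This should be routine rather than deep: the relative divisor behaves like an interior node one of whose adjacent components has been replaced by ``the limit pair itself'', and all the flatness input is already available in Cartier-divisor form from Section 3. It is, however, the only place where the argument genuinely departs from Lemma \ref{1.2}; everything else is a transcription under the index conventions of Section 2.3. (One could alternatively sidestep the separate case via Proposition \ref{isom-stacks}, which realizes bubbling at a relative divisor of $(Y,D_\pm)$ as one half of a node-bubbling in the node-marked stack $\fX_0^\dagger$, but the direct argument above is self-contained.)
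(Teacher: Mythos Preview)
Your proposal is correct and follows essentially the same route as the paper: both reduce Lemma \ref{relative-exist} to a transcription of the proof of Lemma \ref{1.2}, choosing an index $l$ where $\deg\err_l\sF_{\eta_0}$ is maximal, splitting that coordinate via a $\Gm$-weighted map into the enlarged chart, building the regular $\Gm$-surface $V$ mapping to the relative Quot scheme, and extracting $S'$ from the chain of rational curves. The paper handles the two cases you separate out uniformly, by declaring $\err_{-n_--1}=\err_-$ and $\err_{n_++1}=\err_+$ so that the relative divisors are simply the extreme values of $l$, writing down the analogue \eqref{GA2} of \eqref{GA}, and then asserting that ``the proof is a mere repetition'' of Lemma \ref{1.2}; it does not spell out the one-sided chain picture you describe for the relative-divisor case, nor does it invoke the $\fX_0^\dagger$ alternative you mention.
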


\begin{proof}
We follow the same strategy used to prove Lemma \ref{1.2}.
Since $S$ is local, we can find a $\xi: S\to \bA\unn$ so that $\xi(\eta_0)=0$ and
$\cY\cong \xi\sta Y[\nn]$. Since $\err\sF_{\eta_0}\ne 0$, we pick an $l$ so that
as polynomials,
$$\deg \err_l \sF_{\eta_0}=\deg\err\sF_{\eta_0},\quad -n_--1\le l\le n_++1,\ l\ne 0
$$
Here we agree that $\err_{-n_--1}=\err_-$ and $\err_{n_++1}=\err_+$
(cf. \eqref{err-re}). Without loss of generality, we assume $l>0$.

We let $u$ be a uniformizing parameter of $S$, and express
\beq\label{zeta2}
\xi=\bl c_{-n_-}u^{e_{-n_-}},\ldots, c_{n_+}u^{e_{n_+}}\br, \quad c_i\in \Gamma(\sO_S)\sta.
\eeq
Since $\xi(\eta_0)=0$, all $e_i\ge 0$. We let
$$\tau_{l}:\bA\unn\times\Gm\to\bA^{n_-+n_+'},\quad n_+'=n_++1,
$$
be defined by
\beq\label{GA2}
(t_{-n_-},\cdots,t_{n_+};\sigma)\mapsto(t_{-n_-},\cdots,t_{l-1},\sigma^{-e_l}t_l,\sigma^{e_l},t_{{l}+1},\cdots,t_{n_+}).
\eeq
(In case $l=1$, we replace $t_{l-1}=t_0$ by $t_{-1}$.)
We then introduce
\[ \xi_{l}= \tau_{l}\circ( \xi, \text{id}): S\times\Gm\stackrel{}\lra\bA\unn\times\Gm\stackrel{}\lra\bA^{n_-+n_+'},
\]
and let $\cY':=\xi_{l}\sta Y[\nn']$ over $S\times\Gm$ be the pull back family.
By the construction of $Y[\nn]$, we have $\cY'\cong\xi^* Y[\nn]\times\Gm=\cY\times\Gm$.
We let $p':\cY'\to Y$ and $\pi_1:\cY'\to\cY$ be the projections.

We let $\phi'=\pi_1\sta\phi:p^{\prime\ast}\sV\to\sF'$ be the pullback quotient
sheaves. By the universal property of Grothendieck's Quot-scheme, the family $\phi'$
induces an $\bA^{n_-+n_+'}$-morphism
\beq\label{morphism2}
f_l:S\times\Gm\lra\Quot^{p\sta\sV_0, P}_{Y[\nn']/\bA^{n_-+n_+'}}.
\eeq

Mimic the proof of Lemma \ref{1.2},
We construct a regular $\Gm$-surface $V$ and $\Gm$-morphisms that fit into the following
commutative diagram
\begin{displaymath}
\xymatrix{ V \ar[r]^{\bar\jmath\qquad\qquad} & \Quot^{p\sta\sV_0, P}_{Y[\nn']/\bA^{n_-+n_+'}}\ar[d]^{\pi} \\
 S\times\Gm\ar[u]^{j}\ar[r]^{\xi_l} \ar[ur]^{f_l} & \bA^{n_-+n_+'}}
\end{displaymath}
so that $\pi\circ \bar\jmath:V\to \bA^{n_-+n_+'}$ is proper.

Once we have the surface the pull back family over $V$ from $\bar\jmath$, we can repeat the proof of
Lemma \ref{1.2} line by line to conclude the existence of $S'\sub S$ that satisfies the requirement of
the Lemma. Since the proof is a mere repetition, we omit the details.
This completes the proof.
\end{proof}

\begin{proof}[Proof of Proposition \ref{existence}] We first prove the Proposition for $\Qydp$.
Let $(\phi_\eta:p_\eta\sta\sV_0\to\sF_\eta)\in \Qydp(\eta)$ be a quotient on
$(\cY_\eta,p_\eta)\in\fY(\eta)$.
%The other case is when $\cY_\eta$ is singular, say
Then $\cY_\eta=Y[\nn]_0\times \eta$ for some $\nn\ge 0$.
Following the convention \eqref{Y-comp},
$$Y[\nn]_0=\Delta_{-n_-}\cup\ldots\cup \Delta_0\cup\ldots\cup \Delta_{n_+},
$$
where $\Delta_0=Y$.

In the remainder of this proof, we adopt the convention that
$W_l=\Delta_l$ for $-n_-\le l\le n_+$; following the rule specified after  \eqref{ynz}
we endow $W_l$ the relative divisors $E_{l,-}$ and $E_{l,+}$
by the rule: for $l>-n_-$, $E_{l,+}=\Delta_{l-1}\cap \Delta_l$; for $l<n_+$, $E_{l,-}=\Delta_{l}\cap \Delta_{l+1}$;
$E_{-n_-,+}=D[n_-]_{-,0}$ and $E_{n_+,-}=D[n_+]_{+,0}$, where $D[n_-]_{-,0}$
and $D[n_+]_{+,0}$ are the two relative divisors of $Y[\nn]_0$.

We let $W_{l,\eta}=W_l\times\eta\sub\cY_\eta$;
we let $E_{l,\pm,\eta}=E_{l,\pm}\times\eta\sub W_{l,\eta}$, let $p_{l,\eta}: W_{l,\eta}\to X$
be the tautological projection, and let $\Gme=\Gm\times\eta$. We adopt the same convention when
$\eta$ is replaced by $\eta_0$ or $S$.

We consider
\beq\label{Wl}
\phi_{l,\eta}\defeq \phi_\eta|_{W_{l,\eta}}: p\lleta\sta \sV\lra \sF\lleta\defeq \sF_\eta|_{W_{l,\eta}}
\eeq
Since $\phi_\eta$ is stable, $\sF_{l,\eta}$ is normal to
the relative divisors $E_{l,\pm,\eta}$ of $W\lleta$.
Because the Grothendieck's Quot-scheme is proper, we can extend $\phi\lleta$
to an $S$-flat quotient family $\ti\phi_l: p_{l}\sta \sV_0\to \ti\sF_l$ on $W_{l,S}=W_l\times S$.

In the ideal case where all $\ti\sF_{l,\eta_0}=\ti\sF_l|_{W_{l,\eta_0}}$ are normal to $E_{l,\pm,\eta_0}$,
then we will show that we can patch $\ti\phi_l$ to a quotient family $\ti\phi$ on $Y[\nn]_0\times S$ whose
quotient sheaf is admissible. Suppose further that its automorphism group $\Aut_{\fY}(\ti\phi|_{\eta_0})$ is finite, this family
will be the desired family that proves Proposition \ref{existence}.

%for $E_{l,\pm,\eta_0}\sub
%\cY_{\eta_0}$ and
%We denote by $\Delta_{l,\eta}=\Delta\times\eta$,
%and $\Gme^{n_-+n_+}$ the product over $\eta$ of $(n_-+n_+)$-copies of
%$\Gme$.

In general, we divide the proof into several steps.
We first take care of the automorphism groups caused by the $\Gm$-action on $W_l$, $l\ne 0$.
Suppose at least one of $n_-$ and $n_+$ is positive. For any $n_-\le l\ne 0\le n_+$,
suppose $\ti\phi_l|_{W\lletaz}$ is not invariant under the tautological $\Gm$ action on $W\lletaz$ and
$p\lletaz\sta\sV_0$, we do nothing. Suppose it is invariant under $\Gm$.
We claim that $\ti\phi_l|_{W_{l,\eta_0}}$ is not a pull back quotient sheaf from $W\lletaz\to D\times\eta_0$.
Suppose it is a pull back quotient sheaf, then $\ti\sF_l$ is flat over $S$ implies that
$\ti \sF_l$ is a pull back sheaf from $W\times S\to D\times S$; in particular
$\ti\sF_l|_{W\lleta}=\sF\lleta$ is a pull back sheaf from $W\lleta\to D\times \eta$.
But this is impossible since $\phi_\eta$ is stable implies that $\phi\lleta$ is not invariant under the
$\Gm$-action, a contraction.

We continue to suppose $\ti\phi_l|_{W_{l,\eta_0}}$ is $\Gm$-invariant. This invariance together with
that $\ti\phi_l|_{{W}_{l,\eta_0}}$ is not a pull back sheaf from
$D\times\eta_0$ implies that $\ti\sF_l|_{{W}_{l,\eta_0}}$
is not normal to at least one of $E_{l,\pm,\eta_0}\sub W\lletaz$.
Therefore, by repeating the proof of Lemma \ref{1.2},
and possibly after a base change, we can find a $\xi_l: \eta\to\Gme$ so that under
$$\psi_l: {W}_{l,\eta}\mapright{(1,\bar \xi_l)} {W}_{l,\eta}\times_\eta \Gme\mapright{\times}
{W}_{l,\eta}
$$
where $\bar\xi: W\lleta\to\Gme$ is via $W\lleta\mapright{\text{pr}}\eta\mapright{\xi_l} \Gme$, and the second
arrow is the $\Gme$-action on $W\lleta$,
the pull back family $\psi_l\sta(\phi_{l,\eta})$ extends to a new $S$-flat family $\ti\phi_l$ (denoted by the same $\ti\phi_l$)
on ${W}_l\times S$
so that $\ti\phi_l|_{{W}_{l, \eta_0}}$ is not invariant under $\Gm$.

For the modified families $\ti\phi_l$, $n_-\le l\le n_+$, our next step is to modify them so that they are
normal to $E_{l,\pm,\eta_0}\sub W\lletaz$.
We let $\fE_{l,\pm}\sub \fW_l$ over $\fA_\diamond$ be the stack of expanded relative pairs of $E_{l,\pm}\sub W_l$.
(Like $\fD_\pm\sub \fY$ with $D_\pm\sub Y$ replaced by $E_{l,\pm}\sub W_l$.)
Then $\ti\phi_{l,\eta}\in \mathfrak{Quot}_{\fW_l/\fA_\diamond}^{p_l\sta \sV_0}(\eta)$.
%We denote the quotient sheaf of $\ti\phi_l$ be $\ti\sF_l$ (on $W_l\times S$).
In case $\ti\sF_{l,\eta_0}=\ti\sF_l|_{W_{l,\eta_0}}$ is normal to $E_{l,\pm,\eta_0}$,
which is equivalent to $\err\sF_{\eta_0}=0$ by the criteria Lemma \ref{criteria-4},
$\ti\sF_{l,\eta_0}$ is admissible and $\ti\phi_{l,\eta_0}$ is stable.
%%%%%
%%%%%%
%%%%%%%
Otherwise, by Lemma \ref{relative-exist}, we can find a finite base change $S'_l\to S$ and an $S'_l$-flat family of quotients
$\phi'_l$ on $(\cW_l',p'_l)$ so that,
letting $\eta_0'$ and $\eta'$ be the closed and the generic points of $S_l'$,
\begin{enumerate}
\item
$\cW'_{l,\eta'}\cong \cW_{l,\eta}\times_{\eta}\eta'$, that under this isomorphism
$\phi'_{l,\eta'}=\phi_{l,\eta}\times_\eta\eta'$;
\item $\Aut_{\fW_l}(\phi'_{l,\eta_0'})$ is finite;
\item $\err\sF'_{\eta_0'}\prec\err\sF_{\eta_0}$.
\end{enumerate}
If $\err\sF'_{\eta_0'}$ is still nonzero,
we repeat this process. By Lemma \ref{chain} on descending chain, this process terminates at finitely many steps.
Thus we obtain an $S_l'$-family of quotient family
$\phi'_l$ satisfying (1) and (2) above together with (3) replaced by
$\err\sF'_{l,\eta_0'}=0$. Namely,
$\{\phi_l': p_l^{\prime \ast}\sV_0\lra \sF_l' \}\in \mathfrak{Quot}_{\fW_l/\fA_\diamond}^{p_l\sta \sV_0}(S_l')$.

In case $l\ne 0$, we can say more of the symmetry of $\phi_{l,\eta'_0}'$.
When $l\ne 0$, $\cW'_{l,\eta_0'}\cong \Delta\cup\ldots\cup\Delta$, is the union of a chain of,
say $n_l$ copies, of $\Delta$. We define
\beq\label{finite}
\Aut_{\fW_l,\Gm}(\phi_{l,\eta_0'}')=\{ g\in \Gm^{\times n_l}\mid g\cdot (\phi_{l,\eta'_0}')\cong \phi_{l,\eta'_0}'\}.
\eeq
Here $g\cdot (\phi_{l,\eta'_0}')$ is the pull back family of $\phi_{l,\eta'_0}'$ under the $\Gm^{\times n_l}$ action
$\cW_{l,\eta_0'}'\mapright{\cdot g} \cW_{l,\eta_0'}'$, and $g\cdot(\phi_{l,\eta'_0}')\cong \phi_{l,\eta'_0}'$ is
the isomorphism as quotient families, using that $p_l^{\prime\ast} \sV_0|_{\cW_{l,\eta_0'}}$ is
invariant under $\Gm^{\times n_l}$.
It follows from the construction of $\phi_l'$ and the proof of Lemma \ref{relative-exist}
that $\Aut_{\fW_l,\Gm}(\phi_{l,\eta_0'}')$ is finite.

By replacing each $S_l'$ by the fiber product of all $S_l'$ over $S$, we
can assume all $S_l'=S'$ for a single finite base change $S'\to S$.
Let $\eta'$ be the generic point of $S'$.
We now show that we can glue the families $\phi_l'$ to a family $\phi'\in \Qydp(S')$ that
extends $\phi_\eta\times_\eta\eta'$. Let $\cW_l$ over $S'$ be the underlying family of $\phi_l'$.
Since $\phi_l'$ is an extension of $\phi_{l,\eta}\times_\eta\eta'$,
we have $\cW_l\times_{S'}\eta' =W_{l,\eta}\times_\eta\eta'$.
We let $\cE_{l,\pm}\sub\cW_l$ be the closure of $E_{l,\pm,\eta}\times_\eta\eta'\sub W_{l,\eta}\times_\eta\eta'$;
$\cE_{l,\pm}\sub\cW_l$ is the the pair of relative divisors of $\cW_l\in \fW_l(S')$.
Thus, they are smooth divisor in $\cW_l$ and $\cE_{l,\pm}\cong E_{l,\pm}\times S'$ canonically.

We then form the union $\cup_{l=-n_-}^{n_+} \cW_l$; using the canonical isomorphism
$E_{l,-}\cong E_{l+1,+}$, we identify $\cE_{l,-}\sub \cW_l$ with $\cE_{l+1,+}\sub \cW_{l+1}$ for $n_-< l<n_+$,
resulting a family, denoted by $\cY'\to S'$. Let $p': \cY'\to Y$ be the projection induced by
$p_l': \cW_l\to Y$, which exists. In conclusion, our construction of $\fW$ (or $\fY$)
ensures that $(\cY',p')\in \fY(S')$.

We let $\iota_l: \cW_l\to\cY'$ be the tautological closed immersion. We claim that we can
find a quotient family $\phi': p^{\prime\ast} \sV_0\to \sF'$ so that
$\iota_l\sta \phi'\cong \phi'_l$. Indeed, since $p'_l: \cW_l\to Y$ is equal to $p'\circ\iota_l: \cW_l\to \cY'\to Y$,
we have canonical isomorphism $\iota_l\sta p^{\prime\ast} \sV_0\cong p_l^{\prime\ast} \sV_0$.
Hence, using the canonical $p^{\prime\ast}\sV_0\to \iota_{l\ast}\iota_l\sta p^{\prime\ast}\sV_0\cong
\iota_{l\ast} p_l^{\prime\ast} \sV_0$, we obtain quotient sheaf
$p^{\prime\ast}\sV_0\to \iota_{l\ast}\sF_l'$. We now verify that as quotient sheaves
\beq\label{LL}
\bl p^{\prime\ast}\sV_0\to \iota_{l-1\ast} \sF_{l-1}'\br\otimes_{\sO_{\cY'}} \sO_{\iota_{l-1}(\cE_{l-1,+})}
\cong \bl p^{\prime\ast}\sV_0\to \iota_{l\ast} \sF_{l}'\br\otimes_{\sO_{\cY'}} \sO_{\iota_{l}(\cE_{l,-})}.
\eeq
(Note $\iota_{l-1}(\cE_{l-1,+})=\iota_{l}(\cE_{l,-})\sub \cY'$.) First, the above two sides are canonically
isomorphic after restricting to fibers over $\eta'\in S'$; this is true because the two sides of
\eqref{LL} restricted to fiber over $\eta'$ are the quotient $\phi_\eta$ restricted to $E_{l-1,+}\times\eta=
E_{l,-}\times\eta\sub \cY_\eta$. On the other hand, since both $\phi_{l-1}'$ and $\phi_l'$ are families of stable quotients,
by Corollary \ref{flat-2}, both sides of \eqref{LL} are flat over $S'$. Therefore, by the separatedness
of Grothendieck's Quot-scheme, \eqref{LL} holds.
Consequently, the desired quotient family $\phi'$ exists.

Finally, we check that $\phi'$ is a family in $\Qydp(S')$. The fact that $\phi'$ is admissible follows from
Lemma \ref{2-flat}; that $\Aut_{\fY}(\phi'_{\eta_0'})$ is finite follows from that
$\Aut_{\fW_l}(\phi'_{l,\eta_0'})$ is finite for $l=0$ and \eqref{finite} is finite for $l\ne 0$.
This shows that $\phi'\in \Qydp(S')$. This completes the proof of Proposition \ref{existence} for
the stack $\Qydp$.

The proof for $\Qxh$ is exactly the same. In case $\phi\in \Qxh(\eta)$ has its underlying scheme $\cX_\eta$
smooth, then the existence of its extension to an $\phi'\in \Qxh(S')$ for a finite base change $S'\to S$
follows from Lemma \ref{1.1} and \ref{1.2}. In case $\cX_\eta$ is singular, then it is isomorphic to $\xn_0\times\eta$.
Like in the proof of the previous case, we split $\xn_0$ as union of smooth $\Delta_i$ and $Y$; study the extension
problem for the restriction of $\phi$ to $\Delta_i\times\eta$ and $Y\times \eta$, and glue them to form a
desired extension. The proof is exactly the same to the first part of the proof. This proves
the Proposition.
\end{proof}

\subsection{The separatedness}

We show the separatedness part in Theorem \ref{properA} and \ref{properB}. By valuative criteria, this is equivalent to show that the extension of $\phi_\eta$ to $\phi$ constructed in the previous subsections is unique.

We prove Proposition \ref{unique} for smooth generic fibers, the others are the same.
\begin{proof}[Proof of Proposition \ref{unique}]
Let $(\phi_1,\cX_1)\ \text{and}\  (\phi_2,\cX_2)\in \Qxp(S)$ be two families of quotients, where $S$ is as before, such that there is a
$\rho_\eta: \cX_{1,\eta}\to \cX_{2,\eta}$ in $\fX(\eta)$ such that $\phi_{{1,\eta}}=\rho_\eta\sta \phi_{2,\eta}$.

Suppose $\rho_\eta:\cX_{1,\eta}\to\cX_{2,\eta}$ extends to $\rho:\cX_1\to\cX_2$, then $\rho^*\phi_2$ is a family of stable quotient sheaves.
By the separatedness of the Quot-schemes, we have $\rho^*\phi_2\cong \phi_1$. Adding that $(\rho^*\phi_2)_{\eta_0}$ is stable,
we conclude that $\rho:\cX_1\to\cX_2$ is an isomorphism, and the Proposition is done.

Suppose such an extension $\rho$ does not exist. Instead, we will construct $\bar\cX_i\in\X(S)$, and morphisms
$h_i:\bar \cX_i\to\cX_i$ so that $h_{i,\eta}:\bar \cX_{i,\eta}\cong\cX_{i,\eta}$ and the arrow $h^{-1}_{2,\eta}\circ\rho_\eta\circ h_{1,\eta}:\cX_{1,\eta}\to \cX_{2,\eta}$ extends to an arrow $h:\bar\cX_1\cong\bar\cX_2$.

We express $\cX_i$ as $\xi_i\sta X[n_i]$ induced by $\xi_i:S\to C[n_i]$ with $\xi_i(\eta_0)=0$. Let $u$ be a uniformizing parameter of $S$;
we express
$$\pi_{n_i}\circ \xi_i=\bl c_{i,1} u^{e_{i,1}},\ldots, c_{i,n_i+1} u^{e_{i,n_i+1}}\br
$$
as in \eqref{zeta}.
Because $\cX_{1,\eta}=\cX_{2,\eta}\in\fX(\eta)$, we have
\beq\label{nnn}
n:=\sum_{j=1}^{n_1+1} e_{1,j}=\sum_{j=1}^{n_2+1} e_{2,j}.
\eeq
We then define
$\xi_i'$ and $\bar\xi_i: S\to C[n]$ by the rule
\beq\label{prime}\pi_{[n]}\circ \xi_i'=\bl c_{i,1} u^{e_{i,1}},1\ldots,1, c_{i,2} u^{e_{i,2}},1,\ldots,1, c_{i,n_i+1}u^{e_{i,n_i+1}},1,\ldots, 1\br,
\eeq
where after each term $c_{i,j} u^{e_{i,j}}$ we repeat $1$ exactly $e_{i,j}-1$ times, and by
\beq\label{bar}\pi_{[n]}\circ \bar\xi_i=\bl c_{i,1} u,u\ldots,u, c_{i,2} u,u,\ldots,u, c_{i,n_i+1}u,u,\ldots, u\br,
\eeq
where after each term $c_{i,j} u$ we repeat $u$ exactly $e_{i,j}-1$ times.

We let $\cX_i'=\xi_i^{\prime\ast} \xn$ and let $\bar\cX_i=\bar\xi_i\sta\xn$. We describe the relations between these families. First, since \eqref{prime} has the form of the standard embedding defined in \eqref{tau}, the families $\cX_i' \cong\cX_i\in \fX(S)$. Next, we let $\sigma_{i,\eta}: \eta\to \Gm^{{}n}$ be defined via
$$\sigma_{i,\eta}(u)=(u^{{e_{i,1}}-1},u^{{e_{i,1}}-2},\cdots,1,u^{{e_{i,2}}-1},u^{{e_{i,2}}-2},\cdots,1,u^{{e_{i,n_i+1}}-1},u^{{e_{i,n_i+1}}-2},\cdots 1),
$$
then $\xi_i'=(\bar\xi_i)^{\sigma_{i,\eta}}$. Lastly, because $c_{i,j}$ are elements in $\Gamma(\sO_S)^*$, from the expression \eqref{bar}, there is a $\sigma: S\to \Gm^{{}n}$ so that $\bar\xi_1=(\bar\xi_2)^\sigma$, which induces an isomorphism $h:\bar\cX_1\cong\bar\cX_2$.

Moreover, because in the coordinate expression of the morphism $\sigma_{i,\eta}:\eta\to\Gm^{{}n}$, all powers of $u$ are nonnegative, the isomorphisms $\bar\cX_{i,\eta}\cong \cX_{i,\eta}$ induced by $\sigma_{i,\eta}$ and the standard embedding \eqref{prime} extend to morphisms
$h_i: \bar\cX_{i}\to \cX_i$, and the restriction of $h_i$ to $\eta_0$, $h_{i,\eta_0}:  \bar\cX_{i,\eta_0}\to \cX_{i,\eta_0}$, is a contraction of all components $\Delta_j\sub\bar\cX_{i,\eta_0}$ except
$\Delta_0$, $ \Delta_{e_{i,1}},\Delta_{e_{i,1}+e_{i,2}},\ldots,\Delta_{e_{i,1}+\ldots+ e_{i,n_i+1}}$.

We now show that the isomorphism $\phi_{{1,\eta}}=\rho_\eta\sta \phi_{2,\eta}$ extends to $(\phi_1,\cX_1)\cong (\phi_2,\cX_2)$. We first prove $e_{1,j}=e_{2,j}$ for all $j$. Indeed,
using isomorphism $\bar\cX_{i,\eta}\cong \cX_{i,\eta}$ we define
$\bar\phi_{i,\eta}$ be the pull back of $\phi_{i,\eta}$ to $\bar\cX_{i,\eta}$. Let
$\bar\phi_i$ on $\bar\cX_i$ be the $S$-flat completion of $\bar\phi_{i,\eta}$.
Such completion exists since the relative Quot-scheme $\Quot_{\cX_i/S}^P$ is proper over $S$.

Since $\phi_{i,\eta_0}$ is stable, in particular admissible, one checks that the pull back of $\phi_i$ via  $h_i:\bar\cX_i\to\cX_i$ is flat over $S$.
Then by the separatedness of the relative Quot-scheme, $\bar\phi_i=h_i^*\phi_i$.

Then since $\bar\phi_{1,\eta}= h\sta \bar\phi_{2,\eta}$ under the isomorphism $h: \bar\cX_1\to\bar\cX_2$, we must have $\bar\phi_1= h\sta \bar\phi_2$. This implies
$e_{1,1}=e_{2,1}$, $e_{1,1}+e_{1,2}=e_{2,1}+e_{2,2}$, etc. Thus combined with identity \eqref{nnn}, we conclude
$n_1=n_2$ and
$e_{1,j}=e_{2,j}$ for all $j$. This implies that the arrow $\cX_{1,\eta}\cong \cX_{2,\eta}$ in $\fX(\eta)$ extends to
an arrow $\cX_1\cong \cX_2$ in $\fX(S)$. By the separatedness of the Quot-scheme, we get $(\phi_1,\cX_1)\cong
(\phi_2,\cX_2)$ in $\Qxp(S)$. This proves that $\Qxp$ is separated.
\end{proof}

\subsection{For the stable pairs}
We now investigate the properness and separatedness of $\Mxh$ and $\Mydh$. Let $S=\spec R\to C$ with $\eta_0$ and $\eta\in S$
be as in the statement of Proposition \ref{existence}. Let $\phi_\eta:\sO_{\cX_\eta}\to\sF_\eta$ be an element in $\Mxh(\eta)$.
We indicate how to find a finite base change $S'\to S$ and a
$\phi':\sO_{\cX'}\to\sF'$ in $\Mxh(S')$ so that $\phi'\times_{S'}\eta'=\phi_{\eta}\times_{\eta}\eta'$.

By definition, $\phi'\in \Mxh(S')$ if the following hold:
\begin{enumerate}
\item $\sF'$ is a flat $S'$-family of pure one-dimensional sheaves; $\coker \phi'$ has relative dimension at most zero;
\item $\coker \phi'$ is away from the singular divisor of $\cX_{\eta_0}$;
\item $\sF'$ is normal to the singular divisor of $\cX_{\eta_0}$;
\item $\Aut_{\fX}(\phi'_{\eta_0})$ is finite.
\end{enumerate}

Let $\sK_\eta=\coker(\phi_\eta)$ and let $E_\eta\sub\cY_\eta$ be its support. We first study the case where $\cX_\eta$ is smooth. In this case, following the proof in \cite{Li2},
possibly after a finite base change of $S$, which by abuse of notation we still denote by $S$,
we can find an $\cX\in \fX(S)$ that extends $\cX_\eta$ so that
\begin{itemize}
\item[(a)] the closure $\overline E_\eta$ of $E_\eta$ in $\cX$ is disjoint from the singular divisors of $\cX_{\eta_0}$;
\item[(b)] for any added $\Delta\sub \cX_{\eta_0}$, we have $\Delta\cap \overline E_\eta\ne \emptyset$.
\end{itemize}

Since the moduli of stable pairs over a projective scheme is projective (cf. \cite{Le}),
we can extend $\phi_\eta$ to a $\phi:\sO_\cY\to \sF$ that satisfies (1); because of (b), (4) holds as well.
Suppose (2) is violated for the family $\phi$, then by repeating the argument in Subsection \ref{com-I},
we conclude that by a further finite base change, which we still denote by $S$, we can find an
extension $\phi$ of $\phi_\eta$ that satisfies (1), (2) and (4). In case the extension $\phi$ does not satisfies
(3). Then because of (2), $\phi: \sO_{\cX}\to \sF$ near where $\sF$ is not normal to the singular divisor
of $\cX_{\eta_0}$ is a quotient homomorphism. Thus we can apply the result in Subsection \ref{com-I} directly to
conclude that we can find a finite base change $S'\to S$ and an extension $\phi'\in \Mxh(S')$ of $\phi_\eta$ as desired.

The general case for $\Mxh$ and $\Mydh$ is similar to the proof developed in Section 5. Since it is merely a
duplication of the previous argument, we will not repeat it here. This completes the proof of the separetedness and
the properness of Theorem \ref{properA} and \ref{properB}.

\subsection{The boundedness}

We prove the boundedness part in Theorem \ref{properA}, \ref{properB}.

\begin{prop}\label{bdd2}
The set $\fQuot_{\fX_0/\fC_0}^{\sV,P}(\bk)$ is bounded.
\end{prop}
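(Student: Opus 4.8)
The plan is to bound the family of stable admissible quotients on the central fiber $\fX_0$ by reducing to finitely many Grothendieck Quot-schemes, using the error function $\err$ to control how far an admissible sheaf can be from being pulled back and hence to bound the length $n$ of the chain $X[n]_0$. First I would observe that by Lemma \ref{stable-open} and the decomposition into $\xn_0$, it suffices to show there is an integer $N$ such that every stable admissible quotient $\phi\colon p\sta\sV\to\sF$ with $\chi^H_\sF=P$ lives on some $X[n]_0$ with $n\le N$; once this is established, $\fQuot_{\fX_0/\fC_0}^{\sV,P}(\bk)$ is covered by the images of the finitely many schemes $\Quot^{p\sta\sV,P}_{X[n]_0}$ for $n\le N$, each of which is of finite type, so the set is bounded.

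The key step is the bound on $n$. For an admissible $\sF$ on $\xn_0$, Lemma \ref{vanishing} shows that for each inserted component $\Delta_l$, $1\le l\le n$, the quotient $p\sta\sV|_{\Delta_l}\to\sF_l\utf$ is a pull-back via $\pi_l\colon\Delta_l\to D_l$ of a quotient $p\sta\sV|_{D_l}\to\sE_l$ of sheaves on $D_l$ (since admissibility forces $\err_l\sF=0$, hence $\delta_{l,0}=\delta_{l,1}=0$). Therefore on each $\Delta_l$ the sheaf $\sF$ is determined by a quotient sheaf on $D$, whose Hilbert polynomial $q_l(v)\defeq\chi^{H}_{\sE_l}(v)$ is one of finitely many possibilities bounded by $P$ (the quotients of $\sV|_D$ with Hilbert polynomial $\preccurlyeq P$ in the ordering of Definition \ref{ordering} form a bounded family, e.g.\ by Grothendieck's boundedness for Quot-schemes on $D$). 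Next I would run the exact sequence \eqref{eq-2} together with the additivity \eqref{the-id} — with all error terms vanishing — to express $P(v)=\chi^H_\sF(v)$ as an alternating sum of $\chi^H$ of $\sF|_{\Del_i}$ and $\sF|_{D_i}$. Since $\sF$ is a quotient of the locally free $p\sta\sV$ and $\sF$ is admissible (in particular has no components supported on the $D_i$), for $1\le l\le n$ stability forces $\sF|_{\Delta_l}$ to be a \emph{nonzero} pull-back from $D_l$ that is not itself already the pull-back determined by the neighbouring $\Delta$'s — otherwise a $\Gm$-factor would act trivially and $\Aut_\fX\phi$ would be positive-dimensional. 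Hence each interior $\Delta_l$ contributes a strictly positive amount $\chi^H_{\sE_l}(v)-\chi^H_{\sE_l\cap(\text{image from }D_{l\pm1})}(v)$ to the leading-order count, and since each such contribution is bounded below by $1$ (it is the Hilbert polynomial of a nonzero coherent sheaf on $D$) while the total is fixed by $P$, we obtain $n\le N$ for an $N$ depending only on $P$, $H$, $\sV$.

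Finally I would assemble the pieces: having bounded $n$, the stable admissible quotients with Hilbert polynomial $P$ all occur as $\bk$-points of $\Quot^{p\sta\sV,P,\st}_{X[n]_0}$ for $n\le N$, a finite disjoint union of quasi-projective schemes of finite type over $\bk$; their union surjects onto $\fQuot_{\fX_0/\fC_0}^{\sV,P}(\bk)$ up to the finite equivalence given by $\Gm^n$, so the latter is bounded. The analogous statement for coherent systems follows verbatim, replacing quotients of $p\sta\sV$ by $\cohs$ and using that $\coker\varphi$ is supported away from the singular loci. The main obstacle I anticipate is making rigorous the claim that stability forces each interior $\Delta_l$ to carry a ``genuinely new'' rank-of-Hilbert-polynomial contribution bounded below by a fixed positive constant — i.e.\ ruling out a long chain of $\Delta$'s each carrying a quotient that is ``constant'' along the chain; this is exactly where one must use that $\Aut_\fX\phi$ is finite, translating the combinatorics of the $\Gm^n$-action into the numerical statement, and it is the step that needs the most care.
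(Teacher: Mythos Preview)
Your argument contains a fundamental error at its second step. You claim that admissibility of $\sF$ (i.e.\ $\err_l\sF=0$) forces $\delta_{l,0}=\delta_{l,1}=0$ and hence, via Lemma~\ref{vanishing}, that $p\sta\sV|_{\Delta_l}\to\sF|_{\Delta_l}$ is a pull-back from $D_l$. This implication is false: $\err_l\sF$ and $\delta_{l,i}$ are entirely different quantities. For an admissible $\sF$ one has $\sF=\sF\utf$ and all torsion pieces vanish, so $\delta_{l,0}=\chi(\sF|_{\Delta_l}(v))-\chi(\sF|_{D_l}(v))$, which has no reason to be zero. Worse, your conclusion directly contradicts stability: if $\sF|_{\Delta_l}$ were a pull-back via $\pi_l\colon\Delta_l\to D_l$ for every $1\le l\le n$, then each $\Gm$-factor would act trivially on $\phi$ and $\Aut_\fX\phi\supset\Gm^{n}$, violating finiteness. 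Stability says precisely that the restriction to each inserted $\Delta_l$ is \emph{not} a pull-back. You sense this tension yourself in the final paragraph, but the problem is not a subtle obstacle to be overcome---it invalidates the whole structure of the argument.

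The paper's proof proceeds quite differently, by induction on $\deg P$. For the base case $\deg P=0$ the sheaf is zero-dimensional of length $P$ and stability immediately gives $n\le P$. For the inductive step one restricts $\sF$ to a general $S\in|p\sta H|$ (Lemma~\ref{generic-tran}) to obtain an admissible quotient $\sF'$ with Hilbert polynomial $P(v)-P(v-1)$ of lower degree; after contracting those $\Delta_l$ on which $\phi'|_{\Delta_l}$ has become $\Gm$-invariant, one gets a stable quotient in $\Q_{P_1}$, so the induction hypothesis bounds the number $n_\phi$ of ``active'' components. The remaining work---bounding the full $n$ and the Hilbert polynomials on each piece---requires the technical Lemmas~\ref{Gro}--\ref{bdd-tech3}: one first shows the set of polynomials $\chi^{H_i}_{\sF|_{D_i}}$ is finite (Sublemma~1), then bounds $\chi(\sF|_{\Delta_i}(-D_i))$ from below (Sublemma~2, using that on the inactive components $\chi(\sF|_{\Delta_i}(-D_i))\ge 1$ while on the boundedly many active ones it is $\ge -M$), and finally the additivity $\chi(\sF)=\sum_i\chi(\sF|_{\Delta_i}(-D_i))$ yields the bound on $n$. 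The positivity on inactive components is exactly the ingredient your proposal was groping for, but it only becomes available after the hyperplane reduction.
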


%We will work out the case $\fQuot_{\fX_0/\fC_0}^{\sV,P}=\Qxp\times_C 0$.
We quote the following known result (c.f. \cite{HL}).

\begin{prop}\label{bound}
A set of isomorphism classes of coherent sheaves on a projective scheme is bounded if and only if the set of their Hilbert polynomials is finite, and there is a coherent sheaf $\sF$ so that every sheaf in this set is a quotient sheaf of $\sF$.
\end{prop}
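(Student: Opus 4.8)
The plan is to prove the two implications separately; all the substance lies in the direction ``bounded $\Rightarrow$ dominated''. Throughout, call the projective scheme $W$, fix an ample line bundle $\sO_W(1)$ on it, and work over the base field $\bk$. Recall that a set of isomorphism classes of coherent sheaves on $W$ is \emph{bounded} if it is contained in $\{\,\sG|_{W\times\{s\}}\mid s\in S\ \text{closed}\,\}$ for some $\bk$-scheme $S$ of finite type and some coherent sheaf $\sG$ on $W\times S$.

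\smallskip
\noindent\textbf{``If''.} Suppose the set has Hilbert polynomials in a finite set $\{P_1,\ldots,P_r\}$ and each member is a quotient of a fixed coherent sheaf $\sF$ on $W$. A surjection $\sF\twoheadrightarrow G$ onto a member $G$ of Hilbert polynomial $P_i$ is a $\bk$-point of the Grothendieck Quot-scheme $\Quot^{P_i}_{\sF/W}$, whose universal quotient there is isomorphic to $G$. Each $\Quot^{P_i}_{\sF/W}$ is projective over $\bk$, hence of finite type, so $S\defeq\coprod_{i=1}^r\Quot^{P_i}_{\sF/W}$ is of finite type and carries the universal quotient sheaf $\sG$ on $W\times S$. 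Since every member of the set arises as some $\sG|_{W\times\{s\}}$, the set is bounded.

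\smallskip
\noindent\textbf{``Only if''.} Suppose the set is contained in the closed fibers of a coherent sheaf $\sG$ on $W\times S$, $S$ of finite type over $\bk$. Passing to $S_{\mathrm{red}}$ changes neither the closed fibers nor finiteness, so assume $S$ reduced and Noetherian. Generic flatness gives a dense open of $S$ over which $\sG$ is flat; iterating on the reduced locally closed complement, Noetherian induction yields a finite stratification $S=\coprod_j S_j$ by reduced locally closed subschemes with $\sG_j\defeq\sG|_{W\times S_j}$ flat over $S_j$ and $\sG_j|_{W\times\{s\}}=\sG|_{W\times\{s\}}$ for $s\in S_j$. Over each $S_j$ the Hilbert polynomial of the fibers is locally constant, hence takes finitely many values; so the set of Hilbert polynomials occurring is finite, which is the first assertion. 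For the second, write $\pi_j\colon W\times S_j\to S_j$; by relative Serre vanishing and cohomology-and-base-change (using flatness of $\sG_j$ over the Noetherian $S_j$) there is $m_j$ so that for all $m\ge m_j$ the sheaf $\pi_{j*}(\sG_j\otimes\sO_W(m))$ is locally free of base-change-compatible formation, and the adjunction $\pi_j^*\pi_{j*}(\sG_j\otimes\sO_W(m))\to\sG_j\otimes\sO_W(m)$ is surjective. Put $m=\max_j m_j$ (finite, and these properties persist for larger twists) and let $N$ be the maximum of $Q(m)$ over the finitely many Hilbert polynomials $Q$ that occur. Then for every closed $s\in S$ we get a surjection $\sO_W^{\oplus N}\twoheadrightarrow \sG|_{W\times\{s\}}\otimes\sO_W(m)$, i.e.\ $\sO_W(-m)^{\oplus N}\twoheadrightarrow\sG|_{W\times\{s\}}$; thus $\sF\defeq\sO_W(-m)^{\oplus N}$ dominates every member of the set, as required.

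\smallskip
\noindent\textbf{Main obstacle.} The only real point is the \emph{uniformity} in the last paragraph: the twist $m$ and the bound $N$ must be independent of the stratum $S_j$ and of $s$. This is delivered by the finiteness of the stratification (Noetherian induction) together with relative Serre vanishing over each finite-type base, so beyond assembling these standard facts there is no genuine difficulty — which is why the statement is simply quoted from \cite{HL}.
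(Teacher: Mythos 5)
Your proof is correct. The paper gives no argument of its own here --- it simply quotes the statement from \cite{HL} as a known result --- and what you have written is the standard proof (cf.\ Lemma~1.7.6 of that reference): the ``if'' direction via the projectivity of the Quot-schemes $\Quot^{P_i}_{\sF/W}$, and the ``only if'' direction via flattening stratification plus uniform relative Serre vanishing to produce the dominating sheaf $\sO_W(-m)^{\oplus N}$. The one genuinely delicate point, the uniformity of $m$ and $N$ across strata and fibers, is correctly reduced to the finiteness of the stratification together with the ``for all $m\ge m_j$'' form of relative Serre vanishing, so there is nothing to add.
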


These two Propositions imply that $\Qxh(\bk)$ is bounded.
We prove Proposition \ref{bdd2} by induction on the degree of the polynomial $P(v)$.
To carry out the induction, we need the following lemma. For simplicity, in the remainder part
of this Section, we assume that $H$ on $X\to C$ is sufficiently ample.

\begin{lemm}\lab{generic-tran}
Let $W$ be either $\xn_0$ or $Y[\nn]_0$, and let $p:W\to X_0$ be the projection.
% and let $h=p^*(H^{\otimes v})$, where $v>0$ so that $h$ is very ample.
For any coherent sheaf $\sF$ on $W$, there is an open dense subset $U\sub |p\sta H|$ such
that each divisor $V\in U$ has normal crossing singularity; is smooth away from the singular locus of $W$,
and $\sF$ is normal to $V$. Moreover, if $\sF$ is normal to the singular divisors of $W$
(resp. the distinguished divisor of $Y[\nn]_0$), so does $\sF|_V$, viewed as a sheaf on $W$.
\end{lemm}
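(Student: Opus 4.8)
The plan is to run a standard Bertini-type argument, but carried out \emph{stratum by stratum} on $W$, and then to combine the strata using the local $\Tor$-criteria from Section 3. First I would fix the notation: write $W=\bigcup_{l}\Del_l$ (resp. with the extra components when $W=Y[\nn]_0$), and let $D_l\sub W$ denote the singular divisors (and, in the relative case, also the distinguished divisors $D[n_\pm]_{\pm,0}$). Since $p\sta H$ is the pullback of a sufficiently ample $H$ on $X_0$ under the contraction $p\colon W\to X_0$, the linear system $|p\sta H|$ is basepoint free on each irreducible component and restricts surjectively onto the linear systems $|p\sta H|_{\Del_l}|$ and $|p\sta H|_{D_l}|$ (the restriction maps on global sections are surjective because $H$ is taken sufficiently ample, so $H^1$ of the relevant twisted ideal sheaves vanishes). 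This surjectivity is the technical input that lets generic-divisor statements on the pieces be realized by a single generic member of $|p\sta H|$.

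The key steps, in order. (1) Apply the classical Bertini theorem (char. $0$, Lemma applies because the linear systems are basepoint free) on each smooth component $\Del_l$ and on each smooth divisor $D_l$: a generic $V\in|p\sta H|$ meets $\Del_l$ in a smooth divisor and meets $D_l$ in a smooth divisor, and is moreover transverse to all the mutual intersections. Intersecting these finitely many dense opens of $|p\sta H|$ gives a dense open over which $V$ has normal crossing singularities precisely along $V\cap(\text{Sing }W)$ and is smooth elsewhere. (2) For the normality statement $\Tor_1^{\sO_W}(\sF,\sO_V)=0$: this is a local question, so work on an affine chart and use the description of $W\to T$ from \eqref{split} or \eqref{split-2}. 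Over the open locus where $W$ is smooth, generic flatness / the classical Bertini argument for $\sF$ (using that $\sF$ has finitely many associated points, none of which is forced into a generic hyperplane section) gives that $\sF$ is normal to the generic $V$ there; near the singular locus of $W$, one uses Lemma \ref{2-flat} to reduce to the two smooth branches $\Del_l,\Del_{l\pm1}$ separately, on each of which the same generic-$V$ argument applies. Intersecting again with a dense open of $|p\sta H|$ settles this. (3) For the last assertion: suppose $\sF$ is normal to $D_l$. Apply Proposition \ref{flat} (or \ref{flat-4}) to view $\sF$ as flat along the normal direction of $D_l\sub\Del_{l-1}$ and of $D_l\sub\Del_l$; restricting to a generic $V$ and using that $V$ meets $D_l$ transversally, the induced family $\sF|_V$ on $V$ is still flat along the normal direction of $D_l\cap V$ inside each branch $\Del_i\cap V$, i.e. $\sF|_V$ is normal to the (still normal-crossing) divisor $D_l\cap V$; then Lemma \ref{2-flat} reassembles this to normality of $\sF|_V$ (as a sheaf on $W$, via pushforward) along $D_l$. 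The relative divisor case is identical, using Corollary \ref{flat-2} and Proposition \ref{open-3}.

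The main obstacle I anticipate is step (2) near the singular locus of $W$, i.e.\ ensuring that ``$\sF$ normal to a \emph{generic} hyperplane section'' survives the degeneration to the normal-crossing point and interacts correctly with the torsion subsheaves $\sF_{I_l}$ supported along the $D_l$. The clean way around it is to first observe that being normal to $V$ is an open condition on $V\in|p\sta H|$ (Proposition \ref{open-2}/\ref{flat-4} applied to the tautological family over $|p\sta H|$), so it suffices to exhibit \emph{one} $V$ with the property — and for a single transverse $V$ one can compute $\Tor_1^{\sO_W}(\sF,\sO_V)$ locally by the two-branch analysis of Lemma \ref{2-flat}, where the generic choice on each smooth branch $\Del_i$ avoids the (finitely many) associated primes of $\sF|_{\Del_i}$ of positive-dimensional support meeting $\Del_i$ nongenerically. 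Combined with Lemma \ref{chain}-style openness this produces the required dense open $U\sub|p\sta H|$ and completes the proof.
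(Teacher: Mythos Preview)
Your overall strategy is sound, but it is considerably more elaborate than necessary, and several of the lemma citations are misapplied. The paper's proof avoids the stratum-by-stratum $\Tor$-analysis entirely.

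The paper argues directly via the dimension filtration $0\sub\sF_{\le 0}\sub\cdots\sub\sF_{\le d}=\sF$, with $Z_k=\mathrm{supp}\,\sF_{\le k}$. Since $|p^*H|$ is basepoint free, Bertini produces a dense open $U$ of divisors $V$ that are normal-crossing, smooth off $\mathrm{Sing}\,W$, and contain no irreducible component of any $Z_k$. Because $V$ is Cartier in $W$, normality of $\sF$ to $V$ just says the local equation of $V$ is a nonzerodivisor on $\sF$, and the choice of $U$ guarantees this: a section of $\sF$ killed by that equation would be supported in $V$, forcing some component of some $Z_k$ into $V$. For the ``moreover'' clause the paper simply shrinks $U$ so that each $\sF|_{D_i}$ is also normal to $V$, and then $\sF|_V$ is normal to $D_i$.

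By contrast, your step~(2) detour through Lemma~\ref{2-flat} is both unnecessary and misapplied: that lemma compares normality of $\sF$ to the \emph{singular divisor} $D=W_1\cap W_2$ with normality of the restrictions $\sF|_{W_i}$ to $D$; it says nothing about a transverse Cartier hyperplane $V$, and there is no analogous ``$\sF$ normal to $V$ iff each $\sF|_{W_i}$ normal to $V\cap W_i$'' statement available (restriction to a branch can alter the associated primes). The associated-points remark you already make is the entire argument and works globally on $W$ without any branch splitting, precisely because $V\in|p^*H|$ is Cartier everywhere. Your step~(3) via Proposition~\ref{flat} and Lemma~\ref{2-flat} can be made to go through, but the paper's route---apply the same genericity to $\sF|_{D_i}$ and read off the conclusion from a short $\Tor$ sequence---is much lighter. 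Finally, two citations at the end should be dropped: Proposition~\ref{open-2} concerns admissibility for a \emph{fixed} singular divisor as the base point $s\in S$ varies, not openness in the parameter $V\in|p^*H|$; and Lemma~\ref{chain} is the descending chain condition on $\sA$ and has no bearing here.
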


\begin{proof}
Given $\sF$, we can find a finite length filtration
$$0\sub \sF_{\le 0}\sub \sF_{\le 1}\sub\cdots\sub\sF_{\le d}=\sF,
$$
where $\sF_{\le k}$ is the subsheaf of $\sF$ consisting of elements of dimension at most $k$.
Let $Z_k$ be the support of $\sF_{\le k}$; it is closed in $W$. Because $H$ is sufficiently ample, $|H|$ is base point free.

We then let
$U\sub |p\sta H|$ be the open subset of those divisors $V\in U$ that have normal crossing singularities;
are smooth away from the singular locus of $W$, and do not contain any irreducible component
of $Z_k$ for all $k$. By Bertini's theorem, $U$ is open and non-empty. For $V\in U$, by Definition \ref{normal}, $\sF$ is normal to
$V$ if and only if no element of $\sF_{\le k}$ is supported entirely in $V$.
Because of the construction, $U$ satisfies the requirement of the lemma.

By the same reason, if $\sF$ is normal, we can choose $U$ so that in addition to the requirement stated,
we have that for every $V\in U$, all $\sF$ and $\sF|_{D_i}$
are normal to $V$.
Therefore, $\sF|_V$ is normal to $D_i$ for all $V\in U$.
\end{proof}

\begin{rema}\lab{Del-tran}
Following the proof of the Lemma, one sees that the set $U$ in the Lemma covers every $D_i\sub \xn_0$ (of $Y[\nn]_0$)
up to finite points in that
$$\dim\bl D_i-\cup_{S\in U} S\cap D_i\br=0.
$$
\end{rema}

%\begin{proof}
%By our choice of $U$,
%we have that for any curve $A\sub D_i$, $\cup\{p_i^{-1}(A)\cap V\mid V\in U\}$ is dense in $p_i^{-1}(A)$.
%This proves the furthermore part of the lemma.
%\end{proof}

We state the following lemma due to Grothendieck \cite{HL}.

\begin{lemm}\label{Gro}
Let $W$ be a projective scheme with an ample line bundle $h$. Let $\sV$ be a fixed coherent sheaf on $W$. Let $\fS$ be the set of those quotients $\phi:\sV\to \sF$ so that $\sF$ is pure of dimension $d$. Suppose there is a constant $N$ so that for any $\sF\in\fS$, its Hilbert polynomial
$$ \chi^h_{\sF}(v)=a_dv^d+a_{d-1}v^{d-1}+\cdots,
$$
satisfies $|a_d|\le N$ and $a_{d-1}\le N$. Then $\fS$ is bounded.
\end{lemm}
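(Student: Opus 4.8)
The plan is to deduce the lemma from Proposition \ref{bound}. Since every $\sF\in\fS$ is, by hypothesis, a quotient of the single fixed sheaf $\sV$, the second condition of that Proposition holds automatically, so it suffices to show that the set of Hilbert polynomials $\{\chi^h_\sF\mid\sF\in\fS\}$ is finite. As a preliminary reduction, note that replacing $h$ by a power $h^{\otimes m}$ only substitutes $v\mapsto mv$ in every Hilbert polynomial and rescales the top coefficients by powers of $m$, so we may assume $h$ is very ample; pushing forward along the resulting closed embedding $W\hookrightarrow\PP^r$ and replacing $\sV$ by its direct image, we reduce to $W=\PP^r$ with $h=\sO(1)$, since quotients, Hilbert polynomials, purity, and the bounds on $a_d,a_{d-1}$ are all preserved.

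I would then argue by induction on $d$. When $d=0$, each $\sF$ has length $a_0\le N$, so only finitely many Hilbert polynomials occur and $\fS$ is a finite union of (proper) Quot schemes, hence bounded. For $d>0$, pick a general hyperplane $H\in|\sO(1)|$; since $H$ avoids the finitely many associated points of $\sF$, the restriction $\sF|_H$ is pure of dimension $d-1$, is a quotient of $\sV|_H$, and has $\chi^h_{\sF|_H}(v)=\chi^h_\sF(v)-\chi^h_\sF(v-1)$, whose two leading coefficients are the explicit bounded quantities $d\,a_d$ and $-\binom{d}{2}a_d+(d-1)a_{d-1}$. Applying the inductive hypothesis to $\sV|_H$ bounds $\chi^h_{\sF|_H}$ to finitely many possibilities, hence bounds all of $a_d,\dots,a_1$; the remaining constant term $a_0=\chi(\sF)$ is then pinned into a finite range by a standard semicontinuity argument (for a fixed large $v_0$ one has $0\le h^0(\sF(v_0))\le h^0(\sV(v_0))$ together with uniform vanishing of higher cohomology once the other coefficients, and therefore the Castelnuovo--Mumford regularity, are controlled). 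This gives finiteness of $\{\chi^h_\sF\}$ and Proposition \ref{bound} then finishes the argument.

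The step I expect to be the genuine obstacle is that $\sV|_H$ varies with $H\in|\sO(1)|$, so the inductive hypothesis cannot be invoked against a single fixed sheaf. This is resolved by a flattening (Grothendieck) stratification of $|\sO(1)|$: over each of finitely many locally closed strata the family $\{\sV|_H\}$ is flat, so one applies the inductive hypothesis to the fibre over one closed point of each stratum and takes the (finite) union of the resulting polynomial sets; equivalently one runs the induction for the universal restriction of $\sV$ over the incidence correspondence and uses relative boundedness. This is exactly the content of Grothendieck's lemma as treated in \cite{HL}, to which one may alternatively simply appeal.
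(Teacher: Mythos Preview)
The paper does not prove this lemma at all: it simply states it as ``due to Grothendieck'' and cites \cite{HL}. So there is no approach in the paper to compare yours against; your closing remark (``one may alternatively simply appeal'' to \cite{HL}) is exactly what the authors do.

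Your sketch is the classical hyperplane induction, and the reduction plus the inductive step pinning down $a_d,\ldots,a_1$ are fine. Two comments. First, the worry about $\sV|_H$ varying with $H$ has a cheaper fix than flattening stratifications: for any $\sF$-regular $H$ the pushforward $\iota_{H*}(\sF|_H)$ is again a quotient of the \emph{same} $\sV$ on the ambient space (via $\sV\to\sV|_H\to\sF|_H$), is pure of dimension $d-1$, and has the required bounds on its top two coefficients; so you can run the induction against the fixed $\sV$ without ever choosing a single $H$.

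Second, the step bounding $a_0$ is where your argument has a genuine gap. The inequality $h^0(\sF(v_0))\le h^0(\sV(v_0))$ does \emph{not} follow from surjectivity of $\sV\to\sF$: you would need $H^1$ of the kernel to vanish at $v_0$, and there is no uniform $v_0$ for which this holds across the family. Likewise, the appeal to Castelnuovo--Mumford regularity is circular: the regularity of $\sF$ depends on the full Hilbert polynomial, including the very $a_0$ you are trying to bound, so ``the other coefficients control the regularity'' is false as stated. From the boundedness of $\{\sF|_H\}$ you do get uniform vanishing of $H^{\ge 2}(\sF(v))$ for $v\ge m-2$, but you still need a uniform bound on $h^1(\sF(v_0))$ (for the lower bound on $a_0$) and on $h^0(\sF(v_0))$ (for the upper bound), and neither drops out of the short exact sequence alone. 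The proof in \cite{HL} closes this gap with additional input (the Le~Potier--Simpson estimates bounding $h^0$ in terms of $\hat\mu_{\max}$ and multiplicity, combined with Kleiman's criterion); if you want a self-contained argument you will need an analogue of that, not just semicontinuity.
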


Here we use $\chi^h_{\sF}(v)=\chi(\sF\otimes h^{\otimes v})$ to indicate the dependence
on the polarization $h$ of the Hilbert polynomial of $\sF$. Also we use $(\phi,\sF)$ to abbreviate a quotient sheaf $\phi: \sV\to\sF$
when $\sV$ is understood.

\begin{coro}\label{GroCor}
Let $W$ and $\sV$ be as in Lemma \ref{Gro}, and let $N$ and $d\ge 0$ be two integers.
Let $\fS$ be a set of quotient sheaves $\phi:\sV\to \sF$ on $W$. Suppose for any $(\phi,\sF)\in\fS$, every subsheaf of $\sF$ has dimension $\ge d$, and the Hilbert polynomial
$\chi^h_{\sF}(v)=a_mv^m+\cdots+a_0$ satisfies $|a_i|\le N$ for $i\ge d$ and $a_{d-1}\le N$. Then $\fS$ is bounded.
\end{coro}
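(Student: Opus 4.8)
The plan is to deduce Corollary \ref{GroCor} from Lemma \ref{Gro} by an induction that peels off the pure top-dimensional part of $\sF$. It is convenient to prove the slightly stronger statement in which the fixed sheaf $\sV$ is allowed to range over a bounded family of sheaves on $W$; by the standard fact \cite{HL} that every member of a bounded family is a quotient of a single sheaf of the form $\sO_W(-a)^{\oplus b}$, this is no stronger, but it is what makes the induction close. We induct on $m=\max_{(\phi,\sF)\in\fS}\dim\sF$, which is at most $\dim W$ (a nonzero sheaf of dimension $<d$ would be a subsheaf of itself of dimension $<d$, so every nonzero member of $\fS$ has dimension $\ge d$). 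Splitting $\fS$ according to $\dim\sF$, the members with $\dim\sF<m$ are handled by the inductive hypothesis, so we may assume $\dim\sF=m$ for all of them. In the base case $m=d$, the hypothesis that every subsheaf of $\sF$ has dimension $\ge d$ forces $\sF$ to be pure of dimension $d$, and then $|a_d|\le N$, $a_{d-1}\le N$ put us in the situation of Lemma \ref{Gro}.

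For the inductive step, fix $(\phi\colon\sV\to\sF)\in\fS$ with $\dim\sF=m$, let $T\subset\sF$ be the maximal subsheaf of dimension $\le m-1$, and set $\bar\sF=\sF/T$, which is pure of dimension $m$. Since $\dim T\le m-1$ and the leading coefficient of the Hilbert polynomial of a nonzero sheaf is positive, $\chi^h_{\bar\sF}(v)=\chi^h_{\sF}(v)-\chi^h_{T}(v)$ has the same degree-$m$ coefficient $a_m$ as $\chi^h_{\sF}$, while its degree-$(m-1)$ coefficient is $a_{m-1}$ minus a non-negative quantity, hence is $\le a_{m-1}\le N$. As $\bar\sF$ is a quotient of $\sV$, Lemma \ref{Gro} (applied with $d$ replaced by $m$) shows that the family of all such $\bar\sF$ is bounded; in particular it has only finitely many Hilbert polynomials, so the coefficients of $\chi^h_{\bar\sF}$ are uniformly bounded by a constant $N_1$, and the kernels $\sK=\ker(\sV\to\bar\sF)$ form a bounded family as well.

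Now $T$ is a quotient of $\sK$, hence of a single sheaf $\sG$ dominating the bounded family $\{\sK\}$; every subsheaf of $T$ is a subsheaf of $\sF$, so has dimension $\ge d$; $\dim T\le m-1$; and $\chi^h_{T}=\chi^h_{\sF}-\chi^h_{\bar\sF}$ has degree-$i$ coefficients of absolute value $\le N+N_1$ for $i\ge d$ and degree-$(d-1)$ coefficient $\le N+N_1$. Thus the family of all such $T$ satisfies the hypotheses of the (generalized) corollary with $m$ replaced by $m-1$, $\sV$ by $\sG$, and $N$ by $N+N_1$, so by the inductive hypothesis it is bounded, hence has finitely many Hilbert polynomials. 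Therefore $\chi^h_{\sF}=\chi^h_{T}+\chi^h_{\bar\sF}$ takes only finitely many values over $\fS$; since moreover every $\sF$ in $\fS$ is a quotient of the fixed $\sV$, Proposition \ref{bound} yields that $\fS$ is bounded.

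The main thing to be careful about is the bookkeeping that keeps the induction self-contained: one uses that a bounded family of quotients of a fixed sheaf has bounded kernels and finitely many Hilbert polynomials, and that the lower-dimensional subsheaf $T$ — though not a quotient of $\sV$ — is a quotient of a sheaf taken from a bounded family, which is precisely why the statement must be proved with the source sheaf allowed to vary. All of these are standard facts about boundedness (see \cite{HL}); the essential content of the corollary is carried entirely by Lemma \ref{Gro}.
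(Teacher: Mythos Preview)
Your proof is correct and follows essentially the same approach as the paper's: peel off the pure top-dimensional quotient $\bar\sF$, bound it via Lemma~\ref{Gro}, pass to the kernel, and handle the lower-dimensional torsion part $T$ by induction. You are in fact slightly more careful than the paper in explicitly strengthening the inductive statement to allow the source sheaf to range over a bounded family; the paper's proof invokes the induction hypothesis on the quotients $\ker\phi_{>k}\to\sF_{\le k}$ without making this strengthening explicit, so your bookkeeping is cleaner.
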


\begin{proof}
We let $\fS_k=\{(\phi,\sF)\mid \deg \chi_\sF^h\le k,\ (\phi,\sF)\in\fS\}$. We prove that $\fS_k$ are bounded by induction on
$k$.
%We use induction on the degree $n$ of the Hilbert polynomial $\chi^h_{\sF}(v)$.

When $k=d$, every sheaf $\sF$ in the $\fS_d$ is of pure dimension $d$. The result then follows from Lemma \ref{Gro}.
We now suppose the statement is true for a $k\ge d$; we will show that it is true for $k+1$.

For any quotient $(\phi,\sF)\in \fS_{k+1}$, we let $\sF_{\le k}\sub \sF$ be the maximal subsheaf of dimension at most $k$.
Since $\sF$ has dimension at most $k+1$, $\sF_{>k}:=\sF/\sF_{\le k}$ is either zero or is pure of dimension $k+1$.
Also, the quotient homomorphism $\phi:\sV\to\sF$ induces a quotient $\phi_{>k}:\sV\to\sF_{>k}$;
we let $\fS'$ be the set $\{(\phi_{>k},\sF_{>k})\mid (\phi,\sF)\in\fS_{k+1}\}$, and let
$\fT=\{(\phi_{\le k}:\ker\phi_{>k}\to\sF_{\le k})\mid (\phi,\sF)\in\fS_{k+1}\}$, where $\phi_{\le k}$ is induced from
$\phi_{>k}$. Let
$$\fS_{k+1}\lra \fS'\times \fT, \quad \phi\mapsto (\phi_{>k}, \phi_{\le k}),
$$
which is injective.

Since $\sF_{\le k}$ has dimension at most $k$, its Hilbert polynomial $\chi^h_{\sF_{\le k}}(v)=b_kv^k+\cdots$ has $b_k\ge 0$. Since
$$\chi^h_{\sF_{>k}}(v)=\chi^h_{\sF}(v)-\chi^h_{\sF_{\le k}}(v)=a_{k+1}v^{k+1}+(a_k-b_k)v^k+\cdots,
$$
and by assumption $a_{k+1}$ and $a_k$ are bounded, we see that $a_k-b_k$ is bounded from above.
Applying Lemma \ref{Gro}, we see that $\fS'$ is bounded. It also implies that $\{\ker\phi_{>k}\mid (\phi,\sF)\in \fS_{k+1}\}$ is bounded.

Finally, we consider the quotients $(\phi_{\le k}: \ker\phi_{>k}\to \sF_{\le k})\in \fT$. Since $\sF_{\le k}$ has dimension at most $k$,
and since the collection $\{\ker\phi_{>k}\mid (\phi,\sF)\in\fS_{k+1}\}$ is bounded,
we can apply the induction hypothesis to obtain the boundedness of the set $\fT$.
Therefore, $\fS_{k+1}$ is bounded since $\fS_{k+1}\to \fS'\times \fT$ is injective.
\end{proof}

For any polynomial $f(v)$, we denote $[f(v)]_{>0}=f(v)-f(0)$, which is $f(v)$ taking out the constant term.

\begin{lemm}\label{bdd-tech1}
Let $D\subset W$ be a divisor in a smooth variety $W$; $h$ be an ample line bundle on $W$;
$\sU$ be a coherent sheaf on $W$, and $B$ be a finite set of polynomials in $v$. Let $\fS$ be a set of quotients
$\phi:\sU\to\sE$ so that for any $(\phi,\sE)\in\fS$, $\sE$ is normal to $D$ and $[\chi^h_\sE(v)]_{>0}\in B$.
Then $\fS_D=\{(\phi|_D,\sE|_D)\mid (\phi,\sE)\in \fS\}$ is bounded.
Further, suppose $\{\chi(\sE)\mid (\phi,\sE)\in \fS\}$ is finite, then $\fS$ is bounded.
\end{lemm}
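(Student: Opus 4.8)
Since $\sE$ is normal to $D$, a local equation of $D$ is a non-zero-divisor on $\sE$, i.e.\ $\Tor_1^{\sO_W}(\sE,\sO_D)=0$; tensoring $0\to\sO_W(-D)\to\sO_W\to\sO_D\to 0$ with $\sE$ therefore gives a short exact sequence $0\to\sE(-D)\to\sE\to\sE|_D\to 0$ on $W$. Restricting the given surjection $\sU\to\sE$ to $D$ shows that every $\sE|_D$ with $(\phi,\sE)\in\fS$ is a quotient of the \emph{fixed} sheaf $\sU|_D$. By Proposition \ref{bound} it thus suffices to prove that the set $\{\chi^{h|_D}_{\sE|_D}(v)\mid (\phi,\sE)\in\fS\}$ of Hilbert polynomials is finite.

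First I would bound the degree of these polynomials. Because $\sE$ is normal to $D$, no associated point of $\sE$ lies on $D$, so no component of $\mathrm{Supp}\,\sE$ is contained in $D$; hence $\dim(\mathrm{Supp}\,\sE\cap D)\le\dim\mathrm{Supp}\,\sE-1=\deg\chi^h_\sE-1=\deg[\chi^h_\sE]_{>0}-1$, which is $\le M-1$ for $M:=\max_{0\ne b\in B}\deg b$ (and $\sE|_D=0$ when $\mathrm{Supp}\,\sE$ is finite). So every $\chi^{h|_D}_{\sE|_D}(v)$ has degree $\le M-1$, and a set of polynomials of bounded degree with coefficients in a finite set is finite; it remains to bound the coefficients. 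From the exact sequence and Hirzebruch--Riemann--Roch, $\chi^{h|_D}_{\sE|_D}(v)=\chi^h_\sE(v)-\chi^h_{\sE(-D)}(v)=\int_W\ch(\sE)(1-e^{-[D]})\,e^{vh}\,\mathrm{td}(W)$. As $1-e^{-[D]}$ begins in degree two, this integrand involves only the components $\ch_j(\sE)$ with $j<\dim W$; in particular the top component $\ch_{\dim W}(\sE)$ — which is where the uncontrolled constant term $\chi(\sE)$, equivalently the zero-dimensional part of $\sE$ (disjoint from $D$ by normality), enters — drops out completely.

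It remains to check the lower Chern data of $\sE$ is controlled by $B$. Passing to $\sE_{\mathrm{pure}}:=\sE/(\text{torsion of dimension}<m)$, $m:=\deg\chi^h_\sE$, a quotient of $\sU$ that is pure of dimension $m$ whose leading Hilbert coefficient equals, and whose next coefficient is at most, that of $\chi^h_\sE\in B+\{\chi(\sE)\}$, Lemma \ref{Gro} shows $\{\sE_{\mathrm{pure}}\}$ is bounded; hence the top-dimensional support cycles of the $\sE$ lie in finitely many algebraic families, so in finitely many numerical classes. Feeding this back into the remaining lower coefficients of $\chi^h_\sE$ (all constrained by $B$), using the torsion filtration of $\sE$ together with the fact that the $\sE$ are quotients of the fixed sheaf $\sU$, and using that an effective cycle of bounded $h$-degree has bounded intersection with any fixed class — here ampleness of $h$ enters, since $Ch\pm[D]$ is ample for $C\gg 0$ so $[D]$-degrees are dominated by $h$-degrees — one obtains by descending induction that all positive-dimensional Chern data $\ch_j(\sE)$, $j<\dim W$, lies in a finite set of numerical classes. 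Therefore each coefficient of $\chi^{h|_D}_{\sE|_D}(v)$ takes one of finitely many values, so $\{\chi^{h|_D}_{\sE|_D}(v)\}$ is finite and $\fS_D$ is bounded.

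\textbf{The second assertion} is then immediate: if $\{\chi(\sE)\mid(\phi,\sE)\in\fS\}$ is finite, then $\{\chi^h_\sE(v)\}\subseteq B+\{\chi(\sE)\}$ is a finite set of Hilbert polynomials while every $\sE$ in $\fS$ is a quotient of the single fixed sheaf $\sU$, so Proposition \ref{bound} gives that $\fS$ is bounded. The step where real work is needed is the coefficient bound for $\chi^{h|_D}_{\sE|_D}$: $[\chi^h_\sE]_{>0}\in B$ constrains $\sE$ only away from dimension zero, and the argument must verify both that restriction to $D$ annihilates exactly this uncontrolled zero-dimensional part (the vanishing $\ch_{\dim W}(\sE)\cdot(1-e^{-[D]})=0$ plus normality) and that the surviving positive-dimensional Chern data is pinned to finitely many possibilities — which uses in an essential way that the members of $\fS$ are quotients of one fixed sheaf, not merely that they have bounded leading term.
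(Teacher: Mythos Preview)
Your argument is essentially correct but takes a circuitous route compared to the paper, and one key step is too compressed.

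The paper's proof is short and direct. It splits once: $0\to\sE_{\le 1}\to\sE\to\sE_{>1}\to 0$ with $\sE_{\le 1}$ the maximal subsheaf of dimension $\le 1$. Since all positive-degree coefficients of $\chi^h_\sE$ lie in $B$ and the degree-one coefficient of $\chi^h_{\sE_{>1}}$ is bounded above, Corollary~\ref{GroCor} with $d=2$ bounds $\{\sE_{>1}\}$ in one stroke. A snake-lemma check shows both pieces remain normal to $D$, so the sequence restricts exactly; then $\{\sE_{>1}|_D\}$ is bounded, while $\sE_{\le 1}|_D$ is zero-dimensional of length controlled by the $h$-degree of the (effective) support $1$-cycle of $\sE_{\le 1}$. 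No Chern characters, no Riemann--Roch.

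Your route via $\chi^{h|_D}_{\sE|_D}(v)=\int_W\ch(\sE)(1-e^{-[D]})e^{vh}\mathrm{td}(W)$ is a valid repackaging of the same content. The observation that $\ch_{\dim W}(\sE)$ drops out is correct but heavier than needed: the zero-dimensional torsion $\sE_{\le 0}$ is normal to $D$ hence supported off $D$, so $\sE_{\le 0}|_D=0$ directly. Your ``descending induction through the torsion filtration,'' once unpacked, is exactly Corollary~\ref{GroCor}; the final appeal to $Ch\pm[D]$ ample matches the paper's handling of the $1$-cycle. The place where your write-up is genuinely thin is the sentence concluding that ``all positive-dimensional Chern data $\ch_j(\sE)$, $j<\dim W$, lies in a finite set of numerical classes.'' That is true, but it needs two distinct inputs you only gesture at: first, $\{\sE_{>1}\}$ is bounded \emph{as a family of sheaves} (this is Corollary~\ref{GroCor}, not just Lemma~\ref{Gro} applied to $\sE_{\mathrm{pure}}$), which forces all $\ch_j(\sE_{>1})$ into a finite set; second, $\ch_{\dim W-1}(\sE_{\le 1})$ is an \emph{effective} $1$-cycle class of bounded $h$-degree, so the Chow-variety argument applies to it --- whereas intermediate Chern characters of pure sheaves are not effective and the ``$[D]$-degree dominated by $h$-degree'' trick would fail for them. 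Making this two-step structure explicit is what turns your sketch into a proof; the paper sidesteps the issue by never passing through numerical classes at all. The second assertion is handled identically in both.
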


\begin{proof}
For $(\phi,\sE)\in \fS$, we denote by $\phi_{>1}:\sU\to\sE_{>1}$ the induced quotient homomorphism. We claim that
$\fS'=\{(\phi_{>1},\sE_{>1})\mid (\phi,\sE)\in \fS\}$ is bounded. Indeed,
since $B$ is finite, there is a constant $M$ so that
for any $(\phi,\sE)\in \fS$, the coefficients of $\chi^h_{\sE}(v)=a_nv^n+\cdots+a_0$
satisfy $|a_i|\le M$ for $i\ge 1$.
Since $\sE_{\le 1}$ has dimension $\le 1$, $\chi^h_{\sE_{\le 1}}(v)=b_1v+b_0$ has $b_1\ge 0$.
%Let $\chi^h_{\sE}(v)=a_nv^n+\cdots+a_0$.
Then
$$\chi^h_{\sE_{>1}}(v)=\chi^h_{\sE}(v)-\chi^h_{\sE_{\le 1}}(v)=a_nv^n+\cdots+a_2v^2+(a_1-b_1)v+(a_0-b_0)
$$
has $|a_i|\le M$ for $i\ge 2$ and $a_1-b_1\le M$.
%Since $\chi^h_\sE(v)-\chi^h_\sE(0)=[\chi^h_\sE(v)]_{>0}\in B$ and that $B$ is a finite set, there exists an integer $N$,
%depending only on $B$, so that $|a_1|,\cdots,|a_n|\le N$.
%Since $b_1\ge 0$, $a_1-b_1\le N$.
Applying Corollary \ref{GroCor}, we conclude that $\fS'$ is bounded.
Thus, $a_1-b_1$ is bounded; thus by replace $M$ by a larger constant if necessary, we have $|b_1|\le M$.

We now study $\sE|_D$. As $(\phi,\sE)\in\sF$,
$\sE$ is normal to $D$, thus both $\sE_{\le 1}$ and $\sE_{>1}$ are normal to $D$; therefore
\beq\label{exact}
0\lra\sE_{\le 1}|_D\lra\sE|_D\lra\sE_{>1}|_D\lra 0
\eeq
is exact. Since $\fS'$ is bounded, the set $\{(\phi_{>1}|_D,\sE_{>1}|_D) \mid (\phi,\sE)\in\fS\}$ is bounded.
On the other hand, since the leading coefficients $b_1$ of $\chi^h_{\sE_{\le 1}}(v)$ for
$(\phi,\sE)\in\fS$ satisfy $b_1\le M$, using that the set of effective one-dimensional cycles in
$W$ of bounded degree is bounded, we conclude that
the restrictions $\sE_{\le 1}|_D$ form a set of zero dimensional sheaves of bounded length. Therefore, the set
$\{\sE_{\le 1}|_D\mid (\phi,\sE)\in \fS\}$ is bounded. By \eqref{exact}, together with that
$\{(\phi_{>1}|_D,\sE_{>1}|_D) \mid (\phi,\sE)\in\fS\}$ is bounded, we conclude that
$\fS_D=\{(\phi|_D,\sE|_D)\mid (\phi,\sE)\in \fS\}$ is bounded.

Finally, assuming $\{\chi(\sE)\mid (\phi,\sE)\in \fS\}$ is finite, then $B$ finite implies that $\{\chi_\sE^h(v)\mid (\phi,\sE)\in\fS\}$ is
finite. Since $h$ is ample, by Proposition \ref{bound}, we conclude that $\fS$ is bounded.
\end{proof}

Let $p:\Delta\to D$ be the ruled variety over $D$ used to construct $\xn_0$;
let $D_\pm\sub \Delta$ be its two distinguished sections. Denote $h=p\sta (H|_D)$,
where $H$ is sufficiently ample on $X$, we form $L=h(D_+)$, which is ample.
Let $\sV$ be a locally free sheaf on $X$ as before, and we denote $p\sta\sV=p\sta(\sV|_D)$.
Let $\fB$ be a bounded set of sheaves of $\sO_\Delta$-modules, and % $p^*\sV\to\iota_{S\ast} \sG$, where $S\in |h|$;
let $B$ be a finite set of polynomials. For $S\in |h|$, we denote by $\iota_S: S\to \Delta$ the embedding.

\begin{lemm}\label{bdd-tech2}
Let $\fR$ be a set of quotients $\phi:p^*\sV\to \sE$ on $\Delta$. Suppose every $\sE\in \fR$ is normal to
$D_+$, $\chi^h_{\sE|_{D_+}}(v)\in B$, and there is a smooth $S\in |h|$ so that $\iota_{S\ast}(\sE|_S)\in \fB$.
Then the set $\{[\chi^L_{\sE}(v)]_{>0}\mid (\phi,\sE)\in\fR\}$ is finite. Moreover, if there is an $N$ so that
$\chi(\sE)\le N$ for all $(\phi,\sE)\in\fR$, then $\fR$ is bounded.
\end{lemm}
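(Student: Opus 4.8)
The plan is to compute $\chi^L_{\sE}(v)$ by first peeling off copies of $D_+$ and then using that $p:\Delta\to D$ is a $\Po$-bundle. Since $L=h(D_+)$ and $\sE$ is normal to $D_+$ (hence so is $\sE$ tensored with any line bundle), tensoring the structure sequence of the Cartier divisor $D_+$ with $\sE\otimes h^{\otimes v}(kD_+)$ keeps it exact, so
\[0\to \sE\otimes h^{\otimes v}\bl(k-1)D_+\br\to \sE\otimes h^{\otimes v}(kD_+)\to \sE|_{D_+}\otimes(H|_D)^{\otimes v}\otimes N_{D_+}^{\otimes k}\to 0,\]
where $N_{D_+}=\sO_\Delta(D_+)|_{D_+}$ and we use $h|_{D_+}=H|_D$ under $D_+\cong D$. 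Summing Euler characteristics over $1\le k\le v$ gives
\[\chi^L_{\sE}(v)=\chi^h_{\sE}(v)+\sum_{k=1}^{v}\chi\bl\sE|_{D_+}\otimes(H|_D)^{\otimes v}\otimes N_{D_+}^{\otimes k}\br .\]
Moreover, because $\sE$ is a quotient of $p\sta\sV=p\sta(\sV|_D)$ of a locally free sheaf and the fibres of $p$ are curves, the argument of Lemma \ref{vanishing} gives $R^1p\lsta\sE=0$, so by the projection formula $\chi^h_{\sE}(v)=\chi^{H|_D}_{p\lsta\sE}(v)$.

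It then suffices to show that each of the two summands ranges over a finite set after applying $[\,\cdot\,]_{>0}$. For the sum over $k$: the sheaves $\sE|_{D_+}$ are quotients of the fixed sheaf $\sV|_D$ (via $p\sta\sV|_{D_+}\cong\sV|_D$) with Hilbert polynomial $\chi^h_{\sE|_{D_+}}\in B$, a finite set; by Proposition \ref{bound} they form a bounded family, so the polynomials $\chi\bl\sE|_{D_+}\otimes(H|_D)^{\otimes v}\otimes N_{D_+}^{\otimes k}\br$ in $v,k$, and hence $\sum_{k=1}^v(\cdots)$, take only finitely many values. For $\chi^{H|_D}_{p\lsta\sE}$ I would use the hyperplane section. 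Writing $S=p^{-1}(S_0)$ with $\sO_D(S_0)=H|_D$, the vanishing $R^1p\lsta\sE=0$ and flat base change give $(p\lsta\sE)|_{S_0}\cong(p|_S)\lsta(\sE|_S)$; as $\sE|_S$ ranges over the bounded set $\fB$, its push forward under the fixed morphism $p|_S$ ranges over a bounded family on $D$, so $\chi^{H|_D}_{(p\lsta\sE)|_{S_0}}$ lies in a finite set. Finally, applying the derived restriction $\otimes^{\bL}\sO_{S_0}$ to $p\lsta\sE$ and twisting by $(H|_D)^{\otimes v}$ yields
\[\chi^{H|_D}_{p\lsta\sE}(v)-\chi^{H|_D}_{p\lsta\sE}(v-1)=\chi^{H|_{S_0}}_{(p\lsta\sE)|_{S_0}}(v)-\chi^{H|_{S_0}}_{\Tor_1^{\sO_D}(p\lsta\sE,\sO_{S_0})}(v),\]
and summing from $1$ to $v$ recovers $[\chi^{H|_D}_{p\lsta\sE}(v)]_{>0}$ from the right-hand side.

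The section $S$ may be taken to be the one provided by Lemma \ref{generic-tran} (cf. Remark \ref{Del-tran}), hence in general position relative to $\sE$ and $p\lsta\sE$; then $\sE$ is normal to $S$, the two $\Tor$ terms vanish, and $[\chi^L_{\sE}(v)]_{>0}$ lies in a finite set, proving the first assertion. For the ``moreover'' part, finiteness of $[\chi^L_{\sE}(v)]_{>0}$ bounds the degree and all non-constant coefficients of $\chi^L_{\sE}$, and together with $\chi(\sE)=\chi^L_{\sE}(0)\le N$ and the standard lower bound on $\chi$ of a quotient of $p\sta\sV$ with controlled non-constant Hilbert data (alternatively Corollary \ref{GroCor}), one concludes that the polynomials $\chi^L_{\sE}$ form a finite set; since every $\sE$ in $\fR$ is a quotient of the single sheaf $p\sta\sV$, Proposition \ref{bound} then shows $\fR$ is bounded.

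The main obstacle is controlling $[\chi^{H|_D}_{p\lsta\sE}(v)]_{>0}$: in contrast to $\sE|_{D_+}$, the push forward $p\lsta\sE$ need not be a quotient of a fixed sheaf, so one cannot quote Proposition \ref{bound} for it directly, and the passage from the restriction $(p\lsta\sE)|_{S_0}$ back to $p\lsta\sE$ only determines the non-constant part of the Hilbert polynomial once the relevant $\Tor$-sheaves are shown to vanish. Verifying that the divisor furnished by the construction can indeed be chosen generic enough to kill these $\Tor$-sheaves — together with the observation that the hypothesis $\chi^h_{\sE|_{D_+}}\in B$ already excludes the sheaves (such as pull backs of thickenings of $S_0$) that would otherwise destroy finiteness — is the delicate point of the argument.
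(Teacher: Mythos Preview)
Your argument is correct in outline, but considerably more circuitous than the paper's. The paper avoids the push-forward to $D$ and the attendant base-change and $\Tor$ issues altogether by using the ample line bundle $L$ directly: since $L\cong\sO_\Delta(S+D_+)$ and (after choosing $S$ via Lemma~\ref{generic-tran}) $\sE$ is normal to the divisor $S+D_+$, one has the short exact sequence
\[
0\lra \sE\otimes L^{-1}\lra \sE\lra \sE|_{S+D_+}\lra 0,
\]
which gives $\chi^L_{\sE}(v)-\chi^L_{\sE}(v-1)=\chi^L_{\sE|_{S+D_+}}(v)$ in one step. The restriction $\sE|_{S+D_+}$ is then controlled by the exact sequence
\[
0\lra \sE|_{D_+}(-S\cap D_+)\lra \sE|_{S+D_+}\lra \sE|_S\lra 0,
\]
whose outer terms are bounded by the hypotheses $\chi^h_{\sE|_{D_+}}\in B$ and $\iota_{S\ast}(\sE|_S)\in\fB$. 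So the set of Hilbert polynomials $\chi^L_{\sE|_{S+D_+}}$ is finite, and hence so is $\{[\chi^L_{\sE}(v)]_{>0}\}$. The ``delicate point'' you flag --- killing $\Tor_1^{\sO_D}(p_\ast\sE,\sO_{S_0})$ --- simply does not arise, because nothing is pushed down to $D$.

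Both approaches do share the mild ambiguity you identify: one must choose $S$ to be simultaneously transverse to $\sE$ and to satisfy $\iota_{S\ast}(\sE|_S)\in\fB$. In the paper's applications (Sublemma~1 in the proof of Proposition~\ref{bdd2}) the set $\fB$ consists precisely of restrictions to generic hyperplane sections, so a single generic choice handles both conditions; the lemma should be read with that context.

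For the ``moreover'' clause, note that an upper bound on $\chi(\sE)$ alone does not make the full Hilbert polynomials $\chi^L_\sE$ a finite set; the clean route is to feed the bounds $|a_i|\le N'$ for $i\ge 1$ and $a_0\le N$ directly into Corollary~\ref{GroCor}, which yields boundedness of $\fR$ (and a posteriori finiteness of the $\chi^L_\sE$), rather than the other way around.
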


\begin{proof}
Let $(\phi,\sE)\in \fR$. By the proof of Lemma \ref{generic-tran}, we can find a smooth $S\in |h|$ so that $\sE$ is normal to $S$. Since $\sE$ is normal to $D_+$, $\sE$ is normal to the divisor $S+D_+$. We can also require that $\sE|_S$ is normal to $D_+$.
Using that $L\cong \sO_{\Delta}(S+D_+)$, we obtain the exact sequence
$$ 0\lra \sE\otimes L^{-1}\lra\sE\lra\sE|_{S+D_+}\lra 0.
$$
It follows that
\beq\label{chi-1}
\chi^L_{\sE|_{S+D_+}}(v)=\chi^L_{\sE}(v)-\chi^L_{\sE}(v-1).
\eeq
Using the exact sequence
$$ 0\lra \sE|_{D_+}(-S\cap D_+)\lra\sE|_{S+D_+}\lra \sE|_{S}\lra 0,$$
and $\iota_{S\ast}(\sE|_S)\in \fB$, which is bounded, and $\chi^h_{\sE|_{D_+}}(v)\in B$, by the standard argument used in
Corollary \ref{GroCor}, the set of quotients $\{p^*\sV\to\sE_{S+D_+}\}$ induced from $(p^*\sV\to \sE)\in \fR$ is bounded.
Therefore, the set of polynomials $\{\chi^L_{\sE|_{S+D_+}}(v)\mid (\phi,\sE)\in\fR\}$ is finite. By \eqref{chi-1},
the set $\{[\chi^L_{\sE}(v)]_{>0}\mid (\phi,\sE)\in\fR\}$ is finite. This proves the first part of the lemma.

Moreover, when $\chi(\sE)\le N$ for all $(\phi,\sE)\in\fR$, Corollary \ref{GroCor} implies that $\fR$ is bounded.
\end{proof}

\begin{lemm}\label{bdd-tech3}
Let $\phi:p^*\sV\to\sE$ be a quotient sheaf on $\Delta$, and $\sE$ is normal to both $D_+$ and $D_-$.
Suppose there is an open subset $U\sub |h|$ such that every $V\in U$ has the following property:
$V$ is smooth; $\sE$ is normal to $V$;
$\dim\bl D_--\cup_{V\in U}D_-\cap V\br=0$, and the restriction $\phi|_V:p^*\sV|_V\to\sE|_V$ is $\Gm$-invariant. Then
$$\chi^h_{\sE|_{D_-}}(v)=\chi^h_{\sE|_{D_+}}(v).$$
\end{lemm}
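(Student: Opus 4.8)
The plan is to use the $\Gm$-invariance on generic members of $|h|$ to force $\sE$ to be ``fibrewise trivial'' there, deduce that $\chi^h_{\sE|_{D_+}}$ and $\chi^h_{\sE|_{D_-}}$ differ by a constant, and then kill the constant with a relative-cohomology computation along $p\colon\Delta\to D$.

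First I would set up a good family of divisors. Since $h=p^*(H|_D)$ and $p$ is a $\Po$-bundle, every $V\in|h|$ is $V=p^{-1}(S)$ with $S\in|H|_D|$; write $\pi_V=p|_V\colon V\to S$ and $D_{\pm,V}=D_\pm\cap V$ (a section of $\pi_V$). Shrinking the given $U$ to a dense open, I may assume in addition (Lemma \ref{generic-tran} applied also to the sheaves $\sE|_{D_+},\sE|_{D_-}$ on $D$) that $V$ meets $D_\pm$ transversally, that $\sE|_V$ is normal to $D_{+,V}$ and $D_{-,V}$, and that $\sE|_{D_\pm}$ is normal to $S$; because $|H|_D|$ is base point free, passing to a dense open leaves $D_--\bigcup_{V\in U}D_-\cap V$ finite, so the hypothesis $\dim(\cdot)=0$ persists. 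Fix such a $V$. On $V^*:=V-(D_{+,V}\cup D_{-,V})$ the group $\Gm$ acts freely with quotient $S$, so the $\Gm$-invariant quotient $\phi|_{V^*}$ descends to a quotient $\bar\phi_V\colon\sV|_S\to\bar\sE_V$ on $S$ with $\sE|_{V^*}=\pi_V^*\bar\sE_V$. Since $\sE|_V$ is a $\Gm$-equivariant quotient of the fibrewise-trivial bundle $\pi_V^*(\sV|_S)$ and is normal to the two $\Gm$-fixed sections, the argument of Lemma \ref{vanishing} applies verbatim (the $\Gm$-action confines any fibrewise torsion, and any positive fibrewise twist, to the fixed sections, which normality excludes; equivalently, $\sE|_V$ is the unique flat extension across $D_{\pm,V}$ of $\pi_V^*\bar\sE_V$ by Proposition \ref{flat}) and shows $\sE|_V=\pi_V^*\bar\sE_V$. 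Restricting to the two sections gives $\sE|_{D_{+,V}}\cong\bar\sE_V\cong\sE|_{D_{-,V}}$ as quotients of $\sV|_S$, so $\chi^{H|_D}_{\sE|_{D_+}|_S}(v)=\chi^{H|_D}_{\sE|_{D_-}|_S}(v)$.

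Next, for $V\in U$ with $S=p(V)$, normality of $\sE|_{D_\pm}$ to $S\in|H|_D|$ gives $0\to\sE|_{D_\pm}\otimes(H|_D)^{-1}\to\sE|_{D_\pm}\to\sE|_{D_\pm}|_S\to0$, hence $\chi^h_{\sE|_{D_\pm}}(v)-\chi^h_{\sE|_{D_\pm}}(v-1)=\chi^{H|_D}_{\sE|_{D_\pm}|_S}(v)$. By the previous step these first differences agree for $+$ and $-$, so $\Phi(v):=\chi^h_{\sE|_{D_+}}(v)-\chi^h_{\sE|_{D_-}}(v)$ is a constant $c$. To see $c=0$: as $D_\pm$ are sections of $p$, $\sO_\Delta(D_+-D_-)=p^*M$ for a line bundle $M$ on $D$, so $\sE\otimes\sO_\Delta(-D_-)=\sF\otimes p^*M$ with $\sF:=\sE\otimes\sO_\Delta(-D_+)$; using $0\to\sE\otimes\sO_\Delta(-D_\pm)\to\sE\to\sE|_{D_\pm}\to0$ at $v=0$ one gets $c=\chi(\sE|_{D_+})-\chi(\sE|_{D_-})=\chi(\sF\otimes p^*M)-\chi(\sF)$. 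Now $\sF$ is a quotient of $p^*\sV\otimes\sO_\Delta(-D_+)$, whose $R^1p_*$ vanishes (fibrewise $\sO_{\Po}(-1)$) and whose $R^{\ge2}p_*$ of any subsheaf vanishes, so $R^1p_*\sF=0$, $Rp_*\sF=\sG:=p_*\sF$, and by the projection formula $c=\chi(\sG\otimes M)-\chi(\sG)$. Finally, the pullback structures of Step 1 glue (the $\bar\sE_V$ patch on overlaps because both sides equal $\sE|_{D_+}$ there), so $\sE$ is a pullback over the open set $p^{-1}(D-Z)$, where $Z:=D-\bigcup_{V\in U}p(V)$; over that open $\sF$ is $p^*(\text{sheaf})\otimes\sO_\Delta(-D_+)$, whose pushforward vanishes, and since pushforward commutes with restriction to opens, $\sG$ is supported on $Z$, which is $0$-dimensional by hypothesis. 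A $0$-dimensional sheaf has Euler characteristic unchanged by a line-bundle twist, so $c=\chi(\sG\otimes M)-\chi(\sG)=0$, which is the assertion.

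The \textbf{main obstacle} is Step 1: upgrading the hypothesis ``$\phi|_V$ is $\Gm$-invariant'' to the honest statement that $\sE|_V$ is the pullback of a quotient from $S$ — not merely that $\pi_V^*\pi_{V*}\sE|_V\to\sE|_V$ is surjective. This is precisely the mechanism already isolated in Lemma \ref{vanishing} and used in Lemma \ref{localexc}: the $\Gm$-equivariance forces the torsion-free part of $\sE|_V$ on each fibre to be trivial and forces any fibrewise torsion onto $D_{\pm,V}$, whereupon normality to $D_{\pm,V}$ removes it, and a Hilbert-polynomial count removes the residual cokernel. (Absent the $\Gm$-invariance the conclusion is false — two subsheaves of a locally free sheaf agreeing off a $0$-dimensional set can have different Hilbert polynomials.) Everything after Step 1 is bookkeeping with exact sequences and Euler characteristics.
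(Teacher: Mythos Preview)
Your argument is correct in outline but follows a genuinely different route from the paper. The paper does \emph{not} reduce to a constant and then kill it; instead it filters $\sE$ by dimension. For the quotient $\sE_{>1}$ (no subsheaves of dimension $\le 1$) it shows directly that $p_*(\sE_{>1}(-D_-))=0$ and $R^1p_*(\sE_{>1}(-D_-))=0$, whence $p_*\sE_{>1}\cong\sE_{>1}|_{D_-}$ (and symmetrically for $D_+$), giving $\chi^h_{\sE_{>1}|_{D_-}}=\chi^h_{p_*\sE_{>1}}=\chi^h_{\sE_{>1}|_{D_+}}$ in one stroke. For $\sE_{\le 1}$ it argues that the support lies over finitely many points of $D$, so normality to $D_\pm$ gives $\chi(\sE_{\le 1}|_{D_-})=\chi(\sE_{\le 1}|_{D_+})$ by a rank count on each supporting fibre. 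Your first-difference/constant-killing scheme is a legitimate alternative, and your computation $c=\chi(\sG\otimes M)-\chi(\sG)$ with $\sG=p_*(\sE(-D_+))$ is clean; the paper's filtration buys that the two Hilbert polynomials are compared via a single sheaf $p_*\sE_{>1}$ rather than through a difference equation.

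Two remarks on where the weight actually lies. First, you flag Step~1 as the ``main obstacle'', but your own parenthetical already dispatches it: on $V^\ast$ the $\Gm$-invariant quotient descends to $S$, and since $\sE|_V$ is normal to both $\Gm$-fixed sections, Proposition~\ref{flat} forces $\sE|_V$ to be the unique flat extension $\pi_V^\ast\bar\sE_V$. (Invoking Lemma~\ref{vanishing} ``verbatim'' is imprecise---its hypothesis is $\delta_{l,i}=0$, not $\Gm$-invariance---but your flat-extension argument is the right one.) Second, the step that genuinely needs care is the \emph{gluing} at the end: from ``$\sE|_V$ is a pullback for each $V\in U$'' you conclude ``$\sE$ is a pullback on $p^{-1}(D-Z)$''. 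Since the $V$'s are divisors, not an open cover, agreement of two quotients on every $V$ does not formally force agreement on the union; one needs, e.g., to pass to the universal hyperplane $\cV\subset\Delta\times U$, use that both $q^\ast\sE$ and $q^\ast p^\ast(\sE|_{D_+})$ are $U$-flat (normality to each $V$), and then invoke that two $U$-flat quotients of a locally free sheaf which agree on every fibre are equal. The paper's proof asserts the same local-pullback statement (``for an open $S\subset D$\dots'') with the same brevity, so you are not worse off---but this, not Step~1, is where the honest work sits.
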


\begin{proof}
As before, we let $\sE_{\le 1}\sub \sE$ be the subsheaf of elements of dimension at most $1$,
and form the quotient sheaf $\sE_{>1}=\sE/\sE_{\le 1}$. Let $\phi_{>1}:p^*\sV\to\sE_{>1}$
be the induced quotient homomorphism. We claim that the tautological $p^*p_*\sE_{>1}\to\sE_{>1}$ is an isomorphism.

Since $\sE$ is normal to $D_-$, $\sE_{>1}$ is normal to $D_-$. Thus we have
$$0\lra\sE_{>1}(-D_-)\lra\sE_{>1}\lra\sE_{>1}|_{D_-}\lra 0.
$$
Applying $p_*$, we obtain
$$0\lra p_*(\sE_{>1}(-D_-))\lra p_*\sE_{>1}\lra p_*(\sE_{>1}|_{D_-})\lra R^1p_*(\sE_{>1}(-D_-))=0.
$$
Here the last term is zero because $\sE_{>1}$ is a quotient sheaf of $p^*\sV$. We claim that $p_*(\sE_{>1}(-D_-))=0$.
Suppose not, then it is supported on a positive dimensional subset since $\Delta\to D$ has dimension one fibers.
Let $A\sub D$ be an irreducible positive dimensional component of the support of $p_*(\sE_{>1}(-D_-))$.
Because $\dim\bl D_--\cup_{V\in U}D_-\cap V\br=0$,
the union $\cup\{p^{-1}(A)\cap V\mid V\in U\}$ is dense in $p\upmo(A)$.
Therefore, for an open $S\sub D$ such that $S\cap A\ne\emptyset$, we have that
$\sE|_{p^{-1}(S)}\cong p^*(\sE|_{D_-\cap S})$. Thus $p_*(\sE_{>1}(-D_-))|_S=0$,
contradicting to $S\cap A \ne \emptyset$. This proves $p_*(\sE_{>1}(-D_-))=0$; consequently,
$p^*p_*\sE_{>1}\cong\sE_{>1}$, and
$$ \chi^h_{\sE_{>1}|_{D_-}}(v)=\chi^h_{p_*\sE_{>1}}(v)=\chi^h_{\sE_{> 1}|_{D_+}}(v).
$$

Repeating the same argument, we conclude that $\sE_{\le 1}$ is supported at finite fibers of $p:\Del\to D$.
Since $\sE$ is normal to $D_-$ and $D_+$, $\sE_{\le 1}$ is normal to $D_-$ and $D_+$ too.
Thus $\chi(\sE_{\le 1}|_{D_-})=\chi(\sE_{\le 1}|_{D_+})$. Therefore
$$\chi^h_{\sE|_{D_-}}(v)=\chi^h_{\sE_{>1}|_{D_-}}(v)+\chi(\sE_{\le 1}|_{D_-})
=\chi^h_{\sE_{>1}|_{D_+}}(v)+\chi(\sE_{\le 1}|_{D_+})=\chi^h_{\sE|_{D_+}}(v).
$$
This proves the Lemma.
\end{proof}

In the remainder of this Section, we abbreviate $\Q_P:=\fQuot_{\fX_0/\fC_0}^{\sV,P}(\bk)$.
% also, we
%use $(\phi,\sF,\xn_0)\in \Q_P$ to mean the quotient sheaf $\phi: p\sta\sV\to\sF$ on $\xn_0$ in $\Q_P$.

\begin{proof}[Proof of Proposition \ref{bdd2}]
We prove that $\Q_P$ is bounded by induction on the degree of the polynomial $P(v)$.

Suppose $P(v)=c$ is a constant. Let $(\phi,\sF,\xn_0)\in\Q_P$. Then $\sF$ is a zero dimensional sheaf such that its
support is away from the singular locus of $\xn_0$ and its length is $c$.
The stability of $\sF$ implies that $\sF|_{\Delta_i}$ is nonzero for every $1\le i\le n$.
Therefore, $n\le \mathrm{length}(\sF)=c$. Applying Proposition \ref{bound}, we conclude that $\Q_P$ is bounded in this case.

Next we assume that for an integer $d$, $\Q_P$ is bounded when $\deg P(v)\le d-1$.
We show that $\Q_P$ is bounded when $P(v)$ has degree $d$.

Let $P$ be a degree $d$ polynomial, and let $(\phi, \sF,X[n]_0)\in \Q_P$.
By Lemma \ref{generic-tran}, we can find an $S\in |p^*H|$ so that it has normal crossing singularity; is smooth away from the singular locus of $\xn_0$; that $\sF$ is normal to $S$, and the restriction $\sF|_S$ is normal to the singular divisor of $S$.

Let $\sF'=\iota_{S*}(\sF|_{S})$ and $\phi':p^*\sV\to \sF'$ be the quotient homomorphism induced by $\phi$.
We have $\chi^{p\sta H}_{\sF'}(v)=P(v)-P(v-1)$. By our choice of $S$, $\sF'$ is admissible but not necessary stable.
We let $\Lambda_{\phi}\sub\{1,\cdots,n\}$ be the subset of indices $k$ so that $\phi'|_{\Delta_k}$ is {\sl not} $\Gm$-invariant;
%Then $\Lambda_{\phi}$ is nonempty since $n+1\in \Lambda_{\phi}$.
we let $n_\phi=\#\Lambda_{\phi}\ge 0$, and let
$$I_\phi: \{1,\ldots, n_\phi\}\to \Lam_\phi
$$
be the order-preserving isomorphism.
Let $\Lambda_{\phi}^\complement$ be the complement of $\Lambda_{\phi}$. We then contract all $\Delta_i\sub\xn_0$,
$i\in\Lambda_\phi^\complement$,
to obtain $p_\phi:\xn_0\to X[n_\phi]_0$. Let $p':X[n_\phi]_0\to X_0$ be the projection.
Since $\phi'$ is admissible, and $\phi'|_{\Delta_i}$ is $\Gm$-invariant for $i\in\Lambda_{\phi}^\complement$,
there is a quotient
$$(\phi')^{\st}:p'^*\sV\lra \sF'^{\st} \quad \text{such that}\quad  \phi'=p_\phi^*(\phi')^{\st}.
$$
Then $\bl (\phi')^{\st},\sF'^{\st}, X[n_\phi]_0\br \in\Q_{P_1}$, where $P_1(v)=P(v)-P(v-1)$.
By the induction hypothesis, $\Q_{P_1}$ is bounded. Therefore, there is an $N$ depending on $P$ only so that
\beq\label{NN}
n_\phi\le N.
\eeq

To proceed, we let $p_{\Del}:\Delta\to D$ be the ruled variety used to construct $\xn_0$ with distinguished sections
$D_\pm\sub \Delta$. Let $h=p_{\Del}\sta (H|_D)$, where $H$ is sufficiently ample on $X$
(using $H^{\otimes m}$ if necessary), and form $L=h(D_+)$, which is ample.
Let $H_i=p^*H|_{\Del_i}$; and let $L_i=H_i(D_i)$, $i>0$. We fix the tautological isomorphisms
\beq\label{Del-isom}
\rho_i:\Del\cong \Del_i,\quad \text{so that }h=\rho_i^*H_i,\quad L=\rho_i^*L_i,
\eeq
for all intermediate components $\Del_1,\cdots,\Del_n$ of $\xn_0$.

\vsp

\noindent
{\bf Sublemma 1}. {\sl The set $\{\chi^{H_i}_{\sF|_{D_i}}(v) \mid (\phi,\sF,\xn_0)\in\Q_P,\,  i\le n+1\}$ is finite.
%Let $\S_i=\{\phi|_{D_i}:\phi\in\Q_P\}$ and $S_i=\{\chi^{H_i}_{\sF}(v)\mid \sF\in \S_i\}$. Then $\cup_{i\ge 1} S_i$ is a  finite set,
%and each $\S_i$ is bounded. % since $H_i|_{D_i}$ is ample.
}
\begin{proof}
Let $N$ be as specified in \eqref{NN}.
We first construct a finite sequence of finite sets $B_1,B_2,\cdots,B_{N+1}$ and show that for any $(\phi, \sF,
\xn_0)\in \Q_P$, and any $1\le i\le n+1$, we have
%and any $1\le i\le n+1$,
$\chi^{H_i}_{\sF|_{D_i}}(v)\in B_k$ for some $k$.
%Combined with $n_\phi\le N$ for any $\phi\in \Q_P$, t
This will prove the Sublemma.
%that $\cup_{i\ge 1} S_i\sub \cup_{k=1}^{N+1} B_k$ is finite.

Let $B_1=\{\chi_{\sF|_{D_1}}^{H_0}(v)\mid (\phi,\sF, \xn_0)\in \Q_P\}$.
%and let $B_0=\{\mid \sF\in \fR_0\}$,
%where $H_0=p\sta H|_{\Delta_0}$.
We prove that $B_1$ is a finite set. Indeed, by induction, we can find $S\in |p^*H|$ so that
$\sF'=\iota_{S*}(\sF|_S)$ is admissible, and $\chi^{p\sta H}_{\sF'}(v)=P(v)-P(v-1)$.
Restricting to $\Del_0=Y$, since $(\sF')^{\st}|_{\Del_0}=\sF'|_{\Del_0}$, the induction hypothesis that $\Q_{P_1}$ is
bounded implies that $\{\chi^{H_0}_{\sF'|_{\Del_0}}(v)\mid ((\phi')^{st},(\sF')^{\st},X[n_\phi]_0)\in\Q_{P_1}\}$ is finite.
Therefore,
\beq\label{Del0}
\{[\chi^{H_0}_{\sF|_{\Del_0}}(v)]_{>0}\mid (\phi,\sF, \xn_0)\in\Q_P\}\quad \text{ is finite}.
\eeq
Since $H_0$ is ample on $\Delta_0$, using Lemma \ref{bdd-tech1}, we know that
$\{\sF|_{D_1}\mid (\phi,\sF, \xn_0)\in \Q_P\}$ is bounded.
Therefore, $B_1$ is finite.

We define $B_{i\ge 2}$ inductively. Suppose we have defined $B_k$. Using the isomorphisms \eqref{Del-isom}, we define
a set of quotient homomorphisms on $\Del$:
$$\fR_k=\cup_{i\ge 1}\{\rho_i^*(\phi|_{\Delta_i})\mid (\phi,\sF,\xn_0)\in\Q_P, \ \chi^h_{\rho_i^*(\sF|_{D_i})}(v)\in B_k\}.
$$
%and define a set of polynomials
%$$\fB=\cup_{i\ge 1}\{\rho_i^*(\phi|_{\Delta_i}):\phi\in\Q_{P_1}\}.
%$$
(Recall that $D_i\sub \Delta_i$ is identified with $D_+\sub \Delta$ under $\rho_i$ (cf. \eqref{xn0}).)
We apply the first assertion of Lemma \ref{bdd-tech2} to $B=B_k$ and
$\fB=\cup_{i\ge 1}\{\rho_i\sta(\phi|_{\Delta_i})\mid (\phi,\sF,\xn_0)\in\Q_{P_1}\}$
to conclude that the set
$\{[\chi^L_{\sF}(v)]_{>0}\mid (\phi, \sF)\in \fR_k\}$ is finite. Then applying Lemma \ref{bdd-tech1} to $D_-\subset \Del$,
we conclude that $\{(\phi,\sF)|_{D_-}\mid (\phi,\sF)\in\fR_k\}$ is bounded, which implies that
$B_{k+1}=\{\chi^H_{\sF|_{D_-}}(v)\mid (\phi,\sF)\in\fR_k\}$
is finite. % is bounded.

For any $(\phi,\sF,\xn_0)\in\Q_P$ and $1\le i\le n+1$, we claim that $\chi^{H_i}_{\sF|_{D_i}}(v)\in B_k$ for some $k\le N+1$.
To show this, we consider the sequence of polynomials
\beq\label{seqbi} \chi^{H_1}_{\sF|_{D_1}}(v),\cdots,\chi^{H_{n+1}}_{\sF|_{D_{n+1}}}(v).
\eeq
By Lemma \ref{bdd-tech3}, for $i\in\Lambda_\phi^\complement$, $\chi^{H_i}_{\sF|_{D_i}}(v)=\chi^{H_{i+1}}_{\sF|_{D_{i+1}}}(v)$;
for $i=I_\phi(k)\in \Lambda_\phi$ for some $k$,
$\chi^{H_{i+1}}_{\sF|_{D_{i+1}}}(v)\in B_{k+1}$. Since $\# \Lam_\phi\le N$, we have
$\chi^{H_{i}}_{\sF|_{D_{i}}}(v)\in \cup_{k=1}^{N+1}B_k$. Since each $B_k$ is finite, the Sublemma follows.
\end{proof}

%We let $\Xi_0=\{\chi(\sF|_{\Del_0})\mid \phi\in\Q_P\}$ and $\Xi_j=\{\chi(\sF|_{\Delta_{o_\phi(j)}}(-D_{o_\phi(j)}))\mid \phi\in \Q_P\}$.

\noindent
{\bf Sublemma 2}. {\sl There is a constant $M>0$ so that for any $(\phi,\sF,\xn_0)\in\Q_P$, then
\begin{enumerate}
\item for $i\in\Lambda_\phi^\complement$, we have $\chi(\sF|_{\Del_i}(-D_{i}))\geq 1$;
\item $\chi(\sF|_{\Del_0}(-D_{n+1}))\geq -M$.
\item for $i=I_\phi(k)\in \Lambda_\phi$, $\chi(\sF|_{\Del_i}(-D_{i}))\geq -M$. %\min\{\chi\in\Xi_j\mid 0 \le j\le N+1\}\ge -M$.}
\end{enumerate}}
%(Here in case $i=0$ in item (2), we agree $D_0\defeq D_{n+1}$.)

\begin{proof}
We first prove item (1).
Let $(\phi,\sF,\xn_0)\in\Q_P$ and let $i\in\Lambda_\phi^\complement$.
We let $S\in |p\sta H|$ and $\phi': p\sta\sV\to \sF'=\iota_{S\ast}(\sF|_S)$ be as the quotient sheaf
constructed at the beginning of the this proof (of Proposition \ref{bdd2}).
By the construction of $\Lambda_\phi^\complement$,
we know that the restriction (to $\Delta_i$) $(\phi'|_{\Delta_i},\sF'|_{\Delta_i})$ is $\Gm$-invariant.
By Lemma \ref{vanishing},
$\chi^{H_i}_{\sF^{\prime}|_{\Del_i}}(v)-\chi^{H_i}_{\sF^{\prime}|_{D_i}}(v)=0$.
Since
$$\chi^{H_i}_{\sF^{\prime}|_{\Del_i}}(v)=\chi^{H_i}_{\sF|_{\Del_i}}(v)-\chi^{H_i}_{\sF|_{\Del_i}}(v-1)\and
\chi^{H_i}_{\sF^{\prime}|_{D_i}}(v)=\chi^{H_i}_{\sF|_{D_i}}(v)-\chi^{H_i}_{\sF|_{D_i}}(v-1),
$$
the polynomial $f(v)=\chi^{H_i}_{\sF|_{\Del_i}}(v)-\chi^{H_i}_{\sF|_{D_i}}(v)$
then satisfies $f(v)=f(v-1)$, which makes it a
constant equal to $\chi(\sF|_{\Del_i}(-D_i))$. Since $\sF|_{\Del_i}$ is not $\Gm$-invariant, by Lemma \ref{vanishing},
$\chi(\sF|_{\Del_i}(-D_i))\ge 1$.

We now prove item (2). Suppose the lower bound does not exist. Then there is a sequence $(\phi_k, \sF_k, X[n_k]_0)\in \Q_P$
\beq\label{toinfty1}\chi(\sF_{k}|_{\Del_{0}}(-D_{n_k+1}))\to -\infty,\quad \text{when } k\to +\infty.
\eeq
But by \eqref{Del0} and Corollary \ref{GroCor}, we know that $\{\sF_k|_{\Del_0}\}_{k\ge 1}$ is bounded;
contradicts to \eqref{toinfty1}. Thus item (2) holds.

Suppose item (3) does not hold, then there is a sequence $(\phi_k, \sF_k, X[n_k]_0)\in \Q_P$
and a sequence $1\le i_k\le n_k$ such that
\beq\label{toinfty}\chi(\sF_{k}|_{\Del_{i_k}}(-D_{i_k}))\to -\infty,\quad \text{when } k\to +\infty.
\eeq
%By passing through a subsequence, we can assume
%that $i_k>0$ for all $k$.
Using isomorphisms \eqref{Del-isom}, we introduce
$\bar\sF_k=\rho_{{i_k}}^*(\sF_{k}|_{\Del_{i_k}})$. Tensoring $\bar\sF_k$ with $\sO_\Del(-D_+)$, we obtain a sequence of quotients
$\bar\phi_k:\bar\sV(-D_+)\to \bar\sF_k(-D_+)$, where $\bar\sV=p_{\Del}^*\sV|_D$, $p_{\Del}:\Del\to D$.
By construction, $\chi(\bar\sF_k(-D_+))\to-\infty$. In particular $\chi(\bar\sF_{k}(-D_{+}))$ is bounded from above.

We claim that the set of polynomials $\{[\chi^L_{\bar\sF_{k}(-D_{+})}(v)]_{>0}\}_{k\ge 1}$ is finite.
Once this is proved, then applying Corollary \ref{GroCor} we conclude
that $\{\bar\phi_k\}_{k\ge 1}$ is bounded, which contradicts to $\chi(\bar\sF_k(-D_+))\to-\infty$.

We prove the claim. By Sublemma\! 1, there is a finite set $B$ so that $\chi^{H_k}_{\sF_k|_{D_{i_k}}}(v)\in B$.
Using isomorphism \eqref{Del-isom}, we obtain $\chi^{H}_{\bar\sF_k|_{D_{+}}}(v)\in B$. Applying the first assertion of
Lemma \ref{bdd-tech2}, we conclude that  $\{[\chi^L_{\bar\sF_k}(v)]_{>0}\}_{k\ge 1}$ is finite.
Restricting to $D_+$, Lemma \ref{bdd-tech1} implies that  $\{\chi^L_{\bar\sF_k|_{D_+}}(v)\}_{k\ge 1}$ is finite. The claim
then follows from
$[\chi^L_{\bar\sF_k(-D_{+})}(v)]_{>0}=[\chi^L_{\bar\sF_k}(v)]_{>0}-[\chi^L_{\bar\sF_k|_{D_+}}(v)]_{>0}$.
\end{proof}

We now complete the proof of Proposition \ref{bdd2}. Let $(\phi, \sF, \xn_0)\in \Q_P$.
Since $\sF$ is normal to all $D_i$,
\beq\label{chi-id}\chi(\sF)=\chi(\sF|_{\Del_0}(-D_{n+1}))+\chi(\sF|_{\Del_1}(-D_1))+\cdots+\chi(\sF|_{\Del_{n}}(-D_{n})).
\eeq
For $i\in \Lambda_\phi^\complement$, we have $\chi(\sF|_{\Delta_i}(-D_i))\ge1$;
for $i\in\Lambda_\phi\cup\{0\}$, by Sublemma\! 2, we have $\chi(\sF|_{\Delta_i}(-D_i))\ge -M$ ($D_0=D_{n+1}$).
Since $n_\phi\le N$, we obtain
$\chi(\sF)\geq (N+1)(-M)+(n-\# \Lambda_\phi)$, which implies
\beq\label{nn}n\leq  \chi(\sF)+(N+1)M+N.
\eeq
The identity \eqref{chi-id} and Sublemma 2 also gives the bound,
$$\chi(\sF|_{\Delta_i}(-D_i)) \le \chi(\sF)+(N+1)M+N, \quad 0\le i\le n.
$$
Therefore, applying Lemma \ref{bdd-tech1} and \ref{bdd-tech2}, we conclude that
for each $i$, the set $\{\sF|_{\Delta_i}\mid (\phi,\sF, \xn_0)\in \Q_P\}$ is bounded.
This together with the bound \eqref{nn} implies that $\Q_P$ is bounded.
\end{proof}

By a parallel argument, we have

\begin{prop}
The set $\Qydp(\kk)$ is bounded.
\end{prop}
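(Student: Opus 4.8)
The plan is to mirror the proof of Proposition \ref{bdd2} for the absolute Quot stack $\fQuot_{\fX_0/\fC_0}^{\sV,P}(\bk)$, making the modifications forced by the presence of the two distinguished relative divisors $D[n_-]_{-,0}$ and $D[n_+]_{+,0}$ on $Y[\nn]_0$. First I would reduce, via Proposition \ref{bound}, the boundedness of $\Qydp(\kk)$ to showing that the set of Hilbert polynomials of the quotient sheaves is finite and that there is a fixed coherent sheaf surjecting onto all of them. The finiteness of Hilbert polynomials is built into the definition of $\Qydp$ (all members have $\chi^H_\sF=P$); as in the proof of Proposition \ref{bdd2}, the real content is boundedness of the length $n=n_-+n_+$ of the chain $\Delta_{-n_-}\sqcup\cdots\sqcup Y\sqcup\cdots\sqcup\Delta_{n_+}$, together with boundedness of each restriction $\sF|_{\Delta_i}$.

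The argument proceeds by induction on $\deg P(v)$. The base case $\deg P=0$ is identical to that in Proposition \ref{bdd2}: a zero-dimensional admissible sheaf of length $c$ which is nonzero on every inserted $\Delta_i$ forces $n\le c$, and finiteness of such sheaves is standard. For the inductive step I would again cut by a generic divisor $S\in|p\sta H|$ using Lemma \ref{generic-tran} (and Remark \ref{Del-tran}); by that lemma $S$ can be chosen smooth away from the singular locus, with $\sF$ normal to $S$ and with $\sF|_S$ normal to all singular divisors of $S$ \emph{and} to $S\cap D[n_\pm]_{\pm,0}$. Passing to $\sF'=\iota_{S\ast}(\sF|_S)$, which has $\chi^{p\sta H}_{\sF'}=P(v)-P(v-1)$ of strictly smaller degree, and contracting the $\Delta_i$ on which $\phi'|_{\Delta_i}$ is $\Gm$-invariant, one lands in $\Qydp$ with $P_1(v)=P(v)-P(v-1)$, which is bounded by the induction hypothesis; this produces the uniform bound $n_\phi\le N$ on the number of ``non-invariant'' inserted components, exactly as in \eqref{NN}. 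The chain of Sublemmas then goes through verbatim with the technical inputs Lemma \ref{bdd-tech1}, Lemma \ref{bdd-tech2}, Lemma \ref{bdd-tech3} and Lemma \ref{vanishing}: Sublemma~1 propagates finiteness of $\{\chi^{H_i}_{\sF|_{D_i}}(v)\}$ along the chain starting now from \emph{both} ends, using that $\sF$ is normal to $D[n_\pm]_{\pm,0}$ so that the restrictions to the two relative divisors are controlled by $\chi^H_{\sF|_{D[n_\pm]_{\pm,0}}}(v)$, which is part of the combinatorial data $P$; Sublemma~2 gives the lower bounds $\chi(\sF|_{\Delta_i}(-D_i))\ge 1$ for invariant $i$ and $\ge -M$ for the $\le N$ exceptional indices and for the two ends. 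Summing over the chain via the normal-crossing identity $\chi(\sF)=\sum_i\chi(\sF|_{\Delta_i}(-D_i))$ (with the appropriate boundary convention for $\Delta_{-n_-}$ and $\Delta_{n_+}$, where the subtracted divisor is the relative divisor $D[n_\pm]_{\pm,0}$) yields $n\le \chi(\sF)+(N+1)M+N$, and then each $\{\sF|_{\Delta_i}\}$ is bounded by Lemma \ref{bdd-tech1} and \ref{bdd-tech2}; this gives boundedness of $\Qydp(\kk)$.

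The one genuinely new bookkeeping point — and the step I expect to be the main obstacle — is handling the two \emph{terminal} components $\Delta_{-n_-}$ and $\Delta_{n_+}$, which carry the relative divisors $D[n_-]_{-,0}$ and $D[n_+]_{+,0}$ rather than gluing divisors on their outer sides. The propagation argument of Sublemma~1 must be run from both ends of the chain inward toward $Y=\Delta_0$, and one must check that the base of the induction at each terminal component uses the finiteness of the restriction to the relative divisor (which is encoded in $P$ through Lemma \ref{criteria-4}, since $\err_\pm\sF$ and $\chi(\sF_{I_\pm}(v))$ enter the numerical data) exactly as the interior base case at $\Delta_0$ used \eqref{Del0}. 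The $\Gm$-invariance analysis on the terminal $\Delta_{\pm n_\pm}$ is the same as on interior ones because the tautological $\Gm$-action on each $\Delta_i$ fixes both $D_+$ and $D_-\subset\Delta_i$ regardless of whether one of those is a relative divisor; so Lemma \ref{vanishing} and Lemma \ref{bdd-tech3} apply without change after identifying $\Delta_{\pm n_\pm}$ with the model $\Delta=\bP_D(N_+\oplus 1)$. Once this boundary bookkeeping is set up, every remaining estimate is a line-by-line transcription of the proof of Proposition \ref{bdd2}, so I would simply indicate the modifications and refer to that proof, as the authors do in the excerpt (``By a parallel argument'').
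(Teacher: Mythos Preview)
Your overall strategy is right and matches the paper's intent: the proof is a parallel of Proposition~\ref{bdd2}, and the inductive hyperplane cut, the contraction to a stable object in $\Qyd$ with Hilbert polynomial $P_1(v)=P(v)-P(v-1)$, and the two Sublemmas all transfer. However, there is a genuine gap in how you initiate the propagation of Sublemma~1.

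You propose to run the propagation ``from both ends of the chain inward toward $Y=\Delta_0$,'' starting at the terminal components $\Delta_{\pm n_\pm}$ with the restriction of $\sF$ to the relative divisors. But $\chi^H_{\sF|_{D[n_\pm]_{\pm,0}}}(v)$ is \emph{not} encoded in $P$: the definition of $\Qydp$ fixes only the total polynomial $\chi^H_\sF=P$, and Lemma~\ref{criteria-4} only says $\err_\pm\sF=\chi(\sF_{I_\pm}(v))=0$, which is the vanishing of torsion along the relative divisor, not a constraint on $\sF|_{D[n_\pm]_{\pm,0}}$ itself. So you have no a~priori finite set of polynomials to feed into Lemma~\ref{bdd-tech2} at the terminal components, and the inward propagation cannot get started. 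The correct direction is the reverse: start at $\Delta_0=Y$, the one component where $H_0=q^\ast H|_Y$ is genuinely ample, and propagate \emph{outward} along both halves of the chain. The argument producing \eqref{Del0} and the set $B_1$ goes through unchanged for $Y$, yielding finiteness of both $\{\chi^{H}_{\sF|_{D_1}}(v)\}$ and $\{\chi^{H}_{\sF|_{D_{-1}}}(v)\}$; from there Lemmas~\ref{bdd-tech2} and~\ref{bdd-tech3} step outward toward $\Delta_{n_+}$ and $\Delta_{-n_-}$ respectively, exactly as in the proof of Proposition~\ref{bdd2}, with the bound $n_\phi\le N$ controlling the number of jumps. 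Finiteness of the restrictions to the relative divisors then comes out as a \emph{consequence} at the final step (Lemma~\ref{bdd-tech1} applied to $D_-\subset\Delta$), not as an input. With this correction the rest of your outline, including Sublemma~2 and the telescoping identity, goes through.
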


\subsection{The moduli of stable pairs}

We prove the boundedness of the moduli $\Mxh$ and $\Mydh$. Here $P(v)$ is a degree one polynomial.

\begin{prop}
The set $\Mxh(\kk)$ and $\Mydh(\kk)$ are bounded.
\end{prop}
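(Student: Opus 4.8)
The plan is to run the argument of Proposition \ref{bdd2} with the locally free sheaf $\sV$ replaced by $\sO_X$, keeping track additionally of the zero–dimensional cokernel of the section. Write $P(v)=dv+n_0$. By Proposition \ref{bound} it suffices to bound the relevant Hilbert data together with a fixed sheaf surjecting onto all the sheaves in question; exactly as in the Quot-scheme case the only real point is to bound the number $n$ of inserted components, after which the ambient spaces $\xn_0$ (resp.\ $Y[\nn]_0$) run over a finite set and one concludes by Le Potier's boundedness of coherent systems \cite{Le} on each of these fixed projective schemes (equivalently, by re-running the constant-degree step in the proof of Proposition \ref{bdd2} for $\sV=\sO_X$ together with routine extension bookkeeping). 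Over a smooth fibre $X_c$, $c\ne 0$, boundedness is the classical statement for stable pairs, so we concentrate on the central-fibre locus.

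To bound $n$, let $(\varphi\colon\sO_{\xn_0}\to\sF)\in\Mxh(\kk)$ and set $\sO_Z=\im\varphi$ and $Q=\coker\varphi$. Since $\sF$ is pure of dimension one and $\coker\varphi$ is finite, $\sO_Z$ is a pure one-dimensional quotient sheaf of $\sO_{\xn_0}$ of $p^*H$-degree $d$, while $Q$ is zero-dimensional of some length $\ell$ supported away from the singular locus of $\xn_0$; moreover $\sO_Z$ is again normal to every $D_l$, because near $D_l$ one has $\sO_Z=\sF$. The crucial use of stability is that for every inserted component $\Delta_l$ ($1\le l\le n$) the restriction $\sF|_{\Delta_l}$ is not pulled back from $D_l$ along $\pi_l\colon\Delta_l\to D_l$: a global section of a sheaf of the form $\pi_l^*\sE$ is itself pulled back (as $\pi_{l*}\sO_{\Delta_l}=\sO_{D_l}$), so if $\sF|_{\Delta_l}=\pi_l^*\sE$ then $\varphi|_{\Delta_l}$ would be $\Gm$-invariant and the $l$-th factor of $\Gm^n$ would lie in $\Aut_\fX\varphi$, contradicting finiteness. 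Following Sublemma 2 in the proof of Proposition \ref{bdd2} closely — with $\sV=\sO_X$, applying Lemma \ref{vanishing} to the quotient $\sO_{\xn_0}\to\sO_Z$, and replacing the appeal to the induction hypothesis by the already–established constant-polynomial case, so that the number of components on which a generic hyperplane-section restriction of $\sO_Z$ is $\Gm$-noninvariant is at most $d$ — one obtains, for each of the remaining inserted components, a contribution $\ge 1$ to $\chi(\sF)$ through the identity \eqref{chi-id} (valid since $\sF$ is normal to all $D_l$, the contribution of the zero-dimensional $Q$ being $\ge 0$), while the finitely many ``end'' components $Y$ (resp.\ $Y_\pm$) contribute at least $-M$ for a constant $M=M(P)$, using the classical bound on $\chi$ for quotients of $\sO_Y$ of bounded degree on the fixed scheme $Y$. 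Hence $n\le n_0+M+d+\mathrm{const}$.

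With $n$ bounded, $\xn_0$ ranges over finitely many fixed projective schemes. On each of them the quotients $\sO_{\xn_0}\to\sO_Z$ with $\chi^H_{\sO_Z}=dv+e$ are nonempty for only finitely many $e$ (the constant term of a bounded-degree subscheme being bounded), hence lie in finitely many Quot-schemes and form a bounded family; consequently $\ell=n_0-e$ is bounded, $Q$ lies in a bounded family, $\sF$ — being an extension of the bounded family of zero-dimensional sheaves $Q$ by the bounded family $\sO_Z$ — lies in a bounded family, and $\varphi$ is recovered from the quotient $\sO_{\xn_0}\to\sO_Z$ together with the extension class. This gives boundedness of $\Mxh(\kk)$. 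For $\Mydh(\kk)$ the argument is identical: one replaces $\xn_0$ by $Y[\nn]_0$ and uses that $\sF$ and $Q$ are in addition normal to the relative divisors $D[n_\pm]_{\pm,0}$, so that the decomposition \eqref{chi-id} and the control of the end contributions (now also involving the relative divisors) go through unchanged.

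The main obstacle is the first step, bounding $n$, and within it the adaptation of Sublemma 2 of Proposition \ref{bdd2} to the pair setting; this is precisely where purity of $\sF$ together with finiteness of $\coker\varphi$ is used essentially, in order to turn $\Gm$-invariance of $\sF|_{\Delta_l}$ into $\Gm$-invariance of the section $\varphi|_{\Delta_l}$ (a sheaf-theoretic fact about the $\Po$-bundle $\Delta_l\to D_l$) and so import the positivity of Lemma \ref{vanishing}. Everything else is standard boundedness bookkeeping together with appeals to Proposition \ref{bound}, Proposition \ref{bdd2} in degree zero, and \cite{Le}; the stable-pairs case is genuinely simpler than the Quot-scheme case because no induction on the degree of $P$ has to be carried out.
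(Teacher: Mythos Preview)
Your approach is essentially the paper's---the paper only streamlines it by setting $\Lambda_\varphi=\{i\ge 1:a_i>0\}$ where $\chi^{H_i}_{\sF_i}(v)=a_iv+b_i$, so that $\#\Lambda_\varphi\le d$ is immediate from $\sum a_i=d$ with no hyperplane section taken---but there is a gap in the step bounding $n$. In the identity \eqref{chi-id} there are three types of summands: the end component $\Delta_0=Y$, the inserted $\Delta_i$ with $i\in\Lambda_\varphi^\complement$, and the at most $d$ inserted $\Delta_i$ with $i\in\Lambda_\varphi$. You bound the contribution of the first type from below by $-M$ and of the second by $1$ (correctly, via Lemma \ref{vanishing} and stability), but you say nothing about the third. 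Without a uniform lower bound $\chi(\sF_i(-D_i))\ge -M'$ for $i\in\Lambda_\varphi$, \eqref{chi-id} gives no bound on $n$: a pure curve in the $\bP^1$-bundle $\Delta_i$ of bounded $p^*H$-degree can have arbitrarily negative Euler characteristic, since $p^*H|_{\Delta_i}$ is trivial on the fibres. This missing bound is precisely item (3) of Sublemma~2, whose proof requires Sublemma~1 (a uniform bound on $\chi(\sF|_{D_j})$ propagated along the chain from $D_1$---this, not the cardinality bound on $\Lambda_\varphi$, is where the constant-polynomial input is really used) together with Lemma \ref{bdd-tech2} to switch to the ample $L_i=H_i(D_i)$. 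The paper's proof applies ``the same argument as in Sublemma~2'' to the quotient $\varphi_i':\sO_{\Delta_i}\to\im\varphi_i$ to obtain it.

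The same omission resurfaces in your post-$n$-bounded step: the claim that a one-dimensional subscheme of $\xn_0$ of bounded $p^*H$-degree has bounded $\chi$ is false for $n\ge 1$, again because $p^*H$ is only semi-ample. To finish one must bound each $\chi(\sF|_{\Delta_i}(-D_i))$ from both sides (as in the paragraph following \eqref{chi-id} in the proof of Proposition \ref{bdd2}) and then invoke Lemmas \ref{bdd-tech1}--\ref{bdd-tech2} with the ample $L_i$ on each $\Delta_i$; this again needs Sublemma~1. With that in place, both gaps close and your argument completes.
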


\begin{proof}
We work with the case $\Mxh(\kk)$. The other is the same. Let $P(v)=av+b$. Let $(\varphi,\sF,\xn_0)\in\Mxh(\kk)$, let
$\sF_i=\sF|_{\Delta_i}$ and $H_i=p^*H|_{\Del_i}$. Then each $\chi^{H_i}_{\sF_i}(v)=a_iv+b_i$ has $a_i\ge 0$, and
\beq\label{asum} a=a_0+a_1+\cdots+a_{n}.
\eeq
Let $\Lambda_\varphi$ be the set of those $k\ge 1$ so that $\chi^{H_k}_{\sF_k}(v)$ has positive degree.
Then by \eqref{asum}, $n_\varphi=\#\Lambda_\varphi\le a$.
Let $\Lambda_\varphi^\complement=\{1,\cdots,n\}-\Lambda_\varphi$.

First, we show that for each $i\in\Lambda_\varphi^\complement$, $\chi(\sF_{i}(-D_i))\ge 1$.
Let $\varphi_i:\sO_{\Del_i}\to\sF_i$ be the restriction of $\varphi$ to $\Delta_i$. Since $\coker\varphi$ has zero dimensional support,
$\chi(\coker\varphi)\ge 0$. Hence $\chi(\sF_i(-D_i))\ge \chi(\im \varphi_i(-D_i))$.

For $\im \varphi_i$, we have the induced quotient homomorphism
$\varphi_i':\sO_{\Del_i}\to\im \varphi_i$. Applying Lemma \ref{vanishing} to $\varphi_i'$,
we get $\chi(\im \varphi_i(-D_i))\ge 0$. Since $\varphi_i$ is not $\Gm$-invariant,
either $\chi(\coker \varphi)>0$ or $\chi(\im \varphi_i(-D_i))>0$. Thus
$\chi(\sF_{i}(-D_i))\ge 1$.

Next, we let $I_\varphi:\{1,\cdots,n_\varphi\}\to\Lambda_\varphi$ be the order-preserving isomorphism.
%Let $\Xi_0=\{\chi(\sF)\mid (\varphi,\sF)\in\Mxh(\kk)\}$ and
We form
$$\Xi_k=\{\chi(\sF_{j}(-D_{j}))\mid (\varphi,\sF,\xn_0)\in\Mxh(\kk), \, j=I_\varphi(k) \}.
$$
(For $k=0$, we agree $I_{\varphi}(0)=0$ and $D_0=D_{n+1}$.)
Applying the same argument as in Sublemma~2 of the proof of Proposition \ref{bdd2} to $\varphi'_k$,
we conclude that there is an $M>0$ so that for each $k\le a$, $\inf\{\chi\in\Xi_k\}\ge -M$.
(Note that by the bound $\#\Lambda_\varphi\le a$, $\Xi_k=\emptyset$ if $k>a$.)

%For any such $k$ and $(\varphi,\sF,\xn_0)\in\Mxh(\kk)$, we
%consider the induced quotient homomorphism $\varphi_k':\sO_{\Del_k}\to\im \varphi_k$.
%Applying the same argument as in Sublemma~2 of the proof of Proposition \ref{bdd2} to $\varphi'_k$,
%we conclude that there is an $M>0$ so that all $\chi(\im\varphi_{o_\varphi(j)}(-D_{o_\varphi(j)}))\ge -M$.
%Using $\chi(\sF_k)\ge \chi(\im \varphi_k)$ and $\coker(\varphi_k)$ has dimension zero, we prove our assertion.

Lastly, since $\sF$ is normal to $D_i$, we have
\beq\label{chi-idd}\chi(\sF)=\chi(\sF_{0}(-D_{n+1}))+\chi(\sF_{1}(-D_1))+\cdots+\chi(\sF_{{n}}(-D_{n})).
\eeq
Repeating the argument following \eqref{chi-id}, we prove the
%Therefore, $\chi(\sF)\ge \#\Lambda^\complement_\varphi+(N+2)(-M)$. It follows that $n\le \#\Lambda_\varphi+N\le \chi(\sF)+M(N+2)+N$. Applying Lemma \ref{Gro}, we prove the
boundedness of $\Mxh(\kk)$.
\end{proof}

\subsection{Decomposition of the central fiber}

In this subsection, we assume that $Y$ is a disjoint union of two smooth components $Y_-$ and $Y_+$.
We introduce a canonical decomposition of the central fiber of the moduli stacks $\Qxp$ and $\Mxh$ over $C$.
We shall focus on $\Qxp$ and omit the details for $\Mxh$.

Let
$$\fQuot_{\fX_0/\fC_0}^{\sV,P}=\Qxp\times_C 0
$$
be the central fiber of $\Qxp$ over $C$. We denote $\fC^P$ be the weighted stack of weights in $\Lam=\QQ[m]$
(polynomials in $m$)
and of total weight $P$ (c.f. Section 2.5). For each stable quotient $\phi\colon p^*\sV\to\sF$ in $\Qxp(\kk)$,
where $\sF$ is a sheaf on ${\xn_0}$, it assigns a weight $w$ to $\xn_0$ by assigning each irreducible
$\Del_l\sub\xn_0$ (resp. divisor $D_l\sub \xn_0$) the polynomial $\chi^H_{{\sF_{\Delta_l}}}$ (resp. $\chi^H_{{\sF_{D_l}}}$).
Since $\sF$ is admissible, this rule applied to $(\phi, \cX)\in \Qxp(S)$ defines a continuous weight assignment of the family
$\cX/S$. In particular, the morphism $\Qxp\to\fC$ factors through
\beq\label{pip}
\pi_P: \Qxp\lra \fC^P.
\eeq

We now form the set of splittings of $P$: $\Lam_P^{\text{spl}}$, which is the set of triples $\delta=(\delta_\pm,\delta_0)$ in $\Lam$ so that $\delta_-+\delta_+-\delta_0=P$.
We follow the notation developed in Subsection \ref{subsec2.5}.
For any $\delta\in \Lam_P^{\text{spl}}$, we form the moduli of stable relative quotients on $\fD_\pm\sub\fY_\pm$ over $\fA_\diamond$:
%$$\fQuot_{\fY_-/\fA_\diamond}^{\delta_-,\delta_0}\and \fQuot_{\fY_+/\fA_\diamond}^{\delta_+,\delta_0}.
%$$
for any scheme $S$, we define $\fQuot_{\fY_-/\fA_\diamond}^{\delta_-,\delta_0}(S)$ be the
collection of $(\phi; \cY,\cD)$,
where $(\cY,\cD)\in \fY_-(S)$ and $\phi\colon p^*\sV\to\sF$ is an $S$-flat family of stable relative
quotients on the pair $\cD\sub\cY$ such that for any closed $s\in S$,
$\chi^H_{{\sF_s}}=\delta_-$ and $\chi^H_{{{\sF_s}}|_{\cD_s}}=\delta_0$.
We form $\fQuot_{\fY_+/\fA_\diamond}^{\delta_+,\delta_0}$ similarly. By Theorem \ref{properB}, we have

\begin{prop}
The groupoids $\fQuot_{\fY_\pm/\fA_\diamond}^{\delta_\pm,\delta_0}$ are Deligne-Mumford stacks,
proper and separated, and of finite type.
\end{prop}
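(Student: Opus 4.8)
The plan is to exhibit each $\fQuot_{\fY_\pm/\fA_\diamond}^{\delta_\pm,\delta_0}$ as an open and closed substack of the moduli of relative stable quotients with fixed Hilbert polynomial, and then invoke Theorem~\ref{properB}. First I would recall that $\fY_+$ is the stack of expanded relative pairs of $(Y_+,D_+)$, namely the instance of Definition~\ref{DY2} with $Y$ replaced by $Y_+$, $n_-=0$ and $D_-=\emptyset$; all the constructions of Sections~2--4 and the statement of Theorem~\ref{properB} apply verbatim to this (degenerate) configuration with one relative divisor in place of two. In particular, writing $\fQuot^{\sV_0,\delta_+}_{\fD_+\subset\fY_+}$ for the open and closed part of the stack of relative stable quotients on $\fD_+\subset\fY_+$ cut out by $\chi^H_{\sF}=\delta_+$, Theorem~\ref{properB} gives that it is a separated, proper Deligne-Mumford stack of finite type.

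Next I would check that fixing the Hilbert polynomial of the restriction to the relative divisor carves out an open and closed substack. Given $(\phi;\cY,\cD)\in\fQuot^{\sV_0,\delta_+}_{\fD_+\subset\fY_+}(S)$, the sheaf $\sF$ is $S$-flat and each of its members is relative to $\cD$, so by Corollary~\ref{flat-2} (applied as in Proposition~\ref{open-3}) the restriction $\sF|_{\cD}$ is an $S$-flat family of sheaves on $\cD$. Hence $s\mapsto\chi^H_{\sF_s|_{\cD_s}}$ is a locally constant function on $S$, and the locus where it equals $\delta_0$ is open and closed; this locus is exactly $\fQuot_{\fY_+/\fA_\diamond}^{\delta_+,\delta_0}$ viewed inside $\fQuot^{\sV_0,\delta_+}_{\fD_+\subset\fY_+}$. (One also obtains that $\fQuot^{\sV_0,\delta_+}_{\fD_+\subset\fY_+}$ is the disjoint union of the $\fQuot_{\fY_+/\fA_\diamond}^{\delta_+,\delta_0}$ over the finitely many $\delta_0$ that occur, the finiteness coming from the boundedness of the set of closed points; but this is only an aside.) An open and closed substack of a separated, proper, finite-type Deligne-Mumford stack is a union of connected components, hence is itself a Deligne-Mumford stack, separated, proper, and of finite type. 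This proves the assertion for $\fQuot_{\fY_+/\fA_\diamond}^{\delta_+,\delta_0}$; the argument for $\fQuot_{\fY_-/\fA_\diamond}^{\delta_-,\delta_0}$ is identical, and the parallel statement for the moduli of relative coherent systems follows the same way with $\Mydh$ in place of $\Qydp$.

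There is no genuine obstacle here: the proposition is a formal consequence of Theorem~\ref{properB}. The only point requiring attention is the compatibility of restriction-to-$\cD$ with base change, which is what makes $\chi^H_{\sF|_{\cD}}$ a locally constant invariant of the family and hence produces a decomposition into open and closed pieces; this is precisely the content of Corollary~\ref{flat-2} and Proposition~\ref{open-3}, which were established exactly in order to define the evaluation morphisms $\ev_\pm$ and this decomposition. Granting those, the remaining verifications are purely formal stack-theoretic bookkeeping.
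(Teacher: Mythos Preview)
Your proposal is correct and matches the paper's approach: the paper simply writes ``By Theorem~\ref{properB}, we have'' before stating the proposition, treating it as an immediate consequence, and you have spelled out exactly the details needed to justify that invocation---applying Theorem~\ref{properB} to the one-divisor pair $(Y_\pm,D_\pm)$ and then using flatness of the restriction to $\cD_\pm$ (Corollary~\ref{flat-2}) to see that fixing $\delta_0$ cuts out an open and closed substack.
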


Using $\delta\in\Lam_P^{\text{spl}}$, we form the stack $\Cdel$, according to the rule specified in Section $2$. We define
$$\fQuot_{\fX_0^\dagger/\fC_0^\dagger}^{\delta}=\Qxp\times_{\fC^P}\Cdel.
$$
It parameterizes stable quotients $\phi\colon p^*\sV\to\sF$ on $\xn_0$ with a node-marking $D_k\sub \xn_0$
so that the Hilbert polynomials of $\sF$ restricted to $\cup_{i<k}\Delta_i$, to $\cup_{i\ge k}\Delta_i$ and to
$D_k$ are $\delta_-$, $\delta_+$ and $\delta_0$, respectively.

For each  $\delta\in\Lam_P^{\text{spl}}$, like the case of stable morphisms, we have the gluing morphism that factors through $\fQuot_{\fX_0^\dagger/\fA_0^\dagger}^{\delta}$ (it originally maps to $\Qxp\times_C 0$):
\beq\label{glue-H}
\Phi_{\delta}: \fQuot_{\fY_-/\fA_\diamond}^{\delta_-,\delta_0}\times_{\Quot_D^{\sV_D,\delta_0}} \fQuot_{\fY_+/\fA_\diamond}^{\delta_+,\delta_0}\lra
\fQuot_{\fX_0\shar/\fA_0\shar}^{\delta},
\eeq
where $\Quot_D^{\sV_D,\delta_0}$ is the Grothendieck's Quot-scheme of quotient sheaves $\sV_D=\sV|_D\to \sE$ with $\chi^H_{\sE}(v)=\delta_0$.

Using the collection of pairs of line bundles and sections $(L_\delta,s_\delta)$ for $\delta\in \Lam_P^{\text{spl}}$
constructed in Proposition \ref{prop2.24}, and let $\pi_P$ be as in \eqref{pip}, we have

\begin{theo}\label{3.36}
Let $(L_\delta,s_\delta)$ %$\delta\in \Lam_P^{\text{spl}}$,
and the notation be as in Proposition \ref{prop2.24}. Then
\begin{enumerate}
\item $\otimes_{\delta\in \Lam_P^{\rm{spl}}} \, \pi_P\sta L_\delta\cong \sO_{\Qxp}$,
and $\prod_{\delta\in \Lam_P^{\rm{spl}}} \pi_P \sta s_\delta =\pi_P \sta \pi\sta t$;
\item as closed substacks, $\fQuot_{\fX_0^\dagger/\fA_0^\dagger}^\delta= (\pi_P \sta s_\delta=0)$;
\item The morphism $\Phi_\delta$ in \eqref{glue-H} is an isomorphism of Deligne-Mumford stacks.
\end{enumerate}
\end{theo}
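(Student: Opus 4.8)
The statement splits into three parts; parts (1) and (2) are formal consequences of the decomposition of the base stack in Proposition \ref{prop2.24}, and part (3) is the sheaf-theoretic lift of the isomorphism $\Psi_\delta$ of Proposition \ref{isom-stacks}.

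For part (1), recall that by \eqref{pip} the morphism $\Qxp\to\fC$ factors through $\pi_P\colon\Qxp\to\fC^P$. Pulling back the two identities of Proposition \ref{prop2.24}(1) along $\pi_P$ and using functoriality of pull-back gives $\bigotimes_{\delta}\pi_P\sta L_\delta\cong\pi_P\sta\sO_{\fC^P}=\sO_{\Qxp}$ and $\prod_\delta\pi_P\sta s_\delta=\pi_P\sta\pi\sta t$. For part (2), by definition $\fQuot_{\fX_0^\dagger/\fA_0^\dagger}^\delta=\Qxp\times_{\fC^P}\Cdel$ (using the canonical identification $\fC_0^\dagger\cong\fA_\diamond$, and hence of their $\delta$-components), and Proposition \ref{prop2.24}(2) exhibits $\Cdel=\fC_0^{\dagger,\delta}$ as the closed substack $s_\delta\upmo(0)\subset\fC^P$ via $\Phi_\delta$; therefore $\fQuot_{\fX_0^\dagger/\fA_0^\dagger}^\delta=\Qxp\times_{\fC^P}s_\delta\upmo(0)=(\pi_P\sta s_\delta=0)$ as closed substacks of $\Qxp$.

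For part (3) the plan is to construct a morphism inverse to $\Phi_\delta$ and check mutual inverseness componentwise. Given a scheme $S$ and a point of $\fQuot_{\fX_0^\dagger/\fA_0^\dagger}^\delta(S)$ — an $S$-flat family of stable quotients $\phi\colon p\sta\sV\to\sF$ on $(\cX,p)\in\fX_0^\dagger(S)$ with node-marking the singular divisor $\cD_k$ and Hilbert-polynomial splitting $\delta$ — cut $\cX$ along $\cD_k$ into the subchain $\cY_-$ of components to the left of $\cD_k$, an expanded relative pair of $(Y_-,D_-)$ with new relative divisor $\cD_k$, and the subchain $\cY_+$ to the right, an expanded relative pair of $(Y_+,D_+)$; on underlying families this cutting is inverse to the gluing map $\fY_-^{\delta_-,\delta_0}\times\fY_+^{\delta_+,\delta_0}\to\fX_0^{\dagger,\delta}$ appearing in the square preceding Proposition \ref{isom-stacks}, which is an isomorphism once $\Psi_\delta$ is. Restricting $\phi$ gives $\phi_\pm\colon p\sta\sV|_{\cY_\pm}\to\sF|_{\cY_\pm}$; by Lemma \ref{2-flat}, Proposition \ref{flat-5} and Corollary \ref{flat-2}, each $\sF|_{\cY_\pm}$ remains $S$-flat, admissible and normal to $\cD_k$, so $\phi_\pm$ are families of relative quotients, with Hilbert polynomials $\delta_\pm$, and the restrictions of both to $\cD_k\cong D\times S$ coincide with $\sF|_{\cD_k}$, a flat family of quotients of $\sV_D=\sV|_D$ with Hilbert polynomial $\delta_0$. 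Stability is preserved: the $\Gm$-factors acting on components to the left of $\cD_k$ and those acting to the right act independently, so $\Aut_\fX\phi_s$ decomposes as the product of the automorphism groups of $\phi_{\pm,s}$, and finiteness of the former is equivalent to finiteness of both latter. This produces a point of $\fQuot_{\fY_-/\fA_\diamond}^{\delta_-,\delta_0}\times_{\Quot_D^{\sV_D,\delta_0}}\fQuot_{\fY_+/\fA_\diamond}^{\delta_+,\delta_0}(S)$, visibly compatible with base change. Gluing back (the operation defining $\Phi_\delta$) and restricting recovers the original data, and conversely, by Proposition \ref{isom-stacks} on underlying objects and Lemma \ref{2-flat} on sheaves; hence $\Phi_\delta$ is an isomorphism of fibered categories. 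Since source and target of $\Phi_\delta$ are Deligne--Mumford stacks — the factors $\fQuot_{\fY_\pm/\fA_\diamond}^{\delta_\pm,\delta_0}$ by the Proposition preceding Theorem \ref{3.36}, hence their fibre product over the scheme $\Quot_D^{\sV_D,\delta_0}$, and $\fQuot_{\fX_0^\dagger/\fA_0^\dagger}^\delta$ as a closed substack of $\Qxp$ by part (2) — $\Phi_\delta$ is an isomorphism of Deligne--Mumford stacks.

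The main obstacle is the family version of the gluing construction: one must verify that gluing two $S$-flat families of relative stable quotients whose relative-divisor restrictions agree as $S$-flat quotients on $D\times S$ yields an $S$-flat family of stable quotients on the glued expanded degeneration. The key input is Lemma \ref{2-flat}, that a coherent sheaf on a transversal union of two smooth pieces along a Cartier divisor is normal to that divisor precisely when its restriction to each piece is; together with the openness of the admissible and stable loci (Propositions \ref{open-2} and \ref{open-3}) and additivity of Hilbert polynomials over the node decomposition (as in Lemma \ref{delta}), this reduces the matter to the point-set statements above, the rest being routine bookkeeping.
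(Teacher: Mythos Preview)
Your proof is correct and is exactly the argument the paper has in mind: the paper states Theorem \ref{3.36} without an explicit proof, since parts (1)--(2) are immediate pullbacks of Proposition \ref{prop2.24} along $\pi_P$, and part (3) is the evident sheaf-theoretic upgrade of the stack isomorphism $\Psi_\delta$ of Proposition \ref{isom-stacks}, using the admissibility machinery of Section 3 (Proposition \ref{flat-5}, Corollary \ref{flat-2}, Lemma \ref{2-flat}) to guarantee that cutting and gluing along the marked divisor preserve $S$-flatness and admissibility. The gluing step you describe is carried out in detail in the proof of Proposition \ref{existence} (around \eqref{LL}), and your treatment of stability via the product decomposition of $\Aut_\fX$ is the right one.
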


For the case of coherent systems, like Quot-schemes, the morphism $\P^P_{\X/\C}\to\fC$ factors through
\beq\label{pip2}
\pi_P: \P^P_{\X/\C}\lra \fC^P.
\eeq
For any $\delta\in \Lam_P^{\text{spl}}$, we define the moduli of relative stable pairs on $\fD_\pm\sub\fY_\pm$ over $\fA_\diamond$:
$$\P_{\fY_-/\fA_\diamond}^{\delta_-,\delta_0}\and \P_{\fY_+/\fA_\diamond}^{\delta_+,\delta_0}.
$$
They are again Deligne-Mumford stacks, proper and separated, and of finite type; and they both admit an evaluation morphism to the Hilbert scheme $\Hilb_D^{\delta_0}$ via restriction.

Accordingly, for $\delta\in \Lam_P^{\text{spl}}$, we define
$$\P_{\fX_0^\dagger/\fC_0^\dagger}^{\delta}=\Mxh\times_{\fC^P}\Cdel.
$$
We have a glueing morphism
\beq\label{glue-P}
\Phi_{\delta}: \P_{\fY_-/\fA_\diamond}^{\delta_-,\delta_0}\times_{\Hilb_D^{\delta_0}} \P_{\fY_+/\fA_\diamond}^{\delta_+,\delta_0}\lra
\P_{\fX_0\shar/\fC_0\shar}^{\delta}.
\eeq

\begin{theo}\label{3.36pri}
Let $(L_\delta,s_\delta)$ and the notation be as in Proposition \ref{prop2.24}. Then
\begin{enumerate}
\item $\otimes_{\delta\in \Lam_P^{\rm{spl}}} \, \pi_P\sta L_\delta\cong \sO_{\Mxh}$,
and $\prod_{\delta\in \Lam_P^{\rm{spl}}} \pi_P \sta s_\delta =\pi_P \sta \pi\sta t$;
\item as closed substacks, $\P_{\fX_0^\dagger/\fC_0^\dagger}^\delta= (\pi_P \sta s_\delta=0)$;
\item The morphism $\Phi_\delta$ in \eqref{glue-P} is an isomorphism of Deligne-Mumford stacks.
\end{enumerate}
\end{theo}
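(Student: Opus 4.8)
The plan is to imitate the proof of Theorem~\ref{3.36} for Quot-schemes — which itself follows the pattern of the moduli of stable maps in \cite{Li2} — paying attention only to the features special to stable pairs. First, as in \eqref{pip}, one checks that sending a family $(\varphi\colon\sO_{\cX}\to\sF;\cX)\in\Mxh(S)$ to the weight assignment that records $\chi^H_{\sF|_{\Del_l}}$ on each component $\Del_l$ and $\chi^H_{\sF|_{D_l}}$ on each singular divisor $D_l$ is, because $\sF$ is admissible, a continuous weight assignment of total weight $P$; this produces the factorization $\pi_P\colon\Mxh\to\fC^P$ recorded in \eqref{pip2}, and the statement of the theorem refers to the pairs $(L_\delta,s_\delta)$ of Proposition~\ref{prop2.24} pulled back along $\pi_P$ (with $\beta=P$).

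Parts (1) and (2) will then be formal. For (1) we simply pull back along $\pi_P$ the identities $\bigotimes_\delta L_\delta\cong\sO_{\fC^P}$ and $\prod_\delta s_\delta=\pi\sta t$ of Proposition~\ref{prop2.24}(1). For (2) we note that by definition $\P_{\fX_0^\dagger/\fC_0^\dagger}^\delta=\Mxh\times_{\fC^P}\Cdel$, while Proposition~\ref{prop2.24}(2) identifies $\Cdel$ with the closed substack $s_\delta\upmo(0)\sub\fC^P$; composing these gives $\P_{\fX_0^\dagger/\fC_0^\dagger}^\delta\cong\Mxh\times_{\fC^P}s_\delta\upmo(0)=(\pi_P\sta s_\delta=0)$ as closed substacks of $\Mxh$.

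The real content is part (3). To construct an inverse to the gluing morphism $\Phi_\delta$ of \eqref{glue-P}, we start from a family $(\varphi\colon\sO_{\cX}\to\sF;\cX)\in\P_{\fX_0^\dagger/\fC_0^\dagger}^\delta(S)$ together with its node-marking, whose image $\cD\sub\cX$ separates, over every geometric point of $S$, the chain $\xn_0$ into the subchain $\cY_-$ containing $\Del_0=Y_-$ and the subchain $\cY_+$ containing $\Del_{n+1}=Y_+$, with $\cD$ becoming the distinguished relative divisor of each. Since $\sF$ is admissible it is in particular normal to $\cD$, so by Corollary~\ref{flat-2} and Proposition~\ref{flat-4} the restrictions $\sF_\pm\defeq\sF|_{\cY_\pm}$ are $S$-flat, normal to the singular divisors of $\cY_\pm$ and to $\cD$, and $\sF|_{\cD}$ is an $S$-flat family on $\cD$. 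The induced $\varphi_\pm\colon\sO_{\cY_\pm}\to\sF_\pm$ are coherent systems whose cokernels — already disjoint from every singular locus of $\cX$, hence from $\cD$ — have zero-dimensional support away from $\cD$; consequently $\varphi_\pm|_{\cD}\colon\sO_{\cD}\to\sF|_{\cD}$ is \emph{surjective}, so $\sF|_{\cD}$ is a flat family of quotients of $\sO_{\cD}$, i.e.\ a point of $\Hilb_D^{\delta_0}$ (this is exactly the reason the fiber product in \eqref{glue-P} is over $\Hilb_D^{\delta_0}$ rather than over a Quot-scheme as in \eqref{glue-H}). The splitting $\delta$ forces $\chi^H_{\sF_\pm}=\delta_\pm$ and $\chi^H_{\sF|_{\cD}}=\delta_0$, so $(\varphi_\pm;\cY_\pm,\cD)$ lie in $\P_{\fY_\pm/\fA_\diamond}^{\delta_\pm,\delta_0}(S)$ and have the same image under $\ev_\pm$; since the construction is compatible with base change it defines a morphism $\P_{\fX_0^\dagger/\fC_0^\dagger}^\delta\to\P_{\fY_-/\fA_\diamond}^{\delta_-,\delta_0}\times_{\Hilb_D^{\delta_0}}\P_{\fY_+/\fA_\diamond}^{\delta_+,\delta_0}$. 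That this is a two-sided inverse of $\Phi_\delta$ is then verified on the underlying stacks of expanded degenerations, where the gluing construction of $\fX_0^{\dagger,\delta}$ and Proposition~\ref{isom-stacks} already identify the underlying data of $\fX_0^{\dagger,\delta}$ with that of $\fY_-^{\delta_-,\delta_0}$ and $\fY_+^{\delta_+,\delta_0}$ glued along $D\times\fA_\diamond$, and $\fC_0^{\dagger,\delta}$ with $\fA_\diamond^{\delta_-,\delta_0}\times\fA_\diamond^{\delta_+,\delta_0}$; combined with the fact that a coherent system on $\cX$ is uniquely recovered from its restrictions to $\cY_-$ and $\cY_+$ when $\sF$ is normal to $\cD$ (Lemma~\ref{2-flat}), uniqueness of the reconstructed family being a consequence of the separatedness already established in Theorems~\ref{properA} and~\ref{properB}, this closes the loop.

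The hard part is the bookkeeping hidden in the last paragraph: one must check that decomposition and gluing each preserve the full stable-pair condition — one-dimensionality and purity of the sheaf, zero-dimensionality of the cokernel and its disjointness from the singular and relative divisors, and finiteness of the automorphism group — and that the constructions are natural in $S$. Purity of $\sF_\pm$ uses crucially that $\sF$ is normal to $\cD$, so that restriction creates no spurious torsion along $\cD$; and the equivalence ``$\Aut_{\fX}\varphi$ finite'' $\Longleftrightarrow$ ``$\Aut_{\fY_\pm}\varphi_\pm$ finite'' is obtained from the identification of the tori attached to $\fC_0^{\dagger,\delta}$ and to $\fA_\diamond^{\delta_-,\delta_0}\times\fA_\diamond^{\delta_+,\delta_0}$ in Proposition~\ref{isom-stacks}. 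These are the stable-pair analogues of the verifications carried out for Quot-schemes in Theorem~\ref{3.36} and for stable maps in \cite{Li2}, and once they are in place part (3), and hence the theorem, follows.
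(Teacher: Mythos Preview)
The paper does not actually supply a proof of Theorem~\ref{3.36pri} (nor of its Quot-scheme analogue Theorem~\ref{3.36}); both are stated without proof, the implicit claim being that they follow from the structural results on the weighted stacks $\fC^P$, $\fC_0^{\dagger,\delta}$ and $\fA_\diamond^{\delta_\pm,\delta_0}$ already assembled in Proposition~\ref{prop2.24} and Proposition~\ref{isom-stacks}, together with the pattern of the stable-maps case in \cite{Li2}. Your sketch is exactly the argument the paper is pointing to: parts (1) and (2) are indeed formal pullbacks of Proposition~\ref{prop2.24}, and your construction of the inverse to $\Phi_\delta$ via restriction along the node-marking, combined with Proposition~\ref{isom-stacks} for the underlying expanded-degeneration data and Lemma~\ref{2-flat}/Corollary~\ref{flat-2} for the sheaf-theoretic bookkeeping, is the intended route.

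One small comment: invoking the separatedness of Theorems~\ref{properA} and~\ref{properB} for the uniqueness of the reconstructed family in part (3) is more than is needed; the point is purely local and follows already from the normalization exact sequence $0\to\sF\to\iota_{-\ast}\sF_-\oplus\iota_{+\ast}\sF_+\to\sF|_{\cD}\to 0$ (valid because $\sF$ is normal to $\cD$), which recovers $\varphi$ from $(\varphi_-,\varphi_+)$ and their common restriction to $\cD$ without any appeal to valuative criteria. Otherwise your sketch is correct and matches the paper's implicit argument.
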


\section{Virtual cycles and their degenerations}

Let $\pi\colon X\to C$ and $H$ ample on $X$ be a simple degeneration of projective threefolds.
We fix a degree one polynomial $P(v)$. % (c.f. Definition \ref{ordering}).
Applying Theorem~\ref{properA}, we form the good degeneration $\Ixp:=\mathfrak{Quot}_{\X/\C}^{\sO_X,P}$ of Hilbert scheme of subschemes of
$X/C$, of Hilbert polynomial $P$.

In this section, we construct the virtual class of $\Ixp$, and use this class to prove a degeneration formula
of the Donaldson-Thomas invariants of ideal sheaves. For notational simplicity, we only treat the case where
the central fiber $X_0$ is the union of two irreducible components and their intersection $D\sub X_0$ is connected.
Our construction of perfect relative obstruction theory of $\Ixp\to\fC^P$ is based on the work of
Huybrechts-Thomas on Atiyah class \cite{HT}; our proof of degeneration formula follows the proof of a similar degeneration formula by Maulik, Pandharipande and Thomas in \cite{MPT}; the formulation of degeneration based on Chern characters
follows the work of Maulik, Nekrasov, Okounkov and Pandharipande in \cite{MNOP2}.

%For the ease of expressing the decomposition formula,
As $X_0$ is assumed to have two irreducible components, the normalization $q: Y\to X_0$ has two connected components
$$Y=Y_-\cup Y_+,\and D_\pm=Y_\pm\cap q\upmo(D).
$$

\subsection{Virtual cycle of the total space}

We first construct the relative obstruction theory of $\Ixp\to\fC^P$ (c.f. \eqref{xcbeta}). We let
$$\pi: \cX=\fX\times_{\fC}\Ixp\lra \Ixp, \and \sI_{\cZ}\sub\sO_{\cX}
$$
be the universal underlying family and the universal ideal sheaf of $\Ixp$. %Let $\pi: \cX\to \Ixp$ be the projection.
We form the traceless part of the derived homomorphism of sheaves of $\sO_{\cX}$-modules:
\beq\label{der-hom}
E=R\pi\lsta \RHom(\sI_{\cZ},\sI_{\cZ})_0[1].
\eeq
Since $\cX\to\Ixp$ is a family of l.c.i. schemes, and $\sI_{\cZ}$ is admissible and of rank one,
by Serre duality, locally $E$ is a two-term perfect complex concentrated at $[0,1]$.

Let
$$L_{\Ixp/\fC^P}=\tau^{\ge -1}\LL_{\Ixp/\fC^P}$$
be the truncated relative cotangent complex of $\Ixp\to\fC^P$.
%Since $\sI_{\cZ}$ is a family of admissible ideal sheaves of $\cX$ flat over $\Ixp$, it admits a locally free resolution of finite length.

\begin{prop}[{\cite[Prop 10]{MPT}}]\label{P6.1}
The Atiyah class constructed in \cite{HT} defines a perfect relative obstruction theory
\beq\label{po}\phi: E\dual \lra L_{\Ixp/\fC^P}.
\eeq
\end{prop}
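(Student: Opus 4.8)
The plan is to follow the now-standard construction of perfect obstruction theories from the Atiyah class, as developed by Illusie and made concrete for moduli of sheaves and ideal sheaves by Huybrechts--Thomas \cite{HT}, adapting it to the \emph{relative} setting over $\fC^P$. First I would record the abstract input: for a flat family $\pi\colon\cX\to\Ixp$ and any perfect complex $\sI_{\cZ}$ on $\cX$, the relative Atiyah class lives in $\Ext^1_{\cX}(\sI_{\cZ},\sI_{\cZ}\otimes\LL_{\cX/\Ixp}^{\vee\,\vee}?)$; more precisely one uses the exact triangle of cotangent complexes for $\cX\to\Ixp\to\fC^P$ and the truncated $\LL_{\cX/\fC^P}\to\pi^*\LL_{\Ixp/\fC^P}$ to produce, via composition with the Atiyah class $\mathrm{at}(\sI_{\cZ})\in\Ext^1_{\cX}(\sI_{\cZ},\sI_{\cZ}\otimes\LL_{\cX/\fC^P})$, an element of
$$\Ext^1_{\cX}\bigl(\sI_{\cZ},\sI_{\cZ}\otimes\pi^*L_{\Ixp/\fC^P}\bigr).$$
By Grothendieck--Verdier duality and adjunction this is the same datum as a morphism $E^\vee\to L_{\Ixp/\fC^P}$ after taking the traceless part, where $E=R\pi_*\RHom(\sI_{\cZ},\sI_{\cZ})_0[1]$. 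The map $\phi$ of the Proposition is exactly this morphism.

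Next I would verify that $\phi$ is an obstruction theory in the sense of Behrend--Fantechi, relative to $\fC^P$: that $h^0(\phi)$ is an isomorphism and $h^{-1}(\phi)$ is surjective. This is checked by deformation theory. For a square-zero extension $A'\to A$ of $\fC^P$-algebras (or, in the stacky setting, over a smooth chart $C[n]$ of $\fC^P$) with ideal $J$, the obstruction to deforming a flat quotient/ideal sheaf, and the torsor of deformations when the obstruction vanishes, are governed respectively by $\Ext^2_0$ and $\Ext^1_0$ of $\sI_{\cZ}$ with $\sI_{\cZ}\otimes J$ \emph{on the fibre over the relevant point of $\fC^P$}. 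The point here --- and this is where I expect the argument to require care rather than a mere citation of \cite{HT} --- is that $\cX\to\Ixp$ is a family of l.c.i.\ (indeed simple-normal-crossing) schemes rather than smooth ones, so the cotangent complex of the fibres is genuinely two-term, and $\RHom(\sI_{\cZ},\sI_{\cZ})$ is not automatically concentrated in the expected degrees unless one uses admissibility. The excerpt already asserts (just before the Proposition) that $E$ is, locally, a two-term perfect complex in $[0,1]$, using that $\sI_{\cZ}$ is admissible and rank one together with relative Serre duality for l.c.i.\ morphisms; I would invoke exactly this to get perfectness of $E^\vee$ in $[-1,0]$, matching $L_{\Ixp/\fC^P}$.

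The main obstacle, then, is the compatibility of the Atiyah-class construction with the l.c.i.\ (not smooth) geometry of $\fX\to\fC$ and with the stack structure. Concretely: (i) one must work with $\tau^{\ge -1}$ of the cotangent complexes throughout and check that truncation does not destroy the functoriality of the Atiyah class needed to land in $L_{\Ixp/\fC^P}$ rather than the full $\LL_{\Ixp/\fC^P}$; (ii) one must check the construction descends from the smooth atlas $\coprod_n[\Quot^{p^*\sV,\st}_{X[n]/C[n]}/\Gm^n]$ to the DM stack $\Ixp$, which follows because the Atiyah class is canonical and $\Gm^n$-equivariant, hence glues; and (iii) the fibrewise deformation computation must be carried out over points of $\fC^P$ where the fibre $X[n]_0$ is singular, using admissibility of $\sI_{\cZ}$ to ensure $\Ext^0_0=0$ (stability/finite automorphisms) and that $\Ext^{\ge 3}_0$ contributes nothing in the relevant range. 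Since all of these steps are exactly what is carried out in \cite[Prop.~10]{MPT} in the analogous situation, the proof I would write is: set up the relative Atiyah class as above, identify $\phi\colon E^\vee\to L_{\Ixp/\fC^P}$, cite \cite{HT,MPT} for the functoriality and the perfectness, and then reduce the obstruction-theory axioms to the fibrewise $\Ext$-computation on admissible ideal sheaves, which is a standard deformation-theory statement. I would flag explicitly that the only genuinely new point relative to \cite{MPT} is the verification over $\fC^P$ (the weighted stack of expanded degenerations) rather than over a point or a curve, and that this verification is purely formal once one works over the smooth charts $C[n]$ and uses the $\Gm^n$-equivariance.
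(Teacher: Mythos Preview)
The paper does not give its own proof of this proposition: it is stated with attribution to \cite[Prop.~10]{MPT} and taken as established, with the surrounding text only recording that $E$ is locally a two-term perfect complex in $[0,1]$ (by admissibility, rank one, and relative Serre duality for the l.c.i.\ family). Your sketch is a reasonable outline of how one verifies the result---and is essentially what \cite{MPT} does---but there is nothing in this paper to compare against beyond the bare citation.
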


We let $[\Ixp]\virt\in A_*\Ixp$ be the associated virtual class.

\begin{prop}
Let $c\ne 0\in C$, and let $i_c^!: A\lsta \Ixp\to A_{\ast-1} \fI_{X_c}^P$ be the Gysin map associate to the divisor $c\in C$. Then $i_c^! [\Ixp]\virt=[\fI_{X_c}^P]\virt$.
\end{prop}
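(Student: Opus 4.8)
The plan is to show that restricting the perfect relative obstruction theory \eqref{po} of $\Ixp\to\fC^P$ to the fiber over $c\in C$ recovers the standard perfect obstruction theory on $\fI_{X_c}^P\cong\Hilb_{X_c}^P$, and then to invoke the compatibility of virtual classes with Gysin pullback along a regular embedding (Behrend--Fantechi). First I would observe that $c\in C$ is a regular point, so the inclusion $i_c\colon\{c\}\hookrightarrow C$ is a regular embedding of codimension one; pulling back along $\fC^P\to C$ (which near $X_c$ is just $C$ itself, since no expansion occurs over $c\ne 0$), we get a Cartier divisor $\fI_{X_c}^P\hookrightarrow\Ixp$ and the associated Gysin map $i_c^!\colon A_*\Ixp\to A_{*-1}\fI_{X_c}^P$.

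Next I would identify the restricted obstruction theory. Write $\cX_c=\cX\times_{\Ixp}\fI_{X_c}^P$, which is just $X_c\times\fI_{X_c}^P$ since the expanded degeneration is trivial away from $0$, and let $\sI_{\cZ_c}$ be the restriction of the universal ideal sheaf. Because $\sI_{\cZ}$ is flat over $\Ixp$ and $X_c$ is smooth (so in particular l.c.i.), base change for $R\pi_*R\cH om$ gives $E|_{\fI_{X_c}^P}\cong R\pi_{c*}R\cH om(\sI_{\cZ_c},\sI_{\cZ_c})_0[1]$, which is exactly the complex defining the Donaldson-Thomas obstruction theory of $\fI_{X_c}^P$. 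Functoriality of the Atiyah class construction of \cite{HT} under base change along $\fI_{X_c}^P\to\Ixp$ then shows that $\phi|_{\fI_{X_c}^P}$ is the standard perfect obstruction theory on $\fI_{X_c}^P$ relative to a point (note $L_{\Ixp/\fC^P}|_{\fI_{X_c}^P}=L_{\fI_{X_c}^P}$, since the fiber of $\fC^P\to C$ over $c$ is a point). Hence $[\fI_{X_c}^P]\virt$ as defined by this restricted theory agrees with the usual DT virtual class.

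Finally I would apply the base-change property of virtual classes: given a perfect obstruction theory on $\Ixp$ relative to the smooth base $\fC^P$ and a Gysin map $i_c^!$ coming from the regular embedding $\{c\}\hookrightarrow C$ pulled back to $\fC^P$, one has $i_c^![\Ixp]\virt=[\fI_{X_c}^P]\virt$ where the right side is formed from the restricted relative obstruction theory. This is the compatibility statement in \cite[\S7]{Behrend-Fantechi} (or the refined Gysin/virtual pullback formalism of Manolache); the only thing to check is that the relative obstruction theory behaves well under the base change $\fC^P\to C$ near $X_c$, which is immediate since this base change is an isomorphism onto a neighborhood of $c$ in $C$.

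The main obstacle, I expect, is the careful bookkeeping of cotangent complexes and obstruction theories under the two successive base changes (first $\Ixp\to\fC^P\to C$, then restricting to $c$): one must verify that $\fC^P$ is smooth over $C$ near the locus lying over $c$, that the relative theory $E\dual\to L_{\Ixp/\fC^P}$ restricts compatibly, and that the virtual pullback $i_c^!$ commutes with forming virtual classes — all of which are standard but require invoking the correct functoriality statements rather than a computation. The DM-stack setting (as opposed to schemes) requires that one uses the version of virtual pullback valid for Deligne--Mumford stacks, which is available in the literature.
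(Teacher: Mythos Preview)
Your proposal is correct and takes essentially the same approach as the paper: the paper's proof is a single sentence observing that the obstruction theory of $\fI_{X_c}^P$ is the pullback of the relative obstruction theory of $\Ixp\to\fC^P$ via $c\in C$, citing \cite{BF}. You have simply unpacked this into its constituent steps (identifying $\fC^P$ with $C$ near $c\ne 0$, base-changing $E$ and the Atiyah-class map, and invoking the Behrend--Fantechi compatibility of virtual classes with Gysin pullback), which is exactly the content the paper compresses into that one line.
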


\begin{proof}
This is because the obstruction theory of $\fI_{X_c}^P$ is the pull back of the relative obstruction theory of $\Ixp\to \fC^P$ via
$c\in C$ (c.f. \cite{BF}).
\end{proof}

Next we construct the virtual class of the relative Hilbert schemes.
In the subsequent discussion, we use that $Y=Y_-\cup Y_+$ is the union of $Y_-$ and $Y_+$.
We let $\delta=\{(\delta_+,\delta_0),(\delta_-,\delta_0)\}$ be two pairs of polynomials.

We denote by $\fI_{\fY_+/\fA_\diamond}^{\delta_+,\delta_0}$ the moduli of stable relative
ideal sheaves on $\fD_+\sub \fY_+$ of pair Hilbert polynomial $(\delta_+,\delta_0)$.
For simplicity, we abbreviate it to $\cM^\delta_+$.
Let $\text{$L$\raisebox{-3pt}{$\scriptstyle{\cM^\delta_+/\scrAdel}$}}$ %\defeq \tau^{\ge -1}\LL_{\cM^\delta_+/\fA_\diamond}$
be the truncated relative cotangent complex of
$\cM^\delta_+=\fI_{\fY_+/\fA_\diamond}^{\delta_+,\delta_0}\to\fA^{{\delta}_+,{\delta}_0}_\diamond$.
Let
$$\pi_+: \cY_+\lra \fI_{\fY_+/\fA_\diamond}^{\delta_+,\delta_0}\and \sI_{\cZ_+}\sub \sO_{\cY_+}
$$
be the universal underlying family and the universal ideal sheaf of $\fI_{\fY_+/\fA_\diamond}^{\delta_+,\delta_0}$.
%We form
%$$ E_{+}\defeq R\pi_{+\ast} \RHom(\sI_{\cZ_{+}},\sI_{\cZ_{+}})_0[1].
%$$
%Locally it is a two-term perfect complex concentrated at $[0,1]$.

\begin{prop}[{\cite{MPT}}]\label{P6.4}
The Atiyah class in \cite{HT} defines a perfect relative obstruction theory
\beq\label{relob}
\phi_{+}: E_+\dual \defeq R\pi_{+\ast} \RHom(\sI_{\cZ_{+}},\sI_{\cZ_{+}})_0[1]\dual \lra \text{$L$\raisebox{-3pt}{$\scriptstyle{\cM^\delta_+/\scrAdel}$}}.
\eeq
\end{prop}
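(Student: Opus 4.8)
The plan is to carry out, relative to the Artin stack $\fA_\diamond$ (rather than $\fC^P$) and in the presence of the relative divisor $\fD_+\subset\fY_+$, precisely the construction of Huybrechts--Thomas \cite{HT} that Maulik--Pandharipande--Thomas used in \cite{MPT} to prove Proposition~\ref{P6.1}. First I would note that $E_+\dual=\big(R\pi_{+\ast}\RHom(\sI_{\cZ_+},\sI_{\cZ_+})_0[1]\big)\dual$ is, locally on $\cMp$, a perfect complex in amplitude $[-1,0]$, by the same argument as for Proposition~\ref{P6.1}: over the smooth atlas $Y[\nn]\to\fY_+$ the morphism $\cY_+\to\cMp$ is a family of reduced l.c.i.\ threefolds (Definition~\ref{relative} and the construction of $\fY_+$) with invertible dualizing sheaves, and $\sI_{\cZ_+}$ is $\cMp$-flat, torsion-free of rank one, and admissible; relative Serre duality on the l.c.i.\ fibres together with admissibility confines the outermost cohomology sheaves of $R\pi_{+\ast}\RHom(\sI_{\cZ_+},\sI_{\cZ_+})$ to the contributions of $R^0\pi_{+\ast}\sO_{\cY_+}$ and $R^3\pi_{+\ast}\sO_{\cY_+}$, and these are removed by passing to the traceless summand (which splits off since the characteristic is zero).

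Next I would produce the morphism $\phi_+\colon E_+\dual\to L_{\cMp/\fA^{\delta_+,\delta_0}_\diamond}$. Since $\cY_+\to\fA_\diamond$ is a morphism of finite-type algebraic stacks with l.c.i.\ fibres, its relative cotangent complex $\LL_{\cY_+/\fA_\diamond}$ is perfect, so the truncated relative Atiyah class of $\sI_{\cZ_+}$ of \cite{HT} gives a class in $\Ext^1_{\cY_+}\big(\sI_{\cZ_+},\sI_{\cZ_+}\otimes\LL_{\cY_+/\fA_\diamond}\big)$. Composing with the canonical map $\LL_{\cY_+/\fA_\diamond}\to\LL_{\cY_+/\fY_+}\cong\pi_+^{*}\LL_{\cMp/\fA_\diamond}$ (the last identification because $\cY_+\cong\fY_+\times_{\fA_\diamond}\cMp$ and $\fY_+\to\fA_\diamond$ is smooth), then taking $R\pi_{+\ast}$ and using the projection formula and adjunction, one obtains a morphism $R\pi_{+\ast}\RHom(\sI_{\cZ_+},\sI_{\cZ_+})[1]\to L_{\cMp/\fA_\diamond}$; restricting to the traceless summand yields $\phi_+$. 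This is \cite[Prop.~10]{MPT} word for word, with $(\fX,\fC^P)$ replaced by $(\fY_+,\fA^{\delta_+,\delta_0}_\diamond)$.

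It remains to check that $\phi_+$ is a relative obstruction theory, i.e.\ that $h^0(\phi_+)$ is an isomorphism and $h^{-1}(\phi_+)$ is surjective. This is the usual deformation-theoretic identification: $\Hom(\sI_{\cZ_+},\sI_{\cZ_+})_0$ and $\Ext^1(\sI_{\cZ_+},\sI_{\cZ_+})_0$ compute the infinitesimal automorphisms and deformations of $\sI_{\cZ_+}$ on a fixed expanded relative pair, matching $T_{\cMp/\fA_\diamond}$ and the relative obstruction space, exactly as in \cite{HT,MPT}. The two points I expect to require genuine care --- and where the real content of the proof lies --- are (i) that the base is the Artin stack $\fA_\diamond$, so one must invoke perfectness of $\LL_{\cY_+/\fA_\diamond}$ and the compatibility of the entire construction with the smooth base change $Y[\nn]\to\fY_+$, thereby reducing every verification to the atlas; and (ii) that $\cMp$ parametrizes ideal sheaves \emph{relative} to $\fD_+$. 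For (ii), admissibility (Definition~\ref{relative}) and Corollary~\ref{flat-2} ensure that the restriction of $\sI_{\cZ_+}$ to $\fD_+$ stays flat and that infinitesimal deformations of an admissible ideal sheaf are automatically admissible, so no logarithmic term is needed in $E_+$ beyond what $\LL_{\cY_+/\fA_\diamond}$ already contributes; with these observations in place, the proof of \cite{HT,MPT} goes through unchanged.
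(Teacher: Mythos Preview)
Your proposal is correct and matches the intended argument, but note that the paper itself gives no proof of this proposition: it is stated with the attribution [\cite{MPT}] and immediately followed by the definition of the virtual class, with no proof environment at all. The construction you outline---the truncated relative Atiyah class of \cite{HT}, the factorization through $\LL_{\cY_+/\fY_+}\cong\pi_+^*\LL_{\cMp/\fA_\diamond}$, and the amplitude and obstruction-theory checks via relative Serre duality on l.c.i.\ fibres---is exactly the content of \cite[Prop.~10]{MPT} adapted to the relative pair $(\fY_+,\fD_+)$ over $\fA_\diamond$, which is what the paper is invoking by citation. Your points (i) and (ii) are the right places to flag care, and your treatment of them is consistent with how \cite{MPT} handles the analogous issues; there is nothing to correct.
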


The obstruction theory defines its virtual class
$[\fI_{\fY_+/\fA_\diamond}^{\delta_+,\delta_0}]\virt\in A\lsta \fI_{\fY_+/\fA_\diamond}^{\delta_+,\delta_0}$.
By replacing the subscript ``$+$'' with ``$-$'', we obtain a parallel theory for $\cM^\delta_-\defeq \fI_{\fY_-/\fA_\diamond}^{\delta_-,\delta_0}$.

\subsection{Decomposition of the virtual cycle}

We study the decomposition of the virtual cycles of the central fiber $\Ixpo\defeq \Ixp\times_C0$.

We let $\Lambda_P^\spl$ be the collection of triples  $\delta=(\delta_-,\delta_+,\delta_0)$ of polynomials
in $\sA$ so that $\delta_++\delta_--\delta_0=P$.
Following the notation developed in Section 5, the morphism $\Ixp\to \fC$ lifts to $\pi_P:\Ixp\to \fC^P$.
Fixing a splitting data $\delta\in \Lambda_P^\spl$, we define the closed substack
$\Idel$ via the Cartesian diagram
\beq\label{diag0}
\begin{CD}
\Idel\defeq \Ixp\times_{\fC^P}\Cdel @>>> \Ixp\\
@VVV @V{\pi_P}VV \\
\fC_0^{\dagger,\delta}@>>>\fC^P.
\end{CD}
\eeq
We denote by $(L_\delta,s_\delta)$ the pair of the line bundle and the section for $\delta\in \Lam_P^{\text{spl}}$
constructed in Proposition \ref{prop2.24}. Then $\fC_0^{\dagger,\delta}=(s_\delta=0)\sub \fC^P$; and by Theorem \ref{3.36}, $\Idel=(\pi_P^*s_\delta=0)$. We define
$$c_1^\loc(L_\delta, s_\delta): A\lsta \Ixp\lra A_{\ast-1} \Idel
$$
be the localized first Chern class of $(L_\delta, s_\delta)$.
\vsp
We define the perfect relative obstruction theory of $\Idel\to\fC_0^{\dagger,\delta}$ by pulling back
the relative obstruction theory \eqref{po} of $\Ixp\to\fC^P$ via the diagram \eqref{diag0}:
\beq\label{phi-del}\phi_\delta: E_\delta\dual\defeq
R\pi_{\delta\ast} \RHom(\sI_{\cZ_\delta},\sI_{\cZ_\delta})_0[1]\dual \lra L_{\scrIdel/\fC_0^{\dagger,\delta}},
\eeq
where
$$\pi_\delta: \cX_\delta\to \Idel\and \sI_{\cZ_\delta}\sub \sO_{\cX_\delta}
$$
is the universal family
of $\Idel$, which is also the pull back of $(\cX,\sI_{\cZ})$ to $\Idel$ via the arrow in \eqref{diag0}.

Applying \cite{BF},  we get
\beq \label{cloc}
i_0^![\Ixp]\virt=[\Idel]\virt=c_1^\loc(L_\delta, s_\delta)[\Ixp]\virt.
\eeq

\begin{prop}\label{prop-virt-deco}
Let $\iota_\delta:  \Idel\to\Ixpo$ be the inclusion. We have an identity of cycle classes
\beq\label{decomp-virt}
i_0^![\Ixp]\virt=\sum_{\delta\in \Lambda_P^\spl}\iota_{\delta\ast}[\Idel]\virt.
\eeq
\end{prop}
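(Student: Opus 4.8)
The plan is to combine three ingredients already available: the compatibility \eqref{cloc} identifying $i_0^![\Ixp]\virt$ with $c_1^\loc(L_\delta,s_\delta)[\Ixp]\virt$ for each individual $\delta$, the multiplicativity statement in Theorem~\ref{3.36}(1) that $\bigotimes_{\delta\in\Lam_P^{\spl}}\pi_P\sta L_\delta\cong\sO_{\Ixp}$ with $\prod_\delta\pi_P\sta s_\delta=\pi_P\sta\pi\sta t$, and the identification in Theorem~\ref{3.36}(2) of $\Idel$ with the zero locus $(\pi_P\sta s_\delta=0)$. First I would record that $\pi_P\sta\pi\sta t$ is precisely the pullback to $\Ixp$ of the coordinate function cutting out $0\in C$, so that its zero scheme is $\Ixpo$ and $i_0^!$ is the Gysin map for this Cartier divisor on $\Ixp$. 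The divisor $\Ixpo\sub\Ixp$ thus decomposes, as an effective Cartier divisor, into the sum $\sum_{\delta\in\Lam_P^{\spl}}\Idel$ of the Cartier divisors $(\pi_P\sta s_\delta=0)$, since the section $\pi_P\sta\pi\sta t$ factors as the product of the sections $\pi_P\sta s_\delta$ and the line bundles multiply to the trivial one.

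Next I would invoke the behaviour of refined Gysin maps / localized Chern classes under such a factorization. Writing $s=\pi_P\sta\pi\sta t=\prod_\delta s_\delta'$ with $s_\delta'=\pi_P\sta s_\delta$ a section of $L_\delta'=\pi_P\sta L_\delta$, and using $\bigotimes_\delta L_\delta'\cong\sO_{\Ixp}$, one has the standard identity of localized operations
\[
c_1^\loc\bigl(\textstyle\bigotimes_\delta L_\delta',\ \prod_\delta s_\delta'\bigr)=\sum_{\delta\in\Lam_P^{\spl}}\iota_{\delta\ast}\, c_1^\loc(L_\delta',s_\delta'),
\]
where on the left the localized class lives on $(s=0)=\Ixpo$ and each summand on the right is pushed forward along $\iota_\delta\colon\Idel\to\Ixpo$. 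Applying both sides to $[\Ixp]\virt$ and using \eqref{cloc} on each term, which gives $c_1^\loc(L_\delta',s_\delta')[\Ixp]\virt=[\Idel]\virt$, yields exactly \eqref{decomp-virt}. The left-hand side equals $i_0^![\Ixp]\virt$ because, for the single divisor $(s=0)$, the localized first Chern class of $(\sO,s)$ applied to a class is the refined Gysin pullback $i_0^!$ of that class.

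I expect the main obstacle to be the careful bookkeeping of the additivity of localized Chern classes over a product of sections of a collection of line bundles whose tensor product is trivial, and its compatibility with the virtual class: one must make sure the various $c_1^\loc(L_\delta',s_\delta')$ are the same localized operators that appear in \eqref{cloc} (where they were obtained via \cite{BF} from the pulled-back obstruction theory \eqref{phi-del}), and that the pushforwards $\iota_{\delta\ast}$ are along the correct maps $\Idel\to\Ixpo$ rather than into $\Ixp$. This is essentially the same argument used by Maulik--Pandharipande--Thomas in \cite{MPT} and by the first author in \cite{Li2} for the Gromov--Witten case; once the divisor decomposition $\Ixpo=\sum_\delta\Idel$ and the triviality of $\bigotimes_\delta L_\delta'$ are in place, the identity is formal, so the proof will mostly consist of citing \eqref{cloc}, Theorem~\ref{3.36}, and the elementary additivity of localized Chern classes, then assembling them.
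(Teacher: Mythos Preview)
Your proposal is correct and follows essentially the same approach as the paper: the paper's one-line proof simply cites item (1) of Theorem~\ref{3.36} and the identity \eqref{cloc}, which is precisely the additivity-of-localized-Chern-classes argument you spell out in detail. Your expanded discussion of the factorization $\prod_\delta \pi_P^*s_\delta=\pi_P^*\pi^*t$ and the resulting identity $i_0^!=\sum_\delta\iota_{\delta\ast}\,c_1^\loc(L_\delta',s_\delta')$ is exactly the mechanism the paper is invoking implicitly.
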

\begin{proof}
This follows from item (1) of Theorem \ref{3.36} and the identity \eqref{cloc}.
\end{proof}

To reinterpret the terms in the summation of \eqref{decomp-virt}, we will express them in terms of the virtual class of relative
Hilbert schemes. For this, we will use the Cartesian product (keeping the abbreviation
$\fI_{\fY_\pm/\fA_\diamond}^{\delta_\pm,\delta_0}=\cM^\delta_{\pm}$)
\beq\label{diag}
\begin{CD}
\cMm\times_{\Hilb_D^{\delta_0}}\cMp@>u>>\cMm\times\cMp\\
 @VfVV @VV(\ev_-,\ev_+)V\\
 \Hilb_D^{\delta_0}@>\triangle>>\Hilb_D^{\delta_0}\times \Hilb_D^{\delta_0},
\end{CD}
\eeq
where $\ev_\pm$ are the evaluation morphisms and $\triangle$ is the diagonal morphism,
and use the isomorphism (c.f. Theorem \ref{3.36})
\beq\label{arrow-1}
\Phi_\delta:\cMm\times_{\Hilb_D^{\delta_0}}\cMp\lra \Idel.
\eeq
Note that the relative obstruction theory of $\Idel\to\fC_0^{\dagger,\delta}$
endows $\cMm\times_{\Hilb_D^{\delta_0}}\cMp\to\fC_0^{\dagger,\delta}$ a perfect relative obstruction theory;
also
\beq\label{arrow-2}
\cMm\times\cMp
\lra\fA^{{\delta}_-,{\delta}_0}_\diamond\times \fA^{{\delta}_+,{\delta}_0}_\diamond
\eeq
has a perfect relative obstruction theory induced
from that of its factors. We will compare these two obstruction theories.

%We let $\cX_\delta\to \Idel$ with $\sI_{\delta}\sub\sO_{\cX_\delta}$ be the universal family of ;
We continue to denote by $\cX_\delta\to \Idel$ with $\sI_{\delta}\sub\sO_{\cX_\delta}$
(resp. $\cY_\pm\to\cM_\pm$ with $\sI_\pm\sub \sO_{\cY_\pm}$) the universal family of $\Idel$
(resp. $\cM^\delta_\pm$).
We let
$$\ti\cY_\pm=\cY_\pm\times_{\cM^\delta_\pm} \Idel\and
\ti\sI_\pm=\sI_\pm\otimes_{\sO_{\cY_\pm}}\sO_{\ti\cY_\pm},
$$
where $\Idel\to \cM^\delta_\pm$ is the composite of $\Phi_\delta\upmo$ (cf. \eqref{arrow-1}) with the
projection; we let
$\cD_\delta\sub\cX_\delta$ be the total space of the distinguished (marked) divisor (of
$\Idel$). We have the short exact sequence
\beq\label{seq-11}
0\lra \sI_\delta\lra \ti\sI_+\oplus\ti\sI_-\mapright{(1,-1)} \ti\sI_0\lra 0,
\eeq
where $\ti\sI_0\defeq \sI_\delta\otimes_{\sO_{\cX_\delta}}\sO_{\cD_\delta}$.
Because of the admissible requirement, $\ti\sI_0$ is an ideal sheaf of $\sO_{\cD_\delta}$,
and via the $f$ in \eqref{diag}, we have isomorphism as ideal sheaves of $\sO_{\cD_\delta}$:
\beq\label{Hilb}
\ti\sI_0\cong
\sI_{\cZ_D}\otimes_{\sO_{\Hilb_D^{\delta_0}}}\sO_{\scrIdel},
\eeq
where $\cZ_D\sub D\times \Hilb_D^{\delta_0}$ is the universal family of ${\Hilb_D^{\delta_0}}$.

Let $\pi_\delta: \cX_\delta\to \Idel$,
let $\ti\pi_\pm: \ti\cY_\pm\to \Idel$ and $\ti\pi_0: \cD_\delta\to \Idel$ be the corresponding projections.
According to \cite[p.961]{MPT}, we have the following commutative diagram of derived objects
\beq\notag
\begin{CD}
\LMdel[-1]@>>>\Lprod[-1]@>>>\LMdelt \\
@VVV @VVV @VVV @.\\
R\pi_{\delta\ast}\RHom(\sI_\delta,\sI_\delta)_0@>>>\bigoplus_{-,+} R\ti\pi_{\pm\ast}\RHom(\ti\sI_\pm,\ti\sI_\pm)_0
@>>> R\ti\pi_{0\ast}\RHom(\ti\sI_0,\ti\sI_0)_0, \\
\end{CD}
\eeq
where the vertical arrows are the dual of the perfect obstruction theories, and the lower sequence is part of the
distinguished triangle induced by \eqref{seq-11}.

We claim that, under the morphism $f$ in \eqref{diag},
\beq\label{Rf}
R\ti\pi_{0\ast}\RHom(\ti\sI_0,\ti\sI_0)_0\dual \cong
f\sta L_\triangle,\quad L_\triangle\defeq L_{\Hilb_D^{\delta_0}/\Hilb_D^{\delta_0}\times \Hilb_D^{\delta_0}},
\eeq
and via this isomorphism the last vertical arrow in the above diagram is identical to the canonical arrow
\beq\label{Lf}
\LMdelt \lra f\sta L_\triangle\dual.
\eeq
Indeed, since $\Hilb_D^{\delta_0}$ is smooth, and the conormal bundle of $\Hilb_D^{\delta_0}$
in $\Hilb_D^{\delta_0}\times \Hilb_D^{\delta_0}$ via the diagonal $\Del$ is
isomorphism to the cotangent sheaf
$\Omega_{\Hilb_D^{\delta_0}}$, we have $L_\triangle\cong\Omega_{\Hilb_D^{\delta_0}}[1]$.

Next, we let $\pi_H: D\times \Hilb_D^{\delta_0}\to \Hilb_D^{\delta_0}$ be the projection.
Then by the deformation of ideal sheaves of smooth surfaces, the derived objects
$$R\ti\pi_{H\ast}\RHom(\ti\sI_{\cZ_D},\ti\sI_{\cZ_D})_0\dual\cong \Omega_{\Hilb_D^{\delta_0}}[1].
$$
By the isomorphism \eqref{Hilb}, we have canonical isomorphism
$$R\ti\pi_{0\ast}\RHom(\ti\sI_0,\ti\sI_0)_0 \cong f\sta R\ti\pi_{H\ast}\RHom(\ti\sI_{\cZ_D},\ti\sI_{\cZ_D})_0.
$$
Combined, we have \eqref{Rf}, and that the last vertical arrow is identical to the
\eqref{Lf}.
\vsp

Applying \cite{BF}, we have

\begin{prop}\lab{decompose}
The perfect relative obstruction theories of $\Idel$  and of \eqref{arrow-2}
are compatible with respect to the fiber diagram \eqref{diag} (using \eqref{arrow-1}). Consequently, we have the identity
\beq\lab{comp}
[\Idel]\virt =\triangle^!\bl [\cMm]\virt\times
[\cMp]\virt\br.
\eeq
\end{prop}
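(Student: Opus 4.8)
The plan is to invoke the compatibility-of-obstruction-theories formalism of Behrend--Fantechi \cite{BF} applied to the fiber square \eqref{diag}. Concretely, the statement \eqref{comp} follows once we verify that the perfect relative obstruction theory $\phi_\delta$ of $\scrIdel\to\fC_0^{\dagger,\delta}$ (constructed in \eqref{phi-del}) and the product obstruction theory on $\cMm\times\cMp\to\fA^{{\delta}_-,{\delta}_0}_\diamond\times\fA^{{\delta}_+,{\delta}_0}_\diamond$ (induced from those of the factors via Proposition \ref{P6.4}) fit together into a compatible datum over the diagonal. Here ``compatible'' is in the precise sense of \cite{BF}: using the isomorphism $\Phi_\delta$ of \eqref{arrow-1} to identify the source with the fiber product, one needs a morphism of distinguished triangles relating the (pulled-back) obstruction complexes on the three corners $\scrIdel$, $\cMm\times\cMp$ and $\Hilb_D^{\delta_0}$ to the relative cotangent complexes, such that the squares commute. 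The excerpt has already supplied this morphism of triangles -- the displayed commutative diagram of derived objects taken from \cite[p.961]{MPT} -- so the proof reduces to checking the third vertical arrow is the right one.

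First I would recall the short exact sequence \eqref{seq-11} of universal ideal sheaves and observe that the associated distinguished triangle, after applying $R\pi_{\delta\ast}\RHom(-,-)_0$, produces exactly the bottom row of the displayed diagram; the traceless condition is preserved because the gluing is along a divisor and the sheaves have rank one. Next I would identify the top row: $\LMdel$, $\Lprod$ and $\LMdelt$ sit in a distinguished triangle by functoriality of relative cotangent complexes for the tower $\scrIdel\to\cMm\times\cMp\to\Hilb_D^{\delta_0}$ (equivalently for the diagonal $\triangle$). The vertical arrows on the left and middle are the duals of $\phi_\delta$ and of the product obstruction theory, which are perfect and are obstruction theories by Propositions \ref{P6.1} and \ref{P6.4}. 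The content then concentrates in the right vertical arrow, for which I would prove the two claims \eqref{Rf} and \eqref{Lf}: that $R\ti\pi_{0\ast}\RHom(\ti\sI_0,\ti\sI_0)_0\dual\cong f\sta L_\triangle$, and that under this identification the vertical map agrees with the canonical $\LMdelt\to f\sta L_\triangle\dual$. For \eqref{Rf} I would use that $\Hilb_D^{\delta_0}$ is smooth, so $L_\triangle\cong\Omega_{\Hilb_D^{\delta_0}}[1]$ via the conormal bundle of the diagonal, together with the surface-deformation identity $R\pi_{H\ast}\RHom(\sI_{\cZ_D},\sI_{\cZ_D})_0\dual\cong\Omega_{\Hilb_D^{\delta_0}}[1]$ and the base-change isomorphism \eqref{Hilb} which rewrites $\ti\sI_0$ as the pullback of $\sI_{\cZ_D}$.

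Granting the morphism of triangles, compatibility in the sense of \cite{BF} is immediate, and their functoriality theorem for virtual pullback along a regular embedding (the diagonal $\triangle$ of the smooth scheme $\Hilb_D^{\delta_0}$) gives $[\scrIdel]\virt=\triangle^!\bl[\cMm]\virt\times[\cMp]\virt\br$, which is \eqref{comp}. I expect the main obstacle to be the verification of \eqref{Lf} at the level of actual complexes rather than just isomorphism classes: one must check that the Atiyah-class-induced obstruction theory restricts, along the marked divisor $\cD_\delta$, to the standard Atiyah-class obstruction theory of $\Hilb_D^{\delta_0}$, so that the third square genuinely commutes and not merely up to a sign or an automorphism. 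This is handled by the naturality of the Atiyah class under restriction to a Cartier divisor (as in \cite{HT}) combined with \eqref{Hilb}; since the argument is a direct transcription of \cite{MPT}, I would state it, cite the relevant passage, and leave the diagram-chase to the reader, exactly as the excerpt does. The remaining steps -- that the left and middle vertical maps are perfect obstruction theories, and that the top row is a distinguished triangle -- are formal and require no new input.
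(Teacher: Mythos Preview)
Your proposal is correct and follows essentially the same approach as the paper: the work is done in the paragraphs preceding the proposition (the short exact sequence \eqref{seq-11}, the commutative diagram of distinguished triangles from \cite{MPT}, and the identifications \eqref{Rf}--\eqref{Lf}), after which the proposition itself is just the one-line appeal to \cite{BF}. Your added emphasis on verifying \eqref{Lf} at the level of complexes via naturality of the Atiyah class is a reasonable gloss on what the paper leaves implicit by citing \cite{MPT}.
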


We state the cycle version of the degeneration of Donaldson-Thomas invariants.

\begin{theo}\label{cycle}
Let $X/C$ be a simple degeneration of projective threefolds such that $X_0=Y_-\cup Y_+$ is a
union of two smooth irreducible components. Let $[\Ixp]\virt\in A_*\Ixp$ be the virtual class of the
good degeneration, and let $\triangle$ be the diagonal morphism in \eqref{diag}. Then
$i_c^![\Ixp]\virt=[\fI_{X_c}^P]\virt$ for $c\ne 0\in C$, and
\beq\label{formula}i_0^! [\Ixp]\virt=\sum _{\delta \in\Lam_P^{\mathrm{spl}} }
\triangle^!\bl [\cMm]\virt\times
[\cMp]\virt\br.
\eeq
\end{theo}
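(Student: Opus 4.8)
The plan is to deduce the theorem by assembling the decomposition results already proved, so the argument is short and mostly a matter of bookkeeping. For $c\neq 0\in C$ I would simply invoke the Proposition established above in Subsection~6.1, which gives $i_c^![\Ixp]\virt=[\fI_{X_c}^P]\virt$; its content is that the absolute obstruction theory of $\fI_{X_c}^P$ is the restriction along $c\in C$ of the relative obstruction theory \eqref{po} of $\Ixp\to\fC^P$, so that compatibility of virtual classes with refined Gysin pullback \cite{BF} applies. Thus the real content is formula \eqref{formula} over the special fibre $0\in C$.

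For \eqref{formula} I would proceed in two steps. First I would quote Proposition~\ref{prop-virt-deco}, namely
\[ i_0^![\Ixp]\virt=\sum_{\delta\in\Lam_P^{\mathrm{spl}}}\iota_{\delta\ast}[\Idel]\virt, \]
which follows from item~(1) of Theorem~\ref{3.36} --- the relations $\bigotimes_{\delta}\pi_P\sta L_\delta\cong\sO_{\Ixp}$ and $\prod_\delta\pi_P\sta s_\delta=\pi_P\sta\pi\sta t$ --- together with the identity \eqref{cloc} expressing $i_0^!$ through the localized first Chern classes $c_1^\loc(L_\delta,s_\delta)$ and their additivity over the sections $s_\delta$. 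Second, I would substitute into each summand the decomposition of Proposition~\ref{decompose},
\[ [\Idel]\virt=\triangle^!\bl[\cMm]\virt\times[\cMp]\virt\br, \]
whose proof rests on the isomorphism $\Phi_\delta$ of \eqref{arrow-1}, on the compatibility of the relative obstruction theory \eqref{phi-del} of $\scrIdel\to\fC_0^{\dagger,\delta}$ with the product obstruction theory \eqref{arrow-2} under the fibre square \eqref{diag}, and --- the one genuinely delicate input --- on the identification \eqref{Rf}--\eqref{Lf} of the node contribution $R\ti\pi_{0\ast}\RHom(\ti\sI_0,\ti\sI_0)_0\dual$ with $f\sta L_\triangle$, which comes from the distinguished triangle attached to \eqref{seq-11} and the smoothness of $\Hilb_D^{\delta_0}$. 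Combining the two displays produces \eqref{formula}.

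The single point I expect to need spelling out carefully --- and the only thing resembling an obstacle --- is the Gysin bookkeeping in the second step: one must check that the refined product $\triangle^!\bl[\cMm]\virt\times[\cMp]\virt\br\in A_\ast\bl\cMm\times_{\Hilb_D^{\delta_0}}\cMp\br$, pushed forward along $\Phi_\delta$ and then along the closed immersion $\Idel\hookrightarrow\Ixpo$, is exactly the term written on the right of \eqref{formula}; this is formal once \eqref{arrow-1} is known to be an isomorphism of Deligne--Mumford stacks (Theorem~\ref{3.36}(3)), so that $\cMm\times_{\Hilb_D^{\delta_0}}\cMp$ is identified with $\Idel$ and all pushforwards are suppressed in the notation. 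Beyond this I do not anticipate any difficulty: the heavy lifting is done in Theorem~\ref{3.36}, Proposition~\ref{prop-virt-deco} and Proposition~\ref{decompose}, and what remains is their combination together with a citation of \cite{BF} for the $c\neq 0$ statement.
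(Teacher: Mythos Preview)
Your proposal is correct and follows precisely the paper's approach: the paper's proof of Theorem~\ref{cycle} is the single sentence ``The Theorem follows from Proposition~\ref{prop-virt-deco} and \ref{decompose},'' together with the earlier Proposition giving $i_c^![\Ixp]\virt=[\fI_{X_c}^P]\virt$ for $c\neq 0$. Your more detailed unpacking of these ingredients (including the Gysin bookkeeping via the isomorphism $\Phi_\delta$) is faithful to what the paper is invoking.
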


\begin{coro}
Let the situation be as in Theorem \ref{cycle}. Suppose $X_c$ are Calabi-Yau threefolds for $c\ne 0$.
Then
\beq\lab{formula1}\deg \, [\fI^P_{X_c}]\virt=\sum _{\delta \in\Lam_P^{\mathrm{spl}} }\deg
\bl \ev_{-\ast}[\cMm]\virt\bullet
\ev_{+\ast}[\cMp]\virt\br,
\eeq
where $\ev_\pm\colon \fI_{\fY_\pm/\fA_\diamond}^{\delta_\pm,\delta_0}=\cM^\delta_\pm\to \Hilb^{\delta_0}_D$ is the restriction morphism, and $\bullet$ is the intersection pairing in $A\lsta \Hilb^{\delta_0}_D$.
\end{coro}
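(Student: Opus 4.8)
The plan is to deduce the numerical formula \eqref{formula1} from the cycle-level degeneration formula \eqref{formula} of Theorem \ref{cycle} by applying a proper pushforward to a point and using the deformation invariance of degrees of virtual classes. First I would observe that the Calabi--Yau hypothesis on $X_c$, $c\ne 0$, forces the virtual dimension of $\fI_{X_c}^P$ to be zero, so $\deg\,[\fI_{X_c}^P]\virt$ is well defined; moreover the relative obstruction theory \eqref{po} has rank $0$ along the Calabi--Yau fibers, hence the virtual dimension of $\Idel=\scrIdel$ is also $0$ (it equals the virtual dimension of $\fI_{X_c}^P$ shifted appropriately, since $\fC^P$ and $\fC_0^{\dagger,\delta}$ have the same dimension by Proposition \ref{prop2.24}). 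Consequently each term $\triangle^!\bl[\cMm]\virt\times[\cMp]\virt\br$ is a $0$-cycle on $\Idel$, and taking degrees on both sides of \eqref{formula} gives
$$
\deg\, i_0^![\Ixp]\virt = \sum_{\delta\in\Lam_P^{\mathrm{spl}}}\deg\,\triangle^!\bl[\cMm]\virt\times[\cMp]\virt\br .
$$
By the deformation invariance of the degree of a $0$-dimensional virtual class under the Gysin maps $i_c^!$ and $i_0^!$ associated to points of the smooth curve $C$ (Theorem \ref{cycle} gives $i_c^![\Ixp]\virt=[\fI_{X_c}^P]\virt$, and $\deg$ is constant in $c$ by \cite{BF}), the left side equals $\deg\,[\fI_{X_c}^P]\virt$.

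Next I would rewrite each summand on the right using the refined Gysin formula for the fiber square \eqref{diag}. Since $\triangle\colon\Hilb_D^{\delta_0}\to\Hilb_D^{\delta_0}\times\Hilb_D^{\delta_0}$ is the diagonal of a smooth projective variety, the projection formula for the refined intersection product gives, for $0$-cycles,
$$
\deg\,\triangle^!\bl[\cMm]\virt\times[\cMp]\virt\br
=\deg\,\bl\ev_{-\ast}[\cMm]\virt\bullet\ev_{+\ast}[\cMp]\virt\br,
$$
where $\bullet$ denotes the intersection pairing in $A\lsta\Hilb_D^{\delta_0}$ obtained from the diagonal class; this is the standard identity $\deg\,\triangle^!(\alpha\times\beta)=\deg(p_{1\ast}\alpha\bullet p_{2\ast}\beta)$ valid because $\Hilb_D^{\delta_0}$ is smooth and proper, so Poincar\'e duality converts $\triangle^!$ into cap product with the diagonal and then into the intersection pairing. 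Summing over $\delta\in\Lam_P^{\mathrm{spl}}$ yields exactly \eqref{formula1}.

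The main technical point — the step I expect to require the most care — is verifying that all classes in sight are genuinely $0$-dimensional, i.e.\ that the virtual dimensions match up so that $\deg$ makes sense term by term and commutes with the operations $i_0^!$, $\triangle^!$, $\ev_{\pm\ast}$. This amounts to a virtual-dimension bookkeeping: one checks that for a Calabi--Yau threefold fiber the rank of $E\dual=R\pi\lsta\RHom(\sI_\cZ,\sI_\cZ)_0[1]$ relative to $\fC^P$ is zero (using $K_{X_c}\cong\sO$ and Serre duality on the threefold fibers to pair $\mathrm{Ext}^1$ with $\mathrm{Ext}^2$), and similarly that the relative obstruction theories of $\cM^\delta_\pm\to\fA_\diamond^{\delta_\pm,\delta_0}$ have ranks adding, under the gluing of Proposition \ref{decompose} along $\Hilb_D^{\delta_0}$, to the rank of $E_\delta\dual$; the excess coming from the diagonal $\triangle$ is $\dim\Hilb_D^{\delta_0}$, which is precisely cancelled by $\triangle^!$. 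Once this dimension count is in place, the rest is a formal application of \eqref{formula}, the projection formula, and deformation invariance of the virtual degree; I would invoke \cite{BF} and \cite{MPT} for the compatibility statements rather than reprove them.
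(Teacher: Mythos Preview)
Your proposal is correct and follows essentially the same approach as the paper: the paper's proof is a terse two-sentence remark that the Corollary follows from Theorem \ref{cycle} together with the deformation invariance $\deg\, i_c^![\Ixp]\virt= \deg\, i_0^![\Ixp]\virt$, and you have simply spelled out the details it leaves implicit (the Calabi--Yau virtual-dimension count, and the passage from $\triangle^!$ to the intersection pairing $\bullet$ via the projection formula for refined Gysin maps along the square \eqref{diag}).
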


\begin{proof}
The Theorem follows from Proposition \ref{prop-virt-deco} and \ref{decompose}.
The Corollary follows from the Theorem and that
$\deg\, i_c^![\Ixp]\virt= \deg\, i_0^![\Ixp]\virt$.
\end{proof}

\subsection{The degeneration formula}

We prove Theorem \ref{degen} in the Introduction, whose formulation is due to \cite{MNOP2}.

Let the situation be as in Theorem \ref{degen} and \ref{cycle}.
We define descendant invariants, following \cite{MNOP2}. We continue to denote by
$$\pi:\cX\lra\Ixp,\ \pi_X: \cX\lra X,\and \sI_{\cZ}\sub \sO_{\cX}
$$
be the universal family on $\Ixp$.
% be the good degeneration as before.
%Let
%$$\cX\defeq \Ixp\times_{\fC}\fX,
%$$
%which is the tautological family of threefolds over $\Ixp$. Let $\sI_{\cZ}\sub \sO_{\cX}$ be the universal family of ideal sheaves.
Since locally $\sI_{\cZ}$ admits locally free resolutions of finite length, the Chern character
$$\ch(\sI_{\cZ}): A\lsta \cX\lra A\lsta \cX
$$
is well defined.

For any $\gamma\in H^l(X,\ZZ)$, we define
\beq \label{ch}\ch_{k+2}(\gamma):H^{BM}_*(\Ixp,\QQ)\lra H^{BM}_{*-2k+2-l} (\Ixp,\QQ)
\eeq
($H^{BM}_i$ is the Borel-Moore homology) via
\[ \ch_{k+2}(\gamma)(\xi)=\pi\lsta(\ch_{k+2}(\sI_{\cZ})\cdot \pi_X^*(\gamma)\cap\pi^*(\xi)),
\]
where $\pi^*$ is the flat pullback.

For cohomology classes $\gamma_{i}\in H^{l_i}(X,\ZZ)$ of pure degree $l_i$, we define
\[\Bigl\langle\prod_{i=1}^r\tilde\tau_{k_i}(\gamma_{i})\Bigr\rangle_{\X}^P=
\Bigl[\prod_{i=1}^r(-1)^{k_i+1}\ch_{k_i+2}(\gamma_{i})\cdot [\Ixp]\virt\Bigr]_2\in H_2^{BM}(\Ixp,\QQ),
\]
where the term inside the bracket is a homology class of dimension $2\dim [\Ixp]\virt - \sum_{i=1}^r (2k_i-2+l_i)$, and
the $[\cdot]_2$ is taking the dimension two part of the term inside the bracket. This is the family version of the descendent Donaldson-Thomas invariants given in \cite{MNOP2}:
\[\Bigl\langle\prod_{i=1}^r\tilde\tau_{k_i}(\gamma_{i})\Bigr\rangle_{X_c}^P=
\Bigl[ \prod_{i=1}^r(-1)^{k_i+1}\ch_{k_i+2}(\gamma_{i})\cdot [\fI_{X_c}^P]\virt\Bigr]_0\in H_0^{BM}(\fI_{X_c}^P,\QQ).
\]

Since $P$ has degree one, we let $P(v)=d\cdot v+n$. We form the partition function of descendent
Donaldson-Thomas invariants of $X_c$
$$Z_{d}\bigg(X_c;q\Big |\prod_{i=1}^r \tilde \tau_{k_i}(\gamma_{i})
\bigg)=\sum_{n\in \ZZ}\deg\Bigl\langle\prod_{i=1}^r\tilde\tau_{k_i}(\gamma_{i})\Bigr\rangle_{X_c}^{d\cdot v+n}q^n.$$

Accordingly, for the relative Hilbert schemes $\fI_{\fY_\pm/\fA_\diamond}^{\delta_\pm,\delta_0}$, we define
$\ch_{k+2}(\gamma)$ similarly, and
\[
\Bigl\langle\prod_{i=1}^r\tilde\tau_{k_i}(\gamma_{i})\Bigr\rangle_{\Y_\pm}^{\delta_\pm}=
\ev_{\pm\ast}\bl \prod_{i=1}^r(-1)^{k_i+1}\ch_{k_i+2}
(\gamma_{i})\cdot [\fI_{\fY_\pm/\fA_\diamond}^{\delta_\pm,\delta_0}]\virt\br\in H\lsta(\Hilb_D^{\delta_0},\QQ).
\]
Let $\beta_1,\cdots,\beta_m$ be a basis of $H^*(D,\QQ)$. Let $\{C_\eta\}_{|\eta|=k}$ be a Nakajima basis of the cohomology of
$\Hilb_D^k$, where $\eta$ is a cohomology weighted partition w.r.t. $\beta_i$.
The relative DT-invariants with descendent insertions \cite{MNOP2} are
\[\Bigl\langle\prod_{i=1}^r\tilde\tau_{k_i}(\gamma_{i}) \Big | \eta\Bigr\rangle_{\Y_\pm}^{\delta_\pm}=
\Bigl[ \prod_{i=1}^r(-1)^{k_i+1}\ch_{k_i+2}(\gamma_{i})\cap\ev_\pm^*(C_\eta)\cdot
[\fI_{\fY_\pm/\fA_\diamond}^{\delta_\pm,\delta_0}]\virt\Bigl]_0,
\]
which form a partition function
\[ Z_{d_\pm,\eta}\bigg(Y_\pm,D_\pm;q\Big|\prod_{i=1}^r\tilde \tau_{k_i}
(\gamma_{i})\bigg)=\sum_{n\in \ZZ}\deg\Bigl\langle\prod_{i=1}^r\tilde\tau_{k_i}(\gamma_{i})
\Big | \eta\Bigr\rangle_{\Y_\pm}^{d_\pm\cdot v+n}q^n.
\]

\begin{theo}[Theorem \ref{degen}]\lab{degen2}
Fix a basis $\beta_1,\cdots,\beta_m$ of $H^*(D,\QQ)$. The degeneration formula of
Donaldson-Thomas invariants has the following form
\begin{align*}
\notag Z_{d}\bigg(X_c;q\Big|\prod_{i=1}^r  \tilde \tau_{k_i}(\gamma_{i})
\bigg)
= &\sum_{\substack {d_-,d_+;\,\eta\\d=d_-+d_+}} \frac{(-1)^{|\eta|-l(\eta)}\fz(\eta)}{q^{|\eta|}}\cdot\\
\cdot Z_{d_-,\eta}\bigg(Y_-,D_-;q\Big|&\prod_{i=1}^r\tilde \tau_{k_i}(\gamma_{i})
\bigg)\cdot
 Z_{d_+,\eta^\vee}
\bigg(Y_+,D_+;q\Big|\prod_{i=1}^r\tilde
\tau_{k_i}(\gamma_{i})\bigg),
\end{align*}
where $\eta$ are cohomology weighted partitions w.r.t. $\beta_i$, and $\fz(\eta)=\prod_i \eta_i|\Aut(\eta)|$.
\end{theo}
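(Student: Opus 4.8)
The plan is to deduce the numerical formula from the cycle-level identity of Theorem~\ref{cycle} by capping with descendant classes and then repackaging the combinatorics of the Nakajima basis. First I would invoke deformation invariance. The homology class $\prod_{i=1}^r(-1)^{k_i+1}\ch_{k_i+2}(\gamma_i)\cdot[\Ixp]\virt$ lives on $\Ixp$ over the connected curve $C$; its Gysin restriction $i_c^!$ computes $\langle\prod\tilde\tau_{k_i}(\gamma_i)\rangle_{X_c}^P$ for $c\ne 0$, while $i_0^!$ of it is governed by \eqref{formula}. Since for any class in $A_\ast\Ixp$ one has $\deg\, i_c^!=\deg\, i_0^!$, the $X_c$-invariant equals $\sum_{\delta\in\Lam_P^{\mathrm{spl}}}$ of the contributions coming from $\triangle^!\bl[\cMm]\virt\times[\cMp]\virt\br$ capped with the descendant operators.

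Second, on each summand I would push the descendant operators through the gluing isomorphism $\Phi_\delta$ of \eqref{arrow-1} and the fiber square \eqref{diag}. The short exact sequence \eqref{seq-11} relating $\sI_{\cZ_\delta}$, $\ti\sI_\pm$ and $\ti\sI_0$, together with additivity of the Chern character, gives $\ch(\sI_{\cZ_\delta})=\ch(\ti\sI_+)+\ch(\ti\sI_-)-\ch(\ti\sI_0)$; since $\ti\sI_0$ is supported on $\cD_\delta$, which maps to $D\subset X$, its contribution to the dimension-two part of $\prod(-1)^{k_i+1}\ch_{k_i+2}(\gamma_i)\cdot\pi_X^\ast\gamma_i$ is absorbed exactly as in \cite{MNOP2}. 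Hence, after the projection formula, the operator $\prod(-1)^{k_i+1}\ch_{k_i+2}(\gamma_i)$ acting on $[\Idel]\virt=\triangle^!\bl[\cMm]\virt\times[\cMp]\virt\br$ factors as the product of the corresponding operators acting on $[\cMm]\virt$ and $[\cMp]\virt$, with each $\gamma_i$ restricted to $Y_\pm$ via $Y_\pm\to X_0\subset X$; this is the input of \cite{MPT} and Proposition~\ref{decompose}.

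Third comes the combinatorial heart. For fixed $\delta_0$ the Gysin map $\triangle^!$ of \eqref{diag} is computed by inserting the diagonal class of $\Hilb_D^{\delta_0}$, which in the Nakajima basis reads $[\triangle]=\sum_{|\eta|=\delta_0}\frac{(-1)^{|\eta|-l(\eta)}}{\fz(\eta)}\,C_\eta\otimes C_{\eta^\vee}$ by the standard orthogonality of the Nakajima basis under the intersection pairing $\bullet$ on $A_\ast\Hilb_D^{\delta_0}$, where $C_{\eta^\vee}$ denotes the basis element for the Poincaré-dual cohomology weights. Substituting this expansion turns $\deg\bigl(\triangle^!([\cMm]\virt\times[\cMp]\virt)$ capped with the split descendants$\bigr)$ into $\sum_\eta(-1)^{|\eta|-l(\eta)}\fz(\eta)\,\deg\langle\prod\tilde\tau_{k_i}(\gamma_i)\,|\,\eta\rangle_{Y_-}^{\delta_-}\cdot\deg\langle\prod\tilde\tau_{k_i}(\gamma_i)\,|\,\eta^\vee\rangle_{Y_+}^{\delta_+}$, the sign and $\fz(\eta)$ being produced by the passage from $C_\eta$ to its dual. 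Finally I would reorganize the generating series: writing $P(v)=d\cdot v+n$, $\delta_\pm(v)=d_\pm v+n_\pm$ and $\delta_0=|\eta|$, the splitting condition $\delta_-+\delta_+-\delta_0=P$ becomes $d=d_-+d_+$ together with $n=n_-+n_+-|\eta|$; summing over $n$ with $d_-,d_+,\eta$ fixed assembles the degree-$n_\pm$ terms into the full partition functions $Z_{d_-,\eta}(Y_-,D_-;q\,|\,\cdots)$ and $Z_{d_+,\eta^\vee}(Y_+,D_+;q\,|\,\cdots)$, and the shift $n_-+n_+=n+|\eta|$ yields precisely the overall factor $q^{-|\eta|}$. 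Collecting everything gives the stated coefficient $(-1)^{|\eta|-l(\eta)}\fz(\eta)/q^{|\eta|}$.

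The main obstacle I expect is this last step: keeping track of the precise normalization of the Nakajima basis, the pairing matrix, and hence the exact power of $\fz(\eta)$ and the sign $(-1)^{|\eta|-l(\eta)}$, together with the bookkeeping that converts $\delta_-+\delta_+-\delta_0=P$ into $d=d_-+d_+$ and the $q^{-|\eta|}$ factor. The geometric content—deformation invariance, the cycle formula \eqref{formula}, the splitting \eqref{seq-11} of the universal ideal sheaf, and the descendant splitting of \cite{MNOP2,MPT}—is already in place from Theorem~\ref{cycle} and Proposition~\ref{decompose}; what remains is to match conventions carefully.
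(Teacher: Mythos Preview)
Your approach is essentially identical to the paper's: deformation invariance to pass from $X_c$ to the central fiber, application of the descendant operators to the cycle identity \eqref{formula} to obtain the pairing \eqref{lid}, K\"unneth decomposition of the diagonal of $\Hilb_D^{\delta_0}$ in the Nakajima basis, and then the bookkeeping $n=n_-+n_+-|\eta|$ for the $q^{-|\eta|}$ shift. One correction on the point you flagged: in the paper's (and the standard) normalization the diagonal reads $[\triangle]=\sum_{|\eta|=k}(-1)^{k-l(\eta)}\fz(\eta)\,C_\eta\otimes C_{\eta^\vee}$ with $\fz(\eta)$ in the numerator, so the coefficient $(-1)^{|\eta|-l(\eta)}\fz(\eta)$ appears directly without any further ``passage to duals.''
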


\begin{proof}
Since Gysin maps commute with proper pushforward and flat pullback, we have
$$\deg i_c^!\, \Bigl\langle\prod_{i=1}^r\tilde\tau_{k_i}(\gamma_{i})\Bigr\rangle_{\X}^{P}=
\deg\, i_0^!\Bigl\langle\prod_{i=1}^r\tilde\tau_{k_i}(\gamma_{i})\Bigr\rangle_{\X}^{P}.
$$
By $i_c^![\Ixp]\virt=[\fI_{X_c}^P]\virt$,
the left hand side term equals to $\deg\,\Bigl\langle\prod_{i=1}^r\tilde\tau_{k_i}(\gamma_{i})\Bigr\rangle_{X_c}^P$,
which is the Donaldson-Thomas invariants of $X_c$.
\vsp
For the other term, we will decompose it into relative invariants using \eqref{formula}.
Applying the operation $\prod_{i=1}^r(-1)^{k_i+1}\ch_{k_i+2}(\gamma_{i})$ to both sides of \eqref{formula},
and using the restriction morphism $\ev_\pm\colon \fI_{\fY_\pm/\fA_\diamond}^{\delta_\pm,\delta_0}\to \Hilb^{\delta_0}_D$, and
\[
\Bigl\langle\prod_{i=1}^r\tilde\tau_{k_i}(\gamma_{i})\Bigr\rangle_{\Y_\pm}^{\delta_\pm}=
\ev_{\pm\ast}\bl \prod_{i=1}^r(-1)^{k_i+1}\ch_{k_i+2}
(\gamma_{i})\cdot [\fI_{\fY_\pm/\fA_\diamond}^{\delta_\pm,\delta_0}]\virt\br\in H\lsta(\Hilb_D^{\delta_0},\QQ),
\]
we obtain
\begin{align}\label{lid}
\deg i_0^!\,\Bigl\langle\prod_{i=1}^r\tilde\tau_{k_i}(\gamma_{i})\Bigr\rangle_{\X}^{P}=\sum _{\delta \in\Lam_P^{\text{spl}} }
\deg\Big( \Bigl\langle\prod_{i=1}^r\tilde\tau_{k_i}(\gamma_{i})\Bigr\rangle_{\Y_-}^{\delta_-}  \bullet
\Bigl\langle\prod_{i=1}^r\tilde\tau_{k_i}(\gamma_{i})\Bigr\rangle_{\Y_+}^{\delta_+} \Big).
\end{align}

Let $\beta_1,\cdots,\beta_m$ be a basis of $H^*(D,\QQ)$, and let $\eta$ be a cohomology weighted partition with
respect to $\beta_i$. Following the notation in \cite{MNOP2,Nak}, we denote
$$C_\eta=\frac{1}{\fz(\eta)}P_{\delta_1}[\eta_1]\cdots P_{\delta_s}[\eta_s]\cdot \mathbf{1}\in H^*(\Hilb_D^{|\eta|},\QQ)$$
with $\fz(\eta)=\prod_i\eta_i|\Aut(\eta)|$. Then $\{C_\eta\}_{|\eta|=k}$ is the Nakajima basis of the cohomology of $\Hilb_D^{k}$,
and the Kunneth decomposition of the diagonal class
$[\triangle]\in H^*(\Hilb_D^k\times\Hilb_D^k,\QQ)$ takes the form
\[[\triangle]=\sum_{|\eta|=k}(-1)^{k-l(\eta)}{\fz(\eta)}C_\eta\otimes C_{\eta^\vee}.\]
Since
\[\Bigl\langle\prod_{i=1}^r\tilde\tau_{k_i}(\gamma_{i}) \Big | \eta\Bigr\rangle_{\Y_\pm}^{\delta_\pm}=
\Bigl[ \prod_{i=1}^r(-1)^{k_i+1}\ch_{k_i+2}(\gamma_{i})\cap\ev_\pm^*(C_\eta)\cdot [\fI_{\fY_\pm/\fA_\diamond}^{\delta_\pm,\delta_0}]\virt\Bigl]_0
\]
is an element in $H_0^{BM}(\Hilb_D^{\delta_0},\QQ)$, applying to \eqref{lid}, we have
\begin{align*}
&\deg\,i_0^!\Bigl\langle\prod_{i=1}^r\tilde\tau_{k_i}(\gamma_{i})\Bigr\rangle_{\X}^{P}\\\notag
=&\sum _{\delta \in\Lam_P^{\text{spl}}; |\eta|=\delta_0 }
(-1)^{|\eta|-l(\eta)}{\fz(\eta)}\deg\,\Bigl\langle\prod_{i=1}^r\tilde\tau_{k_i}(\gamma_{i}) \Big | \eta\Bigr\rangle_{\Y_-}^{\delta_-} \cdot\deg\,
\Bigl\langle\prod_{i=1}^r\tilde\tau_{k_i}(\gamma_{i}) \Big | \eta^\vee\Bigr\rangle_{\Y_+}^{\delta_+} .
\end{align*}

Finally, we form the partition functions of these invariants. Notice that $\delta_-+\delta_+-\delta_0=P$,
which accounts for the shift of the power of $q$. This proves Theorem \ref{degen2}.
\end{proof}

\subsection{Degeneration of stable pair invariants}
We fix a simple degeneration $\pi:X\to C$ of projective threefolds with a $\pi$-ample $H$ on $X$;
we suppose that $X_0=Y_-\cup  Y_+$ is a union of two smooth irreducible components.
For reference, we state the degeneration of PT-invariants, which is proved in \cite{MPT}.

Recall that the coherent systems we considered are homomorphisms
$\varphi:\sO_X\lra \sF$
so that $\sF$ is pure of dimension one and $\varphi$ has finite cokernel. Let $P$ be a degree one polynomial.
Let $\Mxh$ be the good degeneration of the moduli of coherent systems
constructed in this paper. It is a separated and proper Deligne-Mumford stack of finite type over $C$.
We use the relative obstruction theory of $\Mxh\to\C^P$ introduced in \cite{PT} to construct its virtual class $[\Mxh]\virt$.

Let
$\pi: \cX\to\Mxh$ and $\varphi:\sO_{\cX}\lra \sF$
be the universal family of $\Mxh$, and let $\sI^\bullet\in D^b(\cX)$ be the object corresponds to the complex
$[\sO_{\cX}\to\sF]$ with $\sO_{\cX}$ in degree $0$.
We denote by $L_{\Mxh/\fC^P}$ be the truncated relative cotangent complex of $\Mxh\to\fC^P$.
%and denote $E=R\pi_{\ast}\RHom(\sI^\bullet, \sI^\bullet)_0[1]$.
In \cite[Prop 10]{MPT}, using the Atiyah classes a perfect relative obstruction theory
is constructed:
$$ E\dual\defeq R\pi_{\ast}\RHom(\sI^\bullet, \sI^\bullet)_0[1]\dual\lra L_{\Mxh/\fC^P}.
$$
Let $[\Mxh]\virt\in A_*\Mxh$ be its associated virtual cycle.
In the same paper, for any partition $\delta=(\delta_\pm,\delta_0)$, a perfect relative obstruction theory of
$\P_{\fY_\pm/\fA_\diamond}^{\delta_\pm,\delta_0}\to\fA_\diamond^{\delta_\pm,\delta_0}$ is also constructed,
which gives its virtual class
$[\P_{\fY_\pm/\fA_\diamond}^{\delta_\pm,\delta_0}]\virt\in A_*\P_{\fY_\pm/\fA_\diamond}^{\delta_\pm,\delta_0}$.

\vsp
Let $c\in C$ and $\P^P_{\fX_c/\fC_c}=\Mxh\times_C c$. Let
$$i_c^!:A_*\Mxh\to A_*\P^P_{\fX_c/\fC_c}
$$
be the Gysin map. By Theorem \ref{3.36pri}, we can decompose $\P^P_{\fX_0/\fC_0}$ as a union of
$\P_{\fX_0\shar/\fC_0\shar}^{\delta}$,
$\delta\in \Lam_P^{\mathrm{spl}}$, and obtain the isomorphism \eqref{glue-P}.
By going through the argument parallel to the proof of degeneration formula for Hilbert schemes of ideal sheaves, Maulik, Pandharipande and Thomas proved in \cite{MPT} the degeneration formula of PT stable pair invariants.

\begin{theo}[Maulik, Pandharipande and Thomas]\label{cycle2}
Let $X/C$ be a simple degeneration of projective threefolds such that $X_0=Y_-\cup Y_+$ is a
union of two smooth irreducible components. Then
$$i_c^![\Mxh]\virt=[\P_{X_c}^P]\virt\in A\lsta \P_{X_c}^P \quad \text{for } c\ne 0\in C,
$$
and
$$
i_0^! [\Mxh]\virt=\sum _{\delta \in\Lam_P^{\mathrm{spl}} }
\triangle^!\bl [\P_{\fY_-/\fA_\diamond}^{\delta_-,\delta_0}]\virt\times
[\P_{\fY_+/\fA_\diamond}^{\delta_+,\delta_0}]\virt\br,
$$
where $\triangle:\Hilb_D^{\delta_0}\to \Hilb_D^{\delta_0}\times \Hilb_D^{\delta_0}$ is the diagonal morphism.
\end{theo}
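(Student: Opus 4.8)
The plan is to transcribe the proof of Theorem~\ref{cycle}, together with Propositions~\ref{prop-virt-deco} and~\ref{decompose}, replacing the universal ideal sheaf $\sI_{\cZ}$ everywhere by the universal two-term complex $\sI^\bullet=[\sO_{\cX}\to\sF]$ attached to the universal coherent system on $\Mxh$; the only point that is not a formal repetition is the derived-category diagram at the glueing divisor, which is supplied by \cite{MPT}. The case $c\ne 0\in C$ is immediate: by \cite{MPT} and \cite{PT} the relative obstruction theory $E\dual\to L_{\Mxh/\fC^P}$ restricts along the regular embedding $\{c\}\hookrightarrow C$ to the perfect obstruction theory of the moduli $\P_{X_c}^P$ of stable pairs on $X_c$, so the functoriality of virtual classes under Gysin refinement \cite{BF} gives $i_c^![\Mxh]\virt=[\P_{X_c}^P]\virt$.

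For $c=0$ I would first decompose the central fiber. By Theorem~\ref{3.36pri}, for each $\delta\in\Lam_P^{\mathrm{spl}}$ the substack $\P_{\fX_0^\dagger/\fC_0^\dagger}^{\delta}=\Mxh\times_{\fC^P}\fC_0^{\dagger,\delta}$ is the zero locus $(\pi_P\sta s_\delta=0)\sub\Mxh$ of the section $s_\delta$ of the line bundle $L_\delta$ from Proposition~\ref{prop2.24}, and the relative obstruction theory of $\Mxh\to\fC^P$ pulls back to a perfect relative obstruction theory of $\P_{\fX_0^\dagger/\fC_0^\dagger}^{\delta}\to\fC_0^{\dagger,\delta}$. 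Arguing as for \eqref{cloc} one gets $i_0^![\Mxh]\virt=c_1^{\loc}(L_\delta,s_\delta)[\Mxh]\virt=[\P_{\fX_0^\dagger/\fC_0^\dagger}^{\delta}]\virt$ for every $\delta$, and since $\bigotimes_{\delta}\pi_P\sta L_\delta\cong\sO_{\Mxh}$ with $\prod_{\delta}\pi_P\sta s_\delta=\pi_P\sta\pi\sta t$ (Theorem~\ref{3.36pri}(1)), adding these contributions yields $i_0^![\Mxh]\virt=\sum_{\delta\in\Lam_P^{\mathrm{spl}}}\iota_{\delta\ast}[\P_{\fX_0^\dagger/\fC_0^\dagger}^{\delta}]\virt$, in exact analogy with Proposition~\ref{prop-virt-deco}.

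It then remains to rewrite $[\P_{\fX_0^\dagger/\fC_0^\dagger}^{\delta}]\virt$ as $\triangle^!\bl[\P_{\fY_-/\fA_\diamond}^{\delta_-,\delta_0}]\virt\times[\P_{\fY_+/\fA_\diamond}^{\delta_+,\delta_0}]\virt\br$. Using the isomorphism $\Phi_\delta$ of \eqref{glue-P} and the fiber square \eqref{diag}, with $\cMm$, $\cMp$ replaced by $\P_{\fY_-/\fA_\diamond}^{\delta_-,\delta_0}$, $\P_{\fY_+/\fA_\diamond}^{\delta_+,\delta_0}$, I would compare the obstruction theory pulled back from $\Mxh$ with the product obstruction theory on $\P_{\fY_-/\fA_\diamond}^{\delta_-,\delta_0}\times\P_{\fY_+/\fA_\diamond}^{\delta_+,\delta_0}$. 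The normalization short exact sequence $0\to\sI^\bullet\to\ti\sI^\bullet_+\oplus\ti\sI^\bullet_-\to\ti\sI^\bullet_0\to 0$ along the distinguished divisor $\cD_\delta$, in analogy with \eqref{seq-11}, induces by \cite[p.~961]{MPT} a commutative diagram of distinguished triangles relating $R\pi_{\delta\ast}\RHom(\sI^\bullet,\sI^\bullet)_0$, $\bigoplus_{-,+} R\ti\pi_{\pm\ast}\RHom(\ti\sI^\bullet_\pm,\ti\sI^\bullet_\pm)_0$ and $R\ti\pi_{0\ast}\RHom(\ti\sI^\bullet_0,\ti\sI^\bullet_0)_0$ to the corresponding truncated cotangent complexes; because the coherent systems are admissible and $\coker\varphi$ is disjoint from the node, $\ti\sI^\bullet_0$ is the $f$-pullback of the ideal sheaf $\sI_{\cZ_D}$ of the universal zero-dimensional subscheme of the smooth surface $D$, so $R\ti\pi_{0\ast}\RHom(\ti\sI^\bullet_0,\ti\sI^\bullet_0)_0\dual\cong f\sta\Omega_{\Hilb_D^{\delta_0}}[1]\cong f\sta L_\triangle$ exactly as in \eqref{Rf}, and the glueing vertical arrow is the canonical one. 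Then \cite{BF} gives compatibility of the two obstruction theories over \eqref{diag}, hence $[\P_{\fX_0^\dagger/\fC_0^\dagger}^{\delta}]\virt=\triangle^!\bl[\P_{\fY_-/\fA_\diamond}^{\delta_-,\delta_0}]\virt\times[\P_{\fY_+/\fA_\diamond}^{\delta_+,\delta_0}]\virt\br$, and summing over $\delta\in\Lam_P^{\mathrm{spl}}$ gives the theorem. The hard part is precisely this last step: one must check that the restriction of $\sI^\bullet$ to the node divisor genuinely reduces to the ideal sheaf of points on $D$, and that the derived-category computation of \cite{MPT} carries over verbatim to the present stack-theoretic setting; granting that, everything else is a mechanical copy of the ideal-sheaf argument above.
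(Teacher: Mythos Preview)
Your proposal is correct and follows exactly the approach the paper indicates: the paper does not give a detailed proof of this theorem but simply records that, ``by going through the argument parallel to the proof of degeneration formula for Hilbert schemes of ideal sheaves,'' the result was proved in \cite{MPT}. Your outline is a faithful and accurate expansion of that parallel argument, replacing $\sI_{\cZ}$ by the two-term complex $\sI^\bullet$ and invoking Theorem~\ref{3.36pri} in place of Theorem~\ref{3.36}, with the derived-category comparison at the node supplied by \cite{MPT}.
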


\begin{appendix}

\section{Proof of Lemmas \ref{A} and \ref{B}}
\begin{proof}[Proof of Lemma \ref{A}]
First, because $M_I\otimes_A A_0\to M_0$ is injective and its image lies in $(M_0)_{I_0}$,
$M_I\otimes_A A_0\to (M_0)_{I_0}$ is injective. We next show that it is surjective.

Since $M_I\otimes_A A_0\to (M_0)_{I_0}$ is $\Gm$-equivariant, it suffices to show that every weight $\ell$
element in $(M_0)_{I_0}$ can be lifted to a weight $\ell$ element in $M_I\otimes_A A_0$.
Let $v\in (M_0)_{I_0}$ be a weight $\ell$ element. We first lift $v$ to a weight $\ell$ element
$\bar v\in R_0$; we write
$$\bar v=\alpha_0+z_1\alpha_1+\ldots+z_1^p \alpha_p,\quad \alpha_i\in A[z_2]^{\oplus m}.
$$
Let
$$K=\ker\{\varphi: R\lra M\},\quad K_0=\ker\{ \varphi\otimes_A A_0: R_0\lra M_0\}.
$$
By the definition of $(M_0)_{I_0}$, there is a power $z_1^k$, $k>0$,  so that $z_1^k \bar v\in K_0$.
Because $M$ is $\kk[t]$-flat, tensoring the exact sequence $0\to K\to R\to M\to 0$ with $A_0$,
we obtain an exact sequence $0\to K\otimes_A A_0\to R_0\to M_0\to 0$. Therefore,
$$K\otimes_AA_0=K_0.
$$

We let $w\in K$ be a lift of $z_1^k \bar v\in K_0$. We write $w$ in the form
$$w=w_0+tw_1+\cdots +t^r w_r, \quad w_i\in R'\defeq B[z_1,z_2]/(z_1z_2)^{\oplus m}.
$$
Since $M_0$ only contains
elements of non-negative weights, $\ell\ge 0$. Thus $w$ has weight $\ell+ka$. Since $a>0$, and
since the weights of $w_i$ are $\ell+ka-bi>ka$, we have $w_i=z_i^{k} w_i'$ for $w_i'\in P'$.
For $w_0'$, we can choose it to be $w_0'=\alpha_0+\ldots z_1^p \alpha_p$.
We let
$$w'=w_0'+tw_1'+\ldots+t^rw_r'.
$$
Then $\varphi(w')\in M$ is a lift of $v\in (M_0)_{I_0}$.

We claim $\varphi(w')\in M$ is annihilated by $z_1^k$.
This is true because
$z_1^k\cdot \varphi(w')=\varphi(z_1^k w')=\varphi(w)=0$,
since $w\in K$.
We show that $\varphi(w')$ is also annihilated by a power of $z_2$. %, then $\varphi(w')\in M_I$, and we are done.
We distinguish two cases. The first is when $\ell>0$. In this case, the weight of $w_i'$ are $\ell-ib\ge \ell>0$,
thus $z_1| w_i'$. Hence $z_2\varphi(w')=\varphi(z_2 w')=0$.

The other case is when $\ell=0$. In this case, we still have $z_2w_i'=0$ for $i> 0$.
We claim that for some $h>0$, $z_2^h\varphi(w'_0)=0$.
We pick a $z_2^{h}$ so that $z_2^{h} \bar v\in K_0$, (this is possible since $v\in (M_0)_{I_0}$), and then lift $z_2^{h} \bar v$ to
a weight $0$ element $\ti w
\in K$. We write
$$\ti w=\ti w_0+t\ti w_1+\ldots+t^s \ti w_s, \quad \ti w_i\in R'.
$$
Then $\ti w_{i>0}$ has positive weight in $R'$, thus are annihilated by $z_2$,
and $z_2\ti w=z_2\ti w_0$.
Therefore by replacing $h$ by $h+1$, we can assume $\ti w_{i>0}=0$, and $\ti w_0$ is expressed as
an element in $B[z_2]^{\oplus m}$.

Since $\ti w_0$ is a lift of $z_2^h \bar v=z_2^h \alpha_0$, and since both are expressed as elements in
$A[z_2]^{\oplus m}$, we have $\ti w_0=z_2^h \alpha_0$. Therefore, since $\ti w=\ti w_0\in K$,
$$z_2^h \varphi(w')=\varphi(z_2^h w')=\varphi(z_2^h w'_0)=\varphi(z_2^h \alpha_0)=\varphi(\ti w_0)=\varphi(\ti w)=0.
$$
This proves that $\varphi(w')$ lies in $M_I$ and is a lift of $v\in (M_0)_{I_0}$. This proves the lemma.
\end{proof}

\begin{proof}[Propf of Lemma \ref{B}]
Let $\beta\in C_{\text{gen}}$. Then there are $x\in K^-$, $t^k$ and $z_1^h\in A$ such that
$$x=t^kz_1^h\beta\mod (t^{k+1}, z_1^{h+1}).
$$
Since the modules involved are $\Gm$-equivariant, we can assume that $x$ has weight $ah+bk$.
Thus after expressing $x$ as
$$x=t^kz_1^h\beta+t^{k+1}\beta_1+z_1^{h+1} \beta_2,\quad
\beta_1,\beta_2\in B[z_1,t],
$$
and plus the weight consideration, we conclude
$\beta_2=t^{k+1}\beta_2'$ for a $\beta_2'\in B[z_1,t]^{\oplus m}$.
Therefore,
$z_1x=t^k(z_1^{h+1}\beta+t\beta_3)\in K$, where $\beta_3\in B[z_1,t]^{\oplus m}$.
Since $K\sub R$, we conclude $z_1^{h+1}\beta+t\beta_3\in K$. In particular, $z_1^{h+1}\beta\in K_0$
and $\beta\in (K_0^-)_{(z_1)}\cap R_0^-$. This proves $C_{\text{gen}}\sub C_0$.

For the other direction, we let $\gamma\in C_0$. For the same reason, for a positive $h$ and a
weight $ah$, $y\in K_0^-$, $y=z_1^h\gamma_1+z_1^{h+1}\gamma_2$, $\gamma_i\in A[z_1]^{\oplus m}$.
Since $z_1z_2=0$ in $B$, $y\in K_0$. We let $\ti y\in K$ be a weight $ah$ lifting of $y$,
expressed in the form
$$\ti y=(z_1^h\gamma_1+z_1^{h+1}\gamma_2)+tf_1+\ldots+t^q f_q,\quad
f_i\in R'.
$$
Since $\ti y$ has weight $ah$, we conclude that $f_i=z_1^{h+1}f_i'$, for some $f_i'\in B[z_1]^{\oplus m}$.
Therefore, $\ti y=z_1^h(\gamma+z_1\gamma_3)$, for a $\gamma_3\in B[z_1,t]^{\oplus m}$,
and hence
$$\gamma+z_1\gamma_3\in (K^-\llt)_{(z_1)}\cap R^-\llt.
$$
This implies that $\gamma$ lies in \eqref{Cgen}, and thus lies in $C_0$. This proves the Lemma.
\end{proof}

\end{appendix}

\bibliographystyle{alpha}

\end{document}